\documentclass[11pt]{article}
\usepackage{amsmath}
\usepackage{amssymb,amsbsy,amsthm,dsfont}
\usepackage{graphicx}
\usepackage[dvipsnames]{xcolor}
\usepackage{tikz}
\tikzstyle{vertex}=[circle, draw, inner sep=0pt, minimum size=11pt]
\newcommand{\vertex}{\node[vertex]}
\usepackage{caption}
\usepackage{subcaption}
\usepackage{enumerate}
\usepackage[normalem]{ulem}
\usepackage{cases}
\usepackage[margin=1in]{geometry}
\usepackage[colorlinks,linkcolor=blue,citecolor=blue]{hyperref}
\usepackage{verbatim} 
\usepackage{bbm}

\usepackage{pgfplots}		
\usepackage{tikz}
\usetikzlibrary{calc,decorations.pathreplacing,decorations.pathmorphing}
\tikzset{ 
	protovertex/.style={
		draw,
		circle,
		inner sep=0,
		minimum size=.15cm}
}
\usepgfplotslibrary{statistics}

\definecolor{TUMBlue}{HTML}{0065BD}

\usepackage{mathtools}
\mathtoolsset{showonlyrefs}
\usepackage{algorithm}
\usepackage{algpseudocode}

\makeatletter
\newenvironment{breakablealgorithm}
  {
   \begin{center}
     \refstepcounter{algorithm}
     \hrule height.8pt depth0pt \kern2pt
     \renewcommand{\caption}[2][\relax]{
       {\raggedright\textbf{\ALG@name~\thealgorithm} ##2\par}%
       \ifx\relax##1\relax 
         \addcontentsline{loa}{algorithm}{\protect\numberline{\thealgorithm}##2}%
       \else 
         \addcontentsline{loa}{algorithm}{\protect\numberline{\thealgorithm}##1}%
       \fi
       \kern2pt\hrule\kern2pt
     }
  }{
     \kern2pt\hrule\relax
   \end{center}
  }
\makeatother

\allowdisplaybreaks[1]

\numberwithin{equation}{section}

\DeclareMathOperator{\R}{\mathbb{R}} 
\DeclareMathOperator{\N}{\mathbb{N}} 

\newcommand{\p}{\mathbb{P}} 
\renewcommand{\P}{\mathbb{P}} 
\newcommand{\E}{\mathbb{E}} 
\newcommand{\eps}{\varepsilon} 

\newcommand{\Exp}{\mathrm{Exp}} 

\newcommand{\wt}{\widetilde} 

\newcommand{\UH}{\mathrm{UH}} 
\newcommand{\MCST}{\mathrm{LCS}} 

\newcommand{\LCS}{\mathrm{LCS}} 

\newcommand{\UHCS}{\mathrm{UHCS}} 

\newcommand{\ycp}{\mathrm{YCP}} 

\newcommand{\prob}[1]{\mathbb{P} \left( #1 \right)}

\newcommand{\od}{\text{outdeg}}

\newcommand{\be}{\begin{equation}}
\newcommand{\ee}{\end{equation}}
\newcommand{\ba}{\begin{aligned}}
\newcommand{\ea}{\end{aligned}}

\newcommand{\indicator}{\mathbbm 1}
\newcommand{\bp}{\mathrm{BP}}
\newcommand{\e}{\mathrm e}
\newcommand{\dd}{\mathrm d}
\newcommand{\invisible}[1]{}

\newcommand{\basc}[1]{{\color{red}{ \bf [~Bas:\ }\emph{#1}\textbf{~]}}}

\makeatletter
\def\namedlabel#1#2{\begingroup
	#2%
	\def\@currentlabel{#2}%
	\phantomsection\label{#1}\endgroup
}

\def\cB{{\mathcal B}}
\def\cC{{\mathcal C}}
\def\cD{{\mathcal D}}
\def\cE{{\mathcal E}}

\def\cO{{\mathcal O}}
\def\cP{{\mathcal P}}

\def\cT{{\mathcal T}}
\def\cU{{\mathcal U}}

\newtheorem{theorem}{Theorem}[section]
\newtheorem{lemma}[theorem]{Lemma}
\newtheorem{observation}[theorem]{Observation}

\newtheorem{proposition}[theorem]{Proposition}
\newtheorem{claim}[theorem]{Claim}

\newtheorem{corollary}[theorem]{Corollary}
\newtheorem{conjecture}{Conjecture}[section]

\theoremstyle{definition}
\newtheorem{definition}[theorem]{Definition}
\newtheorem{remark}[theorem]{Remark}

\begin{document}

\title{On the largest common subtree of uniform attachment trees}
\author{
	Johannes B\"aumler\thanks{University of Koblenz; \url{jbaeumler@uni-koblenz.de}}
	\and 
    C\'eline Kerriou\thanks{Stockholm University; \url{celine.kerriou@mail.mcgill.ca}}
	\and 
	Bas Lodewijks\thanks{University of Sheffield; \url{bas.lodewijks@sheffield.ac.uk}}
	\and 
	James Martin\thanks{University of Oxford; \url{martin@stats.ox.ac.uk}} 
	\and 
	Emil Powierski\thanks{TU Dresden; \url{emil.powierski@tu-dresden.de}}
	\and
	Mikl\'os Z.\ R\'acz\thanks{Northwestern University; \url{miklos.racz@northwestern.edu}}
	\and
	Anirudh Sridhar\thanks{New Jersey Institute of Technology; \url{anirudh.sridhar@njit.edu}}
}
\date{\today}

\maketitle


\vspace{-1cm}
\begin{abstract}
We study the largest common subtree of two independent unlabeled uniform attachment trees (also known as random recursive trees). Our main result shows that, when the two trees have $n$ vertices each, their largest common subtree has at least $n^{0.83}$ vertices with high probability. This is obtained by starting with the common subtree induced by the Ulam--Harris labels in the two trees and improving using local optimization steps. We also give some upper bounds and bounds for general random tree growth models. We leave as an intriguing open question to understand the magnitude of the size of the largest common subtree. 
\end{abstract} 

\begin{figure}[h!]
    \centering
    \includegraphics[width=0.7\textwidth]{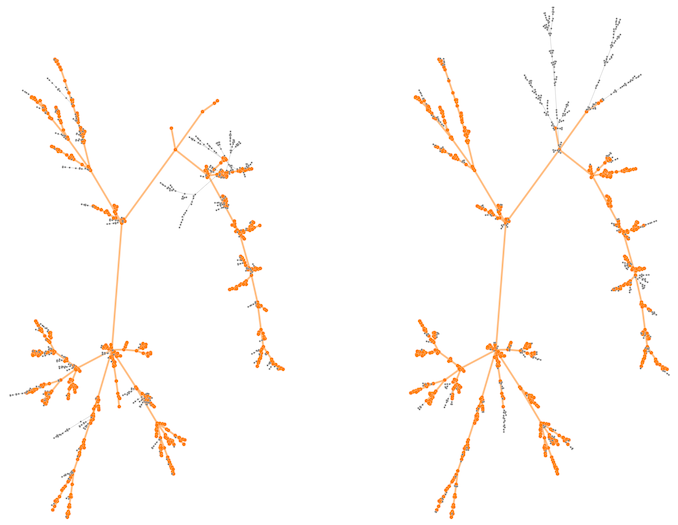}
    \caption{Two independent uniform attachment trees with $1000$ vertices each. Their largest common subtree, which contains 484 vertices, is highlighted in orange.}
    \label{fig:front}
\end{figure}

\clearpage


\tableofcontents

\clearpage

\section{Introduction} \label{sec:intro} 

A uniform attachment tree, also known as a random recursive tree, is defined recursively as follows. 
Let $T_1$ be a graph consisting of a single node (and no edges). 
Given $T_n$, the tree $T_{n+1}$ is formed from $T_n$ by adding a single node that is connected with a single edge to a node of $T_n$, chosen uniformly at random, independently of everything else.  
Uniform attachment (UA) trees are the simplest model of randomly growing graphs and hence are very well studied~\cite{drmota2009random}.

Consider two independent unlabeled 
UA 
trees with $n$ nodes, $T_{n}^{1}$ and~$T_{n}^{2}$. We are interested in the size of the largest common subtree between $T_{n}^{1}$ and $T_{n}^{2}$. That is, let 
\[
X_{n} 
:= \LCS \left( T_{n}^{1}, T_{n}^{2} \right) 
:= \max_{S \subseteq T_{n}^{1}, T_{n}^{2}} |S|, 
\]
where the maximum goes over all trees $S$ such that $S$ is a subtree of both $T_{n}^{1}$ and $T_{n}^{2}$, and $|S|$ denotes the number of nodes in $S$. 
The focus of this paper is to understand the random variable~$X_{n}$. 
See Figure~\ref{fig:front} for an illustration, which shows two independent UA trees, as well as their largest common subtree. 
We note that while the maximum common subgraph problem is NP-hard for general graphs, the restriction to trees allows for polynomial time algorithms to compute~$X_n$~\cite{droschinsky2016faster}.

To the best of our knowledge, $X_{n}$ has not been studied before. 
Similar questions concerning largest common substructures have been studied in probabilistic combinatorics for several decades~\cite{AldousOpenProblem_substructures}, 
including for other models of random trees and random graphs---see Section~\ref{sec:related} for a discussion of related work. 
In particular, a growing line of work 
has been studying the size of the largest common subtree (also referred to as the maximum agreement subtree) 
of two independent uniformly random cladograms (leaf-labeled binary trees)~\cite{AldousOpenProblem_substructures,bryant2003size,bernstein2015bounds,misra2019bounds,bordewich2022maximum,aldous2022largest,pittel2023expected,khezeli2024improved,budzinski2024maximum}. 
The motivation here comes from phylogenetics, where this quantity can be used as a similarity metric to compare two phylogenetic trees (e.g.,~\cite{de2007congruence}). 
In such statistical applications, it is imperative to understand the distribution of the relevant statistic (e.g., similarity metric) under the null model, which in this setting is naturally two independent trees. 

Our initial motivation comes from graph matching: $X_{n}$ is a natural candidate statistic to detect whether two UA trees are correlated\footnote{There are several natural models for correlated UA trees; however, we refrain from defining these, as this discussion is orthogonal to the main content of the paper. We refer the interested reader to~\cite{RS22AAP,BRRS26}.} or not. Heuristically, if two UA trees are correlated, then they should have a larger common subtree than if they were independent. 
As in the literature discussed above, to rigorously establish such a heuristic, one must understand the distribution of $X_{n}$ under the null model, which is independent UA trees. 
In the past decade there has been extensive work on matching correlated random graphs, focusing primarily on Erd\H{o}s--R\'enyi random graphs. 
In particular, recently Ding, Du, and Gong~\cite{ding2024polynomial} studied the random optimization problem of maximizing the overlap (i.e., the number of common edges) of two independent Erd\H{o}s--R\'enyi random graphs, which is similar in spirit to our work (we also refer to~\cite{ding2024polynomial} and the references within for a comprehensive overview of the literature on random graph matching). 
An important emerging research direction is to understand graph matching on random graphs beyond the Erd\H{o}s--R\'enyi model, such as correlated randomly growing graphs~\cite{RS22AAP,BRRS26}, correlated stochastic block models~\cite{RS21SBM,GRS22}, correlated random geometric graphs~\cite{wang2022geometric}, and more~\cite{RS23,ding2025efficiently}. The study of $X_{n}$ is thus motivated by this line of work, particularly on correlated randomly growing graphs~\cite{RS22AAP,BRRS26}. 
Understanding $X_{n}$ is also of inherent mathematical interest as a natural and simple mathematical object.

In the rest of the introduction we first present our results on $X_{n}$, including lower and upper bounds, as well as simulation results (Section~\ref{sec:results_main}). 
We then discuss results on general randomly growing tree models (Section~\ref{sec:results_general}), open problems and future directions (Section~\ref{sec:open}), and related work (Section~\ref{sec:related}). The rest of the paper is dedicated to the proofs.

\subsection{Main results} \label{sec:results_main}

Our main result is the following lower bound on the size of the largest common subtree of two UA trees. 
Throughout the paper we say that an event holds with high probability if its probability tends to $1$ as $n \to \infty$. We also use standard asymptotic notation (such as $O(\cdot), o(\cdot)$, etc.).

\begin{theorem}\label{thm:main}
We have that $X_{n} \geq n^{0.83}$ with high probability.
\end{theorem}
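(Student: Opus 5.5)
We take the route the abstract indicates: start from the common subtree induced by the Ulam--Harris labels, then enlarge it by local optimization steps.

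\textbf{Ulam--Harris baseline.} Label each UA tree as a plane tree by ordering the children of every vertex by arrival time, so each vertex receives a Ulam--Harris word $u\in\bigcup_k \mathbb N^k$. The set $\UH_n$ of words present in both $T_n^1$ and $T_n^2$ is closed under taking prefixes and under decreasing the last letter, so it induces a common rooted subtree. Its size is
\[
\E|\UH_n|=\sum_u \P(u\in T_n^1)^2 .
\]
Embed both trees in continuous time: each vertex gives birth at the times of a rate-one Poisson process, so at time $t$ the tree has about $e^{t}$ vertices. A word $u=(u_1,\dots,u_k)$ is present at time $t$ exactly when a sum of $|u|:=u_1+\dots+u_k$ independent $\Exp(1)$ variables is at most $t$. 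A many-to-one computation then gives $\E|\UH_n|=n^{\alpha+o(1)}$ for an explicit $\alpha<1$, obtained by optimizing over the depth $k\approx ct$ and the letter sum $|u|$. A second-moment estimate, or a branching-process martingale argument, should upgrade this to a statement holding with high probability. This baseline exponent will fall short of $0.83$, and that shortfall is what the improvement step must close.

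\textbf{Local improvement.} The UH matching forces the $i$-th child in one tree to be paired with the $i$-th child in the other. That is wasteful: at a matched vertex the children's subtrees could instead be paired in decreasing order of size, and the same could be done recursively. I would analyse a greedy recursion. Fix a depth parameter, or a threshold on subtree sizes. Above the threshold, match children by size rank. Below it, either recurse the same way or fall back on UH matching. Subtree sizes of children in a UA tree have a well-understood stick-breaking (GEM$(1)$-type) structure, since a child born at time $s$ has subtree size about $e^{t-s}W$. Size-ranked pairing therefore keeps large subtrees aligned, whereas UH pairing misaligns them through birth-time fluctuations. This yields a recursive distributional inequality of the form
\[
Z(t)\;\ge\;1+\sum_{i} Z^{(i)}\bigl(\min(t-s^1_{(i)},\,t-s^2_{(i)})\bigr),
\]
where $s^j_{(i)}$ is the $i$-th order statistic of effective birth times after sorting, and the $Z^{(i)}$ are independent copies of $Z$.

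\textbf{Exponent and concentration.} Seeking solutions of the form $Z(t)\approx e^{\beta t}$ turns the inequality into a fixed-point equation $\E\sum_i e^{-\beta\max(s^1_{(i)},s^2_{(i)})}=1$ of Malthusian type. We solve for $\beta$, numerically if necessary, and tune the depth at which the greedy rule switches to UH so that the resulting exponent exceeds $0.83$. Concentration follows from a Kesten--Stigum/Biggins-type argument for the associated general branching process, or from a first-generation decomposition into many independent subtrees, so that a lower bound holding with positive probability becomes one holding with high probability. Finally we pass from continuous time to the tree with $n$ vertices.

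\textbf{Main obstacle.} The hardest step is the second. Sorting destroys independence, because the matched pairs are not i.i.d. Some explicit rigorous lower bound must be extracted and computed sharply enough to beat $0.83$. I would first try a \emph{one-level} improvement: sort children by size only at the root level, or at the top few levels, and use UH below. This keeps the recursion tractable, since the expected value becomes a product of an explicit order-statistics integral and the already-computed UH exponent. I would then iterate this at several levels until the numerics give $0.83$.
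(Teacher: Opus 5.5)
You handle the Ulam--Harris baseline and the high-probability step (many independent subtrees after time $\eps t$) the same way the paper does. The gap is the improvement step, which is the entire content of the theorem, since $0.83$ exceeds the UH exponent $2(\sqrt2-1)\approx 0.8284$ by less than $0.002$. You never carry this step out, and neither device you propose can deliver it.

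Your recursion $Z(t)\ge 1+\sum_i Z^{(i)}(\min(t-s^1_{(i)},t-s^2_{(i)}))$ replaces, at each of the $\Theta(t)$ levels, a pair of subtrees of different ages by a pair of the smaller common age. Along a path this turns $\P(\sum_j X^1_j\le t)\,\P(\sum_j X^2_j\le t)$ into $\P(\sum_j\max(X^1_j,X^2_j)\le t)$, which is a much larger deviation. To see the cost, apply it to plain UH matching, where $s^j_{(i)}$ is a $\mathrm{Gamma}(i,1)$ birth time. The equation $\sum_i\E\, e^{-\beta\max(\Gamma^1_i,\Gamma^2_i)}=1$ then has root roughly $0.63$, below $0.828$ and even below $\log 2$. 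A bound that loses about $0.2$ in the exponent gives no reason to expect it to certify a gain of $0.002$, and you never compute the root for the ranked version. There is also an independence issue. In continuous time, subtrees selected by size rank are size-biased, so they are not independent copies of $Z$ at age $t-s$. In discrete time, the splitting property of random recursive trees gives conditional independence given the sizes, but then the recursion has to be in the sizes.

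Your fallback cannot work either. Suppose you rank-match only in the top $k$ levels and use UH below. Every constructed vertex below depth $k$ lies in the UH common subtree of the subtrees of some pair $(u^1,u^2)$ of depth-$k$ vertices. There are $(t^k/k!)^2$ such pairs in expectation. Conditionally on their birth times, each pair contributes at most $\E[\UHCS(A_1(t),A_2(t))]=e^{(2(\sqrt2-1)+o(1))t}$. So the exponent stays at $0.8284$, and any gain must come from acting at a positive fraction of the levels on typical paths.

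What the paper does instead is keep an exact formula. It relabels each tree separately and then takes the UH common subtree, so $\E=\sum_\sigma\P(\sigma\in A^s(t))^2$ holds with no min-of-ages loss. The relabeling is local, non-anticipating, and applied at every level. For each sibling pair $(\sigma_k,\sigma_k+1)$ with $\sigma_k$ odd, the labels are swapped if the younger sibling gets its first child first. Each label's arrival time is then an explicit sum of independent $\Exp(1)$, $\min\{U,Y+Z\}$ and $\max\{U,Y+Z\}$ variables. A Cram\'er computation over $V_N^{\mathtt{typ}}$ with tuned parameters gives $e^{0.83336t}$. The second moment uses the modified predecessor $\wt\eta$ to decouple the local dependencies created by switching.

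Your size-ranked matching is also of the form ``relabel each tree, then take UH'', so the same exact identity is available to you. Using it would require large deviations for products of ranked subtree fractions along a path, plus a matching second-moment bound. None of this is in the proposal.
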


We present three, increasingly more complex lower bound arguments, which result in increasingly larger lower bounds, culminating in Theorem~\ref{thm:main}. 
The first argument is based on the following simple observation: 
if $v$ is a node of a UA tree with $n$ nodes, 
then with probability at least $1/2$, at least one incoming node will attach to $v$ by the time the tree has $2n$ nodes. 
This implies that $\E[X_{2n}] \geq (5/4) \E[X_{n}]$, 
and applying this recursively leads to a polynomial lower bound in $n$ (albeit a relatively small one; roughly, $n^{0.32}$). 
See Section~\ref{sec:polyLB} for details. 

To improve upon this, we study a specific common subtree construction, which we term the 
\emph{Ulam--Harris common subtree} 
of the two trees $T_{n}^{1}$ and $T_{n}^{2}$. 
The basis of this construction is the \emph{Ulam--Harris labeling} of a randomly growing tree, which is a systematic way of keeping track of the evolution of the tree. 
The initial vertex, termed the root, has Ulam--Harris label $\varnothing$. 
When a new node $v$ enters the tree, it receives an Ulam--Harris label based on the Ulam--Harris label of the parent of $v$ (i.e., the node that $v$ attaches to).
Without getting into technical details (see Section~\ref{sec:UlamHarris} for these), the basic idea, through an example, is as follows. 
A node $v$ has an Ulam--Harris label of $(2,5,4)$ 
if $v$ is the $4$th node attaching to the parent of $v$, 
which in turn is the $5$th node attaching to the grandparent of $v$, 
which in turn is the $2$nd node attaching to the great-grandparent of $v$, 
which is the root. 
The Ulam--Harris labels can be viewed as a \emph{random labeling} of a randomly growing~tree. 

The nodes of two independent UA trees $T_{n}^{1}$ and $T_{n}^{2}$ will (typically) have different (random) Ulam--Harris labels. 
A natural idea is to consider the subset of nodes in the two trees whose Ulam--Harris label appears in both trees; 
it is not hard to see that in both trees these nodes form a subtree 
and, moreover, that these two subtrees are isomorphic, 
with the Ulam--Harris labels providing an isomorphism between them. 
In short, this is a common subtree of $T_{n}^{1}$ and~$T_{n}^{2}$;
see Figure \ref{fig:ulam_harris_example} for an example.
We analyze the size of this common subtree using large deviation arguments, and show that it has size roughly $n^{2(\sqrt{2}-1)}$. 
More precisely, we show that for every $\eps > 0$ 
we have that $X_{n} \geq n^{2(\sqrt{2}-1)-\eps}$ with high probability; in particular, $X_{n} \geq n^{0.828}$. See Section~\ref{sec:UHLB} for details. 

\begin{figure}
    \centering
    \includegraphics[width=0.95\textwidth]{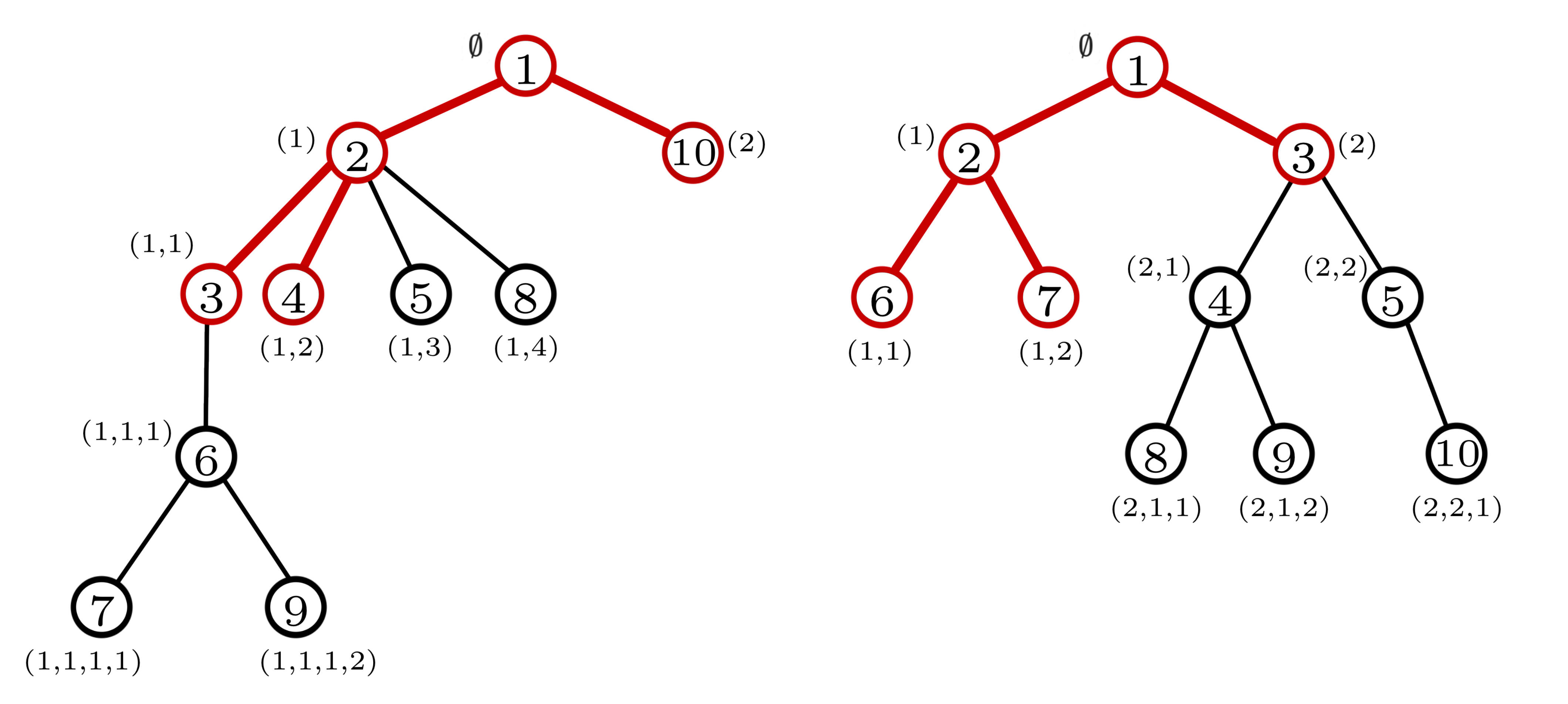}
    \caption{\textit{Ulam--Harris labels and the associated common subtree.} Visualization of two independent uniform attachment trees with $n = 10$ vertices. 
    The vertex labels within each node denote the arrival time of each vertex ($1$ through $n$). 
    The \emph{Ulam--Harris labels} are outside of the nodes.
    The associated \emph{Ulam--Harris common subtree} of the two trees is highlighted in red. 
    In this example the Ulam--Harris common subtree has $5$ nodes, whereas the largest common subtree has $8$ nodes.}
    \label{fig:ulam_harris_example}
\end{figure}

The Ulam--Harris 
common subtree 
discussed above matches ``early'' vertices in $T_{n}^{1}$ and $T_{n}^{2}$. 
However, this is suboptimal due to the 
random fluctuations in which parts of the trees grow faster. 
In particular, the subtree of descendants of later vertices may (and, often, do) grow faster than the corresponding subtree of earlier vertices, 
and hence it is more advantageous to match the later vertices where the tree is growing faster. 
To show this rigorously, 
we start from the 
Ulam--Harris 
common subtree
and study particular local optimizations that switch subtrees, obtaining a larger common subtree. 
The analysis of the size of this common subtree requires much more delicate large deviation bounds, 
culminating in a proof of Theorem~\ref{thm:main} (see Section~\ref{sec:beyondUH} for details). 
While the quantitative jump in the lower bound from the 
Ulam--Harris 
common subtree
to Theorem~\ref{thm:main} is relatively small, 
we emphasize that the main point here is the qualitative result of rigorously going beyond the 
Ulam--Harris 
common subtree. 

\begin{figure}[t]
    \centering
    \begin{subfigure}{0.43 \textwidth}
        \centering
        \includegraphics[width= \textwidth]{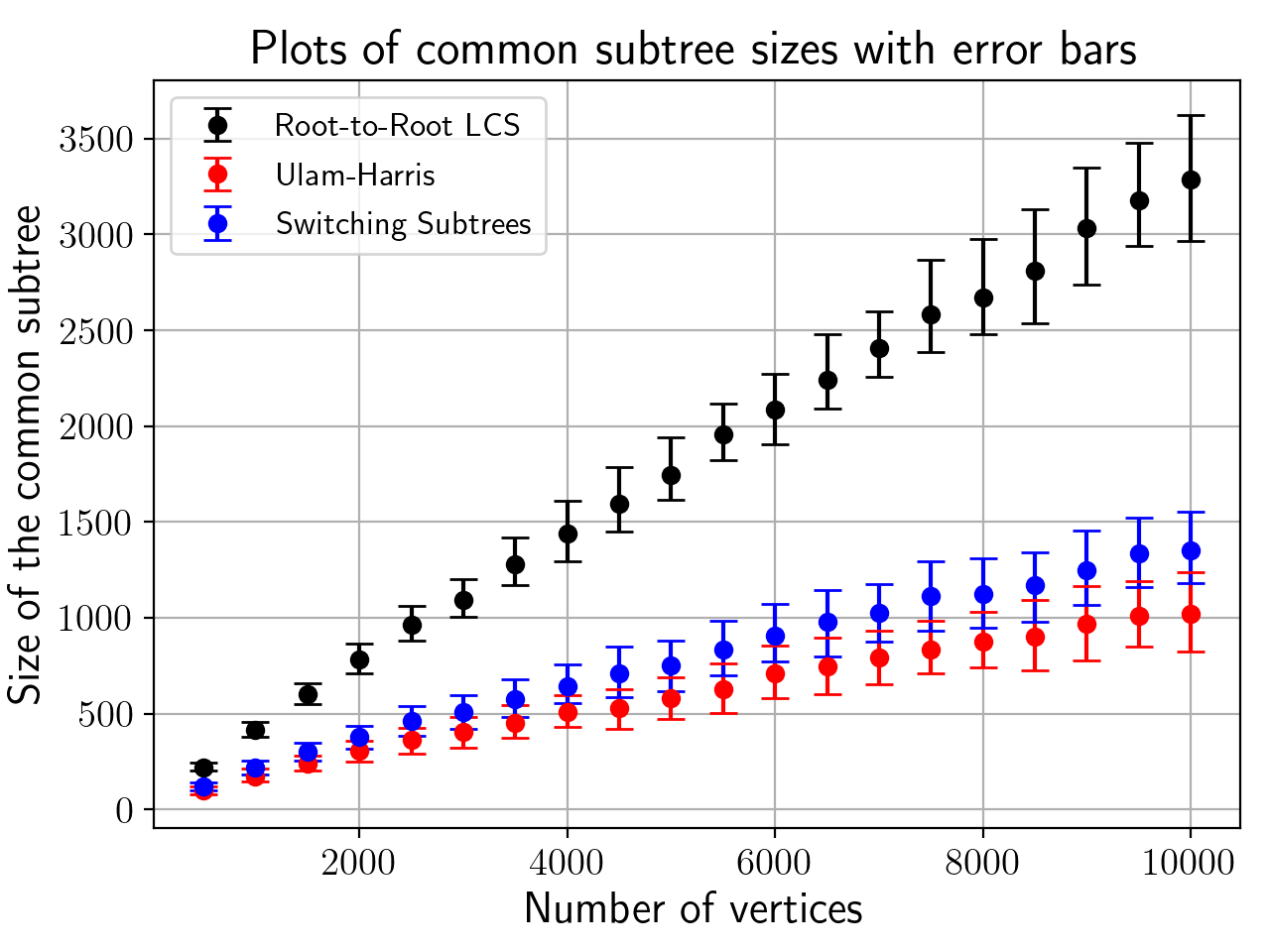}
        \caption{ }
    \end{subfigure}%
    \qquad \qquad
    \begin{subfigure}{0.43 \textwidth}
        \centering
        \includegraphics[width= \textwidth]{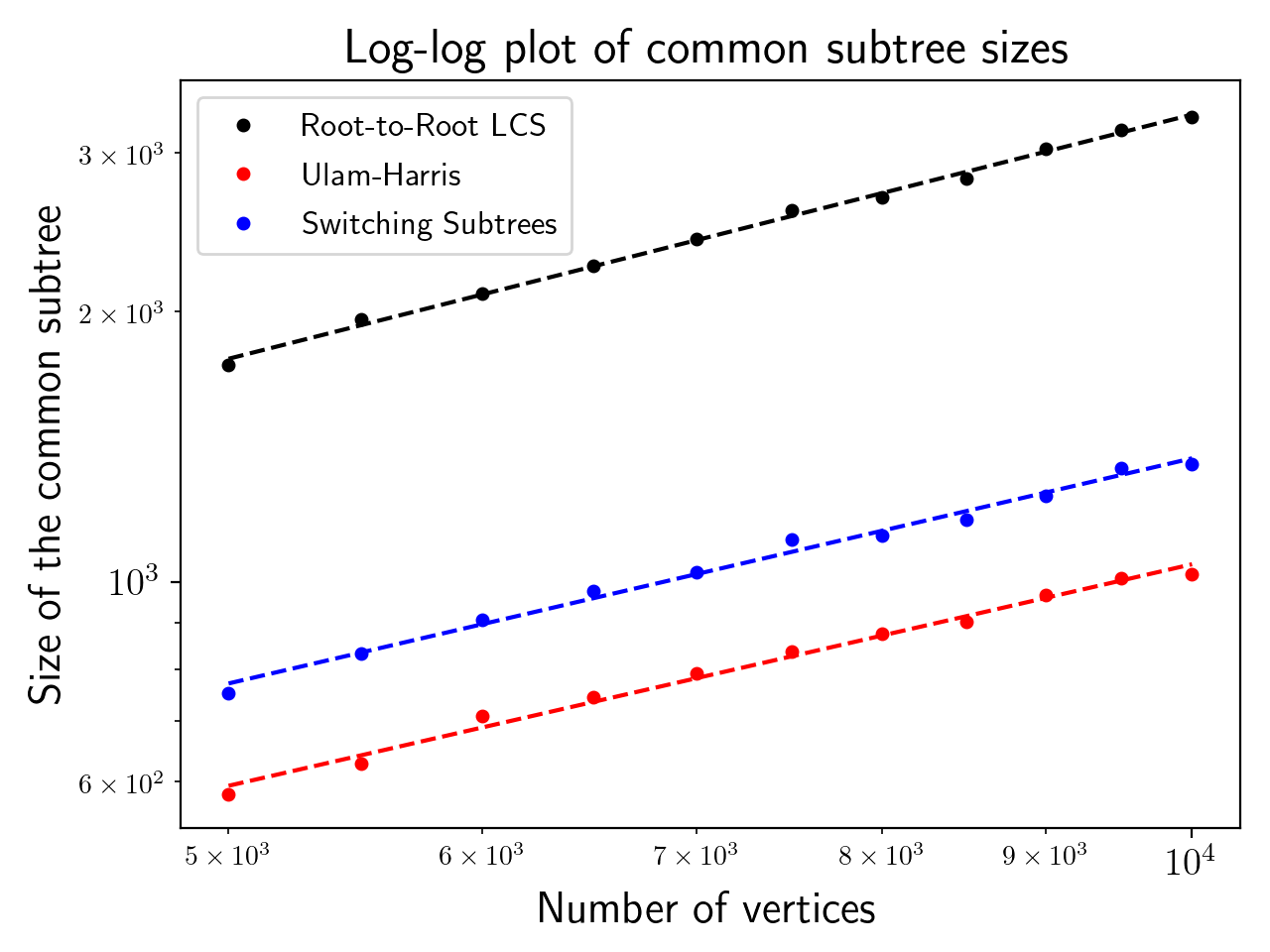}
        \caption{ }
    \end{subfigure}%
    \caption{Simulation results for the common subtree schemes considered in this paper. \emph{Root-to-Root~LCS} refers to the size of the largest common subtree where the roots (i.e., the initial nodes) of the two trees are matched to each other; \emph{Ulam--Harris} refers to the common subtree induced by the Ulam--Harris labels; and \emph{Switching subtrees} refers to our improvement to the Ulam--Harris common subtree (see Algorithm~\ref{alg:switching_subtrees} in Section~\ref{sec:beyondUH}). 
    For each value of $n$, 200 independent trials were averaged. In Fig.~\ref{fig:simulations}(a), the lower and upper error bars represent the 25th and 75th percentiles of the data collected, respectively. In Fig.~\ref{fig:simulations}(b), the dotted lines represent the corresponding best-fit curves for the log-log plots of the simulation data. The slopes of the common subtree schemes \emph{Root-to-Root LCS}, \emph{Ulam--Harris}, and \emph{Switching subtrees} are 0.9014, 0.8177, and 0.8311, respectively.}
    \label{fig:simulations}
\end{figure}

It is instructive to look at simulations which empirically show how various common subtree sizes grow with $n$. In Figure~\ref{fig:simulations}, we plot the average common subtree sizes of three constructions for $n \in [1000, 10000]$: (1) the largest common subtree where the roots (i.e., the initial nodes) of the two trees are matched to each other (termed \emph{Root-to-Root LCS}), (2) the 
Ulam--Harris 
common subtree 
(termed \emph{Ulam--Harris}), and (3) our improvement over the Ulam--Harris 
common subtree
(termed \emph{Switching subtrees}; see Algorithm~\ref{alg:switching_subtrees} in Section~\ref{sec:beyondUH}). For the considered range of~$n$, computing $X_n$ exactly requires a significant computational overhead, so we did not include it in our simulations.\footnote{Computing the Root-to-Root LCS for UA trees can be done in quasi-quadratic time using a dynamic programming approach (see the proof of Proposition~1 in~\cite{droschinsky2016faster}, using also that the maximum degree in UA trees is $O(\log n)$ with high probability). However, using the Root-to-Root LCS as a subroutine for computing $X_n$ through a brute-force approach requires us to iterate over all pairs of ``roots'' across the two trees, which makes the resulting algorithm run in quasi-quartic time with respect to $n$. For $n$ much larger than 1000 (which is the regime where we expect limiting behavior to emerge, as seen in Figure~\ref{fig:simulations}), this requires significant computational resources.} Figure~\ref{fig:simulations} shows that for the subtrees obtained by \emph{Ulam--Harris} and \emph{Switching subtrees}, the empirical results align quite closely with our theoretical results. Moreover, Figure~\ref{fig:simulations} suggests that the \emph{Root-to-Root LCS} is asymptotically at least $n^{0.9}$, which also serves as an empirical lower bound for~$X_n$.

To complement the lower bound of Theorem~\ref{thm:main}, 
we provide some upper bounds that show that $X_{n}$ cannot be too large. 
Specifically, the next two results show that $X_{n}/n$ is bounded away from~$1$ (see Theorem~\ref{thm:UB_near1}) and that $X_{n}/n$ is not bounded away from~$0$ (see Theorem~\ref{thm:UB_near0}). 

\begin{theorem}\label{thm:UB_near1}
We have that $X_{n} \leq 0.999 n$ with high probability. 
\end{theorem}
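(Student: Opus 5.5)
The plan is a first-moment argument over local structures: the idea is to show that a common subtree of size close to $n$ would force two independent UA trees to agree on too many small local statistics. A subtree $S$ with $|S|\ge 0.999n$ of a tree $T$ is obtained by deleting at most $0.001n$ vertices, so $T\setminus S$ is small. Since $S$ is connected, $T\setminus S$ is a union of ``fringe'' pieces hanging off $S$; after re-rooting at a suitable vertex of $S$, these are pendant subtrees. In particular, most vertices of $T$ lie in $S$ together with their entire neighbourhood in $T$, since the deleted vertices touch at most $O(0.001 n)$ vertices of $S$ via tree edges. I will use a degree-profile or fringe-count statistic: let $L_k(T)$ be the number of vertices of degree $k$ in $T$. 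For UA trees, $L_k(T_n)/n \to 2^{-k}$ in probability (a standard result; in particular the leaf fraction tends to $1/2$). This alone does not separate two independent trees, because they have the same limits. So the argument must exploit the \emph{isomorphism} structure, not just counts.

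The refined plan is as follows. Take a near-common subtree $S$, with embeddings $\phi_i : S\to T_n^i$. Every vertex $v\in S$ whose $T^1$-neighbourhood and whose $\phi$-image $T^2$-neighbourhood are both fully contained in the images has $\deg_{T^1}=\deg_S=\deg_{T^2}$. More strongly, take radius-$r$ balls fully inside $S$ in both trees. Then the embedding matches rooted $r$-neighbourhood types between the trees for all but $O_r(0.001 n)$ vertices, where the constant grows with $r$, roughly like the maximal ball size. Counting type frequencies then gives, for each rooted ball type $\tau$ of radius $r$, $|N_\tau(T^1)-N_\tau(T^2)|\le C_r\cdot 0.001n$. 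This only constrains the counts, and the counts concentrate to the same limits, so again there is no contradiction. Local statistics therefore fail, and a global one is needed instead.

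The final plan is a union bound over shapes. I will bound $\P(X_n\ge (1-\delta)n)$ by the expected number of pairs consisting of a tree $S$ on $m\ge(1-\delta)n$ vertices and embeddings into both $T^1$ and $T^2$. By independence this equals $\sum_S \E[\#\text{emb}(S,T^1)]^2$ over unlabeled $S$. I will use the known probability that a UA tree on $n$ vertices has a given shape, which is $\#\{\text{increasing labelings}\}/(n-1)!$. Equivalently, I will count pairs $(T^1,T^2)$ of recursive trees sharing a common $m$-vertex subtree: choose $S$ (at most $e^{O(n)}\cdot$ unlabeled shapes, about $2.96^n$), choose the at most $\delta n$ extra vertices and attachments in each tree (about $\binom{n}{\delta n}n^{\delta n}$ ways), and choose the increasing labelings (at most $n!$ each). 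Compare this with $((n-1)!)^2$ total pairs. The key quantitative input is that a fixed unlabeled tree shape has at most $n!/\prod_v |\text{subtree}(v)|$ increasing labelings, and that the typical UA tree has $\prod_v|\text{subtree}(v)| = e^{\Theta(n)}$, concretely $\E\log$ of it is about $cn$. So the count of pairs is at most $e^{O(n)}\cdot n^{\delta n}\,(n-1)!$, which after dividing by $((n-1)!)^2$ is $n^{\delta n}e^{O(n)}/(n-1)! \to 0$ for small $\delta$.

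The main obstacle, which I will handle carefully, is making this entropy bound rigorous. The number of choices for $S$ together with its placement, the automorphism issues, and the re-attachment choices each must grow as $e^{O(n)}n^{\delta n}$, and I will verify that $\delta=0.001$ leaves room against the $n!$ gap.
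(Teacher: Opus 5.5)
\textbf{There is a genuine gap: as written, the pair count does not close.}

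\textbf{The final count is off by $(n-1)!$.} A bound of order $n^{\delta n}e^{O(n)}/(n-1)!$ is super-exponentially small. But already $\P(X_n\ge(1-\delta)n)\ge\P(T_n^1\cong T_n^2)=\sum_U p_U^2\ge 4^{-n}$, where $p_U=\P(T_n^1\cong U)$, by Cauchy--Schwarz over the at most $4^n$ unlabeled trees $U$ on $n$ vertices. Equivalently, the pairs of recursive trees with $T^1\cong T^2$ alone number at least $((n-1)!)^2 4^{-n}$, far more than $e^{O(n)}n^{\delta n}(n-1)!$.

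Organized correctly, the count is $(n-1)!$ choices of $T^1$ times the number of compatible $T^2$. The argument then needs an \emph{exponential} saving: the number of recursive trees of the relevant shapes must be at most $(n-1)!\,e^{-cn}$, with $c$ exceeding the entropy of choosing $S\subseteq T^1$ and its extension. Two of your ingredients fail against this.
\begin{enumerate}
\item The factor $n^{\delta n}$ for ``extra vertices and attachments'' is super-exponential, so no $e^{-cn}$ can absorb it. The extension must be counted as an unlabeled tree on $n$ vertices containing $S$ (at most $4^{\delta n}\binom{n}{\delta n}$ of them, as in Lemma~\ref{lem:tree_count}) together with a choice of root. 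Only after that should labelings be counted.
\item The hook-length count $(n-1)!/\prod_{v\neq\mathrm{root}}h_v$ is not exponentially smaller than $(n-1)!$ for every shape. For the star it equals $(n-1)!$; the star has a single labeled realization only because of its $(n-1)!$ automorphisms. So knowing that $\E\log\prod_v h_v\approx cn$ for a \emph{typical} tree does not control a union bound ranging over all supertrees of $S$.
\end{enumerate}

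\textbf{The embedding formulation is not equivalent and fails outright.} Your first formulation, $\sum_S\E[\#\mathrm{emb}(S,T^1)]^2$, is not the same as counting pairs of trees, and the automorphism issue makes it useless. Let $S$ be the star on $m=n-k$ vertices and $T$ that star with $k$ of its leaves extended to paths of length two. Then $\P(T^1_n\cong T)\ge\bigl(2^k(m-1-k)!\,k!\bigr)^{-1}$, and $S$ has $(m-1)!$ embeddings into $T$. Hence $\E[\#\mathrm{emb}(S,T^1)]\ge\binom{m-1}{k}2^{-k}$, which grows exponentially for $k=\delta n$ with $\delta$ small.

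\textbf{How the paper closes the gap.} Lemma~\ref{lemma:matchprob} gives $\P(T\sim^\star T_N)\le 0.96^N$ uniformly over \emph{all} rooted shapes. It counts only root-preserving isomorphisms that respect the order of leaf-children, which removes the factorial overcount from interchangeable leaves. It factorizes the expectation over vertices with at least two descendants, bounds each factor by $0.9^{\mathrm{outdeg}(v)}$, and uses $\sum_{v\in T_+}\mathrm{outdeg}(v)\ge(N-1)/2$. This is combined with unlabeled supertree counting and a union bound over the at most $\binom{N+n}{n}$ subtrees of $T^1$.

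\textbf{How your route could be repaired.} You can avoid a worst-case bound by intersecting with the high-probability event that $T^2$ has at least $(1/2-o(1))n$ non-root internal vertices. Then its own rooted shape has $\prod_{v\neq\mathrm{root}}h_v\ge 2^{(1/2-o(1))n}$. Only such shapes enter the count, each has at most $(n-1)!\,2^{-(1/2-o(1))n}$ realizations, and
$\P(X_n\ge(1-\delta)n)\le\binom{n}{\delta n}^2 4^{\delta n}\,n\,2^{-(1/2-o(1))n}+o(1)\to0$ for $\delta=0.001$. Your proposal contains neither the uniform bound nor this restriction.
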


\begin{theorem}\label{thm:UB_near0}
For every $\eps > 0$ there exists $\delta > 0$ such that, for all sufficiently large $n$, we have that $\p \left( X_{n} \leq \eps n \right) \geq \delta$. 
\end{theorem}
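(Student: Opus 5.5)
Fix $\eps>0$. The plan is to produce an event of probability bounded below, uniformly in $n$, on which any common subtree of $T_n^1$ and $T_n^2$ has at most $\eps n$ vertices. The natural source of such an event is the macroscopic, random, non-degenerate split of a UA tree at its root.

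\emph{Step 1: making one tree degenerate.} For fixed $k$, consider the event $A_k$ that in $T_n^1$ vertices $2,\dots,k$ form a path $1-2-\dots-k$. Afterwards the tree grows as a P\'olya urn on the $k$ "pieces" (the subtrees rooted at each path vertex, excluding descendants further along the path). By the Dirichlet limit of the urn, the fraction of vertices in the piece of vertex $j$ converges to a limit with a continuous distribution. Moreover, with probability bounded below (depending only on $k$ and $\eps$), the following holds: the piece of $k$ contains at least $(1-\eps/4)n$ vertices, and every subtree hanging off the early path has size at most $\eps n/4$. The limits are a.s.\ continuous random variables, so $\p$ of this event converges to a positive constant $\delta_1(k,\eps)$.

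\emph{Step 2: making the other tree spread out.} Independently, in $T_n^2$ take the event $B_m$ that the root has at least $m$ children whose subtree sizes are each between $n/(2m)$ and $2n/m$, and that no vertex other than the root has a subtree carrying more than a $(1-\eps/4)$ fraction of the mass outside it. This again has probability bounded below, by the Dirichlet/stick-breaking limit of the root-split. Since the trees are independent, $\p(A_k\cap B_m)\geq \delta_1\delta_2>0$.

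\emph{Step 3: the combinatorial bound.} On the intersection we must bound $|S|$ for any common subtree $S$. Here I would prefer a cleaner invariant, namely the centroid/separator structure:

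\begin{itemize}
\item In a tree, removing a single vertex $v$ leaves components. Any embedded copy of $S$ inherits the property that every vertex of $S$ separates $S$ into components whose sizes are bounded by the corresponding components in the host.
\item Choose the events so that in $T_n^1$ there is a vertex $u$ (the centroid region) such that every vertex $v$ has a component of $T_n^1-v$ of size at least $(1/2-\eta)n$ that is "long".
\item In $T_n^2$, conversely, every vertex has all components of size at most $\eps n$, except for one giant component.
\end{itemize}

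Honestly, the cleanest route is this. On an event where $T_n^2$ has the property that for every vertex $v$, $T_n^2-v$ has one component of size $\geq n-\eps n/2$ (true when subtrees split finely near the root? no), the idea seems hard to close.

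\emph{Main obstacle.} I therefore expect the main obstacle to be this deterministic step: converting the "shape mismatch" of the two trees into $|S|\le \eps n$. The weak point is that the statement must hold with probability bounded below for every $\eps$, which calls for a genuinely asymmetric pair of events whose mismatch forces a small overlap. The first thing I would try is to pick $T_n^1$ very path-like at macroscopic scale, with degrees bounded at all vertices of large subtree size. I would then pick $T_n^2$ with a vertex of degree $d$ all of whose branches have size about $n/d$. I would try to show that a common subtree can use only $O(1)$ of these branches substantially, hence has size $O(n/d)+\eps n/2$. Making "path-like at macroscopic scale" precise, via the heavy-path structure of UA trees and a compactness argument over the continuum limit of the split structure, is where I expect the real work to lie.
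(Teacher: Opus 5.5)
Your guiding idea is the right one, and it is the paper's: pair a tree that is star-like at its root with one that is path-like at macroscopic scale. But the proposal is not yet a proof. The deterministic step that turns the shape mismatch into $|S|\le\eps n$ is left open, and the events in Steps~1 and~2 do not have the properties that step needs.

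\textbf{Step~1 produces the wrong shape.} If the piece of vertex $k$ carries $(1-\eps/4)n$ vertices, that piece is, given its size, just a UA tree rooted at $k$ of almost full size. So the macroscopic shape of $T_n^1$ is unchanged. You need the opposite: \emph{every} piece along the path must be small. Your P\'olya-urn setup does give this if you take $k\ge 2/\beta$ with $\beta:=\eps/4$ and require all $k$ of the $\mathrm{Dirichlet}(1,\dots,1)$ fractions to be below $\beta$. The paper instead uses the leftmost path $\varnothing,(1),(1,1),\dots$ and asks that at each $\sigma=(1,\dots,1)$ the branches $\sigma\oplus(j)$, $j\ge2$, have total weight at most $\beta$. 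The property the argument actually uses is this: for \emph{every} vertex $v$ of the path-like tree, the components left after deleting $v$, other than the two largest, have total size at most $\beta n$.

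Your candidate invariant ``bounded degree at all vertices of large subtree size'' fails with high probability, since the root has degree of order $\log n$. It has to be replaced by the mass condition above. Likewise, in Step~2 you must require that \emph{all} children of the root of the star-like tree have subtree size at most $\beta n$, not just $m$ of them.

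\textbf{The missing deterministic step.} Given these events, it is short.
\begin{itemize}
\item If the root of the star-like tree is not in the common subtree $S$, then $S$ lies inside a single root branch, so $|S|\le\beta n$.
\item Otherwise the root is mapped to some vertex $v$ of the path-like tree, and the paper's point-removal lemma applies. A common subtree whose root is mapped to $v_1$ and $v_2$ has at most $1+\sum_i\min(a_i,b_i)$ vertices, where $a_1\ge a_2\ge\cdots$ and $b_1\ge b_2\ge\cdots$ are the component sizes after deleting $v_1$ and $v_2$. The reason is that each component of $S$ minus its root lands injectively in one component on each side, and the sorted pairing maximizes the sum.
\item Taking the $a_i$ from the star-like tree and the $b_i$ from the path-like tree gives, for large $n$, $|S|\le 1+a_1+a_2+\sum_{i\ge3}b_i\le 1+3\beta n\le\eps n$. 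This is the precise form of your ``only $O(1)$ branches can be used substantially.''
\end{itemize}

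\textbf{The probability bound.} No continuum limit or compactness argument is needed. Reveal the root's children (respectively, the first child along the path) one at a time. Each new subtree size is conditionally uniform, so each required step succeeds with conditional probability at least $\beta/3$. At most $2/\beta$ steps occur in each tree, and independence gives $\delta=(\beta/3)^{4/\beta}$.
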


Overall, for a variety of reasons---including the empirical results and a lack of stronger upper bounds---we conjecture that $X_{n}$ grows near-linearly (though not necessarily linearly) in $n$. 

\begin{conjecture}\label{conj:asymptotics}
We have that $X_{n} = n^{1-o(1)}$ with high probability. 
\end{conjecture}

In other words, we conjecture that there exists a sequence $\eps_{n} \to 0$ such that $X_{n} \geq n^{1-\eps_{n}}$ with high probability. 
Proving or disproving Conjecture~\ref{conj:asymptotics} would be very interesting. If this conjecture is true, then it would be interesting to understand the more precise finer asymptotics as well. 
We refer to Section~\ref{sec:open} for further discussion of this and other open problems.

\subsection{Results on general randomly growing tree models} \label{sec:results_general}

Some of our techniques apply not only to UA trees but also to general randomly growing tree models. 
Another well-studied model is \emph{preferential attachment} (PA)~\cite{Mah92,BA99,BRST01}, 
where the probability that the incoming node attaches to a particular existing node is proportional to its degree. 
More generally~\cite{RTV07}, given an attachment function 
$f: \N \to (0,\infty)$, 
one can consider the model where the probability that the incoming node attaches to a particular node $v$ is proportional to 
$f(\mathrm{deg}(v))$, 
where $\mathrm{deg}(v)$ is the degree of $v$ in the current tree (see Section~\ref{sec:general} for formal details). 

Let us generate two independent trees from such a model, with both trees having $n$ nodes, 
and let $X_{n}$ denote again the size of their largest common subtree 
(suppressing the dependence on~$f$ in the notation for simplicity). 
The following result provides a polynomial lower bound on $X_{n}$ under mild conditions on $f$. 
For simplicity, we state a lower bound in expectation and leave the corresponding high-probability result as an exercise for the reader. 

\begin{theorem}\label{thm:general_lb} 
Suppose that $f$ grows at most linearly (that is, $\limsup_{n\to\infty} f(n) / n < \infty$) and that $\inf_{n\in \N} f(n)>0$. Then there exists $\eps = \eps(f) >0$ such that for all $n \geq 1$ we have that $\E \left[ X_n \right] \geq n^\eps$.
\end{theorem}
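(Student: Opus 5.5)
We would generalize the doubling argument sketched above for uniform attachment, which gives $\E[X_{2n}] \geq \tfrac54 \E[X_n]$. The plan is to prove a recursion of the form $\E[X_{Kn}] \geq (1+c)\,\E[X_n]$ for some constants $K \in \N$ and $c>0$ depending only on $f$, valid for all $n \geq 1$. Iterating from $X_1 = 1$ then gives $\E[X_{K^j}] \geq (1+c)^j$. Since $X_n$ is nondecreasing in $n$ (the trees grow by adding vertices, so a common subtree of $T_n^1,T_n^2$ remains common for $T_m^1,T_m^2$ with $m \ge n$), we get $\E[X_n] \geq (1+c)^{\lfloor \log_K n\rfloor} \geq n^{\eps}/(1+c)$ with $\eps = \log(1+c)/\log K$. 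Shrinking $\eps$ absorbs the constant factor for all $n\ge1$, using $X_n\ge 1$ for small $n$ and $X_n \ge 2$ for $n\ge 2$.

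To prove the recursion, condition on $T_n^1,T_n^2$ and fix a largest common subtree $S$ with $|S| = X_n$, embedded via $\phi_1,\phi_2$. For each vertex $u \in S$, consider the event $A_u^i$ that some vertex arriving in tree $i$ between times $n$ and $Kn$ attaches directly to $\phi_i(u)$.

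If both $A_u^1$ and $A_u^2$ occur, we can add one new leaf to $S$ hanging from $u$, and do so simultaneously for all such $u$; the result is still a common subtree. Hence
\[
X_{Kn} \geq X_n + \#\{u\in S: A_u^1\cap A_u^2\},
\]
and by conditional independence of the two trees
\[
\E[X_{Kn}\mid T_n^1,T_n^2] \geq X_n + \sum_{u\in S} \P(A_u^1\mid T_n^1)\,\P(A_u^2\mid T_n^2).
\]
So it suffices to show $\P(A_u^i \mid T_n^i) \geq p$ for a constant $p>0$ uniformly, which yields $c = p^2$.

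Bounding $\P(A_u^i \mid T_n^i)$ from below is the main obstacle, since it requires controlling the partition function $Z_m = \sum_v f(\deg v)$ at each time $m \in [n, Kn]$.

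1. Let $a = \inf f > 0$ and $f(k) \leq Ck$ for all $k$ (by the $\limsup$ hypothesis together with positivity of $f$ on finitely many values).
2. Then $Z_m \leq C \sum_v \deg(v) = 2C(m-1) \le 2Cm$.
3. Until $\phi_i(u)$ receives a new child, its weight is at least $a$. So at each step $m$ the conditional probability of attaching to it is at least $a/(2Cm)$, regardless of the history.
4. Therefore
\[
\P\bigl((A_u^i)^c \mid T_n^i\bigr) \leq \prod_{m=n}^{Kn-1}\Bigl(1-\tfrac{a}{2Cm}\Bigr) \leq \exp\Bigl(-\tfrac{a}{2C}\log K + o(1)\Bigr),
\]
or more carefully $\leq \exp\bigl(-\tfrac{a}{2C}\sum_{m=n}^{Kn-1} 1/m\bigr) \le \exp\bigl(-\tfrac{a}{2C}\log K\bigr)$ up to rounding.
5. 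Choosing $K=2$ already gives $p = 1-2^{-a/(2C)} > 0$, uniform in $n$.

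One subtlety is the degree convention: if the root has degree $0$ initially, $f(0)$ may need to be defined. We would handle this by working with $f(\deg+1)$ or by noting that only $\inf f$ and linear growth are used. Taking expectations then completes the recursion $\E[X_{2n}] \geq (1+p^2)\E[X_n]$.
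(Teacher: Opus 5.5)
Your proposal is correct and is essentially the paper's proof: the same doubling recursion that adds a leaf at each matched vertex receiving a new child in both trees, with the no-attachment probability controlled by $\sum_v f(\deg v) \le C\sum_v \deg v = 2C(m-1)$ and $f \ge \inf f$. The differences are cosmetic. You keep the constant $C$ and sum $1/m$ harmonically to get $p = 1-2^{-a/(2C)}$, while the paper rescales $f$ so that $f(n)\le n$ and bounds the normalizing sum by $2^{k+2}$ over the whole interval to get $1-e^{-c/4}$; and since the paper's model starts from $T_2$, all degrees are at least $1$ and your $f(0)$ concern does not arise.
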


The proof follows a recursive argument, similar to the one mentioned above in Section~\ref{sec:results_main}; 
see Section~\ref{sec:linear} for details. 
A different lower bound is based on the maximum degree: 
$X_{n}$ is always larger than the smaller of the two maximum degrees in the two trees; 
this is because a tree contains as a subtree a star centered at a maximum degree node (see Lemma~\ref{lem:max_degree_lb}). 
This lower bound is complementary to the one in Theorem~\ref{thm:general_lb}: each can be better in certain cases. 
In Section~\ref{sec:linear} we detail this trade-off for affine preferential attachment, where $f(n) = n + a$ for some $a > -1$.

For certain models it is possible to obtain precise results on $X_{n}$. 
In particular, for superlinear preferential attachment~\cite{oliveira2005connectivity}, where $f(n) = n^{\alpha}$ for some $\alpha > 1$, 
we show that $X_{n} = (1-o(1))n$ with high probability; 
see Section~\ref{sec:superlinear} for more precise statements and details.

\subsection{Open problems and future directions} \label{sec:open}

Our work initiates numerous interesting open questions and directions for future research. We collect here some of them.

\begin{itemize}
\item \textbf{Asymptotics of $X_{n}$.} The main question that our work leaves open is to understand the order of magnitude of $X_{n}$ for independent UA trees. In particular, proving or disproving Conjecture~\ref{conj:asymptotics} would be very interesting. If Conjecture~\ref{conj:asymptotics} is true, then understanding the 
precise 
finer asymptotics hidden in the $o(1)$ in the exponent would be of interest as well. 

Partial progress towards Conjecture~\ref{conj:asymptotics} would be exciting too. For instance, can one prove that, say, $X_{n} \geq n^{0.9}$ with high probability (if this is true)? 
It seems plausible that further local optimizations (beyond those in Section~\ref{sec:beyondUH}) could improve upon Theorem~\ref{thm:main} to a certain extent, but it is unclear if these methods could be pushed as far as to give a lower bound of, say, $n^{0.9}$. For this reason, it would be very interesting to have completely different approaches to the problem (and in particular to constructing lower bounds). 

In a different direction, is it true that $X_{n} = o(n)$ with high probability?

\item \textbf{Limit theorems.} Once the first order asymptotics of $X_{n}$ are understood, it would be interesting to understand distributional limit theorems around them (if they exist). 
For instance, does $X_{n} / \E[X_{n}]$ converge to a constant or does it have a nontrivial limiting distribution? 
Simulations suggest the latter (at least for the \emph{Root-to-Root LCS}), see Figure~\ref{fig:histogram_normalized_expectation}.

\item \textbf{General tree growth models.} 
How does $X_{n}$ behave for other tree growth models beyond UA? 
Sections~\ref{sec:results_general} and~\ref{sec:general} contain bounds, but a broader exploration would be of interest, both generally and for  specific models. 
For instance, how does $X_{n}$ grow for PA? Is it true that $X_{n} = n^{1-o(1)}$ with high probability for PA? 
Are there natural models for which $X_{n} \leq n^{1-\eps}$ with high probability for some fixed $\eps > 0$? 

At a technical level, can the Ulam--Harris common subtree and related arguments (Sections~\ref{sec:UHLB} and~\ref{sec:beyondUH}) be analyzed for other models beyond UA, such as for PA? 
In a different direction, can the simple recursive argument (Sections~\ref{sec:polyLB} and~\ref{sec:linear}) be improved by incorporating aspects of the attachment rule (e.g., incorporating degrees)?

\item \textbf{The structure of the largest common subtree.} 
Beyond the size of the largest common subtree, 
what can we say about its structure? 
For instance, how large is its diameter? 
What is its degree distribution? 
How large is its maximum degree? How many leaves does it have?

Another perspective is to study the complement of the largest common subtree---note that if we remove the largest common subtree from the two trees, then they both break into a forest of small trees. 
What can we say about these forests? For instance, how many components do they have? How large is their largest component? What is the distribution of component sizes?

\item \textbf{Three or more trees.} 
How large is the largest common subtree of $k \geq 3$ independent UA trees with $n$ vertices each? 
The Ulam--Harris common subtree shows that this is at least $n^{k(2^{1/k}-1) - o(1)}$ in expectation (note that the exponent $k(2^{1/k}-1)$ is a decreasing function of~$k$, converging to $\log 2 \approx 0.693$ as $k \to \infty$), see Remark~\ref{remark:many_trees} for details.

\item \textbf{General graphs.} This paper focuses on tree growth models (in part for simplicity), but all the questions studied and asked here can be studied for general graph growth models (replacing common subtree with common subgraph). 

\end{itemize}

\begin{figure}
    \centering
    \includegraphics[width=0.43\textwidth]{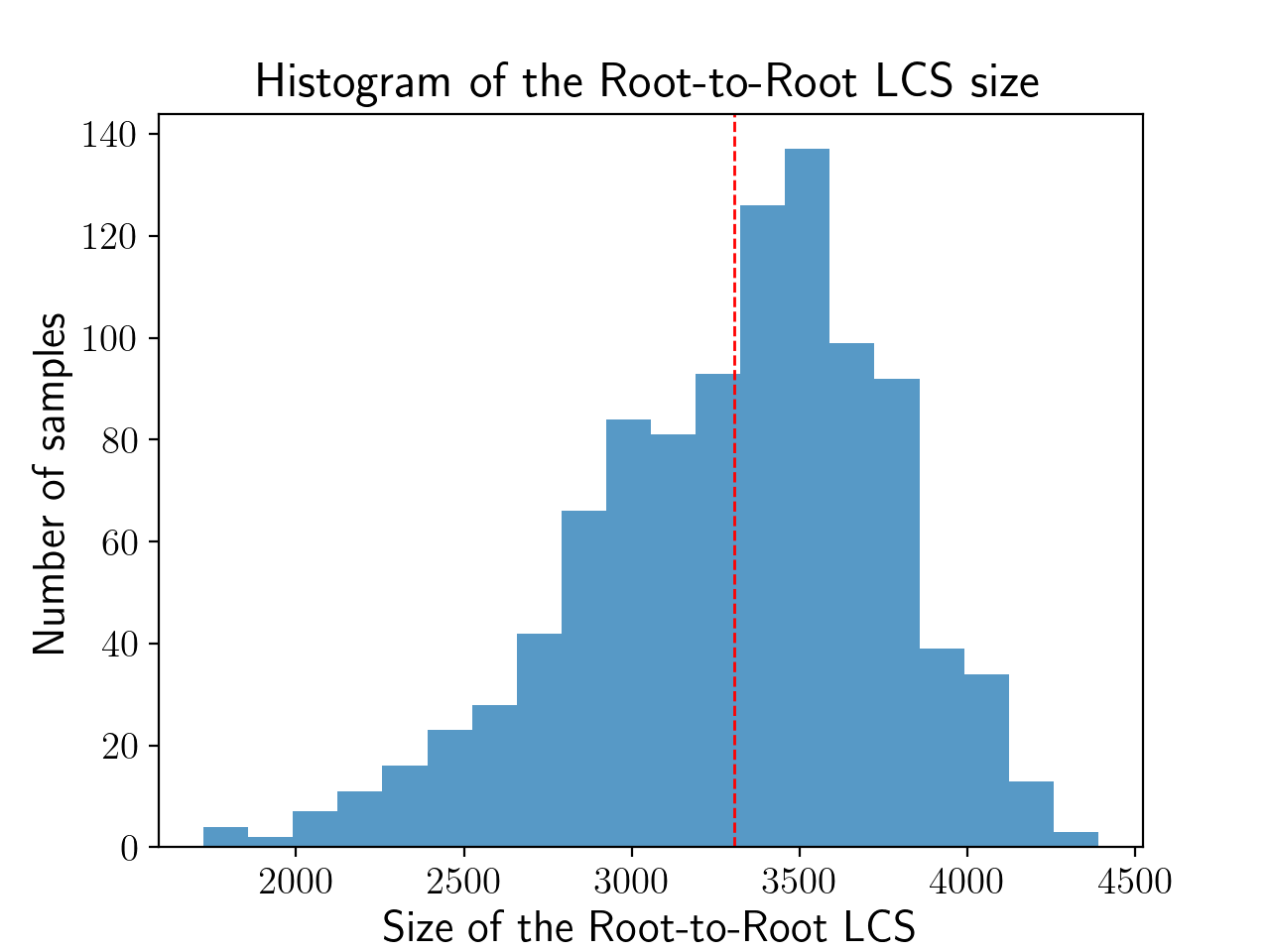}
    \caption{Empirical distribution of the \emph{Root-to-Root LCS} for two independent UA trees on $n = 10000$ vertices, generated by $1000$ independent trials. 
    The histogram was generated using 20 bins, and the dotted red line denotes the empirical mean of the \emph{Root-to-Root LCS}, which is $3304$.}
    \label{fig:histogram_normalized_expectation}
\end{figure}

\subsection{Related work}\label{sec:related}

Largest common substructures of two instances of a combinatorial structure have been studied in probabilistic combinatorics for several decades. 
Aldous~\cite{AldousOpenProblem_substructures} formulated a general setting and discussed several examples, including graphs, permutations, partial orders, and cladograms.  

The largest common subtree (also referred to as the maximum agreement subtree) of two independent uniformly random cladograms (leaf-labeled binary trees) has received particular attention~\cite{AldousOpenProblem_substructures,bryant2003size,bernstein2015bounds,misra2019bounds,bordewich2022maximum,aldous2022largest,pittel2023expected,khezeli2024improved,budzinski2024maximum}, motivated by applications in phylogenetics (e.g.,~\cite{de2007congruence}). 
Here Aldous~\cite{AldousOpenProblem_substructures} conjectured that the expected size of the largest common subtree is $n^{\gamma + o(1)}$ for some $\gamma < 1/2$ (where $n$ is the number of leaves), 
and Bryant, McKenzie, and Steel~\cite{bryant2003size} provided numerical evidence that $\gamma$ is close to $1/2$. 
Bryant, McKenzie, and Steel~\cite{bryant2003size} showed an upper bound of order $n^{1/2}$, which was only recently improved to $n^{1/2-\eps}$ for some $\eps > 0$ by Budzinski and S\'enizergues~\cite{budzinski2024maximum}. 
Bernstein~et~al.~\cite{bernstein2015bounds} proved a lower bound of order $n^{1/8}$, which was improved to roughly $n^{0.366}$ by Aldous~\cite{aldous2022largest}, and most recently to $n^{0.4464}$ by Khezeli~\cite{khezeli2024improved}. 
Several variations of this question have been explored, such as considering the Yule--Harding distribution instead of the uniform distribution~\cite{bryant2003size,bernstein2015bounds}, assuming that the two trees have the same shape~\cite{misra2019bounds}, and considering more general leaf-labeled random rooted trees arising from branching processes~\cite{pittel2023expected}.

Our work is similar in spirit to the works above, but there are some significant differences. 
For one, the structure of UA trees, and more generally randomly growing trees, is quite different from that of the trees studied above. 
Moreover, in all of the works above, the structures and substructures are \emph{labeled}, 
whereas in this paper we consider \emph{unlabeled} structures and substructures. 

Recently, Angel, Atamanchuk, Brandenberger, Donderwinkel, and Khanfir~\cite{angel2025largest} studied the largest common subtree of two independent unlabeled Bienaym\'e trees, showing that its size is of order $n^{1/2}$ under mild moment assumptions. 
The structure of Bienaym\'e trees also differs significantly from that of UA trees, and their largest common subtree is much smaller than that of UA~trees. 

In another line of work, Chatterjee and Diaconis~\cite{chatterjee2023isomorphisms} and Surya, Warnke, and Zhu~\cite{surya2025isomorphisms} studied the largest common induced subgraph of two independent dense Erd\H{o}s--R\'enyi random graphs (among other questions), 
proving rather precise results on its size (in particular, two-point concentration). 
Subsequently, 
Gamarnik, R\'acz, and Schoenbach~\cite{GRS26} studied the algorithmic question of efficiently finding a large common induced subgraph, providing evidence that this problem exhibits a computation-to-optimization gap.
While again similar in spirit, these works are also quite different from ours for several reasons. 
For one, dense graphs are very different from trees. 
Also, in the works above, the independence of edges in Erd\H{o}s--R\'enyi random graphs can be strongly leveraged for analysis, whereas there is significantly more structure in the setting of UA trees / randomly growing trees.

Overall, there is a growing literature on studying largest common subtrees/subgraphs of random trees/graphs, yet, as far as we know, our work is the first to study this question for uniform attachment trees and more broadly for randomly growing graphs.
We hope that our work, and the open problems above, spur further activity in this space.

\subsection{Outline}

The rest of the paper is organized as follows. 
In Section~\ref{sec:polyLB} we present a simple argument that shows that $X_{n}$ grows at least polynomially in $n$. 
In Section~\ref{sec:prelim} we collect some technical preliminaries in preparation for subsequent sections. In Section~\ref{sec:UHLB} we analyze 
the Ulam--Harris common subtree
of two UA trees 
and as a consequence we show that $X_{n} \geq n^{2(\sqrt{2}-1) - \eps}$ with high probability for any $\eps > 0$; in particular, $X_{n} \geq n^{0.828}$. 
In Section~\ref{sec:beyondUH} we then study further local optimizations to obtain a larger common subtree, culminating in a proof of Theorem~\ref{thm:main}. 
Section~\ref{sec:UB} is devoted to upper bound arguments, in particular the proofs of Theorems~\ref{thm:UB_near1} and~\ref{thm:UB_near0}. 
Finally, we prove results for general randomly growing tree models, such as variants of preferential attachment, in Section~\ref{sec:general}.


\section{A simple polynomial lower bound} \label{sec:polyLB} 

In this section we prove a simple polynomial lower bound on the size of the largest common subtree. 
The main idea is to argue recursively, on time intervals that are growing exponentially in length. 
The following claim is in expectation; 
a similar result holds with high probability, but we omit the details, since we will prove stronger high-probability statements in subsequent sections. 

\begin{claim}\label{claim:simple}
For every $n \geq 1$ we have that $\E \left[ X_{n} \right] \geq n^{\log_{2}(5/4)} \geq n^{0.32}$. 
\end{claim}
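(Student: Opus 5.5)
The plan is to prove the recursion $\E[X_{2n}] \geq \tfrac54 \E[X_n]$ for every $n \geq 1$, and then iterate it.

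\textbf{Step 1: the doubling recursion.} Couple $T^1_{2n}$ with $T^1_n$ by running the growth process, and likewise for the second tree. Condition on $(T^1_n, T^2_n)$ and fix a largest common subtree $S$ of them, with $|S| = X_n$. Let $\phi$ be an isomorphism from the copy $S_1 \subseteq T^1_n$ onto the copy $S_2 \subseteq T^2_n$. For each $v \in S_1$, consider the event $A_v$ that at least one of the nodes $n+1, \dots, 2n$ attaches directly to $v$ in tree $1$. Similarly, let $B_{\phi(v)}$ be the event that at least one of these nodes attaches to $\phi(v)$ in tree $2$. On $A_v \cap B_{\phi(v)}$, pick one new child in each tree and add this pair to the matching. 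These new children are leaves hanging off matched vertices, and distinct $v$ receive distinct new vertices. Hence the enlarged vertex sets still induce isomorphic subtrees, and
\[
X_{2n} \;\geq\; X_n + \sum_{v \in S_1} \indicator\{A_v \cap B_{\phi(v)}\}.
\]
The two trees evolve independently given the past, so $\P(A_v \cap B_{\phi(v)} \mid T^1_n, T^2_n) = \P(A_v \mid T^1_n)\,\P(B_{\phi(v)} \mid T^2_n)$.

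\textbf{Step 2: bounding the attachment probability.} The $k$-th new node chooses $v$ with probability $1/(n+k-1)$. Therefore
\[
\P(A_v^c \mid T^1_n) = \prod_{k=1}^{n}\Bigl(1 - \frac{1}{n+k-1}\Bigr) = \prod_{m=n}^{2n-1} \frac{m-1}{m} = \frac{n-1}{2n-1} \leq \frac12 .
\]
So each of the two factors is at least $1/2$, and the conditional probability of $A_v \cap B_{\phi(v)}$ is at least $1/4$. Taking expectations in Step 1 gives $\E[X_{2n}] \geq \E[X_n] + \tfrac14 \E[X_n] = \tfrac54 \E[X_n]$.

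\textbf{Step 3: extending from powers of two to all $n$.} Iterating from $X_1 = 1$ yields $\E[X_{2^k}] \geq (5/4)^k = (2^k)^{\log_2(5/4)}$. To handle general $n$, note that the step-1 argument never used $n$ being a power of two. It gives $\E[X_{2m}] \geq \tfrac54 \E[X_m]$ for every $m$, so the claim follows by strong induction with base cases $n \in \{1, 2\}$, provided we also treat odd $n$. For odd $n$, compare with $m = (n-1)/2$, that is, $n = 2m+1$. Since $X$ is monotone in time under the coupling (a common subtree persists as the trees grow), $\E[X_{2m+1}] \geq \E[X_{2m}] \geq \tfrac54 \E[X_m]$. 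This only gives $\tfrac54 m^{\log_2(5/4)}$, which is slightly less than $n^{\log_2(5/4)}$.

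The main obstacle is this odd-$n$ step. To fix it, I would instead double from $m = \lceil n/2 \rceil$ to $n$ over the interval $(m, n]$. The miss probability is then $\prod_{j=m}^{n-1} \frac{j-1}{j} = \frac{m-1}{n-1}$. For $n = 2m-1$ this equals $\tfrac12$, so the per-vertex success probability is still at least $1/4$. This gives $\E[X_n] \geq \tfrac54 \E[X_{\lceil n/2 \rceil}] \geq \tfrac54 \lceil n/2 \rceil^{\log_2(5/4)} \geq n^{\log_2(5/4)}$, and strong induction closes. Finally, $\log_2(5/4) \approx 0.3219 \geq 0.32$.
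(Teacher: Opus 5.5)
Your proof is correct and is essentially the paper's argument. It uses the same doubling step, in which each matched vertex gains a matched child in both trees with conditional probability at least $1/4$ by the telescoping product. The only difference is how you handle general $n$. You run the step from $\lceil n/2\rceil$ to $n$ for every $n\ge 2$ and close by strong induction using $\tfrac54 (n/2)^{\log_2(5/4)} = n^{\log_2(5/4)}$. The paper instead doubles only along powers of two and covers general $n$ by monotonicity plus the slack from $X_2 = 2$.
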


\begin{proof}
We will show that 
$\E \left[ X_{2^{k+1}} \right] 
\geq \frac{5}{4} \E \left[ X_{2^{k}} \right]$ 
for every $k \geq 1$. 
Since $X_{2} = 2$ 
(because both $T_{2}^{1}$ and $T_{2}^{2}$ are the unique tree on two vertices consisting of a single edge), the claim then follows by iterating this inequality and using monotonicity of $X_{n}$ in $n$: 
\begin{equation}\label{eq:recursion_simple}
\E \left[ X_{n} \right] 
\geq \E \left[ X_{2^{\lfloor \log_{2}(n) \rfloor}} \right] 
\geq \left( \frac{5}{4} \right)^{\lfloor \log_{2}(n) \rfloor - 1} \E \left[ X_{2} \right] 
\geq 2 \left( \frac{5}{4} \right)^{\log_{2}(n) - 2}
\geq \left( \frac{5}{4} \right)^{\log_{2}(n)} 
= n^{\log_{2}(5/4)}. 
\end{equation}

We now argue that 
$\E \left[ X_{2^{k+1}} \right] 
\geq \frac{5}{4} \E \left[ X_{2^{k}} \right]$ 
for every $k \geq 1$. 
We do so by conditioning on $T_{2^{k}}^{1}$ and~$T_{2^{k}}^{2}$. 
Let $S=(V,E)$ be a tree that is a subtree of both $T_{2^{k}}^{1}$ and $T_{2^{k}}^{2}$ with $|S|=|V|=X_{2^k}$.
Let $S^{1}$ and $S^{2}$ denote the copies of $S$ in 
$T_{2^{k}}^{1}$ and $T_{2^{k}}^{2}$, respectively. 
Let $v^{1}$ be a vertex of $S^{1}$ and let $v^{2}$ be the corresponding vertex of $S^{2}$. 

Let us consider the evolution of the first tree from time $2^{k}+1$ to time $2^{k+1}$. 
The probability that no new edge attaches to $v^{1}$ during this time interval is 
\[
\left(1-\frac{1}{2^k}\right) \left(1-\frac{1}{2^k+1}\right) \cdot \ldots \cdot  \left(1-\frac{1}{2^{k+1}-1}\right) 
= \frac{2^k-1}{2^k} \cdot \frac{2^k}{2^k+1} \cdot \ldots \cdot  \frac{2^{k+1}-2}{2^{k+1}-1} 
= \frac{2^k - 1}{2^{k+1}-1}
< \frac{1}{2}.
\]
Thus the probability that at least one new edge attaches to $v^{1}$ during this period is at least~$1/2$. 
Since the two tree growth processes have the same marginal law, the same also holds for $v^{2}$ in $S^{2}$. 
By the independence of the two growth processes, 
the probability that 
both of these events occur 
is at least $(1/2)^{2} = 1/4$. 

Define $\wt{V}$ as the set of vertices $v \in V$ for which at least one edge attaches to it from time $2^{k}+1$ to time $2^{k+1}$ in \emph{both} uniform attachment processes. 
The argument in the previous paragraph, together with linearity of expectation, yields that 
$\E \left[ |\wt{V}| \, \middle| \, T_{2^{k}}^{1}, T_{2^{k}}^{2} \right] 
\geq \frac{1}{4} X_{2^{k}}$. 
Now consider the tree that contains $S=(V,E)$ and where we attach an additional leaf to all vertices $v \in \wt{V}$. 
This tree is contained as a subtree in both $T_{2^{k+1}}^{1}$ and $T_{2^{k+1}}^{2}$ 
and has cardinality $|V|+|\wt{V}|$. 
This shows that 
\[
\E \left[ X_{2^{k+1}} \, \middle| \, T_{2^{k}}^{1}, T_{2^{k}}^{2} \right] 
\geq \E \left[ |V| + |\wt{V}| \, \middle| \, T_{2^{k}}^{1}, T_{2^{k}}^{2} \right] 
\geq \frac{5}{4} X_{2^{k}}.
\]
The claim thus follows by taking expectation. 
\end{proof}

We note that by optimizing the base of the exponent in the proof of Claim~\ref{claim:simple}, the lower bound can be improved to 
$n^{\max_{x > 1} \log_{x}(1+(1-1/x)^{2})} 
\in [n^{0.336},n^{0.337}]$, 
which is only marginally better.

\section{Technical preliminaries} \label{sec:prelim} 

In this section we collect some technical preliminaries that are necessary for the proofs in subsequent sections. 
Specifically, we discuss the Ulam--Harris tree, the associated vertex labeling, and the common subtree it induces between two randomly growing trees. 
We also discuss the embedding of UA trees into continuous-time branching processes. 

\subsection{The Ulam--Harris tree and Ulam--Harris vertex labels} \label{sec:UlamHarris}

The \emph{Ulam--Harris tree} $\cU_{\infty}$ is the infinite rooted tree defined as follows (see, e.g., Harris's book~\cite[Chapter~VI]{harris_book} or Janson's survey~\cite[Section~6]{janson2012simply}).
The root of $\cU_{\infty}$ is the empty string $\varnothing$ and the node set of $\cU_{\infty}$ is 
\[
V_{\infty} := \bigcup_{k \geq 0} \N^{k},
\]
with $\N^0 := \{\varnothing\}$, and where $\N^{k}$ is the set of all length-$k$ strings $(i_{1}, \ldots, i_{k})$ with $i_{1}, \ldots, i_{k} \in \N := \{ 1, 2, 3, \ldots \}$. 
We then connect $(i_{1}, \ldots, i_{k})$ and $(i_{1}, \ldots, i_{k}, i_{k+1})$ by an edge for every $k \geq 0$ and $i_{1}, \ldots, i_{k+1} \in \N$ to obtain the edge set of $\cU_{\infty}$. 
For any $k \geq 1$, the \emph{parent} of vertex $v = (i_{1}, \ldots, i_{k})$ is the vertex $(i_{1}, \ldots, i_{k-1})$. 
Moreover, the \emph{children} of $v = (i_{1}, \ldots, i_{k})$ are the vertices $(i_{1}, \ldots, i_{k}, 1), (i_{1}, \ldots, i_{k}, 2), (i_{1}, \ldots, i_{k}, 3), \ldots$ 
and having them in this order we can view $\cU_{\infty}$ as an infinite \emph{ordered} rooted tree.

The family $\cT$ of ordered rooted trees can be identified with the set of rooted subtrees $T$ of $\cU_{\infty}$ whose vertex set $V(T)$ satisfies the following three properties: 
\begin{enumerate}
\item[\namedlabel{prop:UH1}{(UH1)}] $\varnothing \in V(T)$;
\item[\namedlabel{prop:UH2}{(UH2)}] for every $k \geq 1$, if $(i_{1}, \ldots ,i_{k-1}, i_{k}) \in V(T)$, then $(i_{1}, \ldots, i_{k-1}, j) \in V(T)$ for all $1 \leq j \leq i_{k}$;
\item[\namedlabel{prop:UH3}{(UH3)}] for every $k \geq 1$, if $(i_{1}, \ldots, i_{k-1}, i_{k}) \in V(T)$, then $(i_{1}, \ldots, i_{k-1}) \in V(T)$.
\end{enumerate} 
Furthermore, let $\cT_{n} := \{ T \in \cT : |T| = n \}$ denote the set of all ordered rooted trees with $n$ vertices.

The Ulam--Harris tree is convenient for analysis because a randomly growing tree with $n$ vertices can be naturally viewed as a random element of $\cT_{n}$, that is, as a \emph{random subtree of the Ulam--Harris tree}. 
Specifically, 
the first vertex has Ulam--Harris label~$\varnothing$; 
the second vertex has Ulam--Harris label~$(1)$; 
if the third vertex attaches to the first vertex, then it has Ulam--Harris label~$(2)$, 
otherwise, if it attaches to the second vertex, then it has Ulam--Harris label~$(1,1)$; 
and so on: 
if the $n$-th vertex attaches to vertex $v$ with Ulam--Harris label $(i_{1},\ldots, i_{k})$  
and so far $(j-1)$ vertices have attached to $v$ throughout the process for some $j\geq 1$, 
then the $n$-th vertex gets Ulam--Harris label~$(i_{1} ,\ldots, i_{k}, j)$. 
See Figure~\ref{fig:ulam_harris_example} for an illustration.

There are two perspectives here: 
in addition to viewing a randomly growing tree $T_{n}$ with $n$ vertices as a random element of $\cT_{n}$, 
we can also view the Ulam--Harris labels as a \emph{random labeling} of the vertices of $T_{n}$. 
For a vertex $v \in V(T_{n})$, we write $\ell_{\UH}(v)$ for its Ulam--Harris label as defined above. 
Both of these perspectives will be useful in the following in order to find large common subtrees of two independent randomly growing trees.

For a vertex $v = (i_{1}, \ldots, i_{k}) \in V_{\infty}$, 
we write $|v|_{0} := k$ for its \emph{depth} (i.e., its distance from the root). 
We also call $|v|_{1} := i_{1} + \ldots + i_{k}$ the \emph{level} of node $v$. 
This is relevant when studying randomly growing trees, since $|v|_{1}$ many vertices have to be ``born'' before $v$ can be ``born''.

\subsection{The Ulam--Harris common subtree}\label{sec:UH_common_subtree}

Consider two randomly growing trees $T_{n}^{1}$ and $T_{n}^{2}$. 
Note that the vertices of $T_{n}^{1}$ and $T_{n}^{2}$ (typically) have different (random) Ulam--Harris labels; 
to emphasize this difference, we write $\ell_{\UH}^{1}$ and $\ell_{\UH}^{2}$ for the respective functions that map vertices to Ulam--Harris labels in the two trees. 
Now define 
\begin{equation}\label{eq:UH_matching}
S_{n}^{1} := \left\{ v \in V \left( T_{n}^{1} \right) : \text{there exists } w \in V\left( T_{n}^{2} \right) \text{ such that } \ell_{\UH}^{1}(v) = \ell_{\UH}^{2}(w) \right\}.
\end{equation}
In words, $S_{n}^{1}$ is the subset of vertices in $T_{n}^{1}$ such that there is a vertex in $T_{n}^{2}$ with the same Ulam--Harris label. 
Since Ulam--Harris labels are unique, if for $u\in V(T_n^1)$ there exists a corresponding $w\in V(T_n^2)$ in~\eqref{eq:UH_matching}, then it is unique.
Consequently, we may define 
\[
S_{n}^{2} := \left( \ell_{\UH}^{2} \right)^{-1} \left( \ell_{\UH}^{1} \left( S_{n}^{1} \right) \right),
\]
where the functions act elementwise. 

In words, the Ulam--Harris labels in the two trees induce a \emph{partial matching} between~$V(T_{n}^{1})$ and~$V(T_{n}^{2})$: 
namely, the subsets $S_{n}^{1} \subseteq V(T_{n}^{1})$ and $S_{n}^{2} \subseteq V(T_{n}^{2})$ are matched according to the bijection given by $\left( \ell_{\UH}^{2} \right)^{-1} \circ \ell_{\UH}^{1}$. 
We write 
\begin{equation*}\label{eq:Yn}
Y_{n} := \left| S_{n}^{1} \right| = \left| S_{n}^{2} \right|
\end{equation*}
for the size of this partial matching. 
Now observe that the construction implies that the Ulam--Harris labels $\ell_{\UH}^{1}(S_{n}^{1})$ satisfy the properties~\ref{prop:UH1}--~\ref{prop:UH3} in Section~\ref{sec:UlamHarris}, and so do the Ulam--Harris labels $\ell_{\UH}^{2}(S_{n}^{2})$. In particular, this implies that $S_{n}^{1}$ induces a subtree of $T_{n}^{1}$, 
also $S_{n}^{2}$ induces a subtree of $T_{n}^{2}$, 
and, moreover, these subtrees are \emph{isomorphic} (since they are both isomorphic to the same subtree of the Ulam--Harris tree). 
In other words, this gives a common subtree of $T_{n}^{1}$ and $T_{n}^{2}$, 
which we term the 
\emph{Ulam--Harris common subtree}. 
The size of this common subtree is $Y_{n}$, 
showing that 
\begin{equation}\label{eq:UH_LB}
X_{n} \geq Y_{n}. 
\end{equation}

In Section~\ref{sec:UHLB} we analyze $Y_{n}$ for UA trees in order to give a lower bound on $X_{n}$. 
This analysis is facilitated by embedding the UA trees into a continuous-time branching process, 
the background for which we introduce next.

\subsection{Embedding growing trees into continuous-time branching processes}\label{sec:embed}

In this section we provide an embedding of UA trees into continuous-time branching processes (CTBPs), in particular into a Yule process. This embedding allows us to study the common structures in two independent Yule processes, which provides some analytical advantages.

Let $\{\xi_v\}_{v\in V_\infty}$ be a collection of i.i.d.\ homogeneous Poisson processes on $\R_+:=[0,\infty)$ with~rate~$1$. We define the Yule process, written as $\{\bp(t):t\geq 0\}$, as a branching process started with one individual (denoted by $\varnothing$ and called the \emph{root}) at time $t=0$. Every individual $v\in V_\infty$ born into the process (including the root) produces offspring independently according to the point process $\xi_v$. In other words, each individual has a rate $1$ exponential clock, and each time the clock rings, it gives birth to a child (which then independently starts giving birth according to its own independent clock as well).

Let $|\bp(t)|$ denote the number of individuals in the branching process at time $t$. 
Define the stopping times $\{\tau_{n}\}_{n\in\N}$ 
as 
\be \label{eq:stoppingtimes}
\tau_n:=\inf\{t\geq 0: |\bp(t)|=n\}. 
\ee
In words, $\tau_{n}$ is when the $n$-th individual is born into the branching process. Note that $\tau_{1} = 0$. 

The Athreya--Karlin embedding (see~\cite{AthrKar68}) provides us with the following crucial result. 

\begin{lemma}[\cite{AthrKar68}]\label{thrm:embed}
Consider a sequence of UA trees $\{T_{n}\}_{n\in\N}$ and a Yule process $\{\bp(t): t\geq 0\}$ defined above. 
Seen as a sequence of randomly growing labeled trees, it holds that 
$$\left\{T_n:n\in\N \right\}\overset d=\left\{\bp(\tau_n): n\in\N \right\}.$$  
\end{lemma}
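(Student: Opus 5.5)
The plan is to prove the identity in law by induction on $n$, using the memorylessness of the exponential clocks. Both processes are Markov chains on finite labeled trees: $\{T_n\}$ by construction, and $\{\bp(\tau_n)\}$ as we will verify. So it suffices to show three things: that $\bp(\tau_1)=T_1$ is the single root, that $\{\bp(\tau_n)\}_n$ is Markov, and that its one-step transition probabilities agree with those of uniform attachment. The transitions of uniform attachment are: given $T_n$, the new vertex attaches to each of the $n$ existing vertices with probability $1/n$. Agreement of the initial state and of the transition kernel then gives equality of all finite-dimensional distributions, and hence equality in law of the sequences.

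The key step concerns the behavior of the process after time $\tau_n$, conditionally on $\mathcal F_{\tau_n}$, the information up to time $\tau_n$.

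\begin{enumerate}
\item The time $\tau_n$ is a stopping time for the filtration generated by the Poisson processes. By the strong Markov property of Poisson processes (equivalently, memorylessness of exponential clocks), conditionally on $\mathcal F_{\tau_n}$ the shifted processes $\xi_v(\tau_n+\cdot)-\xi_v(\tau_n)$ are i.i.d.\ rate-$1$ Poisson processes. This holds for the $n$ individuals $v$ alive at time $\tau_n$, and these shifted processes are independent of $\mathcal F_{\tau_n}$.
\item The next birth occurs at the first arrival among these $n$ independent processes. That is, $\tau_{n+1}-\tau_n$ is the minimum of $n$ i.i.d.\ $\Exp(1)$ variables.
\item Almost surely there are no ties, and the argmin is uniform over the $n$ alive individuals, independently of $\mathcal F_{\tau_n}$.
\item Hence the parent of the $(n+1)$-st individual is uniform among the existing ones given the past. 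This shows both the Markov property and the correct transition kernel.
\end{enumerate}

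To make the statement \emph{seen as labeled trees} precise, label the individuals of the Yule process in order of birth, matching the arrival-time labels of $T_n$. With this labeling, the parent assignment described above is exactly the uniform attachment rule. Alternatively, one can label by the Ulam--Harris labels: the $j$-th child of $v$ is the $j$-th point of $\xi_v$, so the labels coincide with those assigned in Section~\ref{sec:UlamHarris}.

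The only delicate point is applying the strong Markov property at the random times $\tau_n$ to a family of processes indexed by the individuals currently alive. I would handle this by noting that, on each event $\{\bp(\tau_n)=T\}$, the alive set is the deterministic set $V(T)$. For the clock of an individual born at a time $\sigma_v\le\tau_n$, which is itself a stopping time, the residual clock past $\tau_n$ is again exponential and independent, by memorylessness. We also need $\tau_n<\infty$ almost surely, which follows since $\tau_{n+1}-\tau_n$ is almost surely finite at each step.
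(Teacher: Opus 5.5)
Your proposal is correct. The paper gives no proof of its own and simply cites \cite{AthrKar68}. Your argument is the standard proof behind that embedding: memorylessness of the exponential clocks at the stopping times $\tau_n$ makes the $(n+1)$-st individual's parent uniform over the $n$ alive individuals, independently of $\mathcal F_{\tau_n}$, and the two processes then share the same initial state and transition kernel.

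One small notational point: since $\xi_v$ records births by the age of $v$, the residual process should read $\xi_v(\tau_n-\sigma_v+\cdot)-\xi_v(\tau_n-\sigma_v)$ rather than $\xi_v(\tau_n+\cdot)-\xi_v(\tau_n)$. Your final paragraph effectively already accounts for this.
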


Moreover, we can precisely characterize the growth rate of this branching process.  

\begin{theorem}[Growth rate of a Yule process; Sec.~III.7, Theorem 1 in \cite{AthrNey72}]\label{thrm:malt}
Consider a Yule process $\left\{\bp(t): t\geq 0 \right\}$. 
There exists a random variable $W \sim \Exp(1)$ on the same probability space such that 
\be 
|\bp(t)|\e^{- t}\overset{\mathrm{a.s.}}{\longrightarrow}W. 
\ee 
\end{theorem}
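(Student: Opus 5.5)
The plan is to deduce the almost-sure convergence from a martingale argument. Set $Z(t) := |\bp(t)|$. This is a pure birth process whose rate in state $k$ is $k$, since each of the $k$ individuals alive carries an independent rate-$1$ clock. It is non-explosive: the holding times $\tau_{k+1}-\tau_k$ are independent $\Exp(k)$, and $\sum_k 1/k = \infty$.

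\textbf{Step 1: a positive martingale.} The generator applied to $f(k)=k$ gives $k\cdot 1$, so $\frac{\dd}{\dd t}\E[Z(t)] = \E[Z(t)]$, hence $\E[Z(t)] = \e^{t}$. More precisely, $M(t) := Z(t)\e^{-t}$ is a nonnegative martingale with respect to the natural filtration. This follows from the Markov property together with $\E[Z(t+s)\mid Z(t)=k] = k\e^{s}$, which holds because each of the $k$ current individuals independently spawns a Yule process started from one individual.

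\textbf{Step 2: convergence.} By the martingale convergence theorem, $M(t)\to W$ almost sure for some finite $W\geq 0$. To get $L^2$ boundedness, compute the second moment. Applying the generator to $k^2$ gives $k\big((k+1)^2-k^2\big) = 2k^2+k$, so $m_2(t) := \E[Z(t)^2]$ solves $m_2' = 2m_2 + \e^{t}$ with $m_2(0)=1$. This yields $m_2(t) = 2\e^{2t}-\e^{t}$. Hence $\E[M(t)^2]\le 2$, the convergence also holds in $L^2$, and $\E[W]=1$.

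\textbf{Step 3: identifying the law.} This is the main point needing care. I would use the classical fact that $Z(t)$ is geometric: $\P(Z(t)=k) = \e^{-t}(1-\e^{-t})^{k-1}$ for $k\ge 1$. One can verify this through the forward Kolmogorov equations $p_k' = (k-1)p_{k-1} - k p_k$. Alternatively, note that $\tau_n$ is distributed as the maximum of $n-1$ i.i.d.\ $\Exp(1)$ variables, via the Rényi representation with spacings $\Exp(k)$. Then $\P(Z(t) > x\e^{t}) = (1-\e^{-t})^{\lfloor x\e^{t}\rfloor}\to \e^{-x}$. Since almost sure convergence implies convergence in distribution, this gives $W\sim\Exp(1)$.

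Combining the three steps yields the theorem. The only step needing a real computation is the geometric law in Step 3. I would verify it by substituting the geometric ansatz into the Kolmogorov equations, which is a direct check.
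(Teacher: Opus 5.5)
Your proof is correct. The nonnegative martingale $|\bp(t)|\e^{-t}$ gives almost sure convergence, and the geometric marginal $\P(|\bp(t)|\geq n)=(1-\e^{-t})^{n-1}$ (equivalently, $\tau_n$ is distributed as the maximum of $n-1$ i.i.d.\ $\Exp(1)$ variables) identifies the limit as $\Exp(1)$.

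The paper gives no proof of its own and simply cites the supercritical limit theorem in Athreya--Ney. Your argument is a self-contained specialization of that martingale argument to the Yule case. Two minor remarks: the $L^2$ bound in Step~2 is not needed for the statement. Also, deriving the geometric law first gives $\E|\bp(t)|=\e^{t}$ and the second moment directly, without applying the generator to unbounded functions.
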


\textbf{CTBP as a random subtree of the Ulam--Harris tree.}
By assigning labels to the individuals born in $\{\bp(t):t\geq 0\}$ according to the Ulam--Harris labeling, we can view $\bp(t)$ as a random subtree of the Ulam--Harris tree~$\cU_\infty$. Observe that this labeling of $\bp(t)$ satisfies properties~\ref{prop:UH1} through~\ref{prop:UH3}, as desired.

The embedding into continuous-time branching processes can be extended from UA to more general models of randomly growing trees, with results analogous to Theorems~\ref{thrm:embed} and~\ref{thrm:malt}. 
Since in this paper we mostly focus on UA trees, we do not discuss these extensions here. 
However, we will use such extensions for the analysis of superlinear PA trees later; see Section~\ref{sec:superlinear} for details. 

\textbf{Comparing two trees in continuous time.}
So far, we have only considered a single CTBP. In order to compare two independent UA trees, we will consider two independent CTBPs, which we denote by $A_{1} := \left\{ A_{1} (t) \right\}_{t\geq 0}$ and $A_{2} := \left\{ A_{2} (t) \right\}_{t\geq 0}$. We think of $A_1 (t)$ and $A_2 (t)$ as the subset of vertices in $\mathcal{U}_\infty$ that have been explored until time $t$. 

For fixed $t$, the trees $A_1(t)$ and $A_2(t)$ do not necessarily have the same size. Nonetheless, we still study the size of the largest common subtree of $A_1(t)$ and $A_2(t)$, denoted by $\MCST\left( A_1(t), A_2(t) \right)$. This is useful, as there is an immediate implication from the continuous model to the discrete model about largest common subtrees.

\begin{observation}\label{obs:conttodiscr}
    Let $f:\R_{\geq 0} \to \R_{\geq 0}$ 
    and suppose that
    \begin{align}\label{eq:obs_conttodiscr_aspmt}
        \MCST\left( A_1(t), A_2(t) \right) \geq f(\e^t) \quad \text{ w.h.p. as } t\to \infty. 
    \end{align}
    Then
    \begin{align}\label{eq:obs_conttodiscr}
        X_n = \MCST \left( T_n^1, T_n^2 \right) \geq f(n) \quad \text{ w.h.p. as } n \to \infty.
    \end{align}
\end{observation}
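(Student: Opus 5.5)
The plan is to realize $T_n^1,T_n^2$ inside the two continuous-time processes at their random hitting times $\tau_n^1,\tau_n^2$, evaluate the hypothesis~\eqref{eq:obs_conttodiscr_aspmt} at the deterministic time $t_n:=\log n$ (so that $f(\e^{t_n})=f(n)$ exactly), and transfer the bound by monotonicity plus an independence argument. Working at exactly $t_n=\log n$ is what lets us avoid any regularity assumption on $f$: we never compare $f$ at two different arguments.

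\textbf{Step 1 (setup).} Let $\tau_n^i:=\inf\{t\ge 0:|A_i(t)|=n\}$ for $i=1,2$. By Lemma~\ref{thrm:embed} and the independence of $A_1,A_2$, the pair $\bigl(A_1(\tau_n^1),A_2(\tau_n^2)\bigr)$ has the law of $(T_n^1,T_n^2)$. Hence $X_n$ has the same law as $\wt X_n:=\MCST\bigl(A_1(\tau_n^1),A_2(\tau_n^2)\bigr)$.

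\textbf{Step 2 (independence of the skeleton from the clocks).} In a Yule process, given that there are currently $k$ individuals, the time to the next birth is $\Exp(k)$. The identity of the parent is uniform among the $k$ individuals and is independent of that waiting time, by the competing-exponentials property of independent rate-$1$ clocks. Inductively, the sequence of trees $\{A_i(\tau_m^i)\}_{m\ge1}$ is therefore independent of the sequence of birth times $\{\tau_m^i\}_{m\ge1}$. I would verify this carefully, since it is exactly what lets us decouple the event on times from the event on shapes; it is the one genuinely non-trivial step. Combined with the independence of the two processes, $\wt X_n$ is independent of the event $E_n:=\{\tau_n^1\ge t_n,\ \tau_n^2\ge t_n\}$.

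\textbf{Step 3 (monotonicity).} On $E_n$ we have $A_i(t_n)\subseteq A_i(\tau_n^i)$ for both $i$, because the trees only grow. Any common subtree of $A_1(t_n),A_2(t_n)$ is therefore a common subtree of the larger trees, so $\wt X_n\ge \MCST(A_1(t_n),A_2(t_n))$ on $E_n$. Using the independence from Step~2, we get
\[
\p\bigl(\wt X_n<f(n)\bigr)\,\p(E_n)=\p\bigl(\wt X_n<f(n),\,E_n\bigr)\le \p\bigl(\MCST(A_1(t_n),A_2(t_n))<f(\e^{t_n})\bigr)\longrightarrow 0
\]
by the hypothesis~\eqref{eq:obs_conttodiscr_aspmt}.

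\textbf{Step 4 (lower bound on $\p(E_n)$).} It remains to show $\liminf_n \p(E_n)>0$. We have $\{\tau_n^i\ge t_n\}\supseteq\{|A_i(t_n)|<n\}=\{|A_i(t_n)|\e^{-t_n}<1\}$. By Theorem~\ref{thrm:malt}, $|A_i(t)|\e^{-t}\to W_i\sim\Exp(1)$ almost surely. Since $W_i$ has a continuous law, $\p(|A_i(t_n)|\e^{-t_n}<1)\to \p(W_i<1)=1-\e^{-1}$. Alternatively, one can compute exactly: $|A_i(t)|$ is geometric with mean $\e^t$. By independence of the two processes, $\p(E_n)\ge (1-\e^{-1})^2/2$ for all large $n$.

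\textbf{Conclusion.} Dividing the display in Step~3 by $\p(E_n)$ gives $\p(X_n<f(n))=\p(\wt X_n<f(n))\to0$, which is~\eqref{eq:obs_conttodiscr}.
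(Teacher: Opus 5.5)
Your proof is correct and follows the same route as the paper. You evaluate the hypothesis at $t=\log n$, restrict to the event (of probability bounded away from zero) that both processes have at most $n$ vertices at that time, and then use monotonicity of $\MCST$ under growth. Your Step~2 (independence of the embedded jump chain from the birth times) is exactly what makes the paper's brief ``coupling argument'' rigorous, and your product inequality $\p(\wt X_n<f(n))\,\p(E_n)\le\p\bigl(\MCST(A_1(t_n),A_2(t_n))<f(n)\bigr)$ plays the role of the paper's conditioning on $\mathcal{L}_n$.
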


\begin{proof}
    Define $t := \log(n) $. By Theorem~\ref{thrm:malt}, there is a uniformly (in $n$) positive probability that
    \begin{equation}\label{eq:conditioning_L_n}
        \max \left\{ |A_{1} \left( \log(n) \right)| , \left| A_{2} \left( \log(n) \right) \right| \right\}
        \leq \e^{\log(n)} = n.
    \end{equation}
    Call this event $\mathcal{L}_n$. As $\mathcal{L}_n$ has uniform positive probability, $\MCST\left( A_1(\log(n)), A_2(\log(n)) \right)$ is still at least $f\left( \e^{\log(n)} \right) = f(n)$ with high probability as $n \to \infty$ after conditioning on $\mathcal{L}_n$ and assuming \eqref{eq:obs_conttodiscr_aspmt}. However, conditioned on $\mathcal{L}_{n}$, by a coupling argument we have that $\MCST\left( A_1(\log(n)), A_2(\log(n)) \right)$ is stochastically dominated by $\MCST \left( T_n^1, T_n^2 \right)$ (recall~\eqref{eq:conditioning_L_n}), which concludes the proof.
\end{proof}

\subsection{The Ulam--Harris common subtree in continuous time}\label{sec:UH_conts_time}

In Section~\ref{sec:UH_common_subtree} we defined (in discrete time) the Ulam--Harris common subtree of two randomly growing trees.  
Here, we define its continuous-time analogue. 
For a vertex $\sigma \in V_\infty$ and $i\in \{1,2\}$, define
\begin{equation}\label{def:ais}
    A_i (\sigma, t) \coloneqq \mathbbm{1}_{\left\{ \sigma \text{ is born by time $t$ in the $i$-th tree} \right\}},
\end{equation}
that is, $A_i (\sigma, t)$ is the indicator of the event that $\sigma$ was born by time $t$ in the $i$-th tree. 
Observe that, by construction, 
the size of the $i$-th tree at time $t$ is given by 
\[
|A_{i}(t)| = \sum_{\sigma \in V_{\infty}} A_{i}(\sigma, t).
\]
The Ulam--Harris common subtree of $A_{1}(t)$ and $A_{2}(t)$ is induced by the vertices $\sigma \in V_{\infty}$ who are present in both $A_{1}(t)$ and $A_{2}(t)$. 
This latter event occurs precisely when 
$A_{1}(\sigma,t) = A_{2}(\sigma,t) = 1$. 
Thus the size of the Ulam--Harris common subtree of $A_{1}(t)$ and $A_{2}(t)$ is given by 
\begin{equation}\label{eq:UHCS_def}
\UHCS(A_{1}(t), A_{2}(t)) 
:= 
\sum_{\sigma \in V_{\infty}} A_{1}(\sigma, t) A_{2}(\sigma, t).
\end{equation}
Thus the continuous-time analogue of the discrete-time inequality $X_{n} \geq Y_{n}$ (see~\eqref{eq:UH_LB} in Section~\ref{sec:UH_common_subtree}) is that 
\begin{equation}\label{eq:UH_LB_continuous_time}
\LCS(A_{1}(t), A_{2}(t)) 
\geq 
\UHCS(A_{1}(t), A_{2}(t)).
\end{equation}
Analogously to Observation~\ref{obs:conttodiscr} 
we have the following observation (with an analogous proof which we omit). 

\begin{observation}\label{obs:UH_conttodiscr}
    Let $f:\R_{\geq 0} \to \R_{\geq 0}$ 
    and suppose that
    \begin{align*}
        \UHCS(A_{1}(t), A_{2}(t)) \geq f(\e^t) \quad \text{ w.h.p. as } t\to \infty. 
    \end{align*}
    Then
    \begin{align*}
        Y_{n} \geq f(n) \quad \text{ w.h.p. as } n \to \infty.
    \end{align*}
\end{observation}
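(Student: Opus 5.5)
The argument copies the proof of Observation~\ref{obs:conttodiscr}, with $\LCS$ replaced by $\UHCS$. The one point to check is that the coupling step still works, because $\UHCS$ is defined through Ulam--Harris labels and not through isomorphism classes of trees. Set $t := \log(n)$ and let $\mathcal{L}_n$ be the event in~\eqref{eq:conditioning_L_n}, namely that both $|A_1(\log n)|$ and $|A_2(\log n)|$ are at most $n$.

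\textbf{Step 1: $\mathcal{L}_n$ has probability bounded below.} By Theorem~\ref{thrm:malt}, $|A_i(\log n)|/n \to W_i$ almost surely, where $W_1, W_2$ are independent $\Exp(1)$ variables. Hence
\[
\liminf_n \p(\mathcal{L}_n) \geq \p(W_1 < 1)\,\p(W_2<1) = (1-\e^{-1})^2 > 0,
\]
using independence of the two processes. Since $\p(\UHCS(A_1(\log n),A_2(\log n)) < f(n)) \to 0$ and $\p(\mathcal{L}_n)$ is bounded away from zero, the conditional probability of this event given $\mathcal{L}_n$ also tends to $0$.

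\textbf{Step 2: coupling, the key step.} By Lemma~\ref{thrm:embed}, applied to each process independently, $A_i(\tau^i_n)$ with its Ulam--Harris labelling has the law of $T_n^i$ with its labelling $\ell^i_{\UH}$, jointly for $i=1,2$. The Ulam--Harris labels are intrinsic to the branching process, so this identification preserves labels, not only the tree shape.

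On $\mathcal{L}_n$ we have $\tau^i_n \geq \log n$ for both $i$. The label sets only grow in time, so $A_i(\log n) \subseteq A_i(\tau^i_n)$ as subsets of $V_\infty$. The quantity
\[
\UHCS(A_1(s_1),A_2(s_2)) = \sum_\sigma A_1(\sigma,s_1)A_2(\sigma,s_2)
\]
is monotone in each time argument, so on $\mathcal{L}_n$
\[
\UHCS(A_1(\log n),A_2(\log n)) \leq \UHCS\bigl(A_1(\tau^1_n),A_2(\tau^2_n)\bigr) \overset{d}{=} Y_n .
\]

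\textbf{Step 3: remove the conditioning.} The obstacle is that conditioning on $\mathcal{L}_n$ could bias the law of the right-hand side. I would avoid this by using that, by Lemma~\ref{thrm:embed}, $\tau^i_n$ and the labelled tree $A_i(\tau^i_n)$ are not independent in general. So rather than condition, I would bound directly:
\[
\p(Y_n < f(n)) = \p\bigl(\UHCS(A_1(\tau^1_n),A_2(\tau^2_n)) < f(n)\bigr).
\]
For this step I would invoke the same fact used implicitly in Observation~\ref{obs:conttodiscr}: for the Yule process, the sequence of jump times $(\tau_n)$ is independent of the embedded discrete labelled tree chain, since the holding time at size $k$ is $\Exp(k)$ whatever the shape. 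Given this independence, conditioning on $\mathcal{L}_n$, which is the event $\{\tau^1_n \geq \log n, \tau^2_n \geq \log n\}$, leaves the law of $(A_1(\tau^1_n),A_2(\tau^2_n))$ unchanged.

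Combining the three steps,
\[
\p(Y_n < f(n)) \leq \p\bigl(\UHCS(A_1(\log n),A_2(\log n)) < f(n) \,\big|\, \mathcal{L}_n\bigr) \to 0,
\]
which is the claim.
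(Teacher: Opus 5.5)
Your proof is correct and takes essentially the paper's route: the paper omits it as analogous to Observation~\ref{obs:conttodiscr}, which sets $t=\log n$, conditions on $\mathcal{L}_n$ and couples, and your independence of the embedded labelled jump chain from the holding times is exactly what justifies that unexplained ``coupling argument''. Two cosmetic fixes: delete the sentence claiming that $\tau^i_n$ and $A_i(\tau^i_n)$ are not independent, and note that $\mathcal{L}_n$ is really $\{\tau^1_{n+1}>\log n,\ \tau^2_{n+1}>\log n\}$, so $\tau^i_n\ge\log n$ can fail when $|A_i(\log n)|=n$ (though then $A_i(\log n)=A_i(\tau^i_n)$ anyway); since this event is still a function of the holding times, the argument goes through unchanged.
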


\section{Lower bound via the Ulam--Harris common subtree} \label{sec:UHLB} 

In this section we obtain a lower bound on $Y_{n}$, 
the size of the Ulam--Harris common subtree of two independent UA trees (see Sections~\ref{sec:UlamHarris} and~\ref{sec:UH_common_subtree} for definitions). 

\begin{theorem}\label{thm:UH_matching}
For every $\eps > 0$ we have that, with high probability, $Y_{n} \geq n^{2(\sqrt{2}-1)-\eps}$. 
In particular, with high probability, we have that $Y_{n} \geq n^{0.828}$. 
\end{theorem}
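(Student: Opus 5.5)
By Observation~\ref{obs:UH_conttodiscr}, it suffices to show that, for every $\eps>0$, with high probability as $t\to\infty$,
\[
\UHCS(A_1(t),A_2(t)) \geq \e^{(2(\sqrt2-1)-\eps)t}.
\]
I would prove this by a second-moment argument on the number of Ulam--Harris labels born in both independent Yule processes, restricted to a well-chosen class of labels.

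\textbf{First moment.} A label $\sigma=(i_1,\dots,i_k)$ is born in a Yule process at time $\sum_{j}\Gamma_{j}$, where $\Gamma_j\sim\mathrm{Gamma}(i_j,1)$ are independent. Equivalently, the birth time is $\mathrm{Gamma}(|\sigma|_1,1)$, so $\P(A_i(\sigma,t)=1)=\P(\mathrm{Pois}(t)\ge |\sigma|_1)$. The number of labels with $|\sigma|_1=m$ is $2^{m-1}$ (compositions of $m$). Hence
\[
\E\,\UHCS=\sum_m 2^{m-1}\,\P(\mathrm{Pois}(t)\ge m)^2 .
\]
Taking $m=xt$ with $x>1$, the summand has exponent
\[
t\bigl[x\log 2-2(x\log x-x+1)\bigr].
\]
Optimizing gives $\log x=\tfrac12\log 2$, i.e.\ $x=\sqrt2$, with value $2\sqrt2-2=2(\sqrt2-1)$. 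So $\E\,\UHCS=\e^{(2(\sqrt2-1)+o(1))t}$.

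\textbf{Second moment.} First-moment calculations alone are not enough, because we need the lower bound with high probability. I would fix $x$ slightly below $\sqrt2$ and count
\[
Z=\sum_{\sigma\in\mathcal S} A_1(\sigma,t)A_2(\sigma,t),
\]
where $\mathcal S$ is the set of labels with $|\sigma|_1=\lfloor xt\rfloor$. To control correlations, I would further restrict $\mathcal S$ to labels whose prefix sums $|\sigma_{\le j}|_1$ stay close to linear in depth; typical compositions satisfy this, which costs only $\e^{o(t)}$.

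For $\sigma,\sigma'$ with longest common prefix $\rho$, write $|\rho|_1=r$. The birth times in a single tree share the $\mathrm{Gamma}(r)$ part and are otherwise independent, so
\[
\E[A_i(\sigma)A_i(\sigma')]\approx \sum_s \P(\text{birth of }\rho\approx s)\,\P\bigl(\mathrm{Gamma}(m-r)\le t-s\bigr)^2 .
\]
Summing over pairs, weighted by $2^{r}\cdot 2^{2(m-r)}$, and squaring over the two independent trees gives a variational expression in $(r/t,s/t)$. I would show that its maximum exponent is strictly below twice the first-moment exponent unless $r=o(t)$, and that the $r=o(t)$ contribution equals $(1+o(1))(\E Z)^2$.

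\textbf{Alternative via branching.} A cleaner route, which I would try first, avoids the full variance computation. Run both processes up to time $\delta t$. With high probability there are $\e^{c\delta t}$ common labels at some level $\ell$, and their future evolutions are conditionally independent. Applying the first-moment bound to each of them on the remaining time $(1-\delta)t$, and using Paley--Zygmund for a single root pair, each common label yields $\ge \e^{(2(\sqrt2-1)-\eps)(1-\delta)t}$ common descendants with probability at least $\e^{-o(t)}$. Since there are exponentially many independent trials, at least one succeeds with high probability.

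\textbf{Main obstacle.} The main obstacle is the Paley--Zygmund step for a single pair, i.e.\ showing
\[
\E[Z^2]\le \e^{o(t)}(\E Z)^2 .
\]
Pairs sharing a long prefix born unusually early could dominate the second moment. The restriction of $\mathcal S$ to ``balanced'' labels, together with strict concavity of the large-deviation rate function, is what should rule this out. Verifying that the resulting two-parameter optimization has no interior maximum exceeding the diagonal value is the delicate computation.
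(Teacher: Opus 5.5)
Your overall architecture matches the paper's. You reduce via Observation~\ref{obs:UH_conttodiscr} and compute the first moment at level $\sqrt2\,t$; that computation is correct. You then prove only the weak bound $\E[Z^2]\le \e^{o(t)}(\E Z)^2$, get success probability $\e^{-o(t)}$ from Paley--Zygmund, and boost it using the $\e^{ct}$ conditionally independent common labels present by time $\delta t$.

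The gap is exactly the step you call the main obstacle: the second-moment bound is never established, and the mechanism you sketch for it is off.
\begin{itemize}
\item \emph{Wrong pivot.} With $\rho$ the longest common prefix, $T_\sigma$ and $T_{\sigma'}$ do not share just a $\mathrm{Gamma}(r)$ part and are not otherwise independent. If $\sigma=\rho\oplus(a)\oplus\cdots$ and $\sigma'=\rho\oplus(b)\oplus\cdots$ with $a<b$, they also share the first $a$ inter-birth times of $\rho$. The correct pivot is the youngest common predecessor $\eta=\ycp(\sigma,\sigma')$ for the order $\preccurlyeq$ (Definition~\ref{def:ycp}). For it, $T_\eta$, $T_\sigma-T_\eta$, $T_{\sigma'}-T_\eta$ are independent, and by Lemma~\ref{lem:partial_order_expansion} each $\sigma$ has at most $2^{m-r}$ partners $\sigma'$ with $|\eta|_1=r$.
\item \emph{Balancing is irrelevant.} Without switching, the law of $T_\sigma$ depends only on $|\sigma|_1$ and the correlation only on $|\eta|_1$. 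So depth profiles have no bearing on whether early-born shared ancestors dominate. Such a restriction is needed only for the switched process of Section~\ref{sec:beyondUH}, where arrival times depend on parities.
\end{itemize}

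The missing idea is a crude decoupling that turns your two-parameter $(r,s)$ optimization into a one-variable monotonicity check. Drop $T_\eta$ from one event: $\P(T_\sigma\le t,\,T_{\sigma'}\le t)\le \P(T_\sigma\le t)\,\P(T_{\sigma'}-T_\eta\le t)$. With $Z$ summing over the whole level-$m$ set, as the paper does, this gives $\E[Z^2]\le 2^{m-1}\P(\mathrm{Gamma}(m)\le t)^2\sum_{r=1}^{m}2^{m-r}\P(\mathrm{Gamma}(m-r)\le t)^2$. By the Chernoff bound, the $r$-th summand is at most $\exp(t\,g((m-r)/t))$, where $g(a)=a\log 2-2a\overline{\Lambda}(1/a)$ and $\overline{\Lambda}$ is the exponential rate function truncated to $0$ on $(1,\infty)$. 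Since $g(a)=a\log 2$ on $[0,1]$ and $g'(a)=\log 2-2\log a\ge 0$ on $[1,\sqrt2]$, every summand is at most $\exp(t\,g(m/t))=2^{m}\P(\mathrm{Gamma}(m)\le t)^2\e^{o(t)}$. This gives $\E[Z^2]\le\e^{o(t)}(\E Z)^2$. What rules out early shared ancestors is that $\sqrt2$ is the right end of the range where $g$ increases, not any balancing.

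Finally, your direct route fails: the claim that small-$r$ pairs contribute $(1+o(1))(\E Z)^2$ is false. On the full level set, for each $\sigma$ exactly half of the $\sigma'$ have $\ycp(\sigma,\sigma')=(1)$, so they share $E_1$. By Jensen, for such pairs $\E[A_1(\sigma,t)A_1(\sigma',t)]/\P(A_1(\sigma,t)=1)^2$ tends to a constant strictly above $1$ (namely $2/(2\sqrt2-1)$ at level $\sqrt2\,t$), and all other pairs are nonnegatively correlated. Hence $\E[Z^2]\ge(1+c)(\E Z)^2$, the second moment alone yields only positive probability, and the bootstrap is necessary rather than an alternative.
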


Due to the inequality $X_{n} \geq Y_{n}$ (see~\eqref{eq:UH_LB} in Section~\ref{sec:UH_common_subtree}), we have the following corollary. 

\begin{corollary}\label{thm:UHLB}
For every $\eps > 0$ we have that, with high probability, $X_{n} \geq n^{2(\sqrt{2}-1)-\eps}$. 
In particular, with high probability, we have that $X_{n} \geq n^{0.828}$. 
\end{corollary}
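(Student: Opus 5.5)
The corollary follows at once from Theorem~\ref{thm:UH_matching} combined with the deterministic inequality~\eqref{eq:UH_LB}. Fix $\eps>0$. The Ulam--Harris common subtree from Section~\ref{sec:UH_common_subtree} is a common subtree of $T_n^1$ and $T_n^2$ with $Y_n$ vertices. Therefore $X_n \geq Y_n$ holds surely, for every realization of the two trees.

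We then have the event inclusion
\[
\{Y_n \geq n^{2(\sqrt{2}-1)-\eps}\} \subseteq \{X_n \geq n^{2(\sqrt{2}-1)-\eps}\}.
\]
By Theorem~\ref{thm:UH_matching}, the left-hand event has probability tending to $1$, so the right-hand event does too. This proves the first statement.

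For the numerical statement, note that $2(\sqrt{2}-1) \approx 0.82843 > 0.828$. Choose $\eps := 2(\sqrt{2}-1) - 0.828 > 0$. Then $n^{2(\sqrt{2}-1)-\eps} = n^{0.828}$, and the first part gives $X_n \geq n^{0.828}$ with high probability. (Alternatively, this can be read off directly from the second part of Theorem~\ref{thm:UH_matching}.)

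No step here is a real obstacle. All of the difficulty is in Theorem~\ref{thm:UH_matching} itself, that is, in the large-deviation analysis of $\UHCS(A_1(t),A_2(t))$ in continuous time. That analysis is then transferred to $Y_n$ through Observation~\ref{obs:UH_conttodiscr}.
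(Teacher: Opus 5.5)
Your proof is correct and is exactly the paper's argument. The corollary follows from Theorem~\ref{thm:UH_matching} via the deterministic inequality $X_n \geq Y_n$ in~\eqref{eq:UH_LB}, and your choice $\eps = 2(\sqrt{2}-1)-0.828>0$ correctly handles the numerical statement.
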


Throughout this section we will mainly work with the continuous-time trees $A_{1}$ and $A_{2}$ (defined in Sections~\ref{sec:embed} and~\ref{sec:UH_conts_time}). 
Theorem~\ref{thm:UH_matching} is directly implied by the following theorem (which is the continuous-time analogue of Theorem~\ref{thm:UH_matching}) and Observation~\ref{obs:UH_conttodiscr}. 

\begin{theorem}\label{thm:UH_matching_cts}
For every $\eps > 0$ and for all $t>0$ large enough, we have that 
\begin{equation}\label{eq:UH_expected}
\E \left[ \UHCS(A_{1}(t), A_{2}(t)) \right] 
\geq 
\exp \left( \left\{ 2 \left( \sqrt{2} - 1 \right) - \eps \right\} t \right).
\end{equation}
Moreover, for every $\eps > 0$, with high probability as $t\to \infty$, we have that 
\begin{equation}\label{eq:UH_highprob}
\UHCS(A_{1}(t), A_{2}(t)) 
\geq 
\exp \left( \left\{ 2 \left( \sqrt{2} - 1 \right) - \eps \right\} t \right).
\end{equation}
In particular, with high probability, we have that 
$\UHCS(A_{1}(t), A_{2}(t))
\geq \exp( 0.828 t )$. 
\end{theorem}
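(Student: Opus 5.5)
The plan is to compute the first moment exactly through the birth-time structure of the Yule process. Then I would upgrade it to a high-probability statement by a second moment argument, followed by a boosting step that uses many independent matched subtrees.

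\textbf{First moment.} In the Ulam--Harris labeling of the Yule process, the vertex $\sigma=(i_1,\ldots,i_k)$ is born at time $\sum_{j=1}^k \Gamma_j$. Here the $\Gamma_j$ are independent and $\Gamma_j$ is the $i_j$-th arrival time of a rate-$1$ Poisson process, so $\Gamma_j \sim \mathrm{Gamma}(i_j,1)$. Hence the birth time of $\sigma$ is $\mathrm{Gamma}(m,1)$ with $m=|\sigma|_1$, and
\[
\E[A_i(\sigma,t)] = p_m(t) := \P(\mathrm{Gamma}(m,1)\le t)=\P(\mathrm{Poi}(t)\ge m).
\]
The number of $\sigma\in V_\infty$ with level $m$ is $\sum_k\binom{m-1}{k-1}=2^{m-1}$. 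By independence of the two trees and \eqref{eq:UHCS_def},
\[
\E\left[\UHCS(A_1(t),A_2(t))\right]=\sum_{m\ge1}2^{m-1}p_m(t)^2 .
\]
For $m=\lfloor at\rfloor$ with $a>1$, Stirling gives $p_m(t)=\exp\bigl(t(-1+a-a\log a)+o(t)\bigr)$. So the $m$-th term is $\exp(t f(a)+o(t))$ with $f(a)=a\log 2-2+2a-2a\log a$. Setting $f'(a)=\log 2-2\log a=0$ gives $a=\sqrt2$, and then $f(\sqrt2)=2\sqrt2-2=2(\sqrt2-1)$. Keeping only the single term $m=\lfloor\sqrt2\, t\rfloor$ already proves \eqref{eq:UH_expected}.

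\textbf{Second moment.} Let $Z_t$ be the number of common vertices with level in a window $[(\sqrt2-\delta)t,(\sqrt2+\delta)t]$. I would write $\E[Z_t^2]$ as a sum over ordered pairs $(\sigma,\sigma')$, grouped by their longest common prefix $\rho$. Write $\sigma=\rho\,i\,\alpha$ and $\sigma'=\rho\, i'\,\alpha'$ with $i\ne i'$, or with one label a prefix of the other. Given the birth times of $\rho$ in the two trees, the two suffixes involve disjoint families of Poisson processes, except that the first letters $i,i'$ both use $\xi_\rho$. Bounding $\P(\text{both born}) \le \P(\rho\text{ born by } s)\cdot p_{\cdot}(t-s)^2$, integrated over $s$ and done in each tree, reduces everything to a variational problem. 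One has to optimize the level and the birth time of $\rho$ together with the levels of the two suffixes. The aim is to show that its value is at most $2f(\sqrt2)$, so that $\E[Z_t^2]\le e^{o(t)}\E[Z_t]^2$. Heuristically the prefix contributes one factor of the form $2^{\ell}p_\ell(s)^2$, which is at most $\e^{s f(\sqrt2)+o(s)}$, while each suffix contributes the full first-moment exponent over the remaining time $t-s$. The concavity of the rate functions should make the split with $s\approx0$ optimal. \emph{This step is where I expect the main difficulty}, because the shared clock $\xi_\rho$ and the range of possible $s$ must be handled uniformly. If the bound above is too crude, I would first truncate to vertices whose birth-time profile along the ancestral path is typical.

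\textbf{Boosting.} Paley--Zygmund with $\E[Z_t^2]\le e^{o(t)}\E[Z_t]^2$ only gives $\P\bigl(Z_t\ge \tfrac12\E Z_t\bigr)\ge e^{-o(t)}$, so I would amplify this to high probability as follows. Fix $\eta>0$. At time $\eta t$, with high probability both roots have at least $K_t\to\infty$ children, with $K_t$ a small multiple of $\eta t$. The children $(1),\ldots,(K_t)$ are then common vertices. The pairs of subtrees rooted at them evolve as independent copies of the two-tree process, restarted at different (but at most $\eta t$) times. Applying the second-moment bound to each copy over time $(1-\eta)t$, with a window width chosen so that the success probability $e^{-o(t)}$ is beaten by $K_t$, at least one copy succeeds with high probability. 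That copy alone gives $\UHCS\ge \exp\bigl((2(\sqrt2-1)-\eps)t\bigr)$ once $\eta$ is small. If the $e^{-o(t)}$ loss is not polynomial, I would instead go to depth $r$ at time $\eta t$, using exponentially many common vertices there. Finally, $2(\sqrt2-1)=0.8284\ldots>0.828$ gives the last assertion.
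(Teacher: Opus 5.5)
Your first-moment computation matches the paper's. So does your overall architecture: a bound $\E[Z_t^2]\le \e^{o(t)}\E[Z_t]^2$, then Paley--Zygmund, then amplification over many independent matched subtrees. The gap is in the second moment, at the step you flag, and it is not only a matter of uniformity. Factorizing the two suffixes given the birth time of the longest common prefix $\rho$ is false.

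Write $\sigma=\rho\oplus(i)\oplus\alpha$ and $\sigma'=\rho\oplus(i')\oplus\alpha'$ with $i<i'$. The $i'$-th arrival of $\xi_\rho$ is the $i$-th arrival plus an independent $\mathrm{Gamma}(i'-i,1)$. So both remaining passage times contain the same $\mathrm{Gamma}(i,1)$ term. By Harris' inequality the joint probability is at least the product of the marginals, and it can exceed it by an exponential factor. For example, $\sigma=(L)$ and $\sigma'=(L-1,1)$ with $L=\lfloor\sqrt2 t\rfloor$ have $\rho=\varnothing$. Yet $\P(T_\sigma\le t,\,T_{\sigma'}\le t)$ is within a constant factor of $\P(\mathrm{Gamma}(L,1)\le t)=\e^{-\Theta(t)}$, not of its square. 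Your heuristic therefore gives the shared part level $|\rho|_1$ when it should be $|\rho|_1+\min(i,i')$. The inequality feeding your variational problem does not hold.

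The missing idea is to split at the youngest common predecessor $\eta=\ycp(\sigma,\sigma')$ in the order $\preccurlyeq$. Here $\eta=\rho\oplus(\min(i,i'))$, or the shorter label when one label is a prefix of the other. Then $T_\eta$, $T_\sigma-T_\eta$ and $T_{\sigma'}-T_\eta$ are independent Gamma variables with parameters $|\eta|_1$, $|\sigma|_1-|\eta|_1$ and $|\sigma'|_1-|\eta|_1$. Moreover, for fixed $\sigma$ and $\eta\preccurlyeq\sigma$ with $|\eta|_1=r$, at most $2^{L-r}$ vertices $\sigma'$ of level $L$ satisfy $\ycp(\sigma,\sigma')=\eta$ (Lemma~\ref{lem:partial_order_expansion}).

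With this decomposition your integral over $s$ closes. The uniform Chernoff bound $2^m\P(\mathrm{Gamma}(m,1)\le u)^2\le \e^{2(\sqrt2-1)u}$ bounds the second moment by $\mathrm{poly}(t)\sup_{s\in[0,t]}\e^{2(\sqrt2-1)(2t-s)}$. The supremum is attained at $s=0$ simply because $2(\sqrt2-1)>0$; no concavity is needed. The paper is cruder still. It uses $\P(T_\sigma\le t,\,T_{\sigma'}\le t)\le\P(T_\sigma\le t)\,\P(T_{\sigma'}-T_\eta\le t)$, so no integral over $s$ appears. Everything then reduces to $\alpha\mapsto\alpha\log 2-2\alpha\overline{\Lambda}(1/\alpha)$ being increasing on $[0,\sqrt2]$.

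Finally, in the boosting step, $K_t\asymp t$ children of the root suffice only if the per-copy success probability is $\gg 1/t$. An $\e^{o(t)}$ second-moment bound does not give that. You therefore need your fallback with exponentially many common vertices, as in the paper, which uses the vertices of level $\lfloor\eps t/2\rfloor$ born in both trees by time $\eps t$.
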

The rest of this section is devoted to the proof of Theorem~\ref{thm:UH_matching_cts}. 

\subsection{Lower bound on the expectation}\label{sec:UHLB_expectation}

We start by showing~\eqref{eq:UH_expected}. 
While ultimately we are interested in showing the high-probability lower bound~\eqref{eq:UH_highprob}, 
it is useful to first understand the expectation, 
as this highlights the main ideas, 
without the need to deal with some additional technical difficulties.

\begin{proof}[Proof of~\eqref{eq:UH_expected}]
By linearity of expectation and since $A_{1}$ and $A_{2}$ are i.i.d., we have that 
\begin{equation}\label{eq:UHCS_expectation}
\E \left[ \UHCS(A_{1}(t), A_{2}(t)) \right] 
= 
\sum_{\sigma \in V_{\infty}} \p \left( \sigma \text{ born by time } t \text{ in } A_{1} \right)^{2}.
\end{equation}
Note, in particular, that we only have to understand the evolution of a single Yule process $A_{1}$. 
To simplify notation, 
in the following we drop $A_{1}$ from the notation, as it is understood from context. 

Recall that the \emph{level} of a vertex 
$\sigma = (\sigma_{1}, \ldots, \sigma_{\ell}) \in V_{\infty}$ 
in the Ulam--Harris tree equals the sum of its indices: 
$|\sigma|_{1} = \sigma_{1} + \ldots + \sigma_{\ell}$. 
To obtain a lower bound on the expectation in~\eqref{eq:UHCS_expectation}, 
we restrict the sum to vertices on a particular level $\alpha t \in \N$ (where $\alpha$ will be chosen (i.e., optimized) later): 
\begin{equation}\label{eq:UHCS_expectation_LB}
\E \left[ \UHCS(A_{1}(t), A_{2}(t)) \right] 
\geq 
\sum_{\sigma \in V_{\infty}: |\sigma|_{1} = \alpha t} \p \left( \sigma \text{ born by time } t \right)^{2}.
\end{equation}
This is useful, since the probability that a vertex is born by time $t$ is the same for all vertices on a particular level. 
Specifically, we have that
\begin{equation}\label{eq:prob_sum_of_exp}
\p \left( \sigma \text{ born by time } t \right) 
= 
\p \left( \sum_{i=1}^{|\sigma|_{1}} E_{i} \leq t \right),
\end{equation}
where the $\{E_{i}\}_{i \geq 1}$ are i.i.d.\ rate $1$ exponential random variables, 
corresponding to appropriate edges in the Ulam--Harris tree. 
Indeed, for 
$\sigma = (\sigma_{1}, \ldots, \sigma_{\ell})$ 
to be born by time $t$, 
the root needs to give birth to its $\sigma_{1}$-th child, 
then $\sigma_{1}$ needs to give birth to its $\sigma_{2}$-th child, 
then $(\sigma_{1}, \sigma_{2})$ needs to give birth to its $\sigma_{3}$-th child, 
and so on, 
all before time $t$. 
Each birth corresponds to an independent rate~$1$ exponential random variable 
and there are $|\sigma|_{1}$ such random variables in total. 

Since the random variables in the sum are i.i.d., we use Cram\'er's theorem (e.g.,~\cite[Theorem~1.4]{hollander2000large}) 
in order to approximate large deviation probabilities of the form appearing in~\eqref{eq:prob_sum_of_exp}. 
The rate function of an exponential random variable with mean $1$ is given by 
\begin{equation}\label{eq:exp_rate_function}
\Lambda^{\star}_{\exp}(x)
:= \sup_{s\in \R}\left\{ sx-\log\left(\mathbb E \left[\e^{s E_1} \right] \right) \right\}
= x - 1 -\log x,
\end{equation}
for $x > 0$. 
Thus for $\alpha > 1$ we obtain that 
\be \label{eq:cramerbound}
\prob{\sum_{i=1}^{\alpha t}E_i\leq t} 
= \exp \left(-\Lambda_{\exp}^{\star}(1/\alpha)\alpha t + o(t) \right)
= \exp \left(-(1-\alpha+\alpha\log\alpha)t + o(t) \right).
\ee

Next, in order to compute the sum in~\eqref{eq:UHCS_expectation_LB}, we need to count the number of vertices at a particular level. 
For fixed $1 \leq \ell \leq N$, let us count the number of vertices 
at depth $\ell$ 
and at level $N$, 
that is, 
the number of vertices $\sigma$ 
with 
$|\sigma|_{0} = \ell$ 
and 
$|\sigma|_{1} = N$. 
Such vertices correspond exactly to compositions of $N$ with $\ell$ elements, 
and there are $\binom{N-1}{\ell-1}$ such compositions. 
(A composition of $N$ with $\ell$ elements is an ordered sequence of positive integers $a_{1}, \ldots, a_{\ell}$ such that $\sum_{i=1}^{\ell} a_{i} = N$.) 
Summing over the different values of $\ell$, 
the number of vertices $\sigma$ with $|\sigma|_1 = N$ is exactly 
\be \label{eq:numind}
\sum_{\ell=1}^{N}\binom{N-1}{\ell-1}=\sum_{\ell=0}^{N-1}\binom{N-1}{\ell}=2^{N-1}.
\ee 

Putting together~\eqref{eq:UHCS_expectation_LB},~\eqref{eq:prob_sum_of_exp},~\eqref{eq:cramerbound}, and~\eqref{eq:numind}, 
we obtain that 
\begin{align*}
\E \left[ \UHCS(A_{1}(t), A_{2}(t)) \right] 
&\geq 2^{\lfloor \alpha t \rfloor-1} \exp \left(-2(1-\alpha+\alpha\log\alpha)t + o(t) \right) \\
&= \exp \left( \left( - 2 \alpha \log\alpha + (2+\log 2) \alpha - 2 \right) t + o(t) \right)
\end{align*}
for every $\alpha > 1$. 
An elementary computation shows that the function 
$\alpha \mapsto - 2 \alpha \log\alpha + (2+\log 2) \alpha - 2$ 
is maximized at $\alpha = \sqrt{2}$. 
Plugging this value into the display above, we obtain that 
\[
\E \left[ \UHCS(A_{1}(t), A_{2}(t)) \right] 
\geq 
\exp \left( 2 \left( \sqrt{2}-1 \right) t + o(t) \right)
\]
and~\eqref{eq:UH_expected} follows. 
\end{proof}

\begin{remark}[Levels of a Yule process]\label{remark:levels} 
In the proof above we studied the set of vertices at a particular level $\alpha t$. 
There are roughly $2^{\alpha t}$ such vertices, which is an increasing function of $\alpha$. 
Also, the probability 
$\prob{\sum_{i=1}^{\alpha t}E_i\leq t}$ 
is a decreasing function of $\alpha$. 
In the calculations above, these two quantities both appear, 
which results in an optimization problem that presents a tradeoff between high and low values of $\alpha$. 
For the particular optimization problem above, the two effects balance out at $\alpha = \sqrt{2}$. 
We mention that there are other natural choices of $\alpha$ that may be considered: 
\begin{itemize}
\item $\alpha = 1$: 
At $\alpha = 1$, the probability 
$\prob{\sum_{i=1}^{\alpha t}E_i\leq t}$ 
is roughly $1/2$ by the central limit theorem, so a vertex $\sigma \in V_\infty$ with $|\sigma|_{1}=t$ has a uniform positive probability of being born by time~$t$. 
Thus the proof above with the choice $\alpha = 1$ 
results in the lower bound 
$\E \left[ \UHCS(A_{1}(t), A_{2}(t)) \right] 
= \Omega(2^{t})$ 
and thus 
in the lower bound 
$\E[Y_{n}] = \Omega(n^{\log 2}) \geq n^{0.693}$.

\item $\alpha = \sqrt{2}$: This choice maximizes 
$2^{\alpha t} \prob{\sum_{i=1}^{\alpha t}E_i\leq t}^{2}$, 
as shown in the proof above.

\item $\alpha = 2$: This choice maximizes 
$2^{\alpha t} \prob{\sum_{i=1}^{\alpha t}E_i\leq t}$ 
(\emph{without} squaring the probability); in words, $\alpha = 2$ maximizes the expected number of explored vertices up to time $t$ at a given level in a single Yule process. 
Since $2^{\alpha t} \prob{\sum_{i=1}^{\alpha t}E_i\leq t}=e^{t+o(t)}$ for $\alpha = 2$, 
it also holds that the bulk of the explored vertices 
in a Yule process 
up to time $t$ are around level $2t$. 

\end{itemize}
\end{remark}

\begin{remark}[Multiple trees / Yule processes]\label{remark:many_trees}
The argument above also works for three or more Yule processes. 
Indeed, let $T_{n}^{1}, T_{n}^{2}, \ldots, T_{n}^{k}$ be $k$ independent UA trees, and  
let $A_{1}, A_{2}, \ldots, A_{k}$ be the corresponding $k$ independent Yule processes. 
The largest common subtree (LCS) and the Ulam--Harris common subtree (UHCS) of $k$ trees / Yule processes can be defined in the natural analogous way. 
Then, the equation~\eqref{eq:UHCS_expectation} in the proof is simply replaced by 
\[
\E \left[ \UHCS(A_{1}(t), A_{2}(t), \ldots, A_{k}(t)) \right] 
= 
\sum_{\sigma \in V_{\infty}} \p \left( \sigma \text{ born by time } t \text{ in } A_{1} \right)^{k}.
\]
In other words, the square is replaced by a $k$-th power. 
The rest of the proof is analogous. 

By choosing $\alpha = 1$ in the proof (as in Remark~\ref{remark:levels}), 
this shows, for some positive constant $c_{k}$ that depends only on $k$, that 
$\E \left[ \UHCS(A_{1}(t), A_{2}(t), \ldots, A_{k}(t)) \right] 
\geq c_{k} 2^{t}$. 
For the discrete-time process, this gives the lower bound 
$\E \left[ \UHCS \left( T_{n}^{1}, T_{n}^{2}, \ldots, T_{n}^{k} \right) \right] 
\geq c_{k} n^{\log 2} 
\geq c_{k} n^{0.693}$. 

One can do better by optimizing over $\alpha$. The function 
$\alpha \mapsto \alpha \log 2 - 2(1-\alpha + \alpha \log \alpha)$
in the exponent is now replaced by the function 
$\alpha \mapsto \alpha \log 2 - k(1-\alpha + \alpha \log \alpha)$. 
This is maximized at $\alpha = 2^{1/k}$, with the maximum value being $k(2^{1/k}-1)$. 
(Note that, as $k \to \infty$, the maximizer converges to $\alpha = 1$ 
and the maximum value converges to $\log 2$, so for large $k$ this is not much better than the argument in the previous paragraph.) 
Thus we obtain that 
$\E \left[ \UHCS(A_{1}(t), A_{2}(t), \ldots, A_{k}(t)) \right] 
\geq \exp \left( k(2^{1/k}-1) t - o(t) \right)$ 
and also that 
$\E \left[ \UHCS \left( T_{n}^{1}, T_{n}^{2}, \ldots, T_{n}^{k} \right) \right] 
\geq n^{k(2^{1/k}-1) - o(1)}$. 
\end{remark}

\subsection{High probability estimates}\label{sec:UHLB_whp}

With the lower bound on the expectation in hand, 
we now turn to proving the high probability lower bound (\eqref{eq:UH_highprob} of Theorem~\ref{thm:UH_matching_cts}). 
The core of the proof is a second moment argument. 
However, the second moment bound that we prove (see~\eqref{eq:2 plus o moment condition} below) is weaker than what suffices for a standard second moment argument. 
To overcome this, we use the exponential growth property of a Yule process to effectively strengthen the second moment bound (see below for details).

As in Section~\ref{sec:UHLB_expectation}, 
we bound from below the size of the Ulam--Harris common subtree 
by considering only vertices at a particular level. 
Specifically, guided by the optimization done in Section~\ref{sec:UHLB_expectation},
we consider only vertices at level 
$\lfloor \sqrt{2} t \rfloor$: 
\begin{equation}\label{eq:UHCS_LB}
\UHCS(A_{1}(t), A_{2}(t)) 
=
\sum_{\sigma \in V_{\infty}} A_{1}(\sigma, t) A_{2}(\sigma, t) 
\geq 
\sum_{\sigma \in V_{\infty}: |\sigma|_{1} = \lfloor \sqrt{2} t \rfloor} A_{1}(\sigma, t) A_{2}(\sigma, t) =: S_{t}.
\end{equation}
In the following we analyze the sum $S_{t}$. Recall that in Section~\ref{sec:UHLB_expectation} we derived the following estimate on its expectation: 
\begin{equation}\label{eq:S_t_expectation}
\E[S_{t}] 
= \exp \left( 2 \left( \sqrt{2} - 1 \right) t + o(t) \right).
\end{equation}
We will prove the following bound on its second moment: 
\begin{equation}\label{eq:2 plus o moment condition}
\E \left[ S_{t}^{2} \right] = \E \left[ S_{t} \right]^{2+o(1)}.
\end{equation}
Before proving this bound, we first show how to obtain~\eqref{eq:UH_highprob}, assuming that~\eqref{eq:2 plus o moment condition} holds. Subsequently, we will verify that~\eqref{eq:2 plus o moment condition} indeed holds. 
\begin{proof}[Proof of~\eqref{eq:UH_highprob} in Theorem~\ref{thm:UH_matching_cts}, assuming that~\eqref{eq:2 plus o moment condition} holds] 
By the Paley--Zygmund inequality, we have that 
\begin{equation*}
\p \left( S_{t} > \E[S_{t}] / 2 \right) 
\geq \frac{\E[S_{t}]^{2}}{4 \E\left[S_{t}^{2}\right]}.
\end{equation*}
Plugging in~\eqref{eq:2 plus o moment condition} into the denominator
and using the fact that $\E[S_{t}]$ grows exponentially in $t$ (see~\eqref{eq:S_t_expectation}), 
we obtain that 
\begin{equation}\label{eq:prob_not_too_small}
\p \left( S_{t} > \E[S_{t}] / 2 \right) 
\geq \frac{\E[S_{t}]^{2}}{4\E[S_{t}]^{2+o(1)}}
= e^{-o(t)}.
\end{equation}

We now explore the two Yule processes $A_{1}$ and $A_{2}$, and the associated Ulam--Harris trees, up to time $t$. 
We do so in two phases: 
first up to time $\eps t$, 
and then from time $\eps t$ to $t$; 
here $\eps \in (0,1)$ is an arbitrary fixed value. 
Consider the first phase up to time $\eps t$, and 
write $\sigma^{(1)}, \ldots, \sigma^{(K)}$ for the vertices 
$\sigma \in V_{\infty}$ 
such that $|\sigma|_{1} = \lfloor \eps t / 2 \rfloor$ 
and $A_{1}(\sigma, \eps t) = A_{2}(\sigma, \eps t) = 1$. 
By a large deviation estimate (similar to~\eqref{eq:cramerbound}) and a union bound, 
there exists a constant $c = c(\eps) > 0$ such that 
$K \geq e^{ct}$ with high probability as $t \to \infty$. 

Now condition on $(A_{1}(\eps t), A_{2}(\eps t))$ and consider the second phase from time $\eps t$ to $t$. 
For every $i \in [K]$, there is a ``new'' Ulam--Harris tree below vertex $\sigma^{(i)}$, which is being explored by both $A_{1}$ and $A_{2}$ after time $\eps t$. 
Let $Y_{i}$ denote the size of the associated Ulam--Harris common subtree of $A_{1}$ and $A_{2}$ up to time $t$ that is below $\sigma^{(i)}$ and is explored after time $\eps t$. Observe that $Y_{i}$ has the same distribution as $\UHCS(A_{1}((1-\eps)t), A_{2}((1-\eps)t))$. By~\eqref{eq:UHCS_LB} we thus have that $Y_{i}$ has the same distribution as $S_{(1-\eps)t}$ and so,  by~\eqref{eq:prob_not_too_small}, we have that 
\begin{align*}
    \p \left( Y_{i} \geq \E \left[ S_{(1-\eps)t} \right] / 2 \right) = e^{-o(t)}.
\end{align*} 
Note also that, given  $(A_{1}(\eps t), A_{2}(\eps t))$, the random variables $\left\{ Y_{i} \right\}_{i \in [K]}$ are mutually independent, since the corresponding subtrees being explored are disjoint. 
Since $K \geq e^{ct}$ with high probability, 
it follows that, 
with high probability, 
there exists $i_{*} \in [K]$ such that 
$Y_{i_{*}} \geq \E \left[ S_{(1-\eps)t} \right] / 2$. 
Observe that, by construction, $\UHCS(A_{1}(t), A_{2}(t)) 
\geq Y_{i}$ for all $i\in [K]$. 
Furthermore, by~\eqref{eq:S_t_expectation}, 
we have that 
$\E \left[ S_{(1-\eps)t} \right] / 2 = \exp \left( 2(\sqrt{2}-1) (1-\eps) t + o(t) \right)$. 

Combining all of the above, we conclude that, with high probability,  
\[
\UHCS(A_{1}(t), A_{2}(t)) 
\geq Y_{i_{*}} 
\geq \E \left[ S_{(1-\eps)t} \right] / 2
= \exp \left( 2 \left( \sqrt{2} - 1 \right) (1-\eps) t + o(t) \right),
\]
as $t\to \infty$. Since $\eps \in (0,1)$ was arbitrary,  the conclusion follows. 
\end{proof}

To conclude the proof of Theorem~\ref{thm:UH_matching_cts}, it remains to prove that~\eqref{eq:2 plus o moment condition} holds. We begin by introducing some notation. 

\begin{definition}\label{def:ycp}
Let $\eta = (\eta_{1}, \ldots, \eta_{k}), \sigma = (\sigma_{1}, \ldots, \sigma_{\ell}) \in V_{\infty}$ be two vertices of the Ulam--Harris tree. 
We say that $\eta \preccurlyeq \sigma$ if the following three conditions hold: 
$k \leq \ell$, 
for all $i \in \{1, \ldots, k-1\}$ we have that $\eta_{i} = \sigma_{i}$, 
and $\eta_{k} \leq \sigma_{k}$. 
In words, if $\eta$ and $\sigma$ are distinct vertices, then $\eta \preccurlyeq \sigma$ if and only if $\eta$ needs to be born before $\sigma$ can be born. 
For two vertices $\sigma, \sigma^\prime \in V_{\infty}$, we define the \emph{youngest common predecessor} $\ycp(\sigma, \sigma^\prime)$ as the maximal element (with respect to the partial ordering $\preccurlyeq$) of the set
\begin{equation*}
    \left\{ \eta : \eta \preccurlyeq \sigma \right\} \cap 
    \left\{ \eta : \eta \preccurlyeq \sigma^\prime \right\}.
\end{equation*}
\end{definition}

\begin{figure}
	\[\begin{tikzpicture}[scale = 1.5]
		\vertex[thick, red, label = $\varnothing$ ] (0) at (0,3) { };
		
		\vertex[thick, red, label = (1)] (1) at (-1.5,2) { };
		\vertex[thick, black, label = {(1,1)}] (11) at (-2.5,1) { };
		\vertex[thick, black, label = { \ \ \ \ (1,2)}] (12) at (-1.2,1) { };

		\vertex[thick, red, fill=gray, label = (2)] (2) at (0.5,2) { };
		\vertex[thick, red, label = {(2,1)\ \ \ \ }] (21) at (0,1) { };
		\vertex[thick, black, label = {\ \ \ (2,2)}] (22) at (1,1) { };
		\vertex[thick, red, fill=black, label = {(2,1,1)\ \ \ \ \ \ \ \ }] (211) at (-0.3,0) { };

		\vertex[thick, black, label = (3)] (3) at (2,2) { };
		\vertex[thick, black, fill=black, label = {\ \ \ \ \ \ \ (3,1)}] (31) at (2,1) { };
		
		\path[thick]
		(0) edge (1) (1) edge (11)
		(1) edge (12) (0) edge (2)
		(2) edge (21) (2) edge (22)
		(21) edge (211) (0) edge (3) (3) edge (31);
		
	\end{tikzpicture}\]
	\centering
 \caption{The black filled vertices (2,1,1) and (3,1) have the gray filled vertex (2) as their youngest common predecessor. The set $\{\eta : \eta \preccurlyeq (2,1,1)\}=\{\varnothing, (1), (2), (2,1), (2,1,1) \}$ consists of the vertices with the red boundary. Observe that, following the notation of~\eqref{eq:oplus} and~\eqref{eq:oplusprime}, one has $(2,1,1)=(2)\oplus(1,1)$ and $(3,1)=(2)\oplus^\prime (1,1)$.} \label{fig:tree_ycp}
\end{figure}

The concepts defined in Definition~\ref{def:ycp} are illustrated in Figure~\ref{fig:tree_ycp}. As another example, one has that $\ycp\left( (1,4,5), (1,7)  \right) = (1,4)$. Also, if $\sigma \preccurlyeq \sigma^\prime$, then $\ycp(\sigma, \sigma^\prime) = \sigma$.  Observe that for each vertex $\sigma \in V_{\infty}$ and each $r\in \{1,\ldots,|\sigma|_1\}$, there exists exactly one vertex $\eta \in V_\infty$ with $\eta \preccurlyeq \sigma$ and $|\eta|_1=r$; see Figure~\ref{fig:tree_ycp} again for an illustration. Further, $\varnothing \preccurlyeq \sigma$ for every $\sigma \in V_{\infty}$. Moreover, we will use the observation captured in the following lemma.

\begin{lemma}\label{lem:partial_order_expansion}
For every $n, k \geq 1$ and $\eta \in V_{\infty}$ such that $|\eta|_{1} = n$, there are exactly $2^{k}$ many vertices $\sigma \in V_{\infty}$ such that $|\sigma|_{1} = n+k$ and $\eta \preccurlyeq \sigma$. 
\end{lemma}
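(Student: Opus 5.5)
We describe the set $\{\sigma : \eta \preccurlyeq \sigma\}$ explicitly and then count its elements of level $n+k$ via compositions, as in~\eqref{eq:numind}. Write $\eta = (\eta_1,\ldots,\eta_m)$ with $m \ge 1$; the case $\eta = \varnothing$ cannot occur since $|\varnothing|_1 = 0 < n$. By Definition~\ref{def:ycp}, $\eta \preccurlyeq \sigma$ holds exactly when $\sigma = (\eta_1,\ldots,\eta_{m-1}, \eta_m + j, \tau_1, \ldots, \tau_r)$ for some integers $j \ge 0$ and $r \ge 0$ and positive integers $\tau_1,\ldots,\tau_r$. This $\sigma$ has level $|\sigma|_1 = n + j + \tau_1 + \cdots + \tau_r$.

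So the vertices with $|\sigma|_1 = n+k$ and $\eta \preccurlyeq \sigma$ are in bijection with pairs consisting of an integer $j \ge 0$ and a (possibly empty) composition $(\tau_1,\ldots,\tau_r)$ of $k - j$. The bijection is injective because the representation of $\sigma$ is unique: the first $m-1$ coordinates are fixed, $j$ is determined by the $m$-th coordinate of $\sigma$, and the $\tau_i$ are the remaining coordinates.

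Next we count these pairs. The case $j = k$ contributes exactly one pair, namely the empty composition. For $0 \le j \le k-1$, the number $k-j \ge 1$ has $2^{k-j-1}$ compositions, by the computation in~\eqref{eq:numind}. Summing,
\[
1 + \sum_{j=0}^{k-1} 2^{k-j-1} = 1 + (2^{k}-1) = 2^{k}.
\]

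An alternative route is a bijection with compositions of $k+1$. Map the pair $(j, \tau)$ to the composition $(j+1, \tau_1, \ldots, \tau_r)$ of $k+1$; this is a bijection onto all compositions of $k+1$, of which there are $2^{k}$.

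The main care point is the boundary convention: $\eta$ itself (the case $j = 0$, $r = 0$) belongs to the set but does not have level $n+k$ when $k \ge 1$, and the empty composition must be counted only in the case $j = k$. Once the set is described correctly, everything else is routine.
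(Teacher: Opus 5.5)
Your proof is correct and is essentially the paper's argument. The paper also parametrizes the vertices $\sigma$ with $\eta \preccurlyeq \sigma$ and counts them via compositions. It splits them into $\eta\oplus\mu$ (your case $j=0$) and $\eta\oplus'\mu$ (your case $j\ge 1$, with $\mu_1=j$), each in bijection with the $2^{k-1}$ compositions $\mu$ of $k$. Your explicit parametrization $\sigma=(\eta_1,\ldots,\eta_{m-1},\eta_m+j,\tau_1,\ldots,\tau_r)$ spells out the step that every such $\sigma$ has one of these two forms, which the paper only asserts. Your bijection with compositions of $k+1$ is the same two-way split, according to whether the first part equals $1$ or is at least $2$.
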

\begin{proof}
Let $\eta = (\eta_{1}, \ldots, \eta_{j})$, where $j \geq 1$ since $n \geq 1$. Let $\mu = (\mu_{1}, \ldots, \mu_{\ell}) \in V_{\infty}$ be a vertex such that $|\mu|_{1} = k$, and recall that there are $2^{k-1}$ such vertices (see~\eqref{eq:numind}). Then the vertex 
\begin{equation}\label{eq:oplus}    
\eta \oplus \mu \coloneqq (\eta_1,\ldots,\eta_j,\mu_1,\mu_2,\ldots,\mu_\ell) \in V_\infty
\end{equation}
satisfies $|\eta \oplus \mu|_{1} = n + k$ and $\eta \preccurlyeq \eta \oplus \mu$. 
Furthermore, the vertex 
\begin{equation}\label{eq:oplusprime}    
\eta \oplus' \mu \coloneqq (\eta_1,\ldots,\eta_{j-1},\eta_{j}+\mu_1,\mu_2,\ldots,\mu_\ell) \in V_\infty
\end{equation}
also satisfies $|\eta \oplus' \mu|_{1} = n+k$ and $\eta \preccurlyeq \eta \oplus' \mu$. All of the vertices in~\eqref{eq:oplus} and~\eqref{eq:oplusprime} are distinct, which gives $2 \times 2^{k-1} = 2^{k}$ vertices $\sigma \in V_{\infty}$ such that $|\sigma|_{1} = n+k$ and $\eta \preccurlyeq \sigma$. 
Finally, it follows from the definition that every vertex $\sigma \in V_{\infty}$ such that $|\sigma|_{1} = n+k$ and $\eta \preccurlyeq \sigma$ can be written either as in~\eqref{eq:oplus} or as in~\eqref{eq:oplusprime}. 
\end{proof}

With these definitions and observations, we are ready to proceed with the proof of the bound on the second moment~\eqref{eq:2 plus o moment condition}.

\begin{proof}[Proof of \eqref{eq:2 plus o moment condition}] 
Let $\{E_{i}\}_{i\in \N}$ be i.i.d.\ exponential random variables with mean $1$; these will be used throughout the proof. 
To abbreviate notation, we write $V_{\sqrt{2}t} := \left\{\sigma \in V_\infty : |\sigma|_1 = \lfloor \sqrt{2}t \rfloor \right\}$ for the set of vertices at level $\lfloor \sqrt{2}t \rfloor$. 
Recall that $\E \left[ A_{1}(\sigma, t) \right] = \p \left( A_1(\sigma,t)=1 \right)$ (i.e., the probability that $\sigma$ is born by time $t$) is the same for all vertices at the same level (see~\eqref{eq:prob_sum_of_exp}), so in particular the probability is the same for all~$\sigma \in V_{\sqrt{2}t}$. 

Now, rewriting $\E{[S_t^2]}$ by expanding the square of the sum, using linearity of expectation and the independence of $A_{1}(t)$ and $A_{2}(t)$, we get that 
\begin{equation}
    \E{[S_t^2]} = \sum_{\sigma \in V_{\sqrt{2}t}} \sum_{\sigma^\prime \in V_{\sqrt{2}t}}
\E \left[  A_1(\sigma,t) A_1(\sigma^\prime,t)  \right]^{2}.
\end{equation}
We also have that 
\begin{equation}
    \E{[S_t]}^2 = \left(  \sum_{\sigma \in V_{\sqrt{2}t}} \E[A_1(\sigma, t)]^2\right)^2 =  \left( \left| V_{\sqrt{2}t} \right| \E \left[  A_1(\mu,t)  \right]^2  \right)^2 ,
\end{equation}
for $\mu \in V_{\sqrt{2}t}$. Further, using the fact that $\E[S_{t}]$ is exponential in $t$ (see~\eqref{eq:S_t_expectation}), in order to prove~\eqref{eq:2 plus o moment condition}, it suffices to show the following inequality:
\begin{equation}\label{eq:correlation}
\sum_{\sigma \in V_{\sqrt{2}t}} \sum_{\sigma^\prime \in V_{\sqrt{2}t}}
\E \left[  A_1(\sigma,t) A_1(\sigma^\prime,t)  \right]^{2} 
\leq \left( \left| V_{\sqrt{2}t} \right| \E \left[  A_1(\mu,t)  \right]^2  \right)^2 \e^{o(t)},
\end{equation}
for $\mu \in V_{\sqrt{2}t}$. From this we can see that the key is to understand the quantity 
$\E \left[  A_1(\sigma,t) A_1(\sigma^\prime,t)  \right]$ for $\sigma, \sigma' \in V_{\sqrt{2}t}$.

Let $T_{\sigma}$ denote the time at which the vertex $\sigma \in V_{\infty}$ gets explored in the Yule process $A_{1}$, 
and recall that $T_{\sigma}$ has the same distribution as 
$\sum_{i=1}^{|\sigma|_{1}} E_{i}$. 
For two vertices $\sigma, \sigma' \in V_{\sqrt{2}t}$, 
the arrival times $T_{\sigma}$ and $T_{\sigma'}$ are correlated, 
since some of the exponential random variables in the respective sums are shared (i.e., they are the same). 
To control this correlation, we aim to separate out the independent exponential random variables. 
To this end, let 
$\eta := \ycp(\sigma, \sigma^\prime)$ be the youngest common predecessor of $\sigma$ and $\sigma^\prime$ (see Definition~\ref{def:ycp}). 
We can then write 
$T_{\sigma} = T_{\eta} + (T_{\sigma} - T_{\eta})$ 
and 
$T_{\sigma'} = T_{\eta} + (T_{\sigma'} - T_{\eta})$. 
This is useful, since the random variables 
$T_{\eta}$, $T_{\sigma} - T_{\eta}$, and $T_{\sigma'} - T_{\eta}$ 
are mutually independent. 

We then have the following inequality: 
\begin{equation}\label{eq:crucial_ineq}
\E \left[  A_1(\sigma,t) A_1(\sigma^\prime,t)  \right]
= \p \left( T_{\sigma} \leq t, T_{\sigma^{\prime}} \leq t \right) 
\leq \p \left( T_{\sigma} \leq t, T_{\sigma^{\prime}} - T_{\eta} \leq t \right) 
= \p \left( T_{\sigma} \leq t \right) \p \left( T_{\sigma^{\prime}} - T_{\eta} \leq t \right),
\end{equation}
where the last equality is due to independence. 
We note that the inequality in~\eqref{eq:crucial_ineq} is not optimal, but using this inequality leads to computations that are significantly more tractable, and it is sufficient for our purposes. 

Plugging~\eqref{eq:crucial_ineq} into the left hand side of~\eqref{eq:correlation} we thus obtain the following inequality: 
\begin{equation}\label{eq:crucial_ineq2}
\sum_{\sigma \in V_{\sqrt{2}t}} \sum_{\sigma^\prime \in V_{\sqrt{2}t}}
\E \left[  A_1(\sigma,t) A_1(\sigma^\prime,t)  \right]^2
\leq
\sum_{\sigma \in V_{\sqrt{2}t}} \p \left(  T_{\sigma} \leq t \right)^2 \sum_{\eta: \eta \preccurlyeq \sigma} \sum_{\substack{\sigma^\prime \in V_{\sqrt{2}t}: \\ \ycp(\sigma, \sigma^\prime) = \eta}}
     \p \left( T_{\sigma^\prime} - T_\eta \leq t   \right)^{2}.
\end{equation} 
Recall that for each vertex $\sigma \in V_{\sqrt{2}t}$ and each $r \in \{1,\ldots,\lfloor \sqrt{2} t \rfloor\}$, there exists exactly one vertex $\eta \in V_{\infty}$ for which $\eta \preccurlyeq \sigma$ and $|\eta|_1=r$. 
Also, 
while $\varnothing \preccurlyeq \sigma$, 
we cannot have that 
$\ycp(\sigma, \sigma^\prime) = \varnothing$, 
since the vertex $(1) \in V_{\infty}$ is a predecessor of both $\sigma$ and $\sigma^{\prime}$ (i.e., $(1) \preccurlyeq \sigma$ and $(1) \preccurlyeq \sigma^{\prime}$). 
By Lemma~\ref{lem:partial_order_expansion}, 
for fixed vertices $\sigma \in V_{\sqrt{2}t}$ and $\eta \in V_\infty$ with $\eta \preccurlyeq \sigma$ and $|\eta|_{1} = r \geq 1$, there are at most $2^{\lfloor \sqrt{2}t \rfloor - r}$ many vertices $\sigma^\prime \in V_{\sqrt{2}t}$ for which $\ycp(\sigma, \sigma^\prime) = \eta$. 
Observe also that, for such $\sigma'\in V_{\sqrt{2}t}$,
the difference in arrival times $T_{\sigma^\prime}-T_\eta$ 
has the same distribution as $\sum_{i=r+1}^{\lfloor \sqrt{2} t \rfloor} E_{i}$. Thus, for $\sigma \in V_{\sqrt{2}t}$ and $\eta \preccurlyeq \sigma$ with $|\eta|_1 = r$, we have that 
$$\sum_{\substack{\sigma^\prime \in V_{\sqrt{2}t}: \\ \ycp(\sigma, \sigma^\prime) = \eta}}
     \p \left( T_{\sigma^\prime} - T_\eta \leq t   \right)^{2} \leq  2^{\sqrt{2}t - r}
\p \left( \sum_{i=r+1}^{\lfloor \sqrt{2} t \rfloor } E_i  \leq t   \right)^{2}.$$
Together with the other observations of the above paragraph and~\eqref{eq:crucial_ineq2}, we can replace the sum over $\eta$ in~\eqref{eq:crucial_ineq2} by a sum over $r \in  \{1,\ldots,\lfloor \sqrt{2} t \rfloor\}$  and use the fact that $T_{\sigma}$ has the same distribution as $\sum_{i=1}^{\lfloor \sqrt{2} t \rfloor} E_{i}$ to get that 
\begin{equation}\label{eq:2nd_moment_bound}
\sum_{\sigma \in V_{\sqrt{2}t}} \sum_{\sigma^\prime \in V_{\sqrt{2}t}}
\E \left[  A_1(\sigma,t) A_1(\sigma^\prime,t)   \right]^2
\leq
\left|V_{\sqrt{2}t}\right| \p \left( \sum_{i=1}^{\lfloor \sqrt{2} t \rfloor} E_i \leq t \right)^2 \sum_{r=1}^{\lfloor \sqrt{2}t \rfloor} 2^{\sqrt{2}t - r}
\p \left( \sum_{i=r+1}^{\lfloor \sqrt{2} t \rfloor } E_i  \leq t   \right)^{2}.
\end{equation} 
Comparing the right hand side of~\eqref{eq:2nd_moment_bound} with the right hand side of~\eqref{eq:correlation}, 
what remains to be shown is the following bound:
\begin{equation}\label{eq:to show whp identity}
    \sum_{r=1}^{\lfloor \sqrt{2}t \rfloor}
    2^{\sqrt{2}t-r}
    \mathbb{P} \left( \sum_{i=r+1}^{\lfloor \sqrt{2}t \rfloor} E_i \leq t \right)^{2} \leq 
    2^{\sqrt{2} t}
    \p \left( \sum_{i=1}^{\lfloor \sqrt{2} t \rfloor} E_i \leq t \right)^2   \e^{o(t)},
\end{equation}
where we used that $|V_{\sqrt{2}t}| = 2^{\sqrt{2}t + \Theta(1)}$.

To this end, define the function 
    \begin{align*}
        \overline{\Lambda}(x) := \begin{cases}
        x-1-\log(x) &\text{ if } x \in (0,1],\\
        0 &\text{ if } x>1.
        \end{cases}
    \end{align*}
Note that $\overline{\Lambda}(x)$ equals the rate function $\Lambda^{\star}_{\exp}(x)$ of an exponential random variable with mean $1$ 
(see~\eqref{eq:exp_rate_function}) 
on the interval $(0,1]$ and is otherwise $0$. 
Using a standard large deviation tail bound for $r < \lfloor \sqrt{2}t \rfloor - t$ and using the trivial upper bound of $1$ for all other $r$, we have that 
\begin{align*}
\mathbb{P} \left( \sum_{i=r+1}^{\lfloor \sqrt{2}t \rfloor} E_i \leq t \right) 
&\leq \exp \left( - \left( \lfloor \sqrt{2}t \rfloor - r \right) \overline{\Lambda} \left( \frac{t}{\lfloor \sqrt{2}t \rfloor - r} \right) \right) \\
&= \exp \left( - \left( \sqrt{2}t - r \right) \overline{\Lambda} \left( \frac{t}{\sqrt{2}t - r} \right)  + \Theta(1) \right),
\end{align*}
where the second equality just removes the floors at the cost of a constant factor. 
Plugging this bound into the left hand side of~\eqref{eq:to show whp identity} we have that 
\begin{equation*}
\sum_{r=1}^{\lfloor \sqrt{2}t \rfloor}
    2^{\sqrt{2}t-r}
    \mathbb{P} \left( \sum_{i=r+1}^{\lfloor \sqrt{2}t \rfloor} E_i \leq t \right)^{2}
    \leq
    \sum_{r=1}^{\lfloor \sqrt{2}t \rfloor}
    \exp \left( \log(2) (\sqrt{2}t-r) - 2 (\sqrt{2}t-r) \overline{\Lambda}\left( \frac{t}{\sqrt{2}t - r} \right) + \Theta(1) \right).
    \end{equation*}
Assuming that the expression  
$\exp \left( \log(2) (\sqrt{2}t-r) - 2 (\sqrt{2}t-r) \overline{\Lambda}\left( \frac{t}{\sqrt{2}t - r} \right) \right)$, considered as a function of $r$ from $\left[0, \sqrt{2}t\right ]$ to $\R$,
is maximized at $r=0$, we can conclude that 
\begin{equation}\label{eq:sum_r_tail_bound}
        \sum_{r=1}^{\lfloor \sqrt{2}t \rfloor}
    2^{\sqrt{2}t-r}
    \mathbb{P} \left( \sum_{i=r+1}^{\lfloor \sqrt{2}t \rfloor} E_i \leq t \right)^{2}
    \leq \sqrt{2}t \exp \left( \log(2) \sqrt{2}t - 2 \sqrt{2}t \overline{\Lambda}\left( \frac{1}{\sqrt{2}} \right) + \Theta(1) \right).
    \end{equation}
By the large deviation bound~\eqref{eq:cramerbound}, the right hand side of~\eqref{eq:to show whp identity} bounds from above the right hand side of~\eqref{eq:sum_r_tail_bound} for an appropriate choice of the factor $e^{o(t)}$. 
    
Thus we are left to show that the maximizer of the function 
\[
r \mapsto    \exp \left( \log(2) (\sqrt{2}t-r) - 2 (\sqrt{2}t-r) \overline{\Lambda}\left( \frac{t}{\sqrt{2}t - r} \right) \right)
\] 
over $r \in \left[0,\sqrt{2}t\right]$ is $r = 0$.
Equivalently, we will show that the maximizer of the function 
    \begin{align*}
        g(\alpha) := \log(2) \alpha - 2 \alpha \overline{\Lambda}\left( \frac{1}{\alpha} \right) ,
    \end{align*}
    over the domain $\alpha \in \left[ 0, \sqrt{2} \right]$ is $\alpha = \sqrt{2}$. It is clear that the function $g(\alpha)$ is increasing for $\alpha \in [0,1]$, as the function $\overline{\Lambda}\left( \tfrac{1}{\alpha} \right) $ is equal to $0$ for $\alpha \in [0,1]$. For $\alpha > 1$ the function $g$ is still increasing for $|\alpha-1|$ small enough, and further,
    \begin{align*}
        \frac{\dd}{\dd \alpha}g(\alpha) = \frac{\dd}{\dd \alpha} \left\{ \log(2)\alpha -2 \alpha \overline{\Lambda} \left( \frac{1}{\alpha} \right) \right\}
        = \log(2) - 2 \log(\alpha),
    \end{align*}
    which equals $0$ exactly for $\alpha = \sqrt{2}$. We conclude that $g(\alpha)$ is increasing for $\alpha \in  \left[ 0, \sqrt{2} \right]$.
\end{proof}

\section{Going beyond 
Ulam--Harris: 
a lower bound of $n^{0.83}$}
\label{sec:beyondUH}

In this section we 
go beyond the Ulam--Harris common subtree 
considered in the previous section. This allows us to obtain an improved lower bound for the size of the largest common subtree. 
The key idea, described in detail in Section~\ref{sec:switching_subtrees}, is to start with the Ulam--Harris common subtree and then switch certain subtrees in the two trees to locally optimize the random fluctuations in which parts of the trees grow faster. 
Our main result in this section is the following theorem, which, combined with Observation~\ref{obs:conttodiscr}, directly implies Theorem~\ref{thm:main}.

\begin{theorem}\label{thm:ENH_matching_cts}
For all $t>0$ sufficiently large, we have that
\be\label{eq:ENH_expected}
    \E \left[ \MCST\left( A_1(t ), A_2(t) \right)\right] \geq \e^{0.833 t}.
\ee
Moreover, with high probability we have that
\be \label{eq:ENH_highprob}
     \MCST\left( A_1(t ), A_2(t) \right) \geq \e^{0.833 t}.
\ee
\end{theorem}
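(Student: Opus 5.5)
Our plan is to improve the level-based Ulam--Harris construction of Section~\ref{sec:UHLB} with a local ``switching'' step, and then rerun the same expectation and second-moment machinery. Fix a level $m=\lfloor \beta t\rfloor$ with $\beta$ to be optimised; we expect $\beta$ to be slightly below $\sqrt{2}$. Take a vertex $\eta$ at level $m$. In the Ulam--Harris tree its next-level descendants include the two candidates $\eta\oplus(1)$ (first child of $\eta$) and $\eta\oplus'(1)$ (next sibling of $\eta$). The Ulam--Harris common subtree matches $\eta\oplus(1)$ with $\eta\oplus(1)$ and $\eta\oplus'(1)$ with $\eta\oplus'(1)$. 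More generally, at the parent of $\eta$ we may reorder the children $\eta_j,\eta_j+1$ independently in the two trees. Sibling subtrees rooted at the same parent are interchangeable for unlabeled isomorphism, so matching the $j$-th child in $A_1$ with the $(j+1)$-th child in $A_2$ (and vice versa) still yields a common subtree. The construction is then as follows. Match Ulam--Harris labels up to level $m$. At every matched vertex, choose, among the bounded number of permutations of its first few children, the one that maximises the common subtree below. Below those children, continue with plain Ulam--Harris matching. The resulting size is $\sum_\sigma \max_{\pi}\big(\text{matched pairs}\big)$, which is a lower bound for $\MCST(A_1(t),A_2(t))$.

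\textbf{Expectation bound~\eqref{eq:ENH_expected}.} For a vertex $\sigma$ at level $m+k$ reached via a switched pair, the indicator of being born in both trees is no longer $A_1(\sigma,t)A_2(\sigma,t)$. Instead it is $A_1(\sigma,t)A_2(\pi(\sigma),t)$, where $\pi$ is chosen to maximise the contribution. Taking the better of two options only helps, and it is most useful exactly when the fluctuations of the birth times in the two trees go in opposite directions. We will lower bound the expected contribution by choosing the switch with a deterministic rule that depends only on the birth times of the first two children at the switch point, $E^{(1)}_{a},E^{(1)}_{a+1}$ and $E^{(2)}_{a},E^{(2)}_{a+1}$. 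Conditionally on these, the subtrees below are independent Yule processes, so the conditional expectation reduces to the Cramér-type computation~\eqref{eq:cramerbound}. The gain comes from a joint large deviation. The exponent becomes a variational problem over the level $\beta$ and over tilted values $x_1,x_2$ of the switching gap variables. Its value should be $2(\sqrt2-1)+\delta$ for an explicit small $\delta$, and we will evaluate it numerically to obtain a value exceeding $0.833$.

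\textbf{High-probability bound~\eqref{eq:ENH_highprob}.} We reuse the scheme of Section~\ref{sec:UHLB_whp}. First we prove a second moment bound $\E[S_t'^2]=\E[S_t']^{2+o(1)}$ for the new sum $S'_t$. We decompose pairs $(\sigma,\sigma')$ through $\ycp$ as in~\eqref{eq:crucial_ineq}; the switching decisions are local, so they only create additional bounded-size dependencies. Next, Paley--Zygmund gives $\p(S'_t>\E[S'_t]/2)\ge e^{-o(t)}$. Finally, we boost this to high probability using the $e^{ct}$ independent copies rooted at time $\eps t$, as before. \textbf{The main obstacle}, we expect, is the second moment. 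It requires showing, as in the final step of Section~\ref{sec:UHLB_whp}, that the analogue of $g(\alpha)$ is maximised at the endpoint, that is, that correlated pairs sharing a long common prefix do not dominate. With switching, the rate function is a two-parameter optimisation rather than the explicit one-parameter function $g$, so we would verify monotonicity numerically or bound it by a convex surrogate. The expectation computation is also delicate, since the optimisation must yield an exponent above $0.833$ with rigorous numerical control.
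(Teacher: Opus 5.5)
\textbf{There is a genuine gap, and it is already in the first moment.} As you describe the construction, you use plain Ulam--Harris matching up to level $m$. At each level-$m$ vertex $\eta$ you choose from a bounded family $\Pi$ of sibling permutations, and below that you again use plain Ulam--Harris matching. So every path passes through a single switch point, and such a scheme cannot beat the exponent $2(\sqrt2-1)$.

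To see this, write $p(\sigma)=\p(\sigma\text{ born by time }t)$, and let $X_{\eta,\pi}$ be the number of matched pairs below $\eta$ when $\pi$ is used there. Then $\max_{\pi\in\Pi}X_{\eta,\pi}\le\sum_{\pi\in\Pi}X_{\eta,\pi}$. For a fixed $\pi$, using it at every $\eta$ defines a deterministic bijection $\hat\pi$ of $V_\infty$, namely a composition of sibling-subtree swaps. Independence of $A_1,A_2$ and Cauchy--Schwarz give
\[
\E\Big[\sum_{\eta}X_{\eta,\pi}\Big]\le\sum_{\sigma\in V_\infty}p(\sigma)\,p(\hat\pi(\sigma))\le\sum_{\sigma\in V_\infty}p(\sigma)^2=\E\left[\UHCS(A_1(t),A_2(t))\right].
\]
Hence your common subtree has expected size at most $(1+|\Pi|)\,\E[\UHCS(A_1(t),A_2(t))]$. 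Summing $2^{N-1}\p(\sum_{i\le N}E_i\le t)^2\le 2^{N-1}e^{-2N\overline{\Lambda}_{\exp}(t/N)}$ over $N$ shows that this is $e^{(2(\sqrt2-1)+o(1))t}$. Since $2(\sqrt2-1)\approx 0.8284$, this is below $e^{0.833t}$ for large $t$.

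In particular, your variational problem over $(\beta,x_1,x_2)$ must return exactly $2(\sqrt2-1)$, not $2(\sqrt2-1)+\delta$. Taking the better of boundedly many options at one level, or tilting boundedly many gap variables, gains at most a constant factor. More generally, $k$ switch points per path give at most a factor $|\Pi|^k$. So an exponential gain needs $\Theta(t)$ switches along typical paths.

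\textbf{The missing idea, which is what the paper does.} The paper relabels each tree on its own, at every generation. For every sibling pair $(\sigma_1,\ldots,\sigma_k)$, $(\sigma_1,\ldots,\sigma_{k-1},\sigma_k+1)$ with $\sigma_k$ odd, whichever sibling has a child first is made the odd-labelled one, with the two subtrees swapped if needed. Because the rule looks at one tree only, $A_1^s$ and $A_2^s$ stay i.i.d. Therefore $\E[\UHCS(A_1^s(t),A_2^s(t))]=\sum_\sigma\p(\sigma\text{ born by time }t\text{ in }A_1^s)^2$. Your joint rule, which maximises over both trees at once, gives up this factorisation.

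After switching, the arrival time of $\sigma$ is a sum of three kinds of terms:
\begin{itemize}
\item $\min\{U,Y+Z\}$ (mean $3/4$) for each odd coordinate;
\item $\max\{U,Y+Z\}$ (mean $9/4$) for each even coordinate;
\item i.i.d.\ exponentials for the rest.
\end{itemize}
The gain is thus per coordinate and accumulates linearly in $t$. The paper then restricts to $V_N^{\mathtt{typ}}$: depth $N/2+1$, with $N/3$ odd and $N/6$ even entries among the first $N/2$, and still of size $2^{N-o(N)}$. It splits the time budget into three independent Cram\'er events with $(\alpha,\beta,\gamma)=(0.72,0.64,1.85)$, which gives an exponent of about $0.83336$.

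Your second-moment outline matches the paper's $\wt{\eta}$ device and Lemma~\ref{lem:inequalities_new}: shift the common predecessor to remove local dependencies, then show the endpoint is the maximiser. However, it only has something to act on once switches occur at linearly many levels.
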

After describing the common subtree construction in Section~\ref{sec:switching_subtrees}, 
we prove the lower bound on the expectation~\eqref{eq:ENH_expected} in Section~\ref{sec:switching_subtrees_expectation}. 
We then prove the high probability estimate~\eqref{eq:ENH_highprob} in Section~\ref{subsec:High probab}, using a modified second moment argument similar to the one in Section~\ref{sec:UHLB_whp}. 
Some technical calculations are deferred to Section~\ref{subsec: technical statements}. 

\subsection{Locally improving the Ulam--Harris common subtree by switching subtrees}\label{sec:switching_subtrees}

\begin{figure}
	\[\begin{tikzpicture}[scale = 1.4]

            \draw (-2.8,3) node {The original process $A$:};

            \vertex[thick, black, label = $\varnothing$ ] (0a) at (-4,2) {};
            \vertex[thick, black, label = {(1)\ \ } ] (1a) at (-4.5,1) {}; 
            \vertex[thick, black, label = {\ (2)} ] (2a) at (-3.5,1) {};

            \draw[->,thick,decorate,decoration=snake] (-3.1,1.4) -- (-2.4,1.4);

            \vertex[thick, black, label = $\varnothing$ ] (0b) at (-1.5,2) {};
            \vertex[thick, black, label = {(1)\ \ } ] (1b) at (-2,1) {}; 
            \vertex[thick, black, label = {\ (2)} ] (2b) at (-1,1) {};
            \vertex[thick, black, label = {(2,1)\ \ \ \ } ] (21b) at (-1.5,0) {};
 
            \draw[thick, gray] (-0.5,-0.2) -- (-0.5,3.2);

            \draw (3.2,3) node {The modified process $A^{s}$ with subtrees switched:};

            \vertex[thick, black, label = $\varnothing$ ] (0sa) at (0.5,2) {};
            \vertex[thick, black, label = {(1)\ \ } ] (1sa) at (0,1) {}; 
            \vertex[thick, black, label = {\ (2)} ] (2sa) at (1,1) {};

            \draw[->,thick,decorate,decoration=snake] (1.4,1.4) -- (2.1,1.4);

            \vertex[thick, black, label = $\varnothing$ ] (0sb) at (3,2) {};
            \vertex[thick, black, label = {\ (1)} ] (1sb) at (3.5,1) {}; 
            \vertex[thick, black, label = {(2)\ \ } ] (2sb) at (2.5,1) {};
            \vertex[thick, black, label = {(1,1)\ \ \ \ } ] (11sb) at (3,0) {};

            \draw[->,thick,decorate,decoration=snake] (3.9,1.4) -- (4.6,1.4);

            \vertex[thick, black, label = $\varnothing$ ] (0sc) at (6,2) {};
            \vertex[thick, black, label = {(1)\ \ } ] (1sc) at (5.5,1) {}; 
            \vertex[thick, black, label = {\ (2)} ] (2sc) at (6.5,1) {};
            \vertex[thick, black, label = {(1,1)\ \ \ \ } ] (11sc) at (5,0) {};

            \path[thick]
            (0a) edge (1a)
            (0a) edge (2a)
            (0b) edge (1b)
            (0b) edge (2b)
            (2b) edge (21b)
            (0sa) edge (1sa)
            (0sa) edge (2sa)
            (0sb) edge (1sb)
            (0sb) edge (2sb)
            (1sb) edge (11sb)
            (0sc) edge (1sc)
            (0sc) edge (2sc)
            (1sc) edge (11sc);  
		
	\end{tikzpicture}\]
	\centering
 \caption{An illustration of the \emph{switching subtrees} construction. Suppose that the original process $A$ grows as in the left part of the figure, with vertices $\varnothing$, (1), and (2) present, and vertex (2,1) then attaches to vertex (2). Then the modified process $A^{s}$ grows as in the right part of the figure (with the new vertex in $A^{s}$ attaching at the same time as in $A$): the labels of (1) and (2) get switched, and the new vertex attaches to (1) and gets label (1,1); it is also natural to then switch the two subtrees, in order to preserve the increasing ordering of vertex labels among the children of each vertex.} 
 \label{fig:switching_subtrees}
\end{figure}

We begin by describing the common subtree construction that is key to improving the lower bound and is the focus of this section. Using the same notation as in the previous sections, recall that for $\sigma = (\sigma_1,\ldots,\sigma_k)$ we have that $|\sigma|_1 = \sigma_1 + \ldots + \sigma_k$ and $|\sigma|_0 = k$. 
Recall that a Yule process $A = \{ A(t) \}_{t \geq 0}$ starts with a single vertex $\varnothing$ and then all vertices independently create offspring at rate $1$. 
Also, 
we view a Yule process as a continuous-time exploration of the Ulam--Harris tree, so each vertex gets an Ulam--Harris label when it is born, and $A(t)$ is the subset of the Ulam--Harris tree that has been explored until time $t$. 

Now, given a Yule process $A = \{ A(t) \}_{t \geq 0}$, we define a new process $A^{s} = \{ A^{s}(t) \}_{t \geq 0}$ by 
relabeling the vertices of $A$, as described below (and illustrated in Figure~\ref{fig:switching_subtrees}). 
This transformation can be equivalently understood as switching the order of certain subtrees---hence the superscript $s$ which stands for \textit{switching}. 

Before describing the \emph{switching subtrees} construction, we introduce some notation which will be helpful. 
For a vertex $\sigma \in V_{\infty}$, let $T_{\sigma}$, respectively $T_{\sigma}^s$, denote the time at which the vertex $\sigma$ gets explored in the Yule process $A$, respectively $A^s$; 
we set $T_{\varnothing} :=0$ and $T_{\varnothing}^{s} := 0$. 
The construction is such that for every fixed $T \geq 0$, we have that 
$\{ A^{s}(t) \}_{t \in [0,T]}$ is a function of $\{ A(t) \}_{t \in [0,T]}$. In particular, for all $t \geq 0$, $A^{s}(t)$ is a relabeling of $A(t)$, hence the two trees $A(t)$ and $A^{s}(t)$ are isomorphic. 
For all $t \geq 0$, we let $\pi_{t}: A(t) \to A^{s}(t)$ denote the isomorphism from $A(t)$ to $A^{s}(t)$.
Observe that the process $A$ only changes at the stopping times $\{ T_{\sigma} \}_{\sigma \in V_{\infty}}$; 
as we shall see, the same is true for $A^{s}$ and the process $\{ \pi_{t} \}_{t \geq 0}$, so it suffices to describe the construction at the stopping times $\{ T_{\sigma} \}_{\sigma \in V_{\infty}}$. 

We start the description of the \emph{switching subtrees} construction by describing the initial time evolution of the process. 
At time $t = 0$, $A(0)$ consists of just the root vertex $\varnothing$, and so does $A^{s}(0)$. 
The initial isomorphism $\pi_{0}$ is the identity, with $\pi_{0}(\varnothing) = \varnothing$; 
in fact, the construction fixes the root throughout time, so $\pi_{t}(\varnothing) = \varnothing$ for all $t \geq 0$. 
We know that (deterministically) the first (non-root) vertex to get explored by $A$ is the vertex $(1)$, which happens at time $T_{(1)}$. 
We set $T_{(1)}^{s} := T_{(1)}$, so the vertex $(1)$ gets explored by $A^{s}$ at the same time. 

Subsequently, the next vertex to get explored by $A$ is either $(1,1)$ or $(2)$, depending on whether $T_{(1,1)} < T_{(2)}$ or $T_{(1,1)} > T_{(2)}$ 
(note that almost surely no two vertices get explored at the same time); 
we distinguish two cases based on this. 
If $T_{(1,1)} < T_{(2)}$, 
then we set $T_{(1,1)}^{s} := T_{(1,1)}$ and $T_{(2)}^{s} := T_{(2)}$; 
in words, the vertices $(1,1)$ and $(2)$ get explored by $A^{s}$ at the same time as they do by $A$. 
Furthermore, 
we set 
$\pi_{t}((1)) := (1)$ for all $t \geq T_{(1)}$ 
and 
$\pi_{t}((2)) := (2)$ for all $t \geq T_{(2)}$; 
that is, in this case, the construction fixes the vertices $(1)$ and $(2)$ throughout time.

The more interesting case is when $T_{(1,1)} > T_{(2)}$, which we describe next. This 
will showcase~the key idea of the \emph{switching subtrees} construction, see Figure~\ref{fig:switching_subtrees} for an illustration. 
In this case we further distinguish between two cases, depending on whether $(1,1)$ or $(2,1)$ gets explored by $A$~earlier. 
\begin{itemize}
\item \textbf{Case 1: $T_{(1,1)} < T_{(2,1)}$.} In this case, $A^{s}$ is identical to $A$ in its next step. 
That is, we let $T_{(1,1)}^{s} := T_{(1,1)}$; in words, $(1,1)$ gets explored by $A^{s}$ at the same time as it does by $A$. 
We also let $T^{s}_{(2)} := T_{(2)}$. 
Furthermore, 
we set 
$\pi_{t}((1)) := (1)$ for all $t \geq T_{(1)}$ 
and 
$\pi_{t}((2)) := (2)$ for all $t \geq T_{(2)}$; 
that is, in this case, the construction fixes the vertices $(1)$ and $(2)$ throughout time---no switching occurs between $(1)$ and $(2)$.

\item \textbf{Case 2: $T_{(1,1)} > T_{(2,1)}$.} This is the interesting case: at time $T_{(2,1)}$, when vertex $(2,1)$ appears in $A$, in the process $A^{s}$ we let the vertex $(1,1)$ appear instead. 
We can think of this modified process as a switching/relabeling of the vertices $(1)$ and $(2)$ and their corresponding subtrees. 

More formally, in this case we set $T_{(1,1)}^{s} := T_{(2,1)}$; 
in words, $(1,1)$ gets explored by $A^{s}$ when $(2,1)$ gets explored by $A$. 
Furthermore, we set 
\begin{equation*}
    \pi_{t} ((1)) := 
    \begin{cases}
        (1) &\mbox{if } t \in [T_{(1)}, T_{(2,1)}), \\
        (2) &\mbox{if } t \in [T_{(2,1)}, \infty),
    \end{cases}
\end{equation*}
and 
\begin{equation*}
    \pi_{t} ((2)) := 
    \begin{cases}
        (2) &\mbox{if } t \in [T_{(2)}, T_{(2,1)}), \\
        (1) &\mbox{if } t \in [T_{(2,1)}, \infty).
    \end{cases}
\end{equation*}
In words, the vertices $(1)$ and $(2)$ get \emph{switched} in the process $A^{s}$ (compared to $A$) at time~$T_{(2,1)}$. 
This case is illustrated in Figure~\ref{fig:switching_subtrees}. 
Note that we also set 
$\pi_{T_{(2,1)}} \left( (2,1) \right) := (1,1)$.
\end{itemize}

Subsequently, the exploration of the tree by $A$ continues, with the exploration of the tree by~$A^{s}$ following analogously, preserving switches of subtrees that have already been made 
(and which are tracked by the isomorphisms $\{\pi_{t}\}_{t \geq 0}$); 
furthermore,
the subtree switching step (in situations analogous to the one described in Figure~\ref{fig:switching_subtrees}) is done inductively for all $k \in \N$ and all pairs of vertices 
$(\sigma_{1}, \ldots, \sigma_{k}), (\sigma_{1}, \ldots, \sigma_{k-1}, \sigma_{k} + 1) \in \N^{k}$ 
where $\sigma_{k}$ is \emph{odd}. 

Let us now describe more formally the general analogues of the cases described above. 
Let $k \in \N$ and $\sigma_{k}$ be \emph{odd}. 
The vertex $\sigma := (\sigma_{1}, \ldots, \sigma_{k})$ is the analogue of the vertex $(1)$ from the cases above. 
Let 
\[
\eta := \left( \eta_{1}, \ldots, \eta_{k} \right) := \pi_{T_{\sigma}} \left( \sigma \right)
\]
be the vertex explored by $A^{s}$ when $\sigma$ is explored by $A$ (note that $T_{\sigma} = T_{\eta}^{s}$ by definition).
At time $T_{\sigma}$ the vertices 
$(\sigma_{1}, \ldots, \sigma_{k-1}, \sigma_{k} + 1)$ 
and $(\sigma_{1}, \dots, \sigma_{k}, 1)$ 
have not yet been explored by $A$. 
Similarly, the construction satisfies, by induction, 
that at time $T_{\sigma} = T_{\eta}^{s}$, the vertices 
$(\eta_{1}, \ldots, \eta_{k-1}, \eta_{k} + 1)$ 
and $(\eta_{1}, \dots, \eta_{k}, 1)$ 
have not yet been explored by $A^{s}$. 
In all cases considered below, we also set 
$T^{s}_{(\eta_{1}, \ldots, \eta_{k-1}, \eta_{k} + 1)} := T_{(\sigma_{1}, \ldots, \sigma_{k-1}, \sigma_{k} + 1)}$.

First, consider whether 
$(\sigma_{1}, \ldots, \sigma_{k-1}, \sigma_{k} + 1)$ 
or 
$(\sigma_{1}, \dots, \sigma_{k}, 1)$ 
is explored by $A$ earlier. 
If 
$T_{(\sigma_{1}, \dots, \sigma_{k}, 1)} < T_{(\sigma_{1}, \ldots, \sigma_{k-1}, \sigma_{k} + 1)}$, 
then we set 
$T^{s}_{(\eta_{1}, \dots, \eta_{k}, 1)} := T_{(\sigma_{1}, \dots, \sigma_{k}, 1)}$. 
Furthermore, in this case we also set 
$\pi_{t} \left( \sigma \right) := \eta$ 
for all $t \geq T_{\sigma}$ 
and $\pi_{t} \left( (\sigma_{1}, \ldots, \sigma_{k-1}, \sigma_{k} + 1) \right) := (\eta_{1}, \ldots, \eta_{k-1}, \eta_{k} + 1)$ 
for all $t \geq T_{(\sigma_{1}, \ldots, \sigma_{k-1}, \sigma_{k} + 1)}$; 
that is, in this case the two corresponding subtrees are not switched. 

Now consider the case when $T_{(\sigma_{1}, \dots, \sigma_{k}, 1)} > T_{(\sigma_{1}, \ldots, \sigma_{k-1}, \sigma_{k} + 1)}$; 
in this case, we further distinguish between two cases, 
depending on whether 
$(\sigma_{1}, \dots, \sigma_{k}, 1)$ 
or 
$\left(\sigma_{1}, \dots, \sigma_{k-1}, \sigma_{k} + 1, 1 \right)$
gets explored by $A$ earlier. 
The general \textbf{Case 1} and \textbf{Case 2} below are analogous to \textbf{Case 1} and \textbf{Case 2} above.
\begin{itemize}
\item \textbf{Case 1: $T_{(\sigma_{1}, \ldots, \sigma_{k}, 1)} < T_{(\sigma_{1}, \ldots, \sigma_{k-1}, \sigma_{k} + 1, 1)}$.} 
In this case, we let 
$T^{s}_{(\eta_{1}, \dots, \eta_{k}, 1)} := T_{(\sigma_{1}, \dots, \sigma_{k}, 1)}$. 
Also, we set 
$\pi_{t} \left( \sigma \right) := \eta$ 
for all $t \geq T_{\sigma}$ 
and $\pi_{t} \left( (\sigma_{1}, \ldots, \sigma_{k-1}, \sigma_{k} + 1) \right) := \left(\eta_{1}, \ldots, \eta_{k-1}, \eta_{k} + 1 \right)$ 
for all $t \geq T_{(\sigma_{1}, \ldots, \sigma_{k-1}, \sigma_{k} + 1)}$; 
that is, in this case the two corresponding subtrees are not switched.

\item \textbf{Case 2: $T_{(\sigma_{1}, \ldots, \sigma_{k}, 1)} > T_{(\sigma_{1}, \ldots, \sigma_{k-1}, \sigma_{k} + 1, 1)}$.} 
This is the interesting case, in which switching occurs. 
In this case, we set 
$T^{s}_{(\eta_{1}, \ldots, \eta_{k}, 1)} := T_{(\sigma_{1}, \ldots, \sigma_{k-1}, \sigma_{k} + 1, 1)}$, 
so $(\eta_{1}, \ldots, \eta_{k}, 1)$ gets explored by~$A^{s}$ 
when $(\sigma_{1}, \ldots, \sigma_{k-1}, \sigma_{k} + 1, 1)$ gets explored by $A$. 
Furthermore, we set 
\begin{equation*}
    \pi_{t} (\sigma) := 
    \begin{cases}
        \eta &\mbox{if } t \in [T_{\sigma}, T_{(\sigma_{1}, \ldots, \sigma_{k-1}, \sigma_{k} + 1, 1)}), \\
        \left( \eta_{1}, \ldots, \eta_{k-1}, \eta_{k} + 1 \right) &\mbox{if } t \in [T_{(\sigma_{1}, \ldots, \sigma_{k-1}, \sigma_{k} + 1, 1)}, \infty),
    \end{cases}
\end{equation*}
and 
\begin{equation*}
    \pi_{t} ((\sigma_{1}, \ldots, \sigma_{k-1}, \sigma_{k} + 1)) := 
    \begin{cases}
        \left( \eta_{1}, \ldots, \eta_{k-1}, \eta_{k} + 1 \right) &\mbox{if } t \in [T_{(\sigma_{1}, \ldots, \sigma_{k-1}, \sigma_{k} + 1)}, T_{(\sigma_{1}, \ldots, \sigma_{k-1}, \sigma_{k} + 1, 1)}), \\
        \eta &\mbox{if } t \in [T_{(\sigma_{1}, \ldots, \sigma_{k-1}, \sigma_{k} + 1, 1)}, \infty).
    \end{cases}
\end{equation*}
Also, we set 
$\pi_{T_{(\sigma_{1}, \ldots, \sigma_{k-1}, \sigma_{k} + 1, 1)}}((\sigma_{1}, \ldots, \sigma_{k-1}, \sigma_{k} + 1, 1)) := (\eta_{1}, \ldots, \eta_{k}, 1)$. 
\end{itemize}

An algorithm describing this procedure is given in Algorithm~\ref{alg:switching_subtrees} below. 
We denote the resulting process as $A^{s} = \{ A^{s}(t) \}_{t \geq 0}$, 
and we again view this as a process on trees. 
Observe that each switching step is an isomorphism on trees, 
and since the transformation from $A(t)$ to $A^{s}(t)$ is a composition of these switching steps, 
it follows that 
the trees $A(t)$ and $A^{s}(t)$ are isomorphic for every $t \geq 0$. 
In fact, there is an isomorphism that maps the root to the root, since each switching step keeps the root in place.

In the construction above, $A^{s}$ is described as a relabeling of $A$. 
However, in addition, it is also natural to subsequently reorder (i.e., \emph{switch}) the subtrees of $A^{s}(t)$ in a way that the children of each vertex are in increasing order. 
This is illustrated in the last step of Figure~\ref{fig:switching_subtrees}. 
Note that after the reordering of subtrees in this way, the resulting rooted, labeled tree is also \emph{ordered}, and hence can be viewed as a subtree of the Ulam--Harris tree; this perspective will be useful in what follows. 
With a slight abuse of notation, 
we will also denote by $A^{s}(t)$ the tree after reordering the subtrees. 
Clearly, reordering the subtrees does not change the isomorphism class of the tree. 


\begin{breakablealgorithm}
\caption{Vertex labels via switching subtrees}
\label{alg:switching_subtrees}
\begin{algorithmic}[1]
\Require{A rooted tree $T$ equipped with arrival times for each vertex and a vertex $v$}
\Ensure{Label $\pi(v)$ of $v$}
\State If $v$ is the root, return $\pi(v)= \varnothing$.
\State Let $\mathsf{par}(v)$ be the parent of $v$, and let $\cC$ be the set of children of $\mathsf{par}(v)$.
\State Let $m$ be the number of vertices in $\cC$ which arrive before $v$.
\State If $m$ is even:
\begin{itemize}
\item Let $u$ be the first vertex in $\cC$ born after $v$. If there does not exist such $u$, set $\pi(v) \leftarrow \pi( \mathsf{par}(v)) \oplus (m + 1)$ and skip the remaining steps.
\item If $v$ has no children and $u$ has at least one child, or if the first child of $u$ is born before the first child of $v$, then set 
$\pi(v) \leftarrow \pi( \mathsf{par}(v)) \oplus (m + 2)$.
\item Else, set $\pi(v) \leftarrow \pi( \mathsf{par}(v)) \oplus (m+1)$.
\end{itemize}
\State If $m$ is odd:
\begin{itemize}
\item Let $u$ be the last vertex in $\cC$ born before $v$.
\item If $u$ has no children and $v$ has at least one child, or if the first child of $v$ is born before the first child of $u$, then set 
$\pi(v) \leftarrow \pi( \mathsf{par}(v)) \oplus (m)$.
\item Else, set $\pi(v) \leftarrow \pi( \mathsf{par}(v)) \oplus (m+1)$.
\end{itemize}
\State Return $\pi(v)$.
\end{algorithmic}
\end{breakablealgorithm}

Now recall that we are interested in two independent Yule processes, 
$A_{1} = \{ A_{1}(t) \}_{t \geq 0}$ and $A_{2} = \{ A_{2}(t) \}_{t \geq 0}$. 
We can then define the switched processes 
$A_{1}^{s} = \{ A_{1}^{s}(t) \}_{t \geq 0}$ and $A_{2}^{s} = \{ A_{2}^{s}(t) \}_{t \geq 0}$
as above. 
Since $A_{1}(t)$ and $A_{1}^{s}(t)$ are isomorphic, 
and so are $A_{2}(t)$ and $A_{2}^{s}(t)$, 
it follows that (the size of) the largest common subtree of $A_{1}(t)$ and $A_{2}(t)$ 
is the same as that of $A_{1}^{s}(t)$ and $A_{2}^{s}(t)$. 
To estimate the latter, we can again turn to the Ulam--Harris common subtree---but note, importantly, that the Ulam--Harris common subtree of $A_{1}^{s}(t)$ and $A_{2}^{s}(t)$  
is different from the Ulam--Harris common subtree of $A_{1}(t)$ and $A_{2}(t)$, 
since the switched processes are relabelings of the original processes. 
In fact, the switching process is defined in a way so that the Ulam--Harris common subtree of the switched trees is typically larger than the Ulam--Harris common subtree of the original trees. 
Summarizing this paragraph, we have that
\begin{equation}\label{eq:switched_UH_LB}
\LCS(A_{1}(t), A_{2}(t)) 
= \LCS(A_{1}^{s}(t), A_{2}^{s}(t)) 
\geq \UHCS(A_{1}^{s}(t), A_{2}^{s}(t)) 
= \sum_{\sigma \in V_{\infty}} A_{1}^{s}(\sigma, t) A_{2}^{s}(\sigma,t),
\end{equation}
where $A_{i}^{s}(\sigma,t)$ is the indicator variable of the event that $\sigma$ is born by time $t$ in $A_{i}^{s}$, for $i \in \{1,2\}$. 

To analyze the right hand side of~\eqref{eq:switched_UH_LB}, it will be useful to give an equivalent description of the switching process in terms of arrival times, as follows. 
First, consider the Yule process $A$, and recall that for a vertex $\sigma \in V_{\infty}$, 
its arrival time is denoted by $T_{\sigma}$. 
For a vertex $\sigma \in V_{\infty} \setminus \{\varnothing\}$, 
we write $W_{\sigma}$ for the time that elapses between the birth of the direct predecessor of $\sigma$ and the birth of  $\sigma$. That is, if $\sigma = (\sigma_{1}, \ldots, \sigma_{k})$ and $\sigma_{k} = 1$, 
then $W_{\sigma} := T_{\sigma} - T_{(\sigma_{1}, \ldots, \sigma_{k-1})}$, and if $\sigma = (\sigma_{1}, \ldots, \sigma_{k})$ and $\sigma_{k} \geq 2$, 
then $W_{\sigma} := T_{\sigma} - T_{(\sigma_{1}, \ldots, \sigma_{k} - 1)}$. 
In addition, define $W_{\varnothing} := 0$ for notational convenience. 
With this notation we have that 
\begin{equation}\label{eq:arrival_time_sum_of_iid}
T_{\sigma} = \sum_{\eta: \eta \preccurlyeq \sigma} W_{\eta},
\end{equation}
where recall that 
$\{W_{\eta}\}_{\eta: \eta \preccurlyeq \sigma, \eta \neq \varnothing}$ 
are i.i.d.\ exponential random variables with mean $1$.

We now reformulate the switching process  $A^{s}$ in terms of arrival times. 
Again we view $A^{s} = \{ A^{s}(t) \}_{t \geq 0}$ as a random tree growing in continuous time, 
and in particular for every $t \geq 0$ we view $A^{s}(t)$ as a subtree of the Ulam--Harris tree. 
Each active vertex produces an offspring at rate $1$, independent of all other vertices, with a slight twist as described next. 
Let $\sigma = (\sigma_{1}, \ldots, \sigma_{k})$ with $\sigma_{k}$ odd. 
Suppose that $(\sigma_{1}, \ldots, \sigma_{k-1}, \sigma_{k} + 1)$ appears in $A^{s}$ before $(\sigma_{1}, \ldots, \sigma_{k},1)$. 
Then, we wait until either $(\sigma_{1}, \ldots, \sigma_{k})$ or $(\sigma_{1}, \ldots, \sigma_{k-1}, \sigma_{k}+1)$ creates a new child, 
and at the time when this occurs, no matter which of the two vertices birthed the new child, 
we give the new child label $(\sigma_{1}, \ldots, \sigma_{k},1)$ and attach it to $\sigma$. 
This is equivalent to the switching construction that is described above and illustrated in Figure~\ref{fig:switching_subtrees}. 
After such a switching step, vertices continue to produce offspring at rate~$1$, and such switching steps continue to happen whenever the situation arises. 

This different perspective changes the distribution of arrival times compared to~\eqref{eq:arrival_time_sum_of_iid}, as follows.  
Suppose that $\sigma = (\sigma_{1}, \ldots, \sigma_{k})$ appears at time $T_{\sigma}^{s}$ in $A^{s}$. 
If $\sigma_{k}$ is odd, 
then the vertex $(\sigma_{1}, \ldots, \sigma_{k},1)$ appears at time 
\[
T_{\sigma}^{s} + \min \left\{ W_{(\sigma_{1}, \ldots, \sigma_{k},1)}, W_{(\sigma_{1}, \ldots, \sigma_{k-1}, \sigma_{k}+1)} + W_{(\sigma_{1}, \ldots, \sigma_{k-1}, \sigma_{k}+1,1)} \right\}
\]
in $A^{s}$. 
On the other hand, if $\sigma_{k}$ is even, 
then the vertex $(\sigma_{1}, \ldots, \sigma_{k},1)$ appears in $A^{s}$ at time 
\[
T_{(\sigma_{1}, \ldots, \sigma_{k-1}, \sigma_{k}-1)}^{s} + \max \left\{ W_{(\sigma_{1}, \ldots, \sigma_{k-1}, \sigma_{k}-1,1)}, W_{\sigma} + W_{(\sigma_{1}, \ldots, \sigma_{k},1)} \right\}.
\]
Inductively, a general vertex 
$\sigma = (\sigma_{1}, \ldots, \sigma_{k})$ thus appears in $A^{s}$ at time 
\begin{align}
    &  
    \sum_{\substack{i=1: \\ \sigma_{i} \text{ odd}}}^{k-1} \min \left\{ W_{(\sigma_1,\ldots,\sigma_i,1)} , W_{(\sigma_1,\ldots, \sigma_{i-1}, \sigma_i + 1 )} + W_{(\sigma_1,\ldots, \sigma_{i-1}, \sigma_i + 1, 1)} \right\}\label{eq:Tmin}\\
    &
    +
    \sum_{\substack{i=1: \\ \sigma_{i} \text{ even}}}^{k-1} \max \left\{ W_{(\sigma_1,\ldots, \sigma_{i-1}, \sigma_{i} - 1,1)} , W_{(\sigma_1,\ldots,\sigma_i  )} + W_{(\sigma_1,\ldots,\sigma_i, 1)} \right\}\label{eq:Tmax}\\
    & + W_{(1)} 
    +
    \sum_{\substack{i=1: \\ \sigma_{i} \text{ odd}}}^{k-1} \sum_{j=2}^{\sigma_i} W_{(\sigma_1,\ldots,\sigma_{i-1},j)}
    +
    \sum_{\substack{i=1: \\ \sigma_{i} \text{ even}}}^{k-1} \sum_{j=2}^{\sigma_i-1} W_{(\sigma_1,\ldots,\sigma_{i-1},j)}
    +
    \sum_{j=2}^{\sigma_k} W_{(\sigma_1,\ldots,\sigma_{k-1},j)}.\label{eq:Texp}
\end{align}
We write $T^{\min}(\sigma)$, $T^{\max}(\sigma)$, and $T^{\exp}(\sigma)$ for the first, second, and third line of this expression. 
Later, when we study two switched processes $A_{1}^{s}$ and $A_{2}^{s}$, 
we add subscripts $1$ and $2$ to this notation to denote these expressions in the respective trees. 

Note that all the random variables $\{W_{\eta}\}$ appearing in the expressions above are independent exponentially distributed random variables with mean $1$. 
Assume that, for some integers $j$, $\ell$, and~$n$, a vertex $(\sigma_1,\ldots,\sigma_k)$ is such that
\begin{align*}
    &\#\{i\in \{1,\ldots,k-1\} : \sigma_i \text{ odd} \} = j, \\
    &\#\{i\in \{1,\ldots,k-1\} : \sigma_i \text{ even} \} = \ell, \text{ and } \\ 
    &|\sigma|_1 = n.
\end{align*}
In particular, this implies that $j+2\ell \leq n$. Then the arrival time of this vertex is distributed as 
\begin{equation}\label{eq:arrival_time_complicated_sum}
    \sum_{i=1}^{j} \min\{U_i,Y_i+Z_i\} + \sum_{i=1}^{\ell} \max\{U_i^\prime,Y_i^\prime+Z_i^\prime\} + 
    \sum_{i=1}^{n-j-2\ell} U_i^{\prime \prime}
\end{equation}
where all the random variables are i.i.d.\ exponential random variables with mean $1$. Compared with~\eqref{eq:arrival_time_sum_of_iid}, 
we see that the expression in~\eqref{eq:arrival_time_complicated_sum} 
is sometimes stochastically dominated by the expression in~\eqref{eq:arrival_time_sum_of_iid} (e.g., if $\ell = 0$) 
and sometimes the expression in~\eqref{eq:arrival_time_complicated_sum} stochastically dominates the expression in~\eqref{eq:arrival_time_sum_of_iid} (e.g., if $j = 0$). 
Vertices of the former type are beneficial for our estimates, while vertices of the latter type hurt our estimates; 
in other words, there is no free lunch. 
The key to the estimates to come is showing that the benefits of the construction outweigh the contributions coming from vertices with larger passage times.

\subsection{Lower bound on the expectation}\label{sec:switching_subtrees_expectation}

\begin{proof}[Proof of~\eqref{eq:ENH_expected} in Theorem~\ref{thm:ENH_matching_cts}] 
The structure of this proof is similar to the proof of~\eqref{eq:UH_expected} in Section~\ref{sec:UHLB_expectation}, 
but the large deviation calculation (and subsequent optimization) is different and more involved. 
Taking expectation in~\eqref{eq:switched_UH_LB}, using linearity of expectation, and since $A_{1}^{s}$ and $A_{2}^{s}$ are i.i.d., 
we have
\begin{equation}\label{eq:switched_expectation}
\E \left[ \LCS(A_{1}(t), A_{2}(t)) \right] 
\geq \sum_{\sigma \in V_{\infty}} \p \left( \sigma \text{ born by time } t \text{ in } A_{1}^{s} \right)^{2}.
\end{equation}
Note that we only have to understand the evolution of a single switched process $A_{1}^{s}$. Let $V_{N} := \left\{ \sigma \in V_{\infty} : |\sigma|_{1} = N \right\}$ denote the set of vertices at level $N$. 
Recall that we can bound the right hand side in~\eqref{eq:switched_expectation} from below by considering the sum only over $V_{N}$, where $N$ will subsequently be chosen carefully. 
Furthermore, we will only consider a subset of $V_{N}$, as defined next. 
Let $N \in \N$ be an integer multiple of~$6$ and define 
\begin{equation} \label{eq:VNtyp}
    V_N^{\mathtt{typ}} := 
    \left\{ \sigma\in V_N:  |\sigma|_0 =  \frac{N}{2} + 1 , 
     \,\#\left\{ i \in \left[N/2\right]  : \sigma_i \text{ even} \right\}=  \frac{N}{6} ,
     \#\left\{ i \in \left[N/2\right]  : \sigma_i \text{ odd} \right\}=   \frac{N}{3} 
    \right\}.
\end{equation}
Note that for $\sigma \in V_N^{\mathtt{typ}}$ there is no parity constraint on the last coordinate $\sigma_{N/2 +1}$. 
The reason why we define the set $V_N^{\mathtt{typ}}$ like this is because a typical element of $V_N$ is in (or `close to') the set $V_N^{\mathtt{typ}}$, and we can more precisely control the probability with which a vertex $\sigma\in V_N^{\mathtt{typ}}$ is born into the tree before time $t$. Thus we will use the lower bound 
\begin{equation}\label{eq:switched_expectation_LB}
\E \left[ \LCS(A_{1}(t), A_{2}(t)) \right] 
\geq \sum_{\sigma \in V_N^{\mathtt{typ}}} \p \left( \sigma \text{ born by time } t \text{ in } A_{1}^{s} \right)^{2}
\end{equation}
for an appropriately chosen $N$.

As discussed in Section~\ref{sec:switching_subtrees}, more precisely in~\eqref{eq:arrival_time_complicated_sum}, the arrival time of a vertex $\sigma \in V_N^{\mathtt{typ}}$ in the switched process $A_{1}^{s}$ is distributed as
\begin{equation*}
    \sum_{i=1}^{N/3} \min \{ U_i,Y_i+Z_i \} 
    + \sum_{i=1}^{N/6} \max \{ U_i^\prime,Y_i^\prime+Z_i^\prime \}
    + \sum_{i=1}^{N/3} U_i^{\prime \prime},
\end{equation*}
where all of the random variables above are i.i.d.\ exponential random variables with mean $1$. 
Let $\Lambda_{\exp}^{\star}$, $\Lambda_{\min}^{\star}$, and $\Lambda_{\max}^{\star}$ denote the large deviation rate functions of random variables of the form $U$, $\min\{U,Y+Z\}$, and $\max\{U,Y+Z\}$, respectively. 
The former is given in~\eqref{eq:exp_rate_function}, 
while the latter two functions are computed in Lemma~\ref{lemma:ratefunc} in Section~\ref{subsec: technical statements}. 
Note that $\E[U] = 1$, 
$\E[ \min\{U,Y+Z\} ] = 3/4$, and 
$\E[ \max\{U,Y+Z\} ] = 9/4$ (see also Remark~\ref{remark:minmax_expectation}). 
Thus, for any fixed 
$\alpha\in(0,1)$, 
$\beta\in(0,3/4)$, 
$\gamma\in(0,9/4)$, and $z := (\alpha/3+\beta/3+\gamma/6)^{-1}$, 
we have that 
\begin{align}
    &\p \Bigg(\sum_{i=1}^\frac{N}{3} U_i^{\prime \prime}  +
    \sum_{i=1}^\frac{N}{3} \min \left\{ U_i', Y_i' + Z_i' \right\} + 
    \sum_{i=1}^\frac{N}{6} \max \left\{ U_i, Y_i + Z_i \right\} \leq \frac{N}{z} \Bigg)\\
    &\ \geq{}  \p\! \left( \sum_{i=1}^\frac{N}{3} U_i^{\prime \prime} \leq \frac{\alpha N}{3} \right)
    \p\! \left( 
    \sum_{i=1}^\frac{N}{3} \min \left\{ U_i', Y_i' + Z_i' \right\} \leq \frac{\beta N}{3} \right)
     \p\!\left(  
    \sum_{i=1}^\frac{N}{6} \max \left\{ U_i, Y_i + Z_i \right\} \leq \frac{\gamma N}{6} \right) \ \ \label{eq:largedev}
    \\
    &\ = \exp\left( - \frac{N}{3} \Lambda_{\exp}^{\star} \left(\alpha\right) \right)
    \exp\left( - \frac{N}{3} \Lambda_{\min}^{\star} \left(\beta\right) \right)\exp\left( - \frac{N}{6} \Lambda_{\max}^{\star} \left(\gamma\right) \right)
     \exp\left( o(N)\right),\label{eq:largedev_LB_abc}
\end{align}
where the first inequality is due to a union bound and independence, 
and the final line follows 
by Cram\'er's theorem. 
We now choose
\begin{equation}\label{eq:constants}
    \alpha := 0.72,
    \quad  \beta := 0.64,
    \quad  \gamma := 1.85,
    \quad z := \left( \frac{\alpha}{3} + \frac{\beta}{3} + \frac{\gamma}{6} \right)^{-1} = 1.3129\ldots 
\end{equation}
Plugging these values into~\eqref{eq:largedev_LB_abc}, 
and numerically evaluating the large deviation rate functions (in~\eqref{eq:exp_rate_function} and Lemma~\ref{lemma:ratefunc}), 
we obtain for every vertex $\sigma\in V_N^{\mathtt{typ}}$ that 
\begin{equation*}
    \p \left( \sigma \text{ born by time } N/z \text{ in } A_{1}^{s} \right) 
    \geq \e^{ - 0.0292  N - o(N)}.
\end{equation*}

We are now ready to plug this estimate back into~\eqref{eq:switched_expectation_LB}. 
First, recall from~\eqref{eq:numind} that $|V_{N}|=2^{N-1}$. 
We will show in Lemma~\ref{lem:descendant_count_in_VN_discrete} that 
$|V_N^{\mathtt{typ}}|=2^{N-o(N)}$, 
so indeed we do not lose much by considering only vertices in 
$V_N^{\mathtt{typ}}$. 
Now setting $N := 6 \lfloor \tfrac{tz}{6} \rfloor$, 
we obtain that 
\begin{equation*}
\E \left[ \LCS(A_{1}(t), A_{2}(t)) \right] 
\geq \sum_{\sigma \in V_N^{\mathtt{typ}}} \p \left( \sigma \text{ born by time } t \text{ in } A_{1}^{s} \right)^{2} 
\geq 2^{N-o(N)} \e^{ - 2 \times 0.0292  N - o(N)},
\end{equation*}
which is bounded from below by $\e^{0.83336 t}$ 
for all $t$ large enough. This concludes the proof of~\eqref{eq:ENH_expected}. 
\end{proof}

\subsection{High probability estimates}\label{subsec:High probab}

In this section we provide a proof for~\eqref{eq:ENH_highprob} in Theorem~\ref{thm:ENH_matching_cts}, that is, that $\MCST\left( A^s_1(t), A^s_2(t) \right) \geq \e^{0.833 t}$ with high probability. To do so, we verify that a modified version of the second moment estimate~\eqref{eq:2 plus o moment condition} holds for our construction. This is sufficient by the same arguments as before. 

Define the binary entropy function as 
\be \label{eq:binentropy}
    H(x):= - x\log(x)-(1-x)\log(1-x)
\ee 
for $x \in (0,1)$, where $\log$ denotes the natural logarithm (in order to be consistent with the rest of the paper); in addition, let $H(0) := 0$ and $H(1) := 0$.  
Note that for any fixed $x \in (0,1)$, assuming $x N \in \N$, we have the standard estimate 
\begin{equation}\label{eq:binom_coeff_entropy_asymp}
    \binom{N}{xN} 
    \sim 
    \frac{1}{\sqrt{2\pi x(1-x) N}} \exp \left( H(x) N \right),
\end{equation}
which follows from Stirling's formula (and where we use the standard asymptotic notation that $a_{N} \sim b_{N}$ if $\lim_{N \to \infty} a_{N} / b_{N} = 1$). In particular, we have the following lower and upper bounds: 
\begin{equation}\label{eq:binom_coeff_entropy_bounds}
    \exp \left( H(x) N - \eps_{N}\right)
    \leq 
    \binom{N}{xN} 
    \leq
    \exp \left( H(x) N \right),
\end{equation}
for some $\eps_{N}$ satisfying $\eps_{N} = \Theta \left( \log N \right)$. 
We will use these estimates throughout the following sections. Another useful property about the binary entropy function $H$ is that for each fixed $y>0$, the function
\begin{align}\label{eq:increasing weighted bin entrop}
    x \mapsto (x+y)H\left( \frac{y}{x+y} \right) = \lim_{N\to \infty} \frac{1}{N} \log \left( \binom{(x+y)N}{y N} \right)
\end{align}
is increasing in $x \in (0,\infty)$.
We start with a technical lemma, which is the analogue of Lemma~\ref{lem:partial_order_expansion}.

\begin{lemma}\label{lem:descendant_count_in_VN_discrete}
Fix $\eta \in V_{\infty}$. 
Let $k := |\eta|_{0}$, 
$\ell := |\eta|_{1}$, 
and 
$r := \left| \left\{ i \in [k] : \eta_{i} \text{ is odd} \right\} \right|$. 
Let $N$ be an integer multiple of $6$, 
which satisfies that
$1 \leq k \leq N/2$, 
$r \leq N/3$, 
and $k-r \leq N/6$. 
If, in addition, $N$ is such that
$\ell \geq N/3 + (r + 2(k-r))$, 
then 
$\left| \left\{ \sigma \in V_{N}^{\mathtt{typ}} : \eta \preccurlyeq \sigma \right\} \right| = 0$.
On the other hand, if $\ell < N/3 + (r + 2(k-r))$, 
then 
\begin{multline}\label{eq:counting_lemma_bound}
\left| \left\{ \sigma \in V_{N}^{\mathtt{typ}} : \eta \preccurlyeq \sigma \right\} \right| \\
\leq 
4 N^{4} \exp \left( \left( \frac{N}{2} - k \right) H \left( \frac{N/3-r}{N/2-k} \right) 
+ \left( \frac{2N}{3}  -  \frac{\ell + r}{2}  \right) H \left( \frac{N/2 - k}{\frac{2N}{3} -  \frac{\ell + r}{2}} \right)  \right).
\end{multline}
Note that in the edge case when $k = N/2$, 
the expression $0 \times H(0/0)$ in~\eqref{eq:counting_lemma_bound} should be interpreted as $0$, and hence the overall bound in this case is $4N^{4}$.

Furthermore, we have that 
\begin{equation}\label{eq:VNtyp_size}
\left| V_{N}^{\mathtt{typ}} \right| = 2^{N-o(N)}. 
\end{equation}
\end{lemma}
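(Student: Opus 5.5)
The plan is to count directly. I will describe each $\sigma \in V_N^{\mathtt{typ}}$ with $\eta \preccurlyeq \sigma$ by a parity pattern together with a nonnegative integer ``excess'' vector, and then bound the number of such descriptions by binomial coefficients, estimated via \eqref{eq:binom_coeff_entropy_bounds}.

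\textbf{Step 1: shape of the extensions.} As in the proof of Lemma~\ref{lem:partial_order_expansion}, every $\sigma$ with $\eta \preccurlyeq \sigma$ has one of two forms. Either $\sigma = \eta \oplus \mu$, or $\sigma = \eta \oplus' \mu$, where in the second case the $k$-th coordinate is raised from $\eta_k$ to $\eta_k + \mu_1$. In both cases the first $k-1$ coordinates are frozen. The $k$-th coordinate is frozen in the first case and is free but $\geq \eta_k$ in the second. To handle both uniformly, I will treat the $k$-th coordinate as contributing at most one extra unit of freedom (its parity and its excess). This changes $r$ by at most $1$ and adds at most a factor of order $N$. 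Together with the factor $2$ for the two forms, this is where part of the prefactor $4N^4$ comes from.

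\textbf{Step 2: choosing parities.} Since $\sigma \in V_N^{\mathtt{typ}}$, its length is $N/2+1$. Among the coordinates in positions $k+1,\dots,N/2$ there must be exactly $N/3 - r$ odd ones and $N/6-(k-r)$ even ones, with the last coordinate $\sigma_{N/2+1}$ unconstrained. The number of admissible parity patterns is therefore $\binom{N/2-k}{N/3-r}$. By \eqref{eq:binom_coeff_entropy_bounds} this is at most $\exp\bigl((N/2-k)H(\tfrac{N/3-r}{N/2-k})\bigr)$, which is the first term in the exponent of \eqref{eq:counting_lemma_bound}.

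\textbf{Step 3: choosing values.} Given the parity pattern, I write each odd coordinate as $1+2a_i$, each even coordinate as $2+2a_i$, and the last coordinate as $1+b$, with $a_i, b \geq 0$. The minimal possible sum of the free coordinates is
\[
(N/3-r)+2(N/6-k+r)+1 = 2N/3-2k+r+1.
\]
The level constraint $|\sigma|_1 = N$ then forces
\[
2\textstyle\sum_i a_i + b = N/3 + 2k - r - \ell - 1 =: M.
\]
\begin{itemize}
\item If $\ell \geq N/3 + r + 2(k-r) = N/3+2k-r$, then $M<0$. There are no solutions, which gives the first claim.
\item Otherwise, for each value $S = \sum_i a_i \leq M/2$, the vector $(a_i)$ can be chosen in $\binom{S + N/2-k-1}{N/2-k-1}$ ways, and $b$ is then determined.
\end{itemize}
Summing over the at most $N$ values of $S$, and using the monotonicity \eqref{eq:increasing weighted bin entrop} to replace $S$ by its maximum $M/2$, gives
\[
S + N/2 - k \leq \tfrac{2N}{3} - \tfrac{\ell+r}{2}.
\]
This yields the second entropy term $\bigl(\tfrac{2N}{3}-\tfrac{\ell+r}{2}\bigr)H\bigl(\tfrac{N/2-k}{2N/3-(\ell+r)/2}\bigr)$. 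I expect the main nuisance, rather than a genuine obstacle, to be the off-by-one bookkeeping: floors, the $r\pm1$ shift in the $\oplus'$ case, and the degenerate case $k=N/2$. All of these should be absorbed into the polynomial prefactor $4N^4$, again via \eqref{eq:increasing weighted bin entrop}.

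\textbf{Step 4: the size of $V_N^{\mathtt{typ}}$.} The upper bound $|V_N^{\mathtt{typ}}| \leq |V_N| = 2^{N-1}$ is immediate from \eqref{eq:numind}. For the lower bound, I repeat the count from scratch with an empty prefix. There are $\binom{N/2}{N/3}$ parity patterns. The excess mass is $N/3-1$, and taking $S = \lfloor (N/3-1)/2 \rfloor \approx N/6$ alone already gives $\binom{2N/3 - O(1)}{N/2-1}$ choices of $(a_i)$, with $b$ determined. The exponent is then
\[
\tfrac{N}{2}H(2/3) + \tfrac{2N}{3}H(1/4) = N\log 2,
\]
as a direct computation with logarithms confirms. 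Hence $|V_N^{\mathtt{typ}}| = 2^{N-o(N)}$.
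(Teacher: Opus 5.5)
Your proposal is correct and follows essentially the paper's route. It uses a binomial count of parity patterns on positions $k+1,\dots,N/2$, a stars-and-bars count of the even excess bounded via \eqref{eq:binom_coeff_entropy_bounds} and the monotonicity of $x\mapsto (x+y)H\bigl(y/(x+y)\bigr)$, polynomial losses for the $\oplus'$ form, and the identity $\tfrac12 H(2/3)+\tfrac23 H(1/4)=\log 2$ for \eqref{eq:VNtyp_size}; your exact-excess parametrization, which fixes the last coordinate, is if anything slightly tighter. The one point to make explicit is the vanishing claim in the $\oplus'$ case, which cannot be absorbed into a prefactor: there the raised $k$-th coordinate gives a prefix with $\ell'+r'\ge \ell+r$, so the budget $M$ does not increase and the claim for $\ell\ge N/3+2k-r$ still holds.
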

\begin{proof} 
Observe that the conditions
$1 \leq k \leq N/2$, 
$r \leq N/3$, 
$k-r \leq N/6$, 
and 
$\ell < N/3 + (r + 2(k-r))$ 
imply that 
\begin{equation}\label{eq:k_r_ineqs}
0 \leq N/3 - r 
\leq N/2 - k 
< 2N/3 - (\ell+r)/2.
\end{equation}
Thus the expression in~\eqref{eq:counting_lemma_bound} makes sense (i.e., the denominators of the fractions are positive and the arguments of $H$ are in $[0,1]$) whenever~$k < N/2$. 

We first deal with the edge case that $k = N/2$. Let $\eta = ( \eta_{1}, \ldots, \eta_{N/2} )$. 
Recall from \eqref{eq:VNtyp} that if $\sigma \in V_{N}^{\mathtt{typ}}$, 
then $|\sigma|_{0} = N/2 + 1$. 
If $\sigma = (\sigma_{1}, \ldots, \sigma_{N/2+1})$ satisfies $\eta \preccurlyeq \sigma$, 
then we must have $\sigma_{i} = \eta_{i}$ for all $i \in [N/2-1]$. 
Thus only the last two coordinates of $\sigma$ are undetermined, both of which take values in $[N]$. 
So
$\left| \left\{ \sigma \in V_{N}^{\mathtt{typ}} : \eta \preccurlyeq \sigma \right\} \right| 
\leq N^{2}$, which proves~\eqref{eq:counting_lemma_bound} in this special case. 

Furthermore, if $k = N/2$, then by~\eqref{eq:k_r_ineqs} 
we have that $r = N/3$, 
and so 
$N/3 + (r+2(k-r)) = N$. Since every $\sigma \in V_{N}^{\mathtt{typ}}$ has at least one more coordinate than $\eta$, if $\eta \preccurlyeq \sigma$, then $N = |\sigma|_{1} > |\eta|_{1} = \ell$. 
Thus if $k= N/2$ and $\ell \geq N/3 + (r+2(k-r)) = N$, then 
$\left| \left\{ \sigma \in V_{N}^{\mathtt{typ}} : \eta \preccurlyeq \sigma \right\} \right| = 0$. 

For the remainder of the proof we can assume that $k < N/2$. The arguments that follow will make use of two inequalities which we now state. 
The first inequality 
bounds the number of sequences in $\{1,2\}^{n}$ with exactly $k$ odd coordinates: 
\begin{align}\label{eq:odd_even_count}
    \left| \left\{ x=(x_1,\ldots,x_n) \in \{1,2\}^n : \left| \left\{ i \in [n] : x_i = 1 \right\} \right|=k \right\} \right| = \binom{n}{k} \leq \exp \left( n H\left( \frac{k}{n} \right) \right),
\end{align}
where the inequality directly follows from \eqref{eq:binom_coeff_entropy_bounds}. 
The second inequality controls the number of sequences with all even coordinates and whose $\ell_{1}$ norm is bounded by some integer $k$:
\begin{align}
    \notag \left| \left\{ x \in (2\N_0)^{n} : |x|_1 \leq k \right\} \right| 
    &= \left| \left\{ x \in (\N_0)^{n} : |x|_1 \leq k/2 \right\} \right|
    \leq k \left| \left\{ x \in (\N_0)^{n} : |x|_1 = \lfloor k/2 \rfloor \right\} \right| \\
    &= k \binom{n + \lfloor k/2 \rfloor -1}{ \lfloor k/2 \rfloor} 
    < k \binom{n + \lfloor k/2 \rfloor }{ \lfloor k/2 \rfloor}. \label{eq:useful_ineq_two}
\end{align}
Here, the first equality follows by the bijection that divides each coordinate by $2$, 
and the first inequality follows from the fact that 
$\left| \left\{ x \in (\N_0)^{n} : |x|_1 = m \right\} \right|$
is nondecreasing in $m$. 
The next equality follows by a `stars and bars' argument for counting combinations, and in the final step we remove the $-1$ for notational convenience.

Recall that for $\sigma \in V_{N}^{\mathtt{typ}}$ we have that $|\sigma|_{0} = \frac{N}{2} + 1$ and $|\sigma|_{1} = N$. 
By the analysis in the proof of Lemma~\ref{lem:partial_order_expansion} (and also recalling the notation $\oplus$ and $\oplus'$ introduced in \eqref{eq:oplus} and \eqref{eq:oplusprime}), 
in order for $\eta \preccurlyeq \sigma$ and the constraints in the statement of Lemma~\ref{lem:descendant_count_in_VN_discrete} to be satisfied, there are two possibilities for the structure of~$\sigma$: 
\begin{itemize}
\item[] \textbf{(Case 1)} either $\sigma = \eta \oplus \mu$ for some $\mu \in V_{\infty}$ with 
$\left| \mu \right|_{0} = N/2 - k + 1$ 
and $\left| \mu \right|_{1} = N - \ell$;
\item[] \textbf{(Case 2)} or $\sigma = \eta \oplus' \mu$ for some $\mu \in V_{\infty}$ with 
$\left| \mu \right|_{0} = N/2 - k + 2$ 
and $\left| \mu \right|_{1} = N - \ell$.
\end{itemize}
In both cases, in order for $\sigma \in V_{N}^{\mathtt{typ}}$ to hold, some additional parity constraints have to be satisfied.
\textbf{Case 1.} 
In the first case, the parity constraints imply that  
$\mu = (\mu_{1}, \mu_{2}, \ldots, \mu_{N/2 - k + 1})$ 
has to satisfy that 
$|\{ i \in [N/2 - k] : \mu_{i} \text{ is odd} \}| = N/3 - r$ 
(and thus also 
$|\{ i \in [N/2 - k] : \mu_{i} \text{ is even} \}| = N/6 - (k-r)$). 
So, in particular, there exists a unique decomposition of $\mu$ into 
\[
\mu=\mu^{(1)} + \mu^{(2)} + \mu^{(3)},
\]
where the sum is taken coordinate-wise and the three sequences $\mu^{(1)}$, $\mu^{(2)}, \mu^{(3)} \in (\N_{0})^{N/2 -k +1}$ satisfy the following constraints.

\begin{itemize}
\item First, the sequence $\mu^{(1)}$ takes care of the parity constraints, 
satisfying 
\begin{itemize}
    \item that 
$\mu^{(1)}_{i} \in \{1,2\}$ for all $i \in [N/2-k]$, 
    \item that 
$|\{ i \in [N/2 - k] : \mu_{i}^{(1)} \text{ is odd} \}| = N/3 - r$, 
    \item and that 
$\mu^{(1)}_{N/2-k+1} = 0$.
\end{itemize}

\item Second, the sequence $\mu^{(2)}$ (essentially) takes care of the $\ell_{1}$ norm constraint, 
satisfying 
\begin{itemize}
    \item that 
$\mu^{(2)} \in (2 \N_0)^{N/2-k+1}$, 
    \item that 
$\mu^{(2)}_{N/2-k+1} = 0$, 
    \item and that 
\begin{align*}
|\mu^{(2)}|_{1} 
&\leq N - 1 - \ell - \left[ (N/3 - r) + 2  (N/6 - (k-r)) \right] \\
&= N/3 - 1 - \ell + (r + 2(k-r)),
\end{align*}
where we used that 
$|\sigma|_{1} = N$, 
that $|\eta|_{1} = \ell$, 
that $\mu_{N/2 - k + 1} \geq 1$, 
and that $|\mu^{(1)}|_{1} = (N/3 - r) + 2(N/6 - (k-r))$.
\end{itemize}

\item Finally, the sequence $\mu^{(3)}$ takes care of the last coordinate, 
satisfying that 
$\mu^{(3)}_i = 0$ for all $i \in [N/2 - k]$, 
and that 
$\mu^{(3)}_{N/2 - k + 1} \in [N]$. 
\end{itemize}
Let $A$, $B$, and $C$ be the corresponding spaces of the sequences $\mu^{(1)}$, $\mu^{(2)}$, and $\mu^{(3)}$, respectively. 
Note that $B$ is nonempty only if 
$\ell < N/3 + r + 2(k-r)$, 
so in the remainder of this case we assume that 
$\ell < N/3 + r + 2(k-r)$. 
With this decomposition in hand, we then have that 
\begin{equation*}
    \left| \left\{\mu : \eta \oplus \mu \in V_{N}^{\mathtt{typ}} \right\} \right| \leq |A|\cdot |B| \cdot |C| .  
\end{equation*}
Thus, we need to bound the size of the sets $A$, $B$, and $C$ from above. 
For $A$, we can directly use~\eqref{eq:odd_even_count} and obtain that 
\[
|A| 
= \binom{N/2 - k}{N/3 - r}
\leq \exp \left( \left( \frac{N}{2} - k \right) H \left( \frac{N/3-r}{N/2-k} \right) \right).
\]
For $B$, first note that $\ell$ and $r$ have the same parity, and so $-\ell + r + 2(k-r)$ is even. 
Using~\eqref{eq:useful_ineq_two} we thus obtain that
\begin{align*}
    |B| &\leq \left| \left\{ x \in (2\N_{0})^{N/2 - k} : |x|_1 \leq N/3 - 1 - \ell + (r + 2(k-r)) \right\} \right|
    \\
    &
    =
    \left| \left\{ x \in (\N_{0})^{N/2 - k} : |x|_1  \leq N/6+\frac{- \ell + (r + 2(k-r))}{2} - 1 \right\} \right|
    \\
    &
    \leq N \binom{2N/3 - (\ell + r)/2}{N/2 - k}
    \leq 
    N
    \exp \left( \left( \frac{2N}{3} - \frac{\ell + r}{2} \right) H \left( \frac{N/2 - k}{\frac{2N}{3} - \frac{\ell + r}{2}} \right) \right).
\end{align*} 
Finally, we have that $|C| \leq N$. 
Overall, we thus obtain the bound 
\begin{multline*}
\left| \left\{\mu :  \eta \oplus \mu \in V_{N}^{\mathtt{typ}} \right\} \right| 
\leq |A|\cdot |B| \cdot |C|  \\
\leq N^{2} \exp \left( \left( \frac{N}{2} - k \right) H \left( \frac{N/3-r}{N/2-k} \right) 
+ \left( \frac{2N}{3}  -  \frac{\ell + r}{2}  \right) H \left( \frac{N/2 - k}{\frac{2N}{3} -  \frac{\ell + r}{2}} \right)  \right).
\end{multline*}

\textbf{Case 2.} 
In the second case, 
the parity constraints imply that
$\mu = (\mu_{0}, \mu_{1}, \mu_{2}, \ldots, \mu_{N/2 - k + 1})$ 
has to satisfy that 
$|\{ i \in [N/2 - k] : \mu_{i} \text{ is odd} \}| \in \{N/3 - r - 1, N/3 - r, N/3 - r + 1 \} \cap \N_{0}$. 
(Note that here we start indexing the coordinates of $\mu$ from $0$ for notational convenience.) 
In particular, 
there exists a unique decomposition of $\mu$ into 
\[
\mu = \mu^{(1)} + \mu^{(2)} + \mu^{(3)},
\]
where the sequences $\mu^{(1)}$, $\mu^{(2)}$, and $\mu^{(3)} \in (\N_0)^{N/2-k+2}$  satisfy the following constraints.
\begin{itemize}
\item First, the sequence $\mu^{(1)}$ takes care of the parity constraints, satisfying 
\begin{itemize}
\item that $\mu^{(1)}_{i} \in \{1,2\}$ for all $i \in [N/2-k]$, 
\item that 
$|\{ i \in [N/2 - k] : \mu^{(1)}_{i} \text{ is odd} \}| 
\in \{ N/3 - r - 1, N/3 - r, N/3 - r + 1 \} \cap \N_{0}$,
\item and that 
$\mu^{(1)}_{0} = \mu^{(1)}_{N/2-k+1} = 0$.
\end{itemize}

\item Second, the sequence $\mu^{(2)}$ (essentially) takes care of the $\ell_{1}$ norm constraint, satisfying 
\begin{itemize}
    \item that 
$\mu^{(2)} \in (2 \N_{0})^{N/2-k+2}$, 
    \item that 
$\mu^{(2)}_{0} = \mu^{(2)}_{N/2-k+1} = 0$, 
    \item and that 
\begin{align*}
|\mu^{(2)}|_{1} 
&\leq 
N - 2 - \ell - [(N/3 - r) + 2(N/6 - (k-r)) - 1] \\
&= N/3 - 1 - \ell + (r + 2(k-r)),
\end{align*}
where we used that 
$|\sigma|_{1} = N$, 
that $|\eta|_{1} = \ell$, 
that $\mu_{0}, \mu_{N/2 - k + 1} \geq 1$, 
and that $|\mu^{(1)}|_{1} \geq (N/3 - r) + 2(N/6 - (k-r)) - 1$. 
\end{itemize}

\item Finally, the sequence $\mu^{(3)}$ takes care of the first and last coordinates, satisfying that 
$\mu^{(3)}_i = 0$ for all $i \in [N/2 - k]$, 
and that 
$\mu^{(3)}_{0}, \mu^{(3)}_{N/2 - k + 1} \in [N]$. 
\end{itemize}
Let $A$, $B$, and $C$ be the corresponding spaces of $\mu^{(1)}$, $\mu^{(2)}$, and $\mu^{(3)}$, respectively. 
Note that $B$ is nonempty only if 
$\ell < N/3 + r + 2(k-r)$, 
so in the remainder of this case we assume that 
$\ell < N/3 + r + 2(k-r)$. 
Then 
\begin{equation*}
    \left| \left\{\mu :  \eta \oplus' \mu \in V_{N}^{\mathtt{typ}} \right\} \right| \leq |A|\cdot |B| \cdot |C| . 
\end{equation*}
Thus, we need to bound the size of the sets $A$, $B$, and $C$ from above. 
For $A$, 
using the fact that 
$1/n \leq \binom{n}{k} / \binom{n}{k+1} \leq n$ 
for any $0 \leq k \leq n$, 
together with~\eqref{eq:odd_even_count}, we obtain that 
\begin{align*}
    |A| 
    &= \binom{N/2-k}{N/3-r-1} + \binom{N/2-k}{N/3-r} + \binom{N/2-k}{N/3-r+1} \\
    &\leq 3N \binom{N/2-k}{N/3-r} 
    \leq 3N \exp \left( \left( \frac{N}{2} - k \right) H \left( \frac{N/3-r}{N/2-k} \right) \right).
\end{align*}
The calculation above assumed $r < N/3$ 
and the same bound holds for $r = N/3$. 
For $B$, 
note that both the number of potentially nonzero coordinates and the $\ell_{1}$ constraint are the same as for the set $B$ from Case 1, so the same bound applies: 
\[
|B| 
\leq 
N
\exp \left( \left( \frac{2N}{3} - \frac{\ell + r}{2} \right) H \left( \frac{N/2 - k}{\frac{2N}{3} - \frac{\ell + r}{2}} \right) \right).
\] 
Finally, we have that $|C| \leq N^{2}$. 
Overall, we thus obtain the bound 
\begin{multline*}
\left| \left\{\mu :  \eta \oplus' \mu \in V_{N}^{\mathtt{typ}} \right\} \right| 
\leq |A|\cdot |B| \cdot |C|  \\ 
\leq 3 N^{4} \exp \left( \left( \frac{N}{2} - k \right) H \left( \frac{N/3-r}{N/2-k} \right) 
+ \left( \frac{2N}{3}  -  \frac{\ell + r}{2}  \right) H \left( \frac{N/2 - k}{\frac{2N}{3} -  \frac{\ell + r}{2}} \right)  \right).
\end{multline*}
Combining this with the bound on 
$\left| \left\{\mu :  \eta \oplus \mu \in V_{N}^{\mathtt{typ}} \right\} \right|$ 
from above, 
we have that~\eqref{eq:counting_lemma_bound} holds.


Finally, we turn to proving~\eqref{eq:VNtyp_size}.  The upper bound $\left| V_{N}^{\mathtt{typ}} \right| \leq \left| V_{N} \right| = 2^{N-1}$ is immediate, thus it suffices to show $\left| V_{N}^{\mathtt{typ}} \right| \geq 2^{N - o(N)}$. Note that 
$(1) \preccurlyeq \sigma$ for every $\sigma \in V_{\infty} \setminus \{\varnothing\}$, 
so 
$\left| V_{N}^{\mathtt{typ}} \right| 
= \left| \left\{ \sigma \in V_{N}^{\mathtt{typ}} : (1) \preccurlyeq \sigma \right\} \right|$. 
Taking $\eta = (1)$, we have that $k= \ell = r =1$, and, restricting attention to Case 1, we have that
\begin{align*}
    \left| V_{N}^{\mathtt{typ}} \right| & \geq \left|\{\mu: (1)  \oplus \mu \in V_N^{\mathtt{typ}}\} \right|.
\end{align*}
To obtain the desired lower bound, we construct a subfamily of valid sequences $\mu$ by following the unique decomposition $\mu = \mu^{(1)}+\mu^{(2)} + \mu^{(3)}$ from Case 1, with tighter constraints. Specifically, we keep $\mu^{(1)}$ as in Case 1, we fix $|\mu^{(2)}|_1 = N/3 -2$ and fix $\mu^{(3)}$ to be the sequence with $\mu_{N/2}^{(3)} = 2$ and zero for all other coordinates. As argued above, for such $\mu$ we have that $(1) \oplus \mu \in  V_N^{\mathtt{typ}}$.  As before, let $A$, $B$, and $C$ be the corresponding spaces of $\mu^{(1)}$, $\mu^{(2)}$, and $\mu^{(3)}$, respectively. 
Then $|A| = \binom{N/2 -1}{N/3 -1}$ as in Case 1, 
$|B| = |\{ x \in (\N_0)^{N/2-1}: |x|_1 = \frac{1}{2}(N/3 -2)\}| = \binom{(N/2-1) +  \frac{1}{2}(N/3 -2) - 1}{ \frac{1}{2}(N/3 -2)}$, 
and $|C|=1$. 
Using the lower bound of~\eqref{eq:binom_coeff_entropy_bounds} and the identity 
$\frac{1}{2} H \left( \frac{2}{3} \right) + \frac{2}{3} H \left( \frac{1}{4} \right) = \log 2$, 
it follows that $\left| V_{N}^{\mathtt{typ}} \right| \geq |A| \cdot |B| \geq 2^{N - o(N)}$; we omit the details, as they are similar to the above.
\end{proof}

Define the following truncated versions of the large deviation rate functions: 
    \be\ba\label{eq:barfunc}
        & \overline{\Lambda}_{\exp}(x) := \begin{cases} \Lambda_{\exp}^{\star} (x) & \mbox{if } x \leq 1,\\
        0 & \mbox{otherwise},
        \end{cases}\\
        & \overline{\Lambda}_{\min}(x) := \begin{cases} \Lambda_{\min}^{\star} (x) & \mbox{if } x \leq 3/4,\\
        0 & \mbox{otherwise},
        \end{cases}\\
        & \overline{\Lambda}_{\max}(x) := \begin{cases} \Lambda_{\max}^{\star} (x) & \mbox{if }x \leq 9/4,\\
        0 & \mbox{otherwise.}
        \end{cases}\\
    \ea\ee
    We also allow $x=+\infty$ as an argument for all three of those functions.
We are now ready to prove the second moment bound.
\begin{proof}[Proof of~\eqref{eq:ENH_highprob} in Theorem~\ref{thm:ENH_matching_cts}] 
Recall the choices of constants 
$\alpha$, $\beta$, $\gamma$, and $z$ from~\eqref{eq:constants}, 
and set $N := 6 \lfloor \tfrac{tz}{6} \rfloor$. 
By the arguments in Section~\ref{sec:switching_subtrees} and~\ref{sec:switching_subtrees_expectation}, we have the lower bound 
\begin{equation}\label{eq:VNtyp_LB}
\LCS \left( A_{1}(t), A_{2}(t) \right) 
\geq 
\sum_{\sigma \in V_{N}^{\mathtt{typ}}} A_{1}^{s}(\sigma, t) A_{2}^{s}(\sigma, t).
\end{equation}
Recall the definitions of 
$T_{1}^{\min}(\sigma)$, $T_{1}^{\max}(\sigma)$, and $T_{1}^{\exp}(\sigma)$ from~\eqref{eq:Tmin},~\eqref{eq:Tmax} and~\eqref{eq:Texp}
in Section~\ref{sec:switching_subtrees}. 
We say that a vertex 
$\sigma \in V_{N}^{\mathtt{typ}}$ is \emph{$1$-good} 
if 
$T_{1}^{\exp}(\sigma) \leq \frac{\alpha N}{3}$, 
$T_{1}^{\min}(\sigma) \leq \frac{\beta N}{3}$, 
and $T_{1}^{\max}(\sigma) \leq \frac{\gamma N}{6}$. 
In particular, as observed in Section~\ref{sec:switching_subtrees_expectation}, 
if $\sigma$ is $1$-good, then $\sigma$ arrives before time $t$ in the first switched process $A_{1}^{s}$; 
in other words, if $\sigma$ is $1$-good, then $A_{1}^{s}(\sigma, t) = 1$. 
Analogously, we say that 
$\sigma \in V_{N}^{\mathtt{typ}}$ is \emph{$2$-good} 
if 
$T_{2}^{\exp}(\sigma) \leq \frac{\alpha N}{3}$, 
$T_{2}^{\min}(\sigma) \leq \frac{\beta N}{3}$, 
and $T_{2}^{\max}(\sigma) \leq \frac{\gamma N}{6}$. 
Furthermore, we call a vertex $\sigma$ \emph{good} 
if it is both $1$-good and $2$-good. 
Since $\sigma$ being good implies that 
$A_{1}^{s}(\sigma, t) A_{2}^{s}(\sigma, t) = 1$, 
the lower bound in~\eqref{eq:VNtyp_LB} further implies the following lower bound:
\begin{equation}\label{eq:VNtyp_good_LB}
\LCS \left( A_{1}(t), A_{2}(t) \right) 
\geq 
\sum_{\sigma \in V_{N}^{\mathtt{typ}}} 
\mathbbm{1}_{\left\{ \sigma \text{ is good} \right\}}.
\end{equation}

This is the lower bound that we will analyze. The first moment of this lower bound was analyzed in Section~\ref{sec:switching_subtrees_expectation}, 
and here we will analyze its second moment. 
Recall from Section~\ref{sec:switching_subtrees_expectation} that 
$\p \left( \sigma \text{ is 1-good} \right)$ is the same for all $\sigma \in V_{N}^{\mathtt{typ}}$, 
and, from~\eqref{eq:largedev_LB_abc}, in particular it satisfies 
    \begin{equation}\label{eq:1goodprob}
        \p \left( \sigma \text{ is 1-good} \right)= e^{-\phi_N + o(N)},
    \end{equation}
with 
    \begin{equation*}
        \phi_N := \frac{N}{3} \Lambda_{\exp}^{\star} (\alpha) +\frac{N}{3} \Lambda_{\min}^{\star} (\beta)+\frac{N}{6} \Lambda_{\max}^{\star} (\gamma),
    \end{equation*}
and where we recall that $\Lambda^\star_{\mathrm{exp}}$ denotes the large deviation rate function of an exponential random variable with mean $1$, and recall also $\Lambda_{\min}^{\star}$ and $\Lambda_{\max}^{\star}$ from Lemma~\ref{lemma:ratefunc}. 
Furthermore, due to the independence of the two processes, we have that 
$\p \left( \sigma \text{ is good} \right) 
= \p \left( \sigma \text{ is 1-good} \right)^{2}$. 

We now turn to the second moment of the right hand side of~\eqref{eq:VNtyp_good_LB}. 
By expanding the square of the sum, 
using linearity of expectation, 
the independence of $A_{1}^{s}$ and $A_{2}^{s}$ (which, in particular, implies that $\mathbb{P} \left( \sigma, \sigma^\prime \text{ are good}  \right)
= \mathbb{P} \left( \sigma, \sigma^\prime \text{ are 1-good}  \right)^{2}$), 
we see that our goal is to show the following inequality (which is the analogue of~\eqref{eq:correlation}):
    \begin{equation}\label{eq:2nd_moment_beyondUH}
        \sum_{\sigma \in V_N^{\mathtt{typ}}} \sum_{\sigma^\prime \in V_N^{\mathtt{typ}}} \mathbb{P} \left( \sigma, \sigma^\prime \text{ are 1-good}  \right)^{2}
        \leq 
        \left( \left| V_N^{\mathtt{typ}} \right| \mathbb{P} \left( \sigma \text{ is 1-good}  \right)^{2} \right)^2 \e^{o(N)},
    \end{equation}
where $\sigma \in V_N^{\mathtt{typ}}$ on the right hand side of~\eqref{eq:2nd_moment_beyondUH}. 
Once~\eqref{eq:2nd_moment_beyondUH} is proven, the proof of~\eqref{eq:ENH_highprob} follows from a second moment argument, together with the use of the exponential growth property of a Yule process. Since this argument is identical to the one in Section~\ref{sec:UHLB_whp}, we omit the details. 
The rest of the proof is devoted to proving that~\eqref{eq:2nd_moment_beyondUH} holds.

In order to prove~\eqref{eq:2nd_moment_beyondUH}, the key is to understand 
$\mathbb{P} \left( \sigma, \sigma^\prime \text{ are 1-good}  \right)$. 
Let $T_{\sigma}^{s}$ denote the arrival time of $\sigma$ in $A_{1}^{s}$ 
and recall that $T_{\sigma}^{s} = T_{1}^{\min}(\sigma) + T_{1}^{\max}(\sigma) + T_{1}^{\exp}(\sigma)$; 
similarly, let 
$T_{\sigma'}^{s}$ denote the arrival time of $\sigma'$ in~$A_{1}^{s}$. 
The arrival times $T_{\sigma}^{s}$ and $T_{\sigma'}^{s}$ are correlated; 
to control this correlation, we aim to separate out independent parts. In the following, we will typically assume that $\sigma \neq \sigma^\prime$, although all results formally also hold for $\sigma = \sigma^\prime$.

As in Section~\ref{sec:UHLB_whp}, we will use the youngest common predecessor 
$\eta := \ycp(\sigma, \sigma')$ 
of $\sigma$ and $\sigma'$ 
in order to find appropriate independence, 
which facilitates the analysis. 
First note that 
$T_{\sigma}^{s} = T_{\eta}^{s} + (T_{\sigma}^{s} - T_{\eta}^{s})$ 
and 
$T_{\sigma'}^{s} = T_{\eta}^{s} + (T_{\sigma'}^{s} - T_{\eta}^{s})$. 
In the analysis of the original process in Section~\ref{sec:UHLB_whp}, 
the triple of random variables 
$\{ T_{\eta}, T_{\sigma} - T_{\eta}, T_{\sigma'} - T_{\eta} \}$
were mutually independent. 
However, this is no longer true in the switched process $A_{1}^{s}$. 
To see this through a simple example, consider $\sigma = (1,1)$ and $\sigma' = (2,1)$, in which case we have that $\eta = (1)$. In the switched process $A_{1}^{s}$, we have that 
$T_{\sigma}^{s} \leq T_{\sigma'}^{s}$ 
\emph{by construction} 
(indeed, this is the key point of the switched process), 
which implies also that 
$T_{\sigma}^{s} - T_{\eta}^{s} \leq T_{\sigma'}^{s} - T_{\eta}^{s}$, 
and thus these random variables are not independent.

Nonetheless, the triple of random variables 
$\{ T_{\eta}^{s}, T_{\sigma}^{s} - T_{\eta}^{s}, T_{\sigma'}^{s} - T_{\eta}^{s} \}$
are still \emph{approximately} independent in the switched process $A_{1}^{s}$, in an appropriate sense, when $|\sigma|_{1} = |\sigma'|_{1}$ is large. 
More precisely, we will slightly modify the definitions of these random variables to find independence, while not losing much quantitatively. 
To this end, 
write $\eta = \left( \eta_{1}, \ldots, \eta_{k} \right)$, 
where $k \geq 1$, 
and assume, without loss of generality, that 
$\sigma_{k} = \eta_{k}$ and $\sigma'_{k} > \eta_{k}$. 
Define $\wt{\eta}$ to be the unique vertex out of 
$\left( \eta_{1}, \ldots, \eta_{k-1}, \eta_{k} + 1, 1 \right)$ 
and 
$\left( \eta_{1}, \ldots, \eta_{k-1}, \eta_{k} + 2 \right)$ 
that satisfies 
$\wt{\eta} \preccurlyeq \sigma'$.  
In the special case where $\sigma' = \left( \eta_{1}, \ldots, \eta_{k-1}, \eta_{k} + 1 \right)$, 
set $\wt{\eta} := \sigma'$. 
The key observation is that $T_{\sigma}^{s}$ and $T_{\sigma'}^{s} - T_{\wt{\eta}}^{s}$ are independent in the switched process~$A_{1}^{s}$. 
The key point behind this is that, while switching creates dependencies among the passage times, these dependencies are all \emph{local} (see Figure~\ref{fig:switching_subtrees} for an illustration), so the minor modification of $\eta$ to $\wt{\eta}$ circumvents the dependency issue.

The random variable $T_{\sigma'}^{s} - T_{\wt{\eta}}^{s}$ is a sum of three different kinds of random variables (of the form $U$, $\min\{ U, Y + Z \}$, and $\max\{ U, Y + Z \}$, where $U, Y, Z$ are i.i.d.\ exponential with mean $1$), and we aim to separate these out. To this end, we distinguish three cases. 

First, suppose that 
$\sigma_{k} = \eta_{k} < \eta_{k} + 1 = \sigma'_{k}$ 
and 
$\wt{\eta} = (\eta_{1}, \ldots, \eta_{k-1}, \eta_{k} + 1, 1)$. 
Define 
$T_{1}^{\min} \left( \wt{\eta}, \sigma' \right) := T_{1}^{\min} \left( \sigma' \right) - T_{1}^{\min} \left( \wt{\eta} \right)$, 
$T_{1}^{\max} \left( \wt{\eta}, \sigma' \right) := T_{1}^{\max} \left( \sigma' \right) - T_{1}^{\max} \left( \wt{\eta} \right)$, 
and 
$T_{1}^{\exp} \left( \wt{\eta}, \sigma' \right) := T_{1}^{\exp} \left( \sigma' \right) - T_{1}^{\exp} \left( \wt{\eta} \right)$. 
In this case we have that 
\begin{align*}
    T_{1}^{\min}(\wt{\eta}, \sigma') 
    &=  
    \sum_{\substack{i=k+1: \\ \sigma'_{i} \text{ odd}}}^{N/2} \min \left\{ W_{(\sigma'_1,\ldots,\sigma'_i,1)} , W_{(\sigma'_1,\ldots,\sigma'_i + 1 )} + W_{(\sigma'_1,\ldots,\sigma'_i + 1, 1)} \right\}, \\
    T_{1}^{\max}(\wt{\eta}, \sigma')
    &=
    \sum_{\substack{i=k+1: \\ \sigma'_{i} \text{ even}}}^{N/2} \max \left\{ W_{(\sigma'_1,\ldots,\sigma'_{i}-1,1)} , W_{(\sigma'_1,\ldots,\sigma'_i  )} + W_{(\sigma'_1,\ldots,\sigma'_i, 1)} \right\}, \\
    T_{1}^{\exp}(\wt{\eta}, \sigma')
    &=
    \sum_{\substack{i=k+1: \\ \sigma'_{i} \text{ odd}}}^{N/2} \sum_{j=2}^{\sigma'_i} W_{(\sigma'_1,\ldots,\sigma'_{i-1},j)}
    +
    \sum_{\substack{i=k+1: \\ \sigma'_{i} \text{ even}}}^{N/2} \sum_{j=2}^{\sigma'_i-1} W_{(\sigma'_1,\ldots,\sigma'_{i-1},j)}
    +
    \sum_{j=2}^{\sigma'_{N/2+1}} W_{(\sigma'_1,\ldots,\sigma'_{N/2},j)}, 
\end{align*}
and note that 
$T_{\sigma'}^{s} - T_{\wt{\eta}}^{s} 
= T_{1}^{\min} \left( \wt{\eta}, \sigma' \right) + T_{1}^{\max} \left( \wt{\eta}, \sigma' \right) + T_{1}^{\exp} \left( \wt{\eta}, \sigma' \right)$, 
with 
$T_{1}^{\min} \left( \wt{\eta}, \sigma' \right)$, 
$T_{1}^{\max} \left( \wt{\eta}, \sigma' \right)$, 
and 
$T_{1}^{\exp} \left( \wt{\eta}, \sigma' \right)$ 
being mutually independent of each other 
and also of 
$T_{1}^{\min} \left( \sigma \right)$, 
$T_{1}^{\max} \left( \sigma \right)$, 
and 
$T_{1}^{\exp} \left( \sigma \right)$.  

Next, suppose 
either that 
$\sigma_{k} = \eta_{k} < \eta_{k} + 2 < \sigma'_{k}$, 
or that 
$\sigma_{k} = \eta_{k} < \eta_{k} + 2 = \sigma'_{k}$ 
and $\sigma'_{k}$ is odd. 
In both cases we have that  
$\wt{\eta} = (\eta_{1}, \ldots, \eta_{k-1}, \eta_{k} + 2)$. 
Define again 
$T_{1}^{\min} \left( \wt{\eta}, \sigma' \right) := T_{1}^{\min} \left( \sigma' \right) - T_{1}^{\min} \left( \wt{\eta} \right)$, 
$T_{1}^{\max} \left( \wt{\eta}, \sigma' \right) := T_{1}^{\max} \left( \sigma' \right) - T_{1}^{\max} \left( \wt{\eta} \right)$, 
and 
$T_{1}^{\exp} \left( \wt{\eta}, \sigma' \right) := T_{1}^{\exp} \left( \sigma' \right) - T_{1}^{\exp} \left( \wt{\eta} \right)$. 
In this case we have 
\begin{align*}
    T_{1}^{\min}(\wt{\eta}, \sigma') 
    &= 
    \sum_{\substack{i=k: \\ \sigma'_{i} \text{ odd}}}^{N/2} \min \left\{ W_{(\sigma'_1,\ldots,\sigma'_i,1)} , W_{(\sigma'_1,\ldots,\sigma'_i + 1 )} + W_{(\sigma'_1,\ldots,\sigma'_i + 1, 1)} \right\}, \\
    T_{1}^{\max}(\wt{\eta}, \sigma')
    &=
    \sum_{\substack{i=k: \\ \sigma'_{i} \text{ even}}}^{N/2} \max \left\{ W_{(\sigma'_1,\ldots,\sigma'_{i}-1,1)} , W_{(\sigma'_1,\ldots,\sigma'_i  )} + W_{(\sigma'_1,\ldots,\sigma'_i, 1)} \right\}, \\
    T_{1}^{\exp}(\wt{\eta}, \sigma')
    &=
    \sum_{j = \eta_{k}+3}^{\sigma'_{k}-1} W_{(\sigma'_{1}, \ldots, \sigma'_{k-1},j)} 
    + W_{(\sigma'_{1}, \ldots, \sigma'_{k-1},\sigma'_{k})} \mathbbm{1}_{\{ \eta_{k} + 2 < \sigma'_{k} \text{ and } \sigma'_{k} \text{ is odd} \}} \\
    &\quad + \sum_{\substack{i=k+1: \\ \sigma'_{i} \text{ odd}}}^{N/2} \sum_{j=2}^{\sigma'_i} W_{(\sigma'_1,\ldots,\sigma'_{i-1},j)}
    +
    \sum_{\substack{i=k+1: \\ \sigma'_{i} \text{ even}}}^{N/2} \sum_{j=2}^{\sigma'_i-1} W_{(\sigma'_1,\ldots,\sigma'_{i-1},j)} 
    +
    \sum_{j=2}^{\sigma'_{N/2+1}} W_{(\sigma'_1,\ldots,\sigma'_{N/2},j)}.
\end{align*}
Again we have 
$T_{\sigma'}^{s} - T_{\wt{\eta}}^{s} 
= T_{1}^{\min} \left( \wt{\eta}, \sigma' \right) + T_{1}^{\max} \left( \wt{\eta}, \sigma' \right) + T_{1}^{\exp} \left( \wt{\eta}, \sigma' \right)$, 
with 
$T_{1}^{\min} \left( \wt{\eta}, \sigma' \right)$, 
$T_{1}^{\max} \left( \wt{\eta}, \sigma' \right)$, 
and 
$T_{1}^{\exp} \left( \wt{\eta}, \sigma' \right)$ 
being mutually independent of each other 
and also of 
$T_{1}^{\min} \left( \sigma \right)$, 
$T_{1}^{\max} \left( \sigma \right)$, 
and 
$T_{1}^{\exp} \left( \sigma \right)$. 

Finally, 
suppose that 
$\sigma_{k} = \eta_{k} < \eta_{k} +2 = \sigma'_{k}$ 
and 
$\sigma'_{k}$ is even, 
and so 
$\wt{\eta} = \left(\eta_{1}, \ldots, \eta_{k-1}, \eta_{k} + 2 \right)$. 
In this case we have that 
\begin{align*}
T_{\sigma'}^{s} - T_{\wt{\eta}}^{s}
&= \sum_{\substack{i=k: \\ \sigma'_{i} \text{ odd}}}^{N/2} \min \left\{ W_{(\sigma'_1,\ldots,\sigma'_i,1)} , W_{(\sigma'_1,\ldots,\sigma'_i + 1 )} + W_{(\sigma'_1,\ldots,\sigma'_i + 1, 1)} \right\} \\
&\quad + \sum_{\substack{i=k: \\ \sigma'_{i} \text{ even}}}^{N/2} \max \left\{ W_{(\sigma'_1,\ldots,\sigma'_{i}-1,1)} , W_{(\sigma'_1,\ldots,\sigma'_i  )} + W_{(\sigma'_1,\ldots,\sigma'_i, 1)} \right\} \\
&\quad - W_{(\sigma'_{1}, \ldots, \sigma'_{k})} \\
&\quad + \sum_{\substack{i=k+1: \\ \sigma'_{i} \text{ odd}}}^{N/2} \sum_{j=2}^{\sigma'_i} W_{(\sigma'_1,\ldots,\sigma'_{i-1},j)}
    +
    \sum_{\substack{i=k+1: \\ \sigma'_{i} \text{ even}}}^{N/2} \sum_{j=2}^{\sigma'_i-1} W_{(\sigma'_1,\ldots,\sigma'_{i-1},j)} 
    +
    \sum_{j=2}^{\sigma'_{N/2+1}} W_{(\sigma'_1,\ldots,\sigma'_{N/2},j)}.
\end{align*}
The negative term $- W_{(\sigma'_{1}, \ldots, \sigma'_{k})}$ is somewhat inconvenient for the subsequent analysis, so we simply drop it and define the relevant terms as follows: 
\begin{align*}
    T_{1}^{\min}(\wt{\eta}, \sigma') 
    &:= 
    \sum_{\substack{i=k: \\ \sigma'_{i} \text{ odd}}}^{N/2} \min \left\{ W_{(\sigma'_1,\ldots,\sigma'_i,1)} , W_{(\sigma'_1,\ldots,\sigma'_i + 1 )} + W_{(\sigma'_1,\ldots,\sigma'_i + 1, 1)} \right\}, \\
    T_{1}^{\max}(\wt{\eta}, \sigma')
    &:=
    \sum_{\substack{i=k: \\ \sigma'_{i} \text{ even}}}^{N/2} \max \left\{ W_{(\sigma'_1,\ldots,\sigma'_{i}-1,1)} , W_{(\sigma'_1,\ldots,\sigma'_i  )} + W_{(\sigma'_1,\ldots,\sigma'_i, 1)} \right\}, \\
    T_{1}^{\exp}(\wt{\eta}, \sigma')
    &:=
    \sum_{\substack{i=k+1: \\ \sigma'_{i} \text{ odd}}}^{N/2} \sum_{j=2}^{\sigma'_i} W_{(\sigma'_1,\ldots,\sigma'_{i-1},j)}
    +
    \sum_{\substack{i=k+1: \\ \sigma'_{i} \text{ even}}}^{N/2} \sum_{j=2}^{\sigma'_i-1} W_{(\sigma'_1,\ldots,\sigma'_{i-1},j)} 
    +
    \sum_{j=2}^{\sigma'_{N/2+1}} W_{(\sigma'_1,\ldots,\sigma'_{N/2},j)}.
\end{align*}
Then we still have that 
$T_{1}^{\min}(\wt{\eta}, \sigma') \leq T_{1}^{\min}(\sigma')$, 
$T_{1}^{\max}(\wt{\eta}, \sigma') \leq T_{1}^{\max}(\sigma')$, 
and 
$T_{1}^{\exp}(\wt{\eta}, \sigma') \leq T_{1}^{\exp}(\sigma')$ 
(which are the relevant inequalities in the bounds to follow), 
and also 
$T_{1}^{\min} \left( \wt{\eta}, \sigma' \right)$, 
$T_{1}^{\max} \left( \wt{\eta}, \sigma' \right)$, 
and 
$T_{1}^{\exp} \left( \wt{\eta}, \sigma' \right)$ 
are mutually independent of each other 
and also of 
$T_{1}^{\min} \left( \sigma \right)$, 
$T_{1}^{\max} \left( \sigma \right)$, 
and 
$T_{1}^{\exp} \left( \sigma \right)$.

Recall that for every $\sigma \in V_N^{\mathtt{typ}}$ and every $\ell \in [N]$ there exists exactly one $\eta \in V_{\infty}$ for which $\eta \preccurlyeq \sigma$ and $\left| \eta \right|_{1} = \ell$. 
Recall also that we write $k = \left| \eta \right|_{0}$, and since $\sigma \in V_{N}^{\mathtt{typ}}$, we have that $k \in [N/2 + 1]$. 
Now let 
$r := \left| \left\{ i \in [k] : \eta_{i} \text{ is odd} \right\} \right|$, 
and hence we have 
$k-r = \left| \left\{ i \in [k] : \eta_{i} \text{ is even} \right\} \right|$. 
Recall also that, since $\sigma' \in V_{N}^{\mathtt{typ}}$, we have that 
$\left| \left\{ i \in [N/2] : \sigma'_{i} \text{ is odd} \right\} \right| = N/3$ 
and 
$\left| \left\{ i \in [N/2] : \sigma'_{i} \text{ is even} \right\} \right| = N/6$.
It is readily checked that, in all of the cases above, we have the following: 
\begin{itemize}
\item $T_{1}^{\min}(\wt{\eta}, \sigma')$ consists of at least $N/3 - r - 1$ summands; 




\item $T_{1}^{\max}(\wt{\eta}, \sigma')$ consists of at least $N/6 - (k-r) - 1$ summands; 
and 


\item $T_{1}^{\exp}(\wt{\eta}, \sigma')$ consists of at least $N/3 - (\ell - (2k-r)) - 4$ summands. 


\end{itemize}

Let us now return to the second moment estimate~\eqref{eq:2nd_moment_beyondUH} that we want to prove. 
Analogously to~\eqref{eq:crucial_ineq2}, we can first write 
\begin{equation}\label{eq:doublesum_to_eta}
\sum_{\sigma \in V_N^{\mathtt{typ}}} \sum_{\sigma^\prime \in V_N^{\mathtt{typ}}} \mathbb{P} \left( \sigma, \sigma^\prime \text{ are 1-good}  \right)^{2} 
= 
\sum_{\sigma \in V_N^{\mathtt{typ}}} \sum_{\eta: \eta \preccurlyeq \sigma} \sum_{\substack{\sigma^\prime \in V_N^{\mathtt{typ}} : \\ \ycp(\sigma, \sigma^\prime) = \eta}} \mathbb{P} \left( \sigma, \sigma^\prime \text{ are 1-good}  \right)^{2}.
\end{equation}
For $\sigma, \sigma' \in V_{N}^{\mathtt{typ}}$ with $\ycp(\sigma, \sigma') = \eta$ such that $\sigma_{k} = \eta_{k} < \sigma_{k}'$, we have (analogously to~\eqref{eq:crucial_ineq}) that 
\begin{align}
&\p \left( \sigma, \sigma' \text{ are 1-good} \right) \\ 
&= 
\p \left( T_{1}^{\exp} \left( \sigma \right) \leq \frac{\alpha N}{3}, T_{1}^{\min} \left( \sigma \right) \leq \frac{\beta N}{3}, T_{1}^{\max} \left( \sigma \right) \leq \frac{\gamma N}{6}, \right. \\
&\qquad \ \ \left.
T_{1}^{\exp} \left( \sigma' \right) \leq \frac{\alpha N}{3}, T_{1}^{\min} \left( \sigma' \right) \leq \frac{\beta N}{3}, T_{1}^{\max} \left( \sigma' \right) \leq \frac{\gamma N}{6} \right) \\ 
&\leq 
\p \left( T_{1}^{\exp} \left( \sigma \right) \leq \frac{\alpha N}{3}, T_{1}^{\min} \left( \sigma \right) \leq \frac{\beta N}{3}, T_{1}^{\max} \left( \sigma \right) \leq \frac{\gamma N}{6}, \right. \\
&\qquad \ \ \left.
T_{1}^{\exp} \left( \wt{\eta}, \sigma' \right) \leq \frac{\alpha N}{3}, T_{1}^{\min} \left( \wt{\eta}, \sigma' \right) \leq \frac{\beta N}{3}, T_{1}^{\max} \left( \wt{\eta}, \sigma' \right) \leq \frac{\gamma N}{6} \right) \\ 
&= 
\p \left( T_{1}^{\exp} \left( \sigma \right) \leq \frac{\alpha N}{3}, T_{1}^{\min} \left( \sigma \right) \leq \frac{\beta N}{3}, T_{1}^{\max} \left( \sigma \right) \leq \frac{\gamma N}{6} \right) \\
&\quad 
\times \p \left(
T_{1}^{\exp} \left( \wt{\eta}, \sigma' \right) \leq \frac{\alpha N}{3}, T_{1}^{\min} \left( \wt{\eta}, \sigma' \right) \leq \frac{\beta N}{3}, T_{1}^{\max} \left( \wt{\eta}, \sigma' \right) \leq \frac{\gamma N}{6} \right) \\ 
&= \p \left( \sigma \text{ is 1-good} \right) 
\p \left( T_{1}^{\exp} \left( \wt{\eta}, \sigma' \right) \leq \frac{\alpha N}{3} \right) 
\p \left( T_{1}^{\min} \left( \wt{\eta}, \sigma' \right) \leq \frac{\beta N}{3} \right) 
\p \left( T_{1}^{\max} \left( \wt{\eta}, \sigma' \right) \leq \frac{\gamma N}{6} \right), \quad \ \ \ \label{eq:factored_bound}
\end{align}
where the last two equalities use the mutual independence of the random variables 
$T_{1}^{\exp} \left( \sigma \right)$, 
$T_{1}^{\min} \left( \sigma \right)$, 
$T_{1}^{\max} \left( \sigma \right)$, 
$T_{1}^{\exp} \left( \wt{\eta}, \sigma' \right)$, 
$T_{1}^{\min} \left( \wt{\eta}, \sigma' \right)$, 
and 
$T_{1}^{\max} \left( \wt{\eta}, \sigma' \right)$. 
Note that the definition of $\wt{\eta}$ is not symmetric in $\sigma$ and $\sigma'$, 
so when $\sigma, \sigma' \in V_{N}^{\mathtt{typ}}$ are such that 
$\ycp(\sigma, \sigma') = \eta$ satisfies $\sigma_{k}' = \eta_{k} < \sigma_{k}$, 
then the display above holds with the roles of $\sigma$ and $\sigma'$ switched. 
Since 
$\p \left( \sigma \text{ is 1-good} \right) 
= \p \left( \sigma' \text{ is 1-good} \right)$
for all $\sigma, \sigma' \in V_{N}^{\mathtt{typ}}$, 
the first factor in~\eqref{eq:factored_bound} can always be taken to be 
$\p \left( \sigma \text{ is 1-good} \right)$. 
For the other factors in~\eqref{eq:factored_bound}, we shall see that the order of magnitude of these probabilities depends only on $\eta$ (up to lower order factors), so the switched roles of $\sigma$ and $\sigma'$ does not play a significant role.

Let us now turn to bounding the latter three factors in~\eqref{eq:factored_bound}, 
assuming that $\sigma_{k} = \eta_{k} < \sigma_{k}'$, 
and also letting 
$\ell := | \eta |_{1}$ 
and 
$r := |\{ i \in [k] : \eta_{i} \text{ is odd}\}|$. 
By the discussion above, 
$T_{1}^{\exp} \left( \wt{\eta}, \sigma' \right)$
is the sum of i.i.d.\ exponential random variables with mean $1$, 
with at least $N/3 - (\ell - (2k - r)) - 4$ summands. 
Thus, if $\{ U_{i} \}_{i \geq 1}$ are i.i.d.\ $\Exp(1)$ random variables, then 
\begin{multline*}
\p \left( T_{1}^{\exp} \left( \wt{\eta}, \sigma' \right) \leq \frac{\alpha N}{3} \right) 
\leq 
\p \left( \sum_{i=1}^{N/3 - (\ell - (2k - r)) - 4} U_{i} \leq \frac{\alpha N}{3} \right) \\
\leq 
\exp \left( - \overline{\Lambda}_{\exp} \left(  \alpha \cdot \frac{N/3}{N/3 - (\ell - (2k - r)) - 4} \right) \cdot \left( \frac{N}{3} - (\ell - (2k - r)) - 4 \right) \right),
\end{multline*}
where the second inequality follows by Cram\'er's theorem 
whenever 
$N/3 - (\ell - (2k - r)) - 4 
\geq \alpha N/3$, 
and by a trivial bound otherwise 
(recall that $\overline{\Lambda}_{\exp}(x) = \Lambda_{\exp}^{\star}(x)$ whenever $x \leq 1$, 
and that $\overline{\Lambda}_{\exp}(x) = 0$ otherwise). 
Similarly, we have that 
\begin{align*}
\p \left( T_{1}^{\min} \left( \wt{\eta}, \sigma' \right) \leq \frac{\beta N}{3} \right) 
&\leq 
\exp \left( - \overline{\Lambda}_{\min} \left( \beta \cdot \frac{N/3}{N/3 - r - 1} \right) \cdot \left( \frac{N}{3} - r - 1 \right) \right), \\
\p \left( T_{1}^{\max} \left( \wt{\eta}, \sigma' \right) \leq \frac{\gamma N}{6} \right) 
&\leq \exp \left( - \overline{\Lambda}_{\max} \left( \gamma \cdot \frac{N/6}{N/6 - (k-r) - 1} \right) \cdot \left( \frac{N}{6} - (k-r) - 1 \right) \right).
\end{align*}
In the expressions above, we can remove the additive constants of $-1$ and $-4$ by replacing them with a finite multiplicative constant in front of the expression. 
Altogether, we have thus shown that there exists a constant $C < \infty$ such that 
\begin{multline}
\p \left( \sigma, \sigma' \text{ are 1-good} \right) 
\leq C \p \left( \sigma \text{ is 1-good} \right) \\
\begin{aligned}
&\times \exp \left( - \overline{\Lambda}_{\exp} \left(  \alpha \cdot \frac{N/3}{N/3 - (\ell - (2k - r))} \right) \cdot \left( \frac{N}{3} - (\ell - (2k - r)) \right) \right) \\
&\times \exp \left( - \overline{\Lambda}_{\min} \left( \beta \cdot \frac{N/3}{N/3 - r} \right) \cdot \left( \frac{N}{3} - r \right) \right) \\
&\times \exp \left( - \overline{\Lambda}_{\max} \left( \gamma \cdot \frac{N/6}{N/6 - (k-r)} \right) \cdot \left( \frac{N}{6} - (k-r) \right) \right).\label{eq:function_G}
\end{aligned}
\end{multline}
In the above calculation, and in the rest of the paper, we use the convention $\frac{a}{0}=+\infty$ for every $a>0$. Remember also that we defined $\overline{\Lambda}_{\exp}(+\infty) = \overline{\Lambda}_{\min}(+\infty) = \overline{\Lambda}_{\max}(+\infty) = 0$.
Note that in the computations above we assumed that 
$\sigma_{k} = \eta_{k} < \sigma'_{k}$, 
but the exact same bound also holds if instead 
$\sigma'_{k} = \eta_{k} < \sigma_{k}$ (as the same arguments apply, with the roles of $\sigma$ and $\sigma'$ switched). Note that the bound above depends on $\eta$ only through $k$, $\ell$, and $r$. Combining this with~\eqref{eq:1goodprob} yields that 
\[
    \p \left( \sigma, \sigma' \text{ are 1-good} \right) 
\leq C \cdot  \exp \left(-(\phi_N + \varphi_N(k,\ell,r)) +o(N) \right),
\]
where we define the function $ \varphi_N(k,\ell,r)$  to be the sum of the terms  in the three exponential factors of~\eqref{eq:function_G}, that is, 
\begin{align}
     \varphi_N(k,\ell,r) := &  \overline{\Lambda}_{\exp} \left(  \alpha \cdot \frac{N/3}{N/3 - (\ell - (2k - r))} \right) \cdot \left( \frac{N}{3} - (\ell - (2k - r)) \right)\\
     & + \overline{\Lambda}_{\min} \left( \beta \cdot \frac{N/3}{N/3 - r} \right) \cdot \left( \frac{N}{3} - r \right)
     + \overline{\Lambda}_{\max} \left( \gamma \cdot \frac{N/6}{N/6 - (k-r)} \right) \cdot \left( \frac{N}{6} - (k-r) \right). 
\end{align}

Plugging this into~\eqref{eq:doublesum_to_eta}, we obtain that
\begin{multline}\label{eq:intermediate_step}
      \sum_{\sigma \in V_N^{\mathtt{typ}}} \sum_{\sigma^\prime \in V_N^{\mathtt{typ}}} \mathbb{P} \left( \sigma, \sigma^\prime \text{ are 1-good}  \right)^{2} \\
        \leq    C^{2}  e^{-2\phi_N+o(N)}
        \sum_{\sigma \in V_N^{\mathtt{typ}}} \sum_{\eta: \eta \preccurlyeq \sigma}   e^{- 2\varphi_N(k,\ell,r)}
        \left|\{\sigma' \in V_N^{\mathtt{typ}}: \ycp(\sigma, \sigma') = \eta\}\right|,
\end{multline}
where we recall that $k = |\eta|_0$, $\ell = |\eta|_1$, and $r = |\{i\in [k]: \eta_i \text{ is odd}\}|$. 
Recall also that we cannot have that $\ycp(\sigma, \sigma') = \varnothing$ (since $(1) \in V_{\infty}$ is a predecessor of both $\sigma$ and $\sigma'$), so the inner sum in~\eqref{eq:intermediate_step} goes over all $\eta$ such that $\eta \preccurlyeq \sigma$ and $\eta \neq \varnothing$. 
To bound the quantity in~\eqref{eq:intermediate_step}, we use the observation that $\{\sigma^\prime \in V_N^{\mathtt{typ}}: \ycp(\sigma, \sigma') = \eta\} \subseteq \{\sigma' \in V_N^{\mathtt{typ}}: \eta \preccurlyeq \sigma' \}$. Further, define the set 
\begin{align*}
     \cD := \big\{ (k,\ell,r) \in \N_0^3: 1\leq k\leq N/2, \quad r\leq N/3, \quad 0 \leq k-r\leq N/6,\quad  2k-r\leq \ell < N/3 + 2k-r\big\}.
\end{align*}
By a slight abuse of notation, write $\eta \in \cD$ if $(k,\ell, r)\in \cD$. Note that the first three constraints in $\cD$ are automatically satisfied for any $\eta \neq \varnothing$ with $ \eta \preccurlyeq \sigma $ for some $\sigma \in V_{N}^{\mathtt{typ}}$, by the definition of~$V_{N}^{\mathtt{typ}}$. 
Furthermore, the lower bound on $\ell$ in the definition of $\cD$ follows from the definitions of $k$, $\ell$, and $r$.
Finally, the upper bound on $\ell$ in the definition of $\cD$ allows us to apply Lemma~\ref{lem:descendant_count_in_VN_discrete}.
By Lemma~\ref{lem:descendant_count_in_VN_discrete},  
for $\eta \notin \cD$ 
we have that 
$  \left| \left\{ \sigma \in V_{N}^{\mathtt{typ}} : \eta \preccurlyeq \sigma \right\} \right| = 0$, 
and for $\eta \in \cD$ we have that
\begin{align*}
     \left| \left\{ \sigma \in V_{N}^{\mathtt{typ}} : \eta \preccurlyeq \sigma \right\} \right| 
     \leq 
     4 N^{4} \exp \left(\psi_N(k,\ell,r) \right),
\end{align*} 
where 
\begin{align*}
   \psi_N(k, \ell, r)
   :=  \left( \frac{N}{2} - k \right) H \left( \frac{N/3-r}{N/2-k} \right) 
+ \left( \frac{2N}{3}  -  \frac{\ell + r}{2}  \right) H \left( \frac{N/2 - k}{\frac{2N}{3} -  \frac{\ell + r}{2}} \right) .
\end{align*}
Applying this bound to the right-hand side of~\eqref{eq:intermediate_step} gives that 
\begin{multline*}
      \sum_{\sigma \in V_N^{\mathtt{typ}}} \sum_{\sigma^\prime \in V_N^{\mathtt{typ}}} \mathbb{P} \left( \sigma, \sigma^\prime \text{ are 1-good}  \right)^{2} \\
\begin{aligned}
    &\leq   4 C^{2}  N^4 e^{-2\phi_N+o(N)}
        \sum_{\sigma \in V_N^{\mathtt{typ}}} \sum_{\eta: \eta \preccurlyeq \sigma, \eta \in \cD}   \exp\left({- 2\varphi_N(k,\ell,r)} + \psi_N(k,\ell,r)\right)\\
    &\leq   4 C^{2}  N^4 e^{-2\phi_N+o(N)}
        \sum_{\sigma \in V_N^{\mathtt{typ}}} \left|\left\{\eta: \eta \preccurlyeq \sigma, \eta \neq \varnothing \right\} \right| \cdot  \max_{(k,\ell,r)\in \cD}  \exp\left({- 2\varphi_N(k,\ell,r)} + \psi_N(k,\ell,r)\right) \\
    &= 4 C^{2}  N^5 \left| V_N^{\mathtt{typ}} \right| \cdot e^{-2\phi_N+o(N)} \max_{(k,\ell,r)\in \cD}  \exp\left({- 2\varphi_N(k,\ell,r)} + \psi_N(k,\ell,r)\right),
\end{aligned}
\end{multline*}
where in the last line we used that 
$\left|\left\{\eta: \eta \preccurlyeq \sigma, \eta \neq \varnothing \right\} \right| = N$ 
for every $\sigma \in V_N^{\mathtt{typ}}$. 
Recall that we aim to show~\eqref{eq:2nd_moment_beyondUH}, 
and that 
$\left| V_{N}^{\mathtt{typ}} \right| = 2^{N-o(N)}$ 
and 
$\mathbb{P} \left( \sigma \text{ is 1-good}  \right) = \exp( - \phi_{N} + o(N))$. 
Consequently, by the inequality in the display above, 
it suffices to show that 
\begin{equation}\label{eq:intermediate_step_two}
    \max_{(k,\ell,r)\in \cD} \exp\left({- 2\varphi_N(k,\ell,r)} + \psi_N(k,\ell,r)\right) 
    \leq \exp\left(N\log 2 - 2\phi_N + o(N)\right),
\end{equation}
where the constants and the $N^5$ factor is absorbed into the $e^{o(N)}$. Taking the logarithm, this is equivalent to showing that
\begin{align}\label{eq:maximization_problem}
     \max_{(k,\ell,r)\in \cD} \left\{ - 2\varphi_N(k,\ell,r) + \psi_N(k,\ell,r)\right\} & \leq N\log 2 - 2\phi_N + o(N).
\end{align}

We now pass to a continuous maximization problem. Substituting $u := k/N$, $v:= \ell/N$ and $w:=r/N$, the triple $(u,v,w)$ takes values in 
\begin{align*}
    \cB_N := \left\{(u,v,w) \in \frac{1}{N}\N_0^3: \quad u\leq \frac{1}{2}, \quad w\leq \frac{1}{3},\quad  0 \leq u-w \leq \frac{1}{6}, \quad 2u-w \leq v \leq \frac{1}{3} +2u - w\right\}.
\end{align*}
Letting $N$ tend to infinity, the set $\cB_{N}$ converges to 
\begin{equation}\label{eq:B}
    \cB := \left\{(u,v,w) \in \R_{+}^{3}: \quad u\leq \frac{1}{2}, \quad w\leq \frac{1}{3},\quad  0\leq u-w \leq \frac{1}{6}, \quad 2u - w \leq v \leq \frac{1}{3} +2u - w\right\}.
\end{equation}
Now define 
\begin{align}
    \varphi(u,v,w) & := \frac{1}{N}\varphi_N(uN, vN, wN) \label{eq:scaled_varphi} \\
    & = \overline{\Lambda}_{\exp} \left(\frac{\alpha /3 }{1/3 - (v - (2u - w))} \right) \cdot \left( 1/3 - (v - (2u - w)) \right)\\
     & \quad + \overline{\Lambda}_{\min} \left(  \frac{\beta/3}{1/3 - w} \right) \cdot \left( 1/3 - w \right)
     + \overline{\Lambda}_{\max} \left(\frac{\gamma /6}{1/6 - (u-w)} \right) \cdot \left( 1/6 - (u-w) \right),
\end{align}
and 
\begin{align}
    \psi(u,v,w) & := \frac{1}{N} \psi_N(uN,vN,wN) \label{eq:scaled_psi} \\
    & = (1/2 - u)H\left(\frac{1/3 - w}{1/2 - u}\right) + (2/3 - (v+w)/2)H\left(\frac{1/2 - u}{2/3 - (v+w)/2}\right).
\end{align}
Observe that $\varphi$ and $\psi$ do not depend on $N$, and further, that 
\begin{align*}
    \phi := \frac{1}{N}\phi_N = \frac{1}{3} \Lambda_{\exp}^{\star} (\alpha) + \frac{1}{3} \Lambda_{\min}^{\star} (\beta)+\frac{1}{6} \Lambda_{\max}^{\star} (\gamma)
\end{align*}
does not depend on $N$ either, where we recall $\phi_N$ from~\eqref{eq:1goodprob}. We are now ready to move from the discrete maximization problem in~\eqref{eq:maximization_problem} to a continuous setting. By dividing both sides of~\eqref{eq:maximization_problem} by $N$ and letting $N\to \infty$, in order to show~\eqref{eq:maximization_problem} it therefore suffices to show that 
\begin{align}\label{eq:continuous_maximization}
    \sup_{(u,v,w) \in \cB} \left\{ \psi(u,v,w) - 2 \varphi(u,v,w) \right\} \leq \log 2 - 2\phi . 
\end{align}
Note that 
$\varphi(0,0,0) = \phi$ 
and 
$\psi(0,0,0) = \frac{1}{2}H(\frac{2}{3}) + \frac{2}{3}H(\frac{3}{4}) = \log 2$, 
so equality is attained in~\eqref{eq:continuous_maximization} by taking $u=v=w=0$. 
The inequality~\eqref{eq:continuous_maximization} follows directly 
by adding the two inequalities in Lemma~\ref{lem:inequalities_new} (and noting again that $\frac{1}{2}H(\frac{2}{3}) + \frac{2}{3}H(\frac{3}{4}) = \log 2$).
\end{proof}

\subsection{Technical statements} \label{subsec: technical statements}

We prove several technical results here that are used in the previous parts of Section~\ref{sec:beyondUH}.

\begin{lemma}\label{lemma:mgfs}
    Let $U,V,$ and $W$ be i.i.d.\ exponential random variables with mean $1$. Then, for $\lambda<2$ we have that 
    \begin{align}
    \E \left[ \exp \left(  \lambda \min \left\{ U + V, W \right\} \right) \right] &= \frac{2}{ (2-\lambda )^2} + \frac{1}{2 -\lambda },
    \intertext{and for $\lambda<1$ we have that}
    \E \left[ \exp \left(  \lambda \max \left\{ U + V, W \right\} \right) \right] &= \frac{4-3\lambda }{(1-\lambda )^2 (2-\lambda )^2}.
    \end{align}
\end{lemma}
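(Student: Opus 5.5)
The plan is to compute both moment generating functions directly, using the distribution of the minimum and maximum of $W$ and $S:=U+V$. Here $S\sim\mathrm{Gamma}(2,1)$ has density $s\e^{-s}$ and tail $\p(S>x)=(1+x)\e^{-x}$, and $S$ is independent of $W$.

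\textbf{The minimum.} For the minimum $M:=\min\{U+V,W\}$ we have
\[
\p(M>x)=\p(S>x)\,\p(W>x)=(1+x)\e^{-2x},
\]
so $M$ has density $f_M(x)=(1+2x)\e^{-2x}$. Then for $\lambda<2$,
\[
\E[\e^{\lambda M}]=\int_0^\infty(1+2x)\e^{-(2-\lambda)x}\,\dd x=\frac{1}{2-\lambda}+\frac{2}{(2-\lambda)^2},
\]
using the elementary integrals $\int_0^\infty \e^{-cx}\,\dd x=1/c$ and $\int_0^\infty x\e^{-cx}\,\dd x=1/c^2$ for $c>0$. This is exactly the first claimed identity, and the condition $\lambda<2$ is precisely what makes the integrals converge.

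\textbf{The maximum.} For the maximum $M':=\max\{U+V,W\}$ there are two routes, and I would use whichever is cleaner. One is to compute the distribution function
\[
\p(M'\le x)=\bigl(1-(1+x)\e^{-x}\bigr)(1-\e^{-x}),
\]
differentiate to get the density, and integrate against $\e^{\lambda x}$ term by term. The terms are of the form $\e^{-x}$, $x\e^{-x}$, $\e^{-2x}$ and $x\e^{-2x}$, so convergence requires $\lambda<1$. Alternatively, and more quickly, I would use the identity $\e^{\lambda\max\{a,b\}}+\e^{\lambda\min\{a,b\}}=\e^{\lambda a}+\e^{\lambda b}$. This gives
\[
\E[\e^{\lambda M'}]=\E[\e^{\lambda S}]+\E[\e^{\lambda W}]-\E[\e^{\lambda M}]=\frac{1}{(1-\lambda)^2}+\frac{1}{1-\lambda}-\frac{1}{2-\lambda}-\frac{2}{(2-\lambda)^2}.
\]

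\textbf{Main obstacle.} The only real work is the routine algebraic check that this combination equals $\frac{4-3\lambda}{(1-\lambda)^2(2-\lambda)^2}$. I would put everything over the common denominator $(1-\lambda)^2(2-\lambda)^2$ and expand the numerator, which should collapse to $4-3\lambda$. Sanity checks at $\lambda=0$ (value $1$) and via the derivative at $0$ (mean $9/4$, consistent with the values stated before the lemma) confirm the formula.
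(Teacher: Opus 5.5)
Your proposal is correct. The treatment of the minimum matches the paper exactly: tail $(1+x)\mathrm{e}^{-2x}$, density $(1+2x)\mathrm{e}^{-2x}$, then integrate. Your first option for the maximum is also the paper's route: it differentiates $\bigl(1-(1+a)\mathrm{e}^{-a}\bigr)(1-\mathrm{e}^{-a})$ to get the density $(1+t)\mathrm{e}^{-t}-(1+2t)\mathrm{e}^{-2t}$ and integrates against $\mathrm{e}^{\lambda t}$.

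Your preferred route for the maximum, via $\mathrm{e}^{\lambda\max\{a,b\}}+\mathrm{e}^{\lambda\min\{a,b\}}=\mathrm{e}^{\lambda a}+\mathrm{e}^{\lambda b}$, is a slicker variant. It avoids a second density computation by reusing the minimum. It also makes the threshold $\lambda<1$ transparent: that is exactly where $\E[\mathrm{e}^{\lambda S}]=(1-\lambda)^{-2}$ and $\E[\mathrm{e}^{\lambda W}]=(1-\lambda)^{-1}$ are finite. Since all terms are nonnegative, the identity holds in $[0,\infty]$, and finiteness of $\E[\mathrm{e}^{\lambda M}]$ lets you subtract it. The two routes are the same identity at the level of densities: the paper's density for the maximum is precisely the density of $S$ plus that of $W$ minus that of $M$. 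So both arrive at the same four-term expression.

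The algebra you deferred does close. Write $b=2-\lambda$, so $1-\lambda=b-1$. The expression becomes $\frac{b}{(b-1)^2}-\frac{b+2}{b^2}$. Over the common denominator $(b-1)^2b^2$, the numerator is $b^3-(b+2)(b-1)^2=3b-2=4-3\lambda$, as required.
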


\begin{proof}
Let us write $g(t):=\e^{-t}\mathbbm 1_{\{t>0\}}$ for the density of the random variables $U$, $V$, and $W$. By the convolution formula for densities of random variables, the density of $U+V$ is given by
\begin{equation*}
    g_{U+V}(t) = \int_{\R} g(s) g(t-s)\,\dd s = t \e^{-t} \mathbbm{1}_{\{t>0\}}.
\end{equation*}
Thus for all $a>0$ we get that 
\begin{equation*}
    \p \left( U+V > a \right) = \int_{a}^\infty t \e^{-t} \dd t = (a+1)\e^{-a}.
\end{equation*}
Hence, by independence of $U+V$ and $W$, for all $a>0$ we have that 
\begin{equation*}
    \p\left( \min\left\{U+V,W \right\} > a \right) = \p \left( U+V > a \right) \p \left( W > a \right)  = (a+1) \e^{-2a}.
\end{equation*}
As a result, the probability density function of $\min\{U+V,W\}$ is given by
\begin{equation*}
    g_{\min}(t) =
    (2t+1)\e^{-2t}
    \mathbbm{1}_{\{t>0\}} .
\end{equation*}
Integrating against the probability density function, we finally get, for any $\lambda < 2$, that 
\begin{align*}
    \E \left[ \exp\left( \lambda \min\{U+V,W\} \right) \right]= \int_0^\infty  (2t+1)\e^{-2t} \e^{\lambda t}\, \dd t=\frac{2}{(2-\lambda)^2}+ \frac{1}{2-\lambda}.
\end{align*}
Next, let us turn to the maximum. Here, for any $a>0$ we have that 
\begin{equation*}
    \p \left( \max\{U+V,W\} \leq a \right) =
    \p \left( U+V\leq a \right)
    \p \left( W \leq a \right)
    =
    \left( 1-(a+1)\e^{-a} \right) \left( 1-\e^{-a} \right),
\end{equation*}
and thus we get that the probability density function of $\max\{U+V,W\}$ is 
\begin{equation*}
    g_{\max}(t) =  \left((1+t)\e^{-t}-(1+2t)\e^{-2t} \right)\indicator_{\{t>0\}}.
\end{equation*}
Integrating against this density, we get, for any $\lambda <1$, that 
\begin{align*}
    \E \left[ \exp\left( \lambda \max\{U+V,W\} \right) \right]
    = 
    \int_0^\infty  \left((1+t)\e^{-t}-(1+2t)\e^{-2t} \right) \e^{\lambda t} \,\dd t= \frac{4-3\lambda}{(1-\lambda)^2(2-\lambda)^2},
\end{align*}
which concludes the proof.
\end{proof}

Next, we determine the large deviation rate functions of the random variables $\min\{U+V,W\}$ and $\max\{U+V,W\}$, which we denote by $\Lambda^{\star}_{\min}$ and $\Lambda_{\max}^{\star}$, respectively. 

\begin{lemma}\label{lemma:ratefunc}
    Let $U$, $V$, and $W$ be i.i.d.\ exponential random variables with mean $1$, and let $\Lambda^{\star}_{\min}$ (respectively, $\Lambda^{\star}_{\max}$) denote the large deviation rate function of the random variable $\min\{U+V,W\}$ (respectively, $\max\left\{U+V,W \right\}$). Then, for $a\in (0,3/4]$, we have that 
    \begin{align}
    \Lambda^{\star}_{\min}(a) &=  a x_{-}(a) - \log \left( \frac{2}{(2-x_{-}(a))^2} + \frac{1}{2-x_{-}(a)} \right), \label{eq:lambdamin}
    \intertext{and for $a\in(0,9/4]$, we have that }
    \Lambda_{\max}^{\star} \left(a\right)  &= a x_{\max}(a) - \log \left(  \frac{-3x_{\max}(a) + 4}{(1-x_{\max}(a))^2 (2-x_{\max}(a))^2} \right), \label{eq:lambdamax}
    \end{align}    
    where $x_{-}(a)$ and $x_{\max}(a)$ are defined in~\eqref{eq:xmin} and~\eqref{eq:xmax}, respectively.
\end{lemma}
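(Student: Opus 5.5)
Both formulas come from Cramér's theorem, which gives the rate function as the Legendre transform $\Lambda^\star(a)=\sup_{\lambda}\{a\lambda-\log M(\lambda)\}$ of the log-moment generating function. Lemma~\ref{lemma:mgfs} already supplies $M_{\min}(\lambda)=\frac{2}{(2-\lambda)^2}+\frac{1}{2-\lambda}$ for $\lambda<2$ and $M_{\max}(\lambda)=\frac{4-3\lambda}{(1-\lambda)^2(2-\lambda)^2}$ for $\lambda<1$. Both are $+\infty$ beyond these thresholds. So the plan is to solve the first-order condition $a=(\log M)'(\lambda)$ on the domain of finiteness, identify the root, and substitute it back.

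For the minimum, write $M_{\min}(\lambda)=\frac{4-\lambda}{(2-\lambda)^2}$. Then
\[
(\log M_{\min})'(\lambda)=-\frac{1}{4-\lambda}+\frac{2}{2-\lambda}.
\]
Setting this equal to $a$ and clearing denominators gives the quadratic
\[
a(4-\lambda)(2-\lambda)=2(4-\lambda)-(2-\lambda)=6-\lambda,
\]
that is, $a\lambda^2-(6a-1)\lambda+(8a-6)=0$. Its roots are
\[
x_{\pm}(a)=\frac{6a-1\pm\sqrt{4a^2+4a+1}}{2a}=\frac{6a-1\pm(2a+1)}{2a},
\]
so $x_-(a)=2-1/a$, which I expect matches~\eqref{eq:xmin}. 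I will check that $x_-(a)<2$, which is the admissible root, while $x_+(a)=4$ lies outside the domain. At $a=3/4$ we get $x_-=2/3$; here the derivative should equal the mean $3/4$ at $\lambda=0$, so I will recheck the algebra at this point. Indeed $(\log M_{\min})'(0)=-1/4+1=3/4$, and the root at $a=3/4$ should then be $\lambda=0$.

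That discrepancy suggests the quadratic needs redoing. Since $-\frac1{4-\lambda}+\frac{2}{2-\lambda}=\frac{-(2-\lambda)+2(4-\lambda)}{(4-\lambda)(2-\lambda)}=\frac{6-\lambda}{(4-\lambda)(2-\lambda)}$, the equation is $a(8-6\lambda+\lambda^2)=6-\lambda$, i.e.\ $a\lambda^2-(6a-1)\lambda+8a-6=0$. The discriminant is $(6a-1)^2-4a(8a-6)=4a^2+12a+1$, not a perfect square, so
\[
x_\pm(a)=\frac{6a-1\pm\sqrt{4a^2+12a+1}}{2a},
\]
and at $a=3/4$ the root $x_-$ equals $0$, as it should. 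To finish this case I will argue as follows. The function $(\log M_{\min})'$ is strictly increasing on $(-\infty,2)$ because the log-MGF is strictly convex. It tends to $0$ as $\lambda\to-\infty$ and to $\infty$ as $\lambda\uparrow2$. Hence for each $a\in(0,3/4]$ there is a unique root, located in $(-\infty,0]$, and it must be $x_-(a)$, since $x_+(a)>2$. Concavity of $\lambda\mapsto a\lambda-\log M(\lambda)$ then makes this stationary point the supremum, which yields~\eqref{eq:lambdamin}.

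The maximum case follows the same template with
\[
(\log M_{\max})'(\lambda)=-\frac{3}{4-3\lambda}+\frac{2}{1-\lambda}+\frac{2}{2-\lambda}.
\]
Setting this equal to $a$ and clearing denominators gives a cubic in $\lambda$. I expect this to be the main obstacle: the root has no nice closed form, and~\eqref{eq:xmax} presumably gives it via Cardano's formula or as an implicitly defined root. The plan is to avoid explicit algebra wherever possible and argue by monotonicity instead. The derivative increases strictly from $0$ (as $\lambda\to-\infty$) to $\infty$ (as $\lambda\uparrow1$), and equals $-3/4+2+1=9/4$ at $\lambda=0$. So for $a\in(0,9/4]$ there is exactly one root in $(-\infty,0]$, which must be the root $x_{\max}(a)$ selected in~\eqref{eq:xmax}. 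Concavity again gives~\eqref{eq:lambdamax}. The only genuine computation left is to check that the branch of the cubic formula chosen in~\eqref{eq:xmax} is the one lying in $(-\infty,0]$, for instance by verifying this at $a=9/4$ and using continuity in $a$.
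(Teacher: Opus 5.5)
Your argument is correct and is essentially the paper's. Cram\'er's theorem reduces both formulas to the Legendre transform of the log-MGFs from Lemma~\ref{lemma:mgfs}. The first-order condition then gives the quadratic $a\lambda^2-(6a-1)\lambda+8a-6=0$ and the cubic $3a\lambda^3+(9-13a)\lambda^2+(18a-25)\lambda+(18-8a)=0$. The quadratic's discriminant is $4a^2+12a+1$, so you should delete your first attempt, which used $4a^2+4a+1$ and got $x_-=2-1/a$.

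The only real difference is how you select the relevant root. You use strict convexity of the log-MGF, the limits of its derivative, and the values $3/4$ and $9/4$ at $\lambda=0$. The paper instead uses $x_-(a)<0$ directly. For the cubic it uses that the discriminant is negative, together with the sign of the product of roots $-(18-8a)/(3a)$. Your version has the advantage of making explicit why the stationary point is the global supremum.

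One caveat on your last step: matching your root with \eqref{eq:xmax} is not really a branch-selection issue. With real cube roots, the expression in \eqref{eq:xmax} is a well-defined real number only if $-4a^4-36a^3+323a^2-426a+1863>0$ on $(0,9/4]$. That condition is exactly the paper's negative-discriminant claim, and your continuity-in-$a$ argument would need it anyway. Once you check it, the cubic has a unique real root, which must coincide with your root in $(-\infty,0]$, so no continuity argument is needed.
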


\begin{remark}\label{remark:minmax_expectation}
    We are only interested in the domains $(0,3/4]$ and $(0,9/4]$ for the functions $\Lambda^{\star}_{\min}$ and $\Lambda^{\star}_{\max}$, respectively, since $\E[\min\{U+V,W\}]=3/4$ and $\E[\max\{U+V,W\}]=9/4$.
\end{remark}

\begin{proof}[Proof of Lemma~\ref{lemma:ratefunc}] 
By Cram\'er's theorem, the large deviation rate function is the Legendre transform of the logarithmic moment generating function. In Lemma~\ref{lemma:mgfs} we computed the relevant moment generating functions, so what remains is to compute the associated Legendre transform. 
We first determine $\Lambda^{\star}_{\min}$. Using Lemma~\ref{lemma:mgfs}, we have that 
\begin{equation}\label{eq:LDP Cramer min}
    \Lambda^\star_{\min}(a) = \sup_{x < 0} \left\{  ax - \log \left( \frac{2}{(2-x)^2} + \frac{1}{2-x} \right) \right\}. 
\end{equation}
To find the value of $x$ that attains the supremum, we compute the derivative with respect to $x$:
\begin{equation}
    \frac{\dd}{\dd x} \left\{ax - \log \left( \frac{2}{(2-x)^2} + \frac{1}{2-x} \right) \right\} =
    \frac{a(x^2-6x+8)+x-6}{(x-4)(x-2)},  
\end{equation}
and set this equal to $0$. 
The resulting equation has the two solutions
\be \label{eq:xmin}
    x_{\pm}(a) = \frac{(6a-1)\pm \sqrt{ (6a-1)^2 - 4a(8a-6) }}{2a} .
\ee
For $a< 3/4$, the solution $x_{-} = x_{-}(a)$ is indeed negative and thus we get that the supremum in~\eqref{eq:LDP Cramer min} is realized at $x=x_{-}$. 
In particular, 
for $a<3/4$ we have that~\eqref{eq:lambdamin} holds, 
and this also holds for $a=3/4$ 
(since 
$\Lambda^{\star}_{\min}(3/4) = 0$).

Next, let us turn to $\Lambda^{\star}_{\max}$. Here, again using Lemma~\ref{lemma:mgfs}, we have for $a \leq 9/4$ that 
\begin{equation}\label{eq:LDP Cramer max}
    \Lambda^{\star}_{\max}(a) = \sup_{x < 0} \left\{ ax - \log \left( \frac{-3x+4}{(1-x)^2(2-x)^2}  \right) 
 \right\}.
\end{equation}
Taking the derivative with respect to $x$  yields
\begin{align*}
    \frac{\dd}{\dd x} \left\{  ax - \log \left( \frac{-3x+4}{(1-x)^2(2-x)^2}  \right) \right\}
    =
    \frac{a(3x^3 - 13 x^2 + 18x -8) +9x^2 -25x + 18}{(x-2)(x-1)(3x-4)}.
\end{align*}
This implies that we want to solve the cubic equation
\begin{align}\label{eq:cubic}
    3ax^3+(9-13a)x^2+(18a-25)x+(18-8a) = 0
\end{align}
for $a \in (0,9/4]$. 
Note that the discriminant of this cubic equation is negative for all $a \in (0,9/4]$, 
so there is a unique real solution. 
Furthermore, 
since both the constant term and the coefficient of the cubic term in~\eqref{eq:cubic} 
are positive 
for $a \in (0, 9/4)$, 
it follows that the unique real solution of~\eqref{eq:cubic} is negative for $a \in (0, 9/4)$; 
in addition, for $a = 9/4$ we have that $18-8a = 0$, 
so the unique real solution is $0$. 
For $a \in (0,9/4)$, the unique real solution $x_{\max}(a)$ 
can be computed using Cardano's formula
and 
is given by
\be \ba\label{eq:xmax}
x_{\max}(a)={}&\frac{13a-9}{9a}\\
&+\Bigg(\frac{ 20 a^3 - 351 a^2 + 243 a - 1458}{1458a^3}+\sqrt{\frac{-4 a^4 - 36 a^3 + 323 a^2 - 426 a + 1863}{8748 a^4}}\Bigg)^{1/3}\\
&+\Bigg(\frac{20 a^3 - 351 a^2 + 243 a - 1458}{1458 a^3}-\sqrt{\frac{-4 a^4 - 36 a^3 + 323 a^2 - 426 a + 1863}{8748 a^4}}\Bigg)^{1/3}.
\ea \ee 
\invisible{
\begin{align*}
        & x_{\max}(a)  = \frac{13a-9}{9a}+  \Bigg( -\frac{-9\cdot 3a(9-13a)(18a-25)+27\cdot 9a^2(18-8a) + 2(9-13a)^3}{1458a^3} \\
        &+ \sqrt{ \left(\frac{-9\cdot 3a(9-13a)(18a-25)+27\cdot 9a^2(18-8a) + 2(9-13a)^3}{1458a^3}\right)^2 + \left( \frac{27a(18a-25)-3(9-13a)^2}{243a^2} \right)^3}\Bigg)^{1/3} \\
        &
        + \Bigg( -\frac{-9\cdot 3a(9-13a)(18a-25)+27\cdot 9a^2(18-8a) + 2(9-13a)^3}{1458a^3}  \\
        &- \sqrt{ \left(\frac{-9\cdot 3a(9-13a)(18a-25)+27\cdot 9a^2(18-8a) + 2(9-13a)^3}{1458a^3}\right)^2 + \left( \frac{27a(18a-25)-3(9-13a)^2}{243a^2} \right)^3} \Bigg)^{1/3}
\end{align*}
}
This implies that for $a \in (0, 9/4]$ the mapping $x\mapsto ax - \log \left( \frac{-3x+4}{(1-x)^2(2-x)^2}  \right)$ is maximal at $x_{\max}(a)$ over all non-positive values of $x$. 
This shows~\eqref{eq:lambdamax}, 
which concludes the proof.
\end{proof}

The following lemma proves two inequalities, which together imply~\eqref{eq:continuous_maximization}, which is needed to conclude the second moment bound in Section~\ref{subsec:High probab}. For this lemma, we also use the convention that $0H(0/0) = 0$.

\begin{lemma}\label{lem:inequalities_new}
    Recall the functions  $\Lambda_{\exp}^{\star}$, $\Lambda_{\min}^{\star}$, $\Lambda_{\max}^{\star}$, and $\overline \Lambda_{\mathrm{exp}}$, $\overline \Lambda_{\min}$, $\overline \Lambda_{\max}$ from \eqref{eq:exp_rate_function}, Lemma~\ref{lemma:ratefunc}, and \eqref{eq:barfunc}, respectively. 
    Recall also the choice of constants $\alpha$, $\beta$, and $\gamma$ from~\eqref{eq:constants}. 
    Finally, recall the set $\cB$ defined in~\eqref{eq:B}. 
    For all $(u,v,w) \in \cB$, we have that
    \begin{multline}\label{eq:inequalities_new 1}
         \left( \frac{2}{3} - \frac{v+w}{2} \right) H\left(\frac{1/2 - u}{2/3 - (v+w)/2}\right)  -  2 \overline{\Lambda}_{\exp} \left(\frac{\alpha /3 }{1/3 - (v - (2u - w))} \right) \cdot \left( \frac{1}{3} - (v - (2u - w)) \right)\\
         \leq \frac{2}{3}H(3/4) - \frac{2}{3} \Lambda_{\exp}^{\star} (\alpha),
    \end{multline}
    and
    \begin{multline}\label{eq:inequalities_new 2}
         \left( \frac{1}{2} - u \right) H\left(\frac{1/3 - w}{1/2 - u}\right) - 2\overline{\Lambda}_{\min} \left(  \frac{\beta/3}{1/3 - w} \right) \cdot \left( \frac{1}{3} - w \right)
     - 2\overline{\Lambda}_{\max} \left(\frac{\gamma /6}{1/6 - (u-w)} \right) \cdot \left( \frac{1}{6} - (u-w) \right)\\
         \leq \frac{1}{2}H(2/3) - \frac{2}{3} \Lambda_{\min}^{\star} (\beta)-\frac{1}{3} \Lambda_{\max}^{\star} (\gamma).
    \end{multline}
\end{lemma}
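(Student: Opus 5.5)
The plan is to reduce each inequality to a low-dimensional concavity-type statement and then verify it by a monotonicity argument combined with a numerical check. The reduction uses the new variables
\[
p := \tfrac{1}{3} - (v - (2u-w)), \qquad q := \tfrac{1}{3} - w, \qquad s := \tfrac{1}{6} - (u-w).
\]
On $\cB$ we have $p \in [0,1/3]$, $q \in [0,1/3]$, $s \in [0,1/6]$. The point $(u,v,w)=(0,0,0)$ corresponds to $p=q=1/3$, $s=1/6$.

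\textbf{First inequality \eqref{eq:inequalities_new 1}.} Here $1/2-u = q+s$ and $2/3-(v+w)/2 = q+s+p/2$. The left side therefore becomes
\[
F(p,q+s) := \Bigl(\tfrac{p}{2}+m\Bigr) H\Bigl(\tfrac{m}{m+p/2}\Bigr) - 2p\,\overline{\Lambda}_{\exp}\Bigl(\tfrac{\alpha}{3p}\Bigr), \qquad m := q+s \le \tfrac12 .
\]
By \eqref{eq:increasing weighted bin entrop}, the binomial-entropy term $(x+y)H(y/(x+y))$ is increasing in $x$ for fixed $y$. It is also increasing in $y$ as long as $y \le x$ fails only mildly; more precisely, $\frac{\partial}{\partial y}(x+y)H(\frac{y}{x+y}) = \log\frac{x+y}{y} \ge 0$. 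So we may set $m=1/2$, i.e.\ $u=w$, which reduces the problem to a one-variable function of $p\in[0,1/3]$. We then need
\[
g(p) := \Bigl(\tfrac{p}{2}+\tfrac12\Bigr)H\Bigl(\tfrac{1}{1+p}\Bigr) - 2p\,\overline{\Lambda}_{\exp}\Bigl(\tfrac{\alpha}{3p}\Bigr) \le g(1/3).
\]
The derivative of the first term is $\tfrac12\log\frac{1+p}{p}$. The derivative of $-2p\Lambda^\star_{\exp}(\alpha/(3p))$ is $-2(1-1/\alpha')$-type: writing $x=\alpha/(3p)$, we get $\frac{d}{dp}[p\Lambda^\star(x)] = \Lambda^\star(x) - x\Lambda^{\star\prime}(x) = \log x$. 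Hence
\[
g'(p) = \tfrac12\log\tfrac{1+p}{p} - 2\log\tfrac{\alpha}{3p}
\]
for $p \ge \alpha/3$, and $g'(p) = \tfrac12\log\frac{1+p}{p} > 0$ for $p < \alpha/3$. This derivative is decreasing in $p$. At $p=1/3$ it equals $\tfrac12\log 4 - 2\log(1/\alpha) = \log 2 + 2\log 0.72 \approx 0.036 > 0$. So $g$ is increasing on $[0,1/3]$, which proves \eqref{eq:inequalities_new 1}.

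\textbf{Second inequality \eqref{eq:inequalities_new 2}.} The left side is
\[
G(q,s) := (q+s)H\Bigl(\tfrac{q}{q+s}\Bigr) - 2q\,\overline{\Lambda}_{\min}\Bigl(\tfrac{\beta}{3q}\Bigr) - 2s\,\overline{\Lambda}_{\max}\Bigl(\tfrac{\gamma}{6s}\Bigr)
\]
on $[0,1/3]\times[0,1/6]$. The same Legendre identity gives $\partial_q[q\Lambda^\star(\beta/(3q))] = \lambda_{\min}$, where $\lambda_{\min}(a) = x_-(a) \le 0$ is the optimizer from Lemma~\ref{lemma:ratefunc}. Up to sign conventions, this equals $-\Lambda^{\star\prime}$ evaluated suitably, and similarly for the max term. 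We thus obtain
\[
\partial_q G = \log\tfrac{q+s}{q} + 2x_-\Bigl(\tfrac{\beta}{3q}\Bigr), \qquad \partial_s G = \log\tfrac{q+s}{s} + 2x_{\max}\Bigl(\tfrac{\gamma}{6s}\Bigr),
\]
where the $x$-terms vanish once the argument exceeds the mean. Since $x_-$ and $x_{\max}$ are increasing in their argument and hence decreasing in $q$, $s$, both partials are monotone. The plan is to show $\partial_q G \ge 0$ and $\partial_s G \ge 0$ on the whole rectangle, so that the maximum is attained at the corner $(1/3,1/6)$, which is exactly the right-hand side.

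Each partial is smallest when its own variable is maximal and the other variable is minimal. So it suffices to check $\log\frac{q+s}{q} + 2x_-(\beta/(3q)) \ge 0$ at $q=1/3$ for all $s\in[0,1/6]$. This fails at $s=0$, and that is the main obstacle.

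The fix is to treat the boundary region separately. On the region where the entropy term is small, I would instead bound the concave function $G$ along rays, or use joint concavity of $G$. The entropy term is jointly concave as a perspective function, and $q\Lambda^\star(c/q)$ is a perspective of a convex function, hence jointly convex, so $G$ is concave. For a concave function it suffices that the directional derivatives at the corner $(1/3,1/6)$ into the domain are nonpositive, i.e.\ that $\partial_q G \ge 0$ and $\partial_s G \ge 0$ at the corner. Numerically:
\[
\partial_q G = \log\tfrac32 + 2x_-(0.64) \ge 0, \qquad \partial_s G = \log 3 + 2x_{\max}(1.85) \ge 0,
\]
and I would verify both numerically. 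The same concavity argument also streamlines the first inequality.
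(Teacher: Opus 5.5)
\paragraph{Where your proposal matches the paper.} Your architecture is the paper's. For the first inequality you push to $u=0$ using monotonicity of the weighted entropy, and then show the one-variable function is monotone via a decreasing derivative plus an endpoint check. Setting $m=1/2$ means $u=0$, which forces $w=0$; it is not merely $u=w$. Your $g$ is the paper's $Q$ with $t=1/3-p$.

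For the second inequality you use joint concavity of $G$ (the paper's $F$ with $(z,t)=(q,s)$) and a first-order check at the corner $(1/3,1/6)$. The paper makes the same check with finite differences at $(\tfrac13+10^{-5},\tfrac16)$ and $(\tfrac13,\tfrac16+10^{-5})$, so your derivative version would be a fine substitute. For the concavity you need convexity of the truncated $\overline{\Lambda}_{\min},\overline{\Lambda}_{\max}$, not just of $\Lambda^\star$; this does hold.

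\paragraph{The gap.} The derivative identity on which everything rests is wrong, and as written your decisive numerical check fails. Let $M$ be the moment generating function and $\lambda^*(a)=\Lambda^{\star\prime}(a)$ the Legendre optimizer. Then
\[
\frac{\dd}{\dd q}\Bigl[q\,\Lambda^{\star}\Bigl(\frac{c}{q}\Bigr)\Bigr]=\Lambda^{\star}(a)-a\,\Lambda^{\star\prime}(a)=-\log M\bigl(\lambda^{*}(a)\bigr),\qquad a=\frac{c}{q}.
\]
This is not $\lambda^*(a)$: you conflated $\Lambda^{\star\prime}$ with the derivative of the perspective. Hence
\[
\partial_q G=\log\frac{q+s}{q}+2\log M_{\min}\Bigl(x_-\Bigl(\frac{\beta}{3q}\Bigr)\Bigr),\qquad \partial_s G=\log\frac{q+s}{s}+2\log M_{\max}\Bigl(x_{\max}\Bigl(\frac{\gamma}{6s}\Bigr)\Bigr),
\]
with $M_{\min}(\lambda)=\frac{2}{(2-\lambda)^2}+\frac{1}{2-\lambda}$ and $M_{\max}(\lambda)=\frac{4-3\lambda}{(1-\lambda)^2(2-\lambda)^2}$ from Lemma~\ref{lemma:mgfs}.

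Since $x_-(0.64)\approx-0.2908$, your quantity is $\log\frac32+2x_-(0.64)\approx 0.405-0.582\approx-0.176<0$. So the check you said you would verify is false and would wrongly suggest the corner is not a maximizer.

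With the correct formula, $M_{\min}(x_-(0.64))\approx 0.8176$, so
\[
\partial_qG\bigl(\tfrac13,\tfrac16\bigr)\approx 0.4055-0.4027\approx 0.0028>0 .
\]
Likewise $x_{\max}(1.85)\approx-0.2557$ and $M_{\max}\approx 0.594$, so
\[
\partial_sG\bigl(\tfrac13,\tfrac16\bigr)\approx 1.0986-1.0413\approx 0.057>0 .
\]
These agree with the paper's difference quotients. The margin in $q$ is tiny, so getting this formula right is exactly where the specific choice of $\beta,\gamma$ enters.

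\paragraph{The first inequality.} The same identity with $M_{\exp}(\lambda)=1/(1-\lambda)$ gives $\frac{\dd}{\dd p}\bigl[p\Lambda^\star_{\exp}(\tfrac{\alpha}{3p})\bigr]=-\log\frac{\alpha}{3p}$, not $+\log x$. So $g'(p)=\frac12\log\frac{1+p}{p}+2\log\frac{\alpha}{3p}$ for $p\ge\alpha/3$. Your value $\log2+2\log0.72\approx0.036$ and your claim that $g'$ is decreasing are correct for this formula, not for the one you displayed. The first inequality therefore survives once the sign is fixed.
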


\begin{remark}
Observe that~\eqref{eq:inequalities_new 1} involves only the functions $\Lambda_{\exp}^{\star}$ and $\overline{\Lambda}_{\exp}$, and the constant $\alpha$, 
while~\eqref{eq:inequalities_new 2} involves only 
$\Lambda_{\min}^{\star}$, 
$\Lambda_{\max}^{\star}$, 
$\overline{\Lambda}_{\min}$, 
$\overline{\Lambda}_{\max}$, 
and the constants $\beta$ and $\gamma$. 
The proof of~\eqref{eq:inequalities_new 1} 
depends only mildly on the specific value of $\alpha$ chosen in~\eqref{eq:constants}; 
in fact, it can be modified to show  that~\eqref{eq:inequalities_new 1} holds for all $\alpha \in (0.71,1)$. 
On the other hand, due to the fact that the functions 
$\Lambda_{\min}^{\star}$ and 
$\Lambda_{\max}^{\star}$ 
are more involved, 
the proof of~\eqref{eq:inequalities_new 2} is specific to the choices of $\beta$ and $\gamma$ in~\eqref{eq:constants}. 
\end{remark}

\begin{proof}[Proof of Lemma~\ref{lem:inequalities_new}]
We first show \eqref{eq:inequalities_new 1}. Observe that using $H(x) = H(1-x)$, one can rewrite
\begin{align*}
      \left(\frac{2}{3} - \frac{v+w}{2} \right)
      H\left(\frac{1/2 - u}{2/3 - (v+w)/2}\right)  & = \left(\frac{2}{3} - \frac{v+w}{2} \right)
      H\left(\frac{2/3 - (v+w)/2 - (1/2-u)}{2/3 - (v+w)/2}\right)
      \\
      &
      =
      \left(\frac{2}{3} - \frac{v+w}{2} \right)
      H\left(\frac{1/6 - ((v+w)/2 - u) }{2/3 - (v+w)/2}\right).
\end{align*}   
Setting $t := v+w-2u \in [0,1/3]$, and observing that $\frac{v+w}{2} = \frac{t}{2}+ u$, we can bound this term from above by
\begin{align*}
    \left(\frac{2}{3} - \frac{v+w}{2} \right) 
    H\left(\frac{1/6 - ((v+w)/2 - u) }{2/3 - (v+w)/2}\right) &
    =
    \left(\frac{2}{3} - \left(\frac{t}{2} + u\right) \right)
    H\left(\frac{1/6 - \frac{t}{2} }{2/3 - \left(\frac{t}{2} + u\right)}\right)
    \\
    &
    \leq
    \left(\frac{2}{3} - \frac{t}{2} \right) H\left(\frac{1/6 - \frac{t}{2} }{2/3 - \frac{t}{2}}\right)
    =
    \left(\frac{2}{3} - \frac{t}{2} \right)
    H\left(\frac{1/2}{2/3 - \frac{t}{2}}\right),
\end{align*}
where in the inequality we used that 
for each fixed $y>0$, the function
$x \mapsto \left(x+y \right)H\left( y/(x+y) \right)$
is increasing in $x \in (0,\infty)$.
Thus, it suffices to show that
\begin{equation*}
    \left( \frac{2}{3} - \frac{t}{2} \right) H\left(\frac{1/2}{2/3 - \frac{t}{2}}\right) -  2 \overline{\Lambda}_{\exp} \left(\frac{\alpha /3 }{1/3 - t} \right) \cdot \left( \frac{1}{3} - t \right)
    \leq
    \frac{2}{3}H(3/4) - \frac{2}{3} \Lambda_{\exp}^{\star} (\alpha)
\end{equation*}
for all $t \in  \left[0,\frac{1}{3}\right]$, 
which is equivalent to showing that
\begin{equation}\label{equation:reference1}
    \frac{1}{2} \left( \frac{4}{3} - t \right) H\left(\frac{1}{4/3 - t}\right) -  2 \overline{\Lambda}_{\exp} \left(\frac{\alpha}{1 - 3t} \right) \cdot \left( \frac{1}{3} - t \right)
    \leq
    \frac{2}{3}H(3/4) - \frac{2}{3} \Lambda_{\exp}^{\star} (\alpha)
\end{equation}
for all $t\in \left[0,\frac{1}{3}\right]$. 
Note that the right-hand side of this inequality is exactly the expression of the left-hand side for $t=0$. Thus, it is sufficient to show that the function $t\mapsto Q(t)$ defined by
\begin{equation*}
    Q(t) \coloneqq \frac{1}{2} \left(\frac{4}{3} - t \right) H\left(\frac{1}{4/3 - t}\right) -  2 \overline{\Lambda}_{\exp} \left(\frac{\alpha}{1 - 3t} \right) \cdot \left( \frac{1}{3} - t \right)
\end{equation*}
is decreasing on $\left[0,\frac{1}{3}\right]$. 
This can be readily seen in Figure~\ref{fig:qfunction}; we next provide a formal proof.
\begin{figure}[t!]
    \centering
    \includegraphics[width=0.5\textwidth]{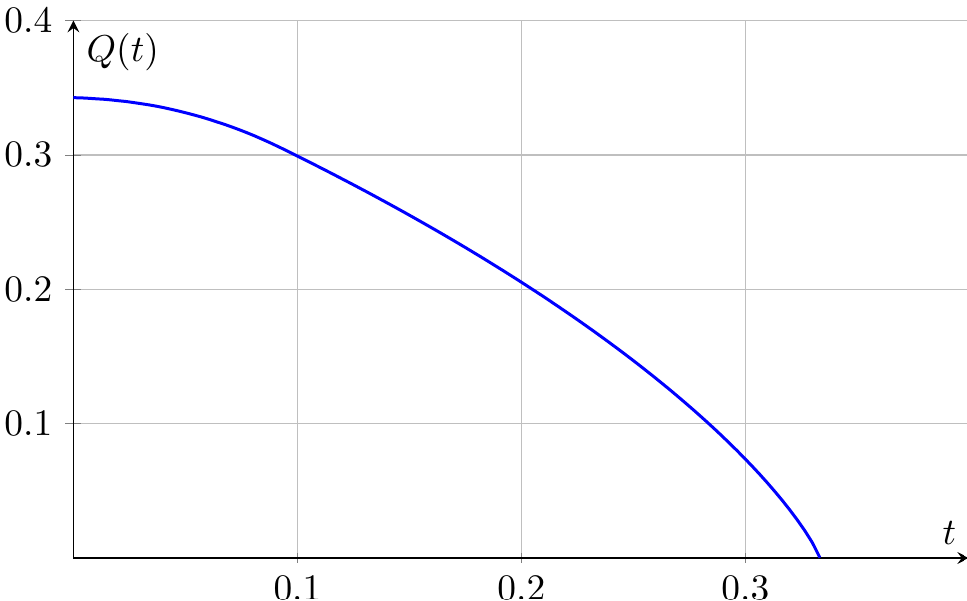}
    \caption{The function $Q$ in blue. Note that the function $t \mapsto Q(t)$ is decreasing for $t\leq \frac{1}{3}$.
    }
    \label{fig:qfunction}
\end{figure}

First, note that $\overline{\Lambda}_{\exp} \left(\frac{\alpha}{1 - 3t} \right) = 0$ for $\frac{\alpha}{1 - 3t} > 1$, that is, for $t > \frac{1-\alpha}{3}=\frac{7}{75}$. Thus, for $t > \frac{7}{75}$ we see that
\begin{align*}
    Q(t) = \frac{1}{2} \left( \frac{4}{3} - t \right) 
    H\left(\frac{1}{4/3 - t}\right), 
\end{align*}
which is decreasing in $t$. 

Next, we turn to $t\leq \frac{7}{75}$, in which case the function $Q$ simplifies to
\begin{align*}
    Q(t) = \frac{1}{2}\left( \frac{4}{3} - t \right) 
    H\left(\frac{1}{4/3 - t}\right) -  2 \left( \frac{1}{3} - t \right) \Lambda^{\star}_{\exp} \left(\frac{\alpha/3}{1/3 - t} \right) .
\end{align*}
The first and second derivatives of the functions $H(x)=-x \log(x) - (1-x) \log(1-x)$ and $\Lambda_{\exp}^\star(x)=x-1-\log(x)$ are given by
\begin{align*}
    \frac{\dd}{\dd x} H(x) &= - \log(x) + \log(1-x) , 
    &\quad
    \frac{\dd^2}{\dd x^2} H(x) &= -\frac{1}{x} - \frac{1}{1-x}, 
    \\
    \frac{\dd}{\dd x} \Lambda_{\exp}^\star(x) &= 1 - \frac{1}{x} , 
    &\quad
    \frac{\dd^2}{\dd^2 x} \Lambda_{\exp}^\star(x) &= \frac{1}{x^2}.
\end{align*}
Writing $H^\prime, H^{\prime \prime}$ and $\Lambda_{\exp}^{\star\prime}, \Lambda_{\exp}^{\star\prime\prime}$ for the first and second derivatives of $H$ and $\Lambda_{\exp}^{\star}$, respectively, we see that the first and second derivatives of $Q$ for $t\in \left[0,\frac{7}{75}\right]$ are given by
\begin{align*}
    \frac{\dd}{\dd t} Q(t) & = - \frac{1}{2} H \left( \frac{1}{\frac{4}{3}-t} \right) 
    + 
    \frac{1}{2} \left( \frac{4}{3}-t \right)^{-1} H^\prime \left( \frac{1}{\frac{4}{3}-t} \right)
    +
    2 \Lambda_{\exp}^{\star} \left( \frac{\frac{\alpha}{3}}{\frac{1}{3}-t} \right) 
    - 
    2 \Lambda_{\exp}^{\star \prime} \left( \frac{\frac{\alpha}{3}}{\frac{1}{3}-t} \right) \frac{\frac{\alpha}{3}}{\frac{1}{3}-t}
    \\
    \frac{\dd^2}{\dd t^2} Q(t) & = \frac{1}{2} \frac{1}{\left( \frac{4}{3}-t \right)^3} H^{\prime\prime} \left(\frac{1}{\frac{4}{3}-t}\right)-2\Lambda_{\exp}^{\star\prime\prime} \left(\frac{\frac{\alpha}{3}}{\frac{1}{3}-t}\right)\cdot\frac{\left(\frac{\alpha}{3}\right)^{2}}{\left(\frac{1}{3}-t\right)^{3}}
    \\
    &
    = -\frac{1}{2\left(\frac{4}{3}-t\right) \left( \frac{1}{3}-t \right)} 
    -\frac{2}{\frac{1}{3}-t}
\end{align*} 
In particular, we see that for each $t \in \left[0,\frac{7}{75} \right]$, we have that 
$\frac{\dd^2}{\dd t^2} Q(t) < 0$. 
This implies that 
$\frac{\dd}{\dd t} Q(t)$ 
is decreasing for $t \in \left[0,\frac{7}{75} \right]$. 
We check further that 
$\frac{\dd}{\dd t} Q(t) \leq -0.03$ for $t = 0$, 
and hence 
$\frac{\dd}{\dd t} Q(t) \leq -0.03$ for all $t \in \left[0,\frac{7}{75} \right]$. 
In particular, this implies that the function $t \mapsto Q(t)$ is non-increasing on the interval $\left[0,\frac{7}{75}\right]$. Since we already established earlier that the function $t \mapsto Q(t)$ is non-increasing on the interval $\left[\frac{7}{75},\frac{1}{3}\right]$, this establishes \eqref{equation:reference1} and thus inequality \eqref{eq:inequalities_new 1}.

We proceed with the proof of inequality~\eqref{eq:inequalities_new 2}. 
We set $s\coloneqq u-w \in \left[0,\frac{1}{6} \right]$. Using that $u=s+w$, we have that showing~\eqref{eq:inequalities_new 2} is equivalent to showing that 
\begin{multline*}
         \left( \frac{1}{2} - (s+w) \right) H\left(\frac{1/3 - w}{1/2 - (s+w)}\right) - 2\overline{\Lambda}_{\min} \left(  \frac{\beta/3}{1/3 - w} \right) \cdot \left( \frac{1}{3} - w \right)
     - 2\overline{\Lambda}_{\max} \left(\frac{\gamma /6}{1/6 - s} \right) \cdot \left( \frac{1}{6} - s \right)\\
         \leq \frac{1}{2}H(2/3) - \frac{2}{3} \Lambda_{\min}^{\star} (\beta)-\frac{1}{3} \Lambda_{\max}^{\star} (\gamma).
\end{multline*}
for all $s\in \left[0,\frac{1}{6}\right]$ and $w\in \left[0,\frac{1}{3} \right]$. 
Setting 
$t:=\frac{1}{6}-s$ and 
$z:=\frac{1}{3}-w$, 
and using that $H(x)=H(1-x)$, we see that showing the inequality in the display above is equivalent to showing that
\begin{equation}\label{eq:ineq H min max}
    (t+z)H\left(\frac{t}{t+z}\right) - 2z \overline{\Lambda}_{\min} \left(  \frac{\beta}{3z} \right)
     - 2t \overline{\Lambda}_{\max} \left(\frac{\gamma }{6t} \right)  \leq \frac{1}{2}H(2/3) - \frac{2}{3} \Lambda_{\min}^{\star} (\beta)-\frac{1}{3} \Lambda_{\max}^{\star} (\gamma) 
\end{equation}
for all $z \in \left[0,\frac{1}{3}\right]$ and $t \in \left[0,\frac{1}{6}\right]$. 

To show~\eqref{eq:ineq H min max}, we first define the function
    \begin{equation*}
        F(z,t) := (t+z) H \left( \frac{t}{t+z} \right) - 2z \overline{\Lambda}_{\min} \left( \frac{\beta}{3z} \right) - 2t \overline{\Lambda}_{\max} \left( \frac{\gamma}{6 t} \right) 
        = F_{1}(z,t) + F_{2} (z,t) + F_{3}(z,t),
    \end{equation*}
for 
$(z,t) \in \R_{+}^{2}$, 
and where  
$F_{1}(z,t) := (t+z) H \left( \frac{t}{t+z} \right)$, 
$F_{2}(z,t) := F_{2}(z) := - 2z \overline{\Lambda}_{\min} \left( \frac{\beta}{3z} \right)$, 
and 
$F_{3}(z,t) := F_{3}(t) := - 2t \overline{\Lambda}_{\max} \left( \frac{\gamma}{6 t} \right)$. 
See Figure~\ref{fig:ffunction} for a plot of this function. 
Noting that the right hand side of~\eqref{eq:ineq H min max} is equal to $F\left(\frac{1}{3},\frac{1}{6} \right)$, our goal now is to show that 
\[
F(z,t) \leq F\left(\frac{1}{3},\frac{1}{6} \right) 
\quad \text{ for all } z \in \left[0,\frac{1}{3}\right], t \in \left[0,\frac{1}{6}\right].
\]

    \begin{figure}[t!]
    \centering
    \includegraphics[width=0.5\textwidth]{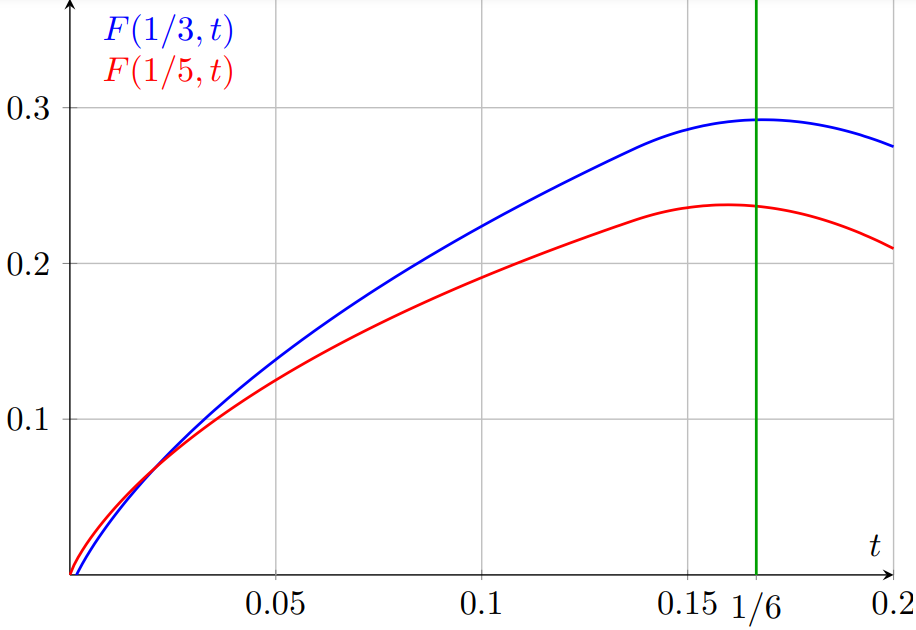}
    \caption{The function $F(z,t)$ for $z=\frac{1}{3}$ (blue) and $z=\frac{1}{5}$ (red). The global maximum of $F$ over the domain $0\leq z \leq \frac{1}{3}, 0\leq t \leq \frac{1}{6}$ is attained at $z=\frac{1}{3}, t= \frac{1}{6}$. Note also that the maximum of $t \mapsto F(\frac{1}{3}, t)$ is attained at a value of $t$ that is slightly larger than $\frac{1}{6}$. 
    }
    \label{fig:ffunction}
    \end{figure}

First, we argue that the function $F$, defined on the domain $\left[ 0, \tfrac{1}{3} \right] \times \left[ 0, \tfrac{1}{6} \right]$, has a maximizer $(z,t)$ with $z \geq \frac{\beta}{3 \cdot 0.75}$ and $t \geq \frac{\gamma}{6 \cdot 2.25}$. 
Note that 
$\overline{\Lambda}_{\min} \left( \frac{\beta}{3z} \right) = 0$ 
for $z \leq \frac{\beta}{3 \cdot 0.75}$ 
by definition. 
Consequently, for $z \leq \frac{\beta}{3 \cdot 0.75}$ 
we have that 
$F(z,t) = F_{1}(z,t) + F_{3}(t)$. 
For fixed $t$, the function 
$z \mapsto F_{1}(z,t) = (t+z) H \left( \frac{t}{t+z} \right)$ 
is increasing in $z$, 
and the function $F_{3}(t)$ is constant in $z$, 
so overall $F(z,t)$ is increasing in $z \in \left[0, \frac{\beta}{3 \cdot 0.75} \right]$ for every fixed $t$. 
An analogous argument shows that 
$F(z,t)$ is increasing in $t \in \left[0, \frac{\gamma}{6 \cdot 2.25} \right]$ for every fixed $z$. 
In sum, it remains to show that 
$F(z,t) \leq F\left( \frac{1}{3}, \frac{1}{6} \right)$
for all 
$(z,t) \in \left[ \frac{\beta}{3 \cdot 0.75}, \frac{1}{3} \right] \times \left[ \frac{\gamma}{6 \cdot 2.25}, \frac{1}{6} \right]$.

Next, we show that the function $F$ is concave on the domain $\mathbb{R}_{+}^{2}$ by showing that the functions $F_{1}$, $F_{2}$, and $F_{3}$ are each individually concave. 
We start with $F_{1}$. 
We have that
    \begin{equation*}
        F_1(z,t) = \limsup_{N\to \infty} \frac{\log \left(\binom{\lfloor (t+z)N \rfloor}{\lfloor tN \rfloor} \right)}{N}.
    \end{equation*}
    By elementary combinatorics we have for all integers $k_1$, $k_2$, $n_1$, and $n_2$ that $\binom{n_1+k_1+n_2+k_2}{k_1+k_2} \geq \binom{n_1+k_1}{k_1} \binom{n_2+k_2}{k_2}$. Using this inequality we have for any $z_1,t_1,z_2,t_2 \geq 0$ and $\lambda \in \left[0,1\right]$ that
    \begin{align*}
        F_1 \left( \lambda (z_1,t_1)+ (1-\lambda) (z_2,t_2)  \right)
        &=
        \limsup_{N\to \infty} \frac{\log \left( \binom{\lfloor \lambda (z_1+t_1)N \rfloor + \lfloor(1-\lambda)(z_2+t_2)N \rfloor }{\lfloor \lambda t_1 N \rfloor + \lfloor (1-\lambda) t_2 N \rfloor}\right)}{N}\\
        &\geq
        \limsup_{N\to \infty} \frac{\log \left( \binom{\lfloor \lambda (z_1+t_1)N \rfloor  }{\lfloor \lambda t_1 N \rfloor }
        \binom{ \lfloor(1-\lambda)(z_2+t_2)N \rfloor }{ \lfloor (1-\lambda) t_2 N \rfloor}
        \right)}{N}\\
        &= 
        \limsup_{N\to \infty} \frac{\log \left( \binom{\lfloor \lambda (z_1+t_1)N \rfloor  }{\lfloor \lambda t_1 N \rfloor }
        \right)}{N}
        +
        \limsup_{N\to \infty} \frac{\log \left( \binom{\lfloor (1-\lambda) (z_2+t_2)N \rfloor  }{\lfloor (1-\lambda) t_2 N \rfloor }
        \right)}{N}\\
        &= 
        \lambda F_1(z_1,t_1) + (1-\lambda) F_1(z_2,t_2),
    \end{align*}
    which shows the concavity of $F_1$. 

Next, we show that $F_{2}$ is concave. 
Since $F_{2}$ depends only on the variable $z$ and not on $t$, it suffices to show concavity in $z$. 
We compute the first and second derivatives of $F_{2}$ and get that 
    \begin{equation*}
        \frac{\dd}{\dd z} F_2(z) 
        = 
        -2 \overline{\Lambda}_{\min} \left( \frac{\beta}{3z} \right) + \frac{2\beta}{3z}\overline{\Lambda}_{\min}^\prime \left( \frac{\beta}{3z} \right) ,
        \qquad
        \text{and}
        \qquad 
        \frac{\dd^2}{\dd z^2} F_2(z) 
        = 
        - \frac{2\beta^2}{9 z^3} \overline{\Lambda}_{\min}^{\prime \prime} \left( \frac{\beta}{3z} \right) \leq 0.
    \end{equation*}
Here the final inequality holds because $\overline{\Lambda}_{\min}^{\prime \prime} \geq 0$, since $\overline{\Lambda}_{\min}$ is convex; the convexity of $\overline{\Lambda}_{\min}$ follows from its definition in \eqref{eq:barfunc} and the fact that $\Lambda_{\min}^\star$ is the large-deviation rate function of a random variable with expectation $3/4$.
An analogous computation and argument also shows that $F_{3}$ is concave. 
Thus $F$, as the sum of three concave functions, is also a concave function.

    Next, we check that
    \begin{align*}
        & F\left( \frac{1}{3} + 0.00001 , \frac{1}{6} \right) = 0.292271136\ldots > 0.292271108\ldots = F\left( \frac{1}{3} , \frac{1}{6} \right) 
        \intertext{ and that } 
        & F\left( \frac{1}{3}  , \frac{1}{6} + 0.00001 \right) = 0.292271679\ldots > 0.292271108\ldots = F\left( \frac{1}{3} , \frac{1}{6} \right).
    \end{align*}
    By concavity of $F$, this already implies that
    \begin{multline*}
        F\left( \frac{1}{3} + \lambda \cdot 0.00001 , \frac{1}{6} + (1-\lambda)\cdot 0.00001 \right) 
        \\
        \geq \lambda F\left( \frac{1}{3} +  0.00001 , \frac{1}{6} \right) + (1-\lambda) F\left( \frac{1}{3} , \frac{1}{6} + 0.00001 \right) >  F\left( \frac{1}{3} , \frac{1}{6} \right)
    \end{multline*}
    for all $\lambda \in \left[ 0,1 \right]$. Define the set
    \begin{equation*}
        S=\left\{\left( \frac{1}{3} + \lambda \cdot 0.00001 , \frac{1}{6} + (1-\lambda)\cdot 0.00001 \right): \lambda \in \left[0,1\right]\right\} \subset \R_{+}^2 ,
    \end{equation*}
    so that the above argument says that $F(s_1,s_2) \geq F\left(\frac{1}{3},\frac{1}{6}\right)$ for all $(s_1,s_2) \in S$.
    For a general point $(z,t) \in \left[ \frac{\beta}{3 \cdot 0.75}, \frac{1}{3} \right] \times \left[ \frac{\gamma}{6 \cdot 2.25}, \frac{1}{6} \right]$ we can write 
    \begin{equation*}
        \left( \frac{1}{3}, \frac{1}{6} \right) = \mu (z,t)+ (1-\mu) (s_1,s_2)
    \end{equation*}
    for some point $(s_1,s_2)\in S$ and some $\mu \in \left(0,1\right]$. The concavity of $F$ now implies that
    \begin{align*}
        F\left(\frac{1}{3},\frac{1}{6}\right) \geq \mu F(z,t) + (1-\mu) F(s_1,s_2)
        \geq \mu F(z,t) + (1-\mu) F\left(\frac{1}{3},\frac{1}{6}\right),
    \end{align*}
    which implies that $F\left(\frac{1}{3},\frac{1}{6}\right) \geq F(z,t)$, 
    as desired. 
\end{proof}

\section{Upper bounds} \label{sec:UB} 

In this section we prove that the largest common subtree of two independent UA trees cannot be too large, as quantified in Theorems~\ref{thm:UB_near1} and~\ref{thm:UB_near0}. 
In Sections~\ref{sec:offspring distribution} and~\ref{sec:roottoroot} we first derive some preliminary results regarding the offspring of vertices in a UA tree. 

In Section \ref{sec:offspring distribution}, we study the sizes of the subtrees of the root. It is well-known that these sizes can be described by a Chinese restaurant process, see Definition \ref{def:chinrest}. We study how likely it is for a deterministic vector $v \in \N^k$ (representing the subtree sizes) to be close (in a way to be defined) to a random vector generated by such a Chinese restaurant process. In Section \ref{sec:roottoroot}, we use these findings inductively to study the probability that a deterministic tree of size $N$ is isomorphic to a UA tree of size $N$. In Section \ref{sec:beyondroottoroot}, we use this, and a union bound over all subtrees $T^\prime$ contained in $T^1$ and $T^2$ of a certain size, to show that, with high probability, the largest common subtree of two UA trees of size $N+ \lfloor 0.0025 \cdot N\rfloor$ has size at most $N$, which implies Theorem \ref{thm:UB_near1}.

In Section \ref{subsec:not_bdd_0}, we prove Theorem \ref{thm:UB_near0}. This section is mostly self-contained, and the result follows from constructing two classes of trees that each occur with uniformly positive probability and whose common subtree is not too large.

\subsection{The structure of the offspring} \label{sec:offspring distribution} 

\noindent As a first step towards the proof of Theorem~\ref{thm:UB_near1}, we investigate the structure of the offspring of a vertex in a UA tree. Run the UA tree for $N$ many steps, so there is a total of $N+1$ vertices, including the root $\varnothing$, and let $K\in \{1,\ldots,N\}$ denote the degree of the root.  The evolution of the sizes of the subtrees of the root can be described in the following manner. 

\begin{definition}[Size of subtrees of $\varnothing$] \label{def:subtree_vector}
Fix $N\in\N$ and initialise $(w_1,\ldots, w_N)=(0,\ldots, 0)$. Then, let $w_1$ be uniformly distributed on $[N]$ and set $i=1$. If $w_1+\cdots +w_i<N$, let $w_{i+1}$ be uniformly distributed on $[N-\sum_{j=1}^i w_j]$, increase $i$ by one, and repeat. If $w_1+\cdots+ w_i=N$ instead, discard $w_{i+1},\ldots, w_N$. It then holds that $w(N)=(w_1,\ldots, w_i)$ is equal in distribution to the sizes of the subtrees rooted at the children of $\varnothing$ in $T_{N+1}$, and we call $w(N)$ the \emph{subtree vector}.    
\end{definition}

\begin{remark}\label{remark:pois diri}
It is readily observed that, for $v\in \N^N$ and $d\in [N]$ such that $v_1+\cdots+v_d=N$ and $v_{d+1}=\ldots=v_N=0$, 
\begin{equation*}
    \p\left(w(N)=v \right) = \prod_{i=1}^{d} \frac{1}{\sum_{j=i}^{d} v_j}.
\end{equation*}
\end{remark}

\noindent The size of the subtrees of $\varnothing$ in $T_N$ can be described in another way as well, namely by a \emph{Chinese restaurant process}. To this end, for $t\in\N_0^\infty$, we let $t_k$ denote the $k^{\mathrm{th}}$ element of $t$, with $k\in\N$, and let $a\cdot t:=(at_1,at_2,\ldots)$ for $a\in\R$ denote element-wise multiplication.  We then define the Chinese restaurant process as follows. 

\begin{definition}[Chinese restaurant process]\label{def:chinrest}
Let $(t^{(n)})_{n\in\N}$ be a sequence of elements $t^{(n)}\in\N_0^\infty$, defined inductively as follows. Initialise $\smash{t_1^{(1)}}:=1$ and $\smash{t_k^{(1)}}:=0$ for all $k\geq 2$.  Given $t^{(n)}$ for some $n\in \N$, let $K=\sup\{k\in\N: \smash{t^{(n)}_k}>0\}$ and define $\smash{t^{(n+1)}}$ by one of the two updates. 
With probability $1/(1+n)$, set   $\smash{t^{(n+1)}_{K+1}}=1$ and $\smash{t^{(n+1)}_j}=\smash{t^{(n)}_j}$ for $j\neq K+1$.
Otherwise, let $i\in[K]$ be a randomly chosen integer, chosen with probability proportionally to $\smash{t^{(n)}_i}$, and set $\smash{t^{(n+1)}_i}=\smash{t^{(n)}_i}+1$ and $\smash{t^{(n+1)}_j}=\smash{t^{(n)}_j}$ for $i\neq j$.
\end{definition}

\noindent This process is known as a Chinese restaurant process, as we can view each step as a customer entering the restaurant who then decides to sit at a table with a probability proportional to the number of customers already sitting at that table, or sits at an empty table with probability proportional to $1$.

The restaurant process is equivalent to the evolution of the subtree sizes of the root $\varnothing$ of a UA tree: $\smash{t^{(n)}}$ describes the sizes of the subtrees of the root $\varnothing$ in $T_{n}$ (see, e.g.,~\cite[Chapter $3$]{Pit06}). The following result regarding the limiting distribution of the restaurant process is well known.

\begin{proposition}[\cite{Pit06}]\label{prop:pdlim}
    Let $(\smash{t^{(n)}})_{n\in\N}$ denote the sequence of table sizes in a Chinese restaurant process, as defined in Definition~\ref{def:chinrest}. Then, 
    \be 
    \frac1n \smash{t^{(n)}}\overset d\longrightarrow Z, 
    \ee 
    where $Z=(Z_i)_{i\in\N}$ can be described as follows. Let $(Y_j)_{j\in\N}$ be i.i.d.\ $\mathrm{Unif}(0,1)$ random variables. Then, $Z_1\overset d=Y_1$, and $Z_i\overset d= Y_i\prod_{j=1}^{i-1}(1-Y_j)$ for $i\geq 2$. 
\end{proposition}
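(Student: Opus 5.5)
The plan is to use the Yule-process embedding of Lemma~\ref{thrm:embed} together with the explicit stick-breaking (GEM$(1)$) structure of the exponential limits of the root's subtrees. Embed the Chinese restaurant process into the Yule process $\{\bp(t)\}_{t\ge0}$: the root's children are born at the arrival times $0<S_1<S_2<\cdots$ of a rate-$1$ Poisson process. The subtree of the $i$-th child is an independent Yule process started at time $S_i$. The table-size vector $t^{(n)}$ is exactly the vector of subtree sizes at time $\tau_n$. The only care needed is the indexing convention of Definition~\ref{def:chinrest}, which I would check first: the "new table with probability $1/(1+n)$" rule matches the root attracting a new child at rate $1$ among $n+1$ individuals once the root is counted.

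By Theorem~\ref{thrm:malt} applied to each child subtree, $|\bp_i(t)|e^{-t}\to W_i e^{-S_i}$ almost surely, where the $W_i\sim\Exp(1)$ are i.i.d. and independent of $(S_i)$. Likewise $|\bp(t)|e^{-t}\to W=\sum_i W_ie^{-S_i}$. To exchange the limit and the infinite sum, I would dominate by expectations: $\E[|\bp_i(t)|e^{-t}]=\E[e^{-S_i}]=2^{-i}$, which is summable, so Scheffé/Fatou give $L^1$ and coordinatewise convergence of the normalized vector. Dividing the two limits yields almost-sure convergence of $t^{(\tau_n)}/|\bp(\tau_n)|$, that is, of $t^{(n)}/n$, to
\[
P_i := \frac{W_ie^{-S_i}}{\sum_j W_je^{-S_j}} .
\]
This holds in the product topology, and also in $\ell^1$ since the total mass is preserved.

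It remains to identify the law of $(P_i)$ as the stick-breaking law stated in Proposition~\ref{prop:pdlim}. Write $R_i:=\sum_{j\ge i}W_je^{-S_j}$, so that $P_1=W_1e^{-S_1}/R_1$. By the memoryless/strong Markov property, $R_2=e^{-S_1}R'$ with $R'\overset d=R_1$ independent of $(S_1,W_1)$; indeed, after time $S_1$ the root-children process restarts. Thus $P_1=W_1/(W_1+R')$, and $1-P_1$ times the rescaled remainder reproduces the same structure. For $P_1\sim\mathrm{Unif}(0,1)$ it suffices that $R'\sim\Exp(1)$, since the ratio $W_1/(W_1+R')$ of two independent $\Exp(1)$ variables is uniform. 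But $R_1=W$, the total limit, which is $\Exp(1)$ by Theorem~\ref{thrm:malt}. Iterating gives $P_i = Y_i\prod_{j<i}(1-Y_j)$ with the $Y_j=W_j/(W_j+R'_j)$. Their mutual independence follows because $Y_1$ depends only on $(W_1,R')$ and $R'$'s internal split is independent of its total; this is the Beta--Gamma algebra, which I would verify with one explicit density computation.

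The main obstacle is the independence step in the last paragraph: one must check that the normalized remainder sequence $(W_je^{-S_j}/R_2)_{j\ge2}$ is independent of $R_2$ itself, and not merely equal in law. I would handle this through the joint density of $(W_1,R')$ and the fact that, for Gamma variables, a sum is independent of its proportions. If the bookkeeping becomes heavy, an alternative fallback is to compute $\p(t^{(n)}_1=m)$ directly from Remark~\ref{remark:pois diri}. This shows that the first table size is uniform on $[n]$ (as in Definition~\ref{def:subtree_vector}), and conditionally on it the remaining tables form an independent CRP of size $n-m$; induction then gives the convergence of the finite-dimensional distributions to the stick-breaking law directly.
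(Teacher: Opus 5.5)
\emph{Overall: the proposal is correct but takes a different route from the paper; three steps need tightening, the third being the one you flagged.}

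The paper gives no proof of this proposition; it simply cites \cite{Pit06}. The classical argument is the P\'olya-urn recursion, which is essentially your fallback. It can be run entirely inside the paper's own setup. Remark~\ref{remark:pois diri} gives $\p(w(N)=v)=\frac1N\,\p\bigl(w(N-v_1)=(v_2,\dots,v_d)\bigr)$, so the first table is uniform on $[N]$ and, given it, the rest is an independent copy of $w(N-v_1)$. Induction then yields the finite-dimensional limits using only elementary discrete computations.

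Your main route is genuinely different. It works in the Yule embedding of Lemma~\ref{thrm:embed}, gives almost sure (and $\ell^1$) convergence rather than mere convergence in distribution, and produces the explicit representation $Z_i=W_ie^{-S_i}/W$. The price is a continuous-time identification step. The route is correct once the following three points are tightened.
\begin{enumerate}
\item $t^{(n)}$ sums to $n$, so it is the root-subtree vector at time $\tau_{n+1}$, not $\tau_n$.
\item Summable expectations do not by themselves justify exchanging the limit and the sum. Also $\E[|\bp_i(t)|e^{-t}]=\E[e^{-S_i}\mathbbm{1}_{\{S_i\le t\}}]\le 2^{-i}$, with inequality rather than equality. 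What works is Fatou, which gives $\sum_iW_ie^{-S_i}\le W$ a.s. Combined with $\E\bigl[\sum_iW_ie^{-S_i}\bigr]=\sum_i2^{-i}=1=\E[W]$, this forces equality a.s.
\item The step you flag cannot be settled by a single Beta--Gamma computation. The claim that $R'$ is independent of its own proportions is exactly the statement being proved, applied to the shifted process. It is true, but you must prove it by induction with a strengthened hypothesis, as follows.
\end{enumerate}

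Let $Y_i:=P_i/(1-P_1-\dots-P_{i-1})$, and let $H_k$ be the statement that $W,Y_1,\dots,Y_k$ are mutually independent with $W\sim\Exp(1)$ and $Y_i\sim\mathrm{Unif}(0,1)$. The base case $H_0$ is Theorem~\ref{thrm:malt}.

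For the inductive step, note that $W=e^{-S_1}(W_1+R')$ and $Y_1=W_1/(W_1+R')$. Moreover $Y_{i+1}=Y_i'$, where $Y_i'$ are the stick variables of the shifted sequence $\bigl((W_j,S_j-S_1)\bigr)_{j\ge2}$. This is an exact identity, since $R_{i+1}=e^{-S_1}R'_i$.

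The shifted copy is independent of $(S_1,W_1)$. Applying $H_k$ to it therefore makes $S_1,W_1,R',Y_1',\dots,Y_k'$ mutually independent. Since $(W,Y_1)$ is a function of $(S_1,W_1,R')$, the statement $H_{k+1}$ reduces to $W\perp Y_1$.

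That independence follows from two facts. First, $W_1+R'$ is $\mathrm{Gamma}(2,1)$ and independent of $W_1/(W_1+R')\sim\mathrm{Unif}(0,1)$, which is a two-variable density check. Second, $e^{-S_1}$ is independent of both. With this induction in place, your identification of the joint stick-breaking law is complete.
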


\begin{remark}
    The limit $Z$ is known as the Griffiths-Engen-McCloskey distribution. It can be defined more generally, but for the purposes in this article we restrict ourselves to these particular definitions to avoid using unnecessary notation. The construction of $Z$ using uniform random variables is known as the \emph{stick-breaking construction}.
\end{remark}

\noindent In what follows, we say that, for two vertices $v,w\in T_n$, the vertex
$v$ is a descendant of $w$ if $v$ is a child of $w$ or a descendant of a child of $w$ (i.e., $v$ is in the subtree rooted at $w$). In particular, this definition directly implies that each vertex that is not the root is a descendant of the root. 

As a final preliminary result, we observe the following: the sizes of the subtrees of any vertex $v$ in $T_n$, conditionally on the event that the total number of vertices in all subtrees of $v$ is equal to $k<n$,  are distributed as $t_k$. This holds as whenever a new vertex attaches to $v$, or to a descendant of $v$, then its place of attachment is uniformly distributed among $v$ and all existing descendants of~$v$ (see, e.g.,~\cite{MeirMoon74}). 

\invisible{
\begin{lemma}[\cite{MeirMoon74}]
    Suppose $T_n$ is a random recursive tree on $n$ vertices. Choose an edge uniformly at random and remove it, obtaining two trees $T'$ and $T''$. Conditionally on the event that the trees $T'$ and $T''$ have sizes $k$ and $n-k$, respectively, for some $k\in[n-1]$, the trees $T'$ and $T''$ are distributed as random recursive trees of sizes $k$ and $n-k$, respectively.
\end{lemma}

\noindent As a direct consequence of the above lemma, we obtain that, conditionally on the size $k_v$ of the subtree rooted at some vertex $v$ in $T_n$, the sizes of the subtrees rooted at the children of $v$ are distributed as $t_{k_v}\overset d= \mathrm{PD}(k_v)$.

}

We are interested in the (expected) number of ways to embed a vector into a subtree vector. Let $v\in \N^{d_1}, w\in \N^{d_2}$ be two vectors. We want to count the number of possibilities to reshuffle the vector $v$ and obtain the vector $w$. First, this is only possible if $d_1=d_2$. Furthermore, we do not want to count the number of ways to reshuffle the indices $i$ of $v$ with $v_i=1$, so these indices should appear in the same order in $v$ and in $w$.

\begin{definition}
    We say that a function $f:\{1,\ldots, d_1\} \to \{1,\ldots, d_2\}$ is an \textit{embedding of $v$ into $w$}  if $f$ is bijective, $v_{i}=w_{f(i)}$ for all $i$, and the $j$-th ``\textit{one}'' in $v$ gets mapped to the $j$-th ``\textit{one}'' in $w$, that is, $f: \{i: v_i=1\} \to \{i: w_i=1\}$ is increasing. We write $\{v\rightarrow w\}$ for the set of all possible embeddings of $v$ into $w$. In case there exists at least one embedding of $v$ into $w$, we also say that the vectors $v$ and $w$ are \textit{equivalent}, and we write $v \sim w$. 
\end{definition}

\noindent Let us discuss a few examples. If $v=(1,1,1), w=(2,1)$, then there exists no embedding of $v$ into $w$. If $v=w=(1,1,\ldots,1)$, then $\left| \{v\rightarrow w\} \right| = 1$, as the ordering of the $1$-entries needs to be preserved. If $v=w=(2,2,\ldots,2) \in \N^d$, then $\left| \{v\rightarrow w\} \right| = d!$, as every permutation on $d$ elements is a proper embedding here. If $v=(2,2,3,3)$ and $w=(2,3,2,3)$, then $\left| \{v\rightarrow w\} \right| = 4$, as one has two possibilities to swap the $2$-entries and two possibilities to swap the $3$-entries.  Let us assume that two vectors $v$ and $w$ each have $z_i$ many entries equal to $i$, for each $i\in\N$. We write $z=(z_1,z_2,\ldots)$ for this vector. As defined, $z$ is a vector in the space $\N^{\infty}$, but we have that $z_i=0$ for all $i$ large enough, say for $i>L$. With a slight abuse of notation, we also write $z$ for the vector where we omit these zero entries for $i>L$, that is, if $z \in \N^{\infty}$ is a vector with $z_L \neq 0$ and $z_i=0$ for all $i>L$, we also write $z=(z_1,\ldots, z_L)$. Given that $v$ and $w$ are equivalent, there is only one way to map the $1$-entries on each other. For the $2$-entries, there are $z_2!$ many ways to map them onto each other. This holds for all numbers of at least $2$, and thus there are $\prod_{i=2}^{\infty} z_i! = \prod_{i=2}^{L} z_i!$ many ways to embed $v$ into $w$, given that the two vectors are equivalent. 

Recall, from Definition~\ref{def:subtree_vector}, the subtree vector $w(N).$ Next, for a vector $v\in \N^K$, we estimate the expected number of embeddings in $\{v\rightarrow w(|v|_1)\}$.

\begin{lemma}\label{lemma:pdembed}
    Let $K\geq 1$ be an integer and let $v\in \N^K$ with $|v|_1 \geq 2$. Then,
    \be 
    \E[|\{v\rightarrow w(|v|_1)\}|]\leq 0.9^{K}.
    \ee 
\end{lemma}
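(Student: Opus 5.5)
The plan is to compute the expectation exactly, using Remark~\ref{remark:pois diri} and a classical symmetrization identity, and then to bound the resulting closed form crudely. Write $N := |v|_1$. Let $z_1$ denote the number of entries of $v$ equal to $1$.

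\textbf{Step 1: reduce to a sum over orderings.} Summing over the possible values $u$ of $w(N)$ gives
\[
\E\left[|\{v\rightarrow w(N)\}|\right]=\sum_{u}|\{v\rightarrow u\}|\,\p(w(N)=u).
\]
Each embedding $f\in\{v\to u\}$ determines $u$ as a rearrangement of $v$, namely $u_{f(i)}=v_i$. So the double sum is the same as a sum over bijections $f:[K]\to[K]$ that are increasing on $\{i:v_i=1\}$, with summand $\p(w(N)=v\circ f^{-1})$. By Remark~\ref{remark:pois diri}, for any arrangement $u$ of the entries of $v$ we have $\p(w(N)=u)=\prod_{i=1}^K\bigl(\sum_{j\ge i}u_j\bigr)^{-1}$, the reciprocal of the product of the suffix sums.

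\textbf{Step 2: the symmetrization identity.} For positive reals $a_1,\dots,a_K$, I will use
\[
\sum_{\pi\in S_K}\prod_{i=1}^K\frac{1}{a_{\pi(i)}+\dots+a_{\pi(K)}}=\prod_{j=1}^K\frac1{a_j}.
\]
I will prove it by induction on $K$ by conditioning on $\pi(1)$: the first factor is always $1/(a_1+\dots+a_K)$, and the induction hypothesis applied to the remaining $K-1$ elements gives
\[
\frac{1}{\sum_j a_j}\sum_{m}\prod_{j\neq m}\frac1{a_j}=\frac{1}{\sum_j a_j}\cdot\frac{\sum_m a_m}{\prod_j a_j}=\prod_j\frac{1}{a_j}.
\]
Now take $a_j=v_j$. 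All $z_1!$ relative orders of the entries equal to $1$ give identical summands. Restricting to permutations that keep the ones in order therefore divides the sum by exactly $z_1!$, and we get
\[
\E\left[|\{v\rightarrow w(N)\}|\right]=\frac{1}{z_1!\prod_{j:v_j\ge2}v_j}\le\frac{1}{z_1!\,2^{K-z_1}}.
\]

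\textbf{Step 3: the elementary bound.} It remains to show $z_1!\,2^{K-z_1}\ge(10/9)^K$. Since $2\ge 1.8\cdot(10/9)$, it suffices to show $z_1!\,1.8^{K-z_1}\ge(10/9)^{z_1}$.
\begin{itemize}
\item If $z_1=0$, this is trivial.
\item If $z_1\ge2$, it already holds because $m!\ge(10/9)^m$ for all $m\ge2$: at $m=2$ we have $2\ge 1.24$, and the ratio of successive terms is $m+1\ge 10/9$.
\item If $z_1=1$, we need $1.8^{K-1}\ge10/9$, which holds whenever $K\ge2$. The case $K=1$, $z_1=1$ would force $v=(1)$, which is excluded by the hypothesis $|v|_1\ge2$.
\end{itemize}

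The only genuinely nontrivial point is Step 2, specifically checking that the restriction to ones-order-preserving bijections costs exactly the factor $1/z_1!$. This holds because swapping two equal entries leaves the suffix-sum product unchanged, so each coset of orderings of the ones contributes identically. I expect this bookkeeping, rather than any estimate, to be the main place where care is needed.
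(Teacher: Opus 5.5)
Your proof is correct, but it takes a genuinely different route from the paper's.

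\textbf{The paper's route.} The paper never evaluates the expectation. It bounds $\p(w(|v|_1)\sim v)$ by a union bound over the $\binom{K}{z}$ rearrangements of $v$, using the fact that the decreasing rearrangement $\hat v$ maximizes the product formula of Remark~\ref{remark:pois diri}. It then multiplies by the $\prod_{i\ge2}z_i!$ embeddings and lower-bounds the suffix sums of $\hat v$ by $z_1+2\ell$. This gives $\frac{1}{z_1!}\prod_{\ell=1}^{K-z_1}\frac{z_1+\ell}{z_1+2\ell}$, which a three-case analysis bounds by $0.9^K$ (with $K=1$ and $z_1=K$ handled separately).

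\textbf{Your route.} You compute the expectation exactly as $\bigl(z_1!\prod_{j:v_j\ge2}v_j\bigr)^{-1}$. This comes from the identity $\sum_{\pi\in S_K}\prod_i(a_{\pi(i)}+\dots+a_{\pi(K)})^{-1}=\prod_j a_j^{-1}$. You combine it with the observation that permuting the positions of the ones acts freely on bijections and leaves the summand unchanged, and that each orbit, of size $z_1!$, contains exactly one ones-order-preserving bijection. That bookkeeping is sound, and so is your final elementary check, including the exclusion of $v=(1)$.

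\textbf{What each buys.} Your argument is shorter and avoids both the rearrangement step and the union bound. It is also sharper: each factor $\frac{z_1+\ell}{z_1+2\ell}$ is at least $\frac12$, so the paper's intermediate bound always dominates your $1/(z_1!\,2^{K-z_1})$. Since $2^m/m!\le 2$ for all $m$, your bound is at most $2^{1-K}\le 2^{-K/2}$ when $K\ge2$, and it equals $1/v_1\le\frac12$ when $K=1$. So the lemma holds with $0.9$ replaced by $2^{-1/2}$. Fed through Claim~\ref{claim:2}, this would improve the constant $0.96$ in Lemma~\ref{lemma:matchprob} to roughly $2^{-1/4}\approx 0.84$, and so enlarge the admissible $\eps$ in Lemma~\ref{lem:upper bound eps}. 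The paper's argument gains only that it needs nothing beyond monotonicity of the product formula, at the cost of a weaker constant.
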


\begin{proof} 
We start with the case $K=1$. In this case, we have that $|\{v\rightarrow w(|v|_1)\}| \in \{0,1\}$, with $|\{v\rightarrow w(|v|_1)\}| = 1$ if and only if $w(|v|_1) = (|v|_1)$, which occurs with probability $\frac{1}{|v|_1} \leq \frac{1}{2} \leq 0.9^1$.

From here on, we will assume that $K \geq 2$.
Suppose that, for each $i\in \N$, the vector $v$ contains exactly $z_i\in\N_0$ entries equal to $i$ such that $\sum_{i=1}^\infty z_i = K$ and let $z = (z_1, z_2, z_3, \dots)$.
The statement is clear when $z_1=K$ and $z_i = 0$ for $i\geq 2$, since in this case
\begin{equation*}
    \E[|\{v\rightarrow w(|v|_1)\}|] = \p \left( w(|v|_1) = v  \right) = \frac{1}{K!} \leq 0.9^K
\end{equation*}
for all $K \geq 2$. Thus, we will assume that $z_1 < K$ for the rest of the proof.
There are 
\be 
\binom{K}{z} = \frac{K!}{\prod_{i=1}^{\infty} z_i!}
\ee 
many ways to reorder the vector $v$. We write $\hat{v}$ for the reordering of $v$ in decreasing order, that is, such that $\hat{v}_1\geq \hat{v}_2\geq \ldots  \geq \hat{v}_K.$ For each reordering $\overline{v}$ of $v$ we have that 
\begin{align*}
	\p\left(w\left(|v|_1\right) = \overline{v} \right)
	\leq
	\p\left(w\left(|v|_1\right) =\hat{v} \right),
\end{align*}
as for any vector $v=(v_1,\ldots,v_K)$, by Remark \ref{remark:pois diri},
\begin{align}\label{eq:equality decreasing}
    \p\left(w\left(|v|_1\right) = v \right) = \prod_{i=1}^{K} \frac{1}{\sum_{j=i}^{K} v_j},
\end{align}
which is maximal when the values $(v_1,\ldots, v_K)$ are in decreasing order. Using a union bound over all $\binom{K}{z}$ many reorderings of $v$, we directly get that 
\be 
\p\left(w\left(|v|_1\right) \sim v \right) \leq \binom{K}{z} \p\left(w\left(|v|_1\right) = \hat{v} \right).
\ee
Thus,
\begin{align}\label{eq:isomorphis expect}
	 \notag \E \left[| \{ v \rightarrow w \left(|v|_1\right) \}|\right]  &= \p \left( v \sim w \left(|v|_1\right) \right)
	 \E \left[| \{ v \rightarrow w \left(|v|_1\right) \}| \ \Big| \ v \sim w \left(|v|_1\right) \right]\\
	 & \notag
	 \leq
	\binom{K}{z}
	\p\left(w\left(|v|_1\right) =  \hat{v}  \right)
	\prod_{i=2}^{\infty} z_i!\\
	&
	\leq 
	\frac{K!}{z_1! } \p\left(w\left(|v|_1\right) =  \hat{v}  \right).
\end{align}
From \eqref{eq:equality decreasing} we also get that
\begin{align*}
    \p\left(w\left(|v|_1\right) =  \hat{v}  \right) 
    &= 
    \prod_{i=1}^{K} \frac{1}{\sum_{j=i}^{K}  \hat{v}_j} 
    = 
    \frac{1}{z_1!} \prod_{i=1}^{K-z_1} \frac{1}{\sum_{j=i}^{K}  \hat{v}_j}\\
    &
    \leq
    \frac{1}{z_1!} \prod_{i=1}^{K-z_1} \frac{1}{z_1 + 2(K-z_1 - i+ 1)}
    =
    \frac{1}{z_1!} \prod_{\ell=1}^{K-z_1} \frac{1}{z_1 + 2\ell},
\end{align*}
where we used variable transform $\ell=K-z_1-i+1$ for the last equality. Thus, we obtain for all $K\geq 2$ that 
\begin{align}\label{eq:outdegree bound}
	\E \left[| \{ v \rightarrow w \left(|v|_1\right) \}|\right]\leq \frac{K!}{z_1! } \p\left(w\left(|v|_1\right) =  \hat{v} \right)
    \leq \frac{K!}{z_1!} \frac{1}{z_1!} \prod_{\ell=1}^{K-z_1} \frac{1}{z_1 + 2\ell}
	= 
	\frac{1}{z_1!} \prod_{\ell=1}^{K- z_1} \frac{z_1+\ell}{z_1+2\ell}.
\end{align}
It thus remains to prove that we can bound the right-hand side from above by $0.9^K$. To this end, we define $s:=K-z_1>0$. In the following, we consider three different cases for combinations of $s$ and $z_1$, and we prove the desired upper bound for all three cases.

    \noindent
    \textbf{Case 1:}  $s\geq 2z_1$. Then, for $\ell \geq \lceil s/2 \rceil$ one has $\frac{z_1+\ell}{z_1+2\ell} \leq \frac{2}{3}$. Furthermore, the condition $s\geq 2z_1$ implies that
    $s = 2s/3 + s/3 \geq 2s/3 + 2z_{1}/3 = 2K/3$. 
    Combining these two observations, we get that
    \begin{equation*}
        \prod_{\ell=1}^{s}\frac{z_1+\ell}{z_1+2\ell} \leq \prod_{\ell=\lceil s/2 \rceil}^{s}\frac{z_1+\ell}{z_1+2\ell} \leq \left(\frac{2}{3}\right)^{s/2} \leq \left(\frac{2}{3}\right)^{K/3} \leq 0.9^K.
    \end{equation*}
    \textbf{Case 2:}   $z_1> 1, s\leq 2z_1$. 
    It follows that $z_1 \geq K/3$, since 
    $3 z_{1} = 2 z_{1} + z_{1} \geq s + z_{1} = K$. 
    Thus, 
    \begin{align*}
        \frac{1}{z_1!}\leq \frac{1}{2^{z_1-1}} \leq \frac{1}{2^{z_1/2}} \leq \frac{1}{2^{K/6}} \leq 0.9^K.
    \end{align*}
\textbf{Case 3:} $z_1=1, s=1$. Here we directly have that
\begin{align*}
    \frac{1}{z_1!} \prod_{\ell=1}^{s} \frac{z_1+\ell}{z_1+2\ell} = \frac{2}{3} \leq 0.9^2 = 0.9^K,
\end{align*}
which concludes the proof of Lemma~\ref{lemma:pdembed}.
\end{proof}

\subsection{Root-to-root matchings}\label{sec:roottoroot}

For a uniform attachment tree of size $N$, denoted by $T_N$, and a rooted tree $T$, we write $T\sim^\star  T_N$ if there exists a {graph isomorphism} from $T$ to $T_N$ that also maps the root to the root. This section focuses on bounding the probability
$\p \left(T\sim^\star  T_N \right)$ for any rooted tree $T$ of size $N$.

\begin{lemma}\label{lemma:matchprob}
    Let $T=(V,E)$ be a fixed tree of size $N$ with a distinguished root $\varnothing \in V$.  Then for all $N$ sufficiently large,
    \begin{equation*}
        \p \left( T \sim^{\star} T_N \right) \leq 0.96^N.
    \end{equation*}
    \invisible{ 
    \basc{Based on improvement of Claim~\ref{claim:1}, this upper bound can now be improved to $o(\sqrt{c}^N)$ for any $c>11/2$.}
}
\end{lemma}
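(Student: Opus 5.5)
We argue by induction on the tree, using a recursive decomposition at the root together with Lemma~\ref{lemma:pdembed}. Let $T$ have root $\varnothing$ with children $c_1,\dots,c_K$, and let $T^{(1)},\dots,T^{(K)}$ be the rooted subtrees at these children, with sizes $v=(v_1,\dots,v_K)$, $|v|_1=N-1$.

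\textbf{Decomposing a root-preserving isomorphism.} If $T\sim^\star T_N$, then the subtree vector $w(N-1)$ of the UA tree must be equivalent to $v$. Moreover, there is an embedding $f\in\{v\to w(N-1)\}$ such that each $T^{(i)}$ is root-isomorphic to the subtree of $T_N$ rooted at the $f(i)$-th child of the root. Entries equal to $1$ correspond to single vertices, which are trivially isomorphic, so fixing their order loses nothing. Conditionally on the subtree vector, the subtrees of the children of the root are independent UA trees of the given sizes, by the Meir--Moon property recalled above. A union bound over embeddings $f$ therefore gives
\[
\p(T\sim^\star T_N)\le \E\bigl[|\{v\to w(N-1)\}|\bigr]\,\max_{f}\prod_{i=1}^K \p\bigl(T^{(i)}\sim^\star T_{v_i}\bigr),
\]
where the product is the same for every $f$. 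Set $q(T):=\p(T\sim^\star T_{|T|})$. Unrolling the recursion over all internal vertices $u$ of $T$ (those with at least one child) yields
\[
q(T)\le \prod_{u\text{ internal}} a(u),
\]
where $a(u)$ is the bound on the expected number of embeddings at $u$.

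\textbf{Bounding the factors.} By Lemma~\ref{lemma:pdembed}, $a(u)\le 0.9^{\deg^+(u)}$ whenever the subtree below $u$ has total size at least $2$. When $u$ has a single child which is a leaf, $a(u)=1$. Since $\sum_u \deg^+(u)=N-1$, we obtain
\[
q(T)\le 0.9^{\,N-1-\#\{u:\ u\text{ has exactly one child, a leaf}\}}.
\]
The main obstacle is that trees with many such "cherry-stem" vertices, for example a path or many pendant two-vertex paths, defeat this bound. To handle them, I would first try to strengthen the $K=1$ case. A vertex with exactly one child whose subtree has size $m$ gives factor $1/m\le 1/2$ if $m\ge2$, but factor $1$ if $m=1$. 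Each vertex $u$ with a single leaf child has a parent $p$, unless $u$ is the root. I would charge $u$'s loss to $p$ by bounding the factor at $p$ more carefully. In Lemma~\ref{lemma:pdembed}, an entry of size $2$ contributes through $z_2$ and the decreasing-order product, and one can redo \eqref{eq:outdegree bound} keeping track of the entries $\ge2$ to get roughly $0.9^{K}\cdot c^{\#\{i: v_i\ge2\}}$ for some $c<1$.

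\textbf{Fallback via vertex counting.} Alternatively, one can simply count vertices: each leaf child of such a $u$ is paired with $u$, the pairs are disjoint, and each $u$ itself contributes its edge to its parent's out-degree. The out-degrees therefore still account for a constant fraction of $N$. For instance, the exponent $N-1-\#\{\text{such }u\}\ge (N-1)/2$ gives $0.9^{(N-1)/2}\approx 0.949^N$, which is already below $0.96^N$ for large $N$ after absorbing constants. This works because each such $u$ has its leaf child, so the number of such $u$ is at most $N/2$. I expect this last counting step, checking that $\#\{u\}\le N/2$ and tidying the root and base cases, to be routine. The genuine content lies in verifying the conditional-independence recursion rigorously.
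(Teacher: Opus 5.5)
Your proposal is correct and is essentially the paper's proof. Your level-by-level union bound reproduces the product formula of Claim~\ref{claim:product}, where the paper instead computes the expected number of leaf-order-respecting root isomorphisms exactly and applies Markov's inequality once. Lemma~\ref{lemma:pdembed} then bounds each factor, and your fallback pairing argument is exactly Claim~\ref{claim:2}. The ``main obstacle'' paragraph is a red herring: paths and pendant two-vertex paths do not defeat the bound, because the pairing already shows that at most $(N-1)/2$ vertices have a single descendant, so no strengthening of the $K=1$ case is needed.
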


Before we go to the proof of this lemma, we introduce some more notation. For the proof, we view both $T$ and $T_N$ as subsets of $V_\infty = \bigcup_{k\geq 0}\N^k$ that respect the natural arrival structure, in the sense that they satisfy properties \ref{prop:UH1} through \ref{prop:UH3}. We think of $T$ as a deterministic tree and $T_N$ as a random element in $\mathcal{T}_N$, as discussed in Section \ref{sec:prelim}.
 For a vertex $\sigma=(\sigma_1,\ldots,\sigma_j)\in \N^j$ and $\mu=(\mu_1,\ldots,\mu_\ell) \in \N^\ell$, we write $\sigma\oplus\mu := (\sigma_1,\ldots,\sigma_j,\mu_1,\ldots,\mu_\ell) \in \N^{j+\ell}$, as in Section \ref{sec:beyondUH}. In particular, for $k\in \N$, we write $\sigma \oplus (k) \coloneqq (\sigma_1,\ldots,\sigma_j,k)$ for its $k$-th child.

For a tree $A \subset V_\infty$ and $v\in A$, write $\chi(v,A) = \{u \in A : u=v\oplus (k) \text{ for some } k\in \N, u \text{ is a leaf}\}$ for the set of children of $v$ in $A$ that are leaves. We write $\{T \to T_N\}$ for the set of graph isomorphisms $\varphi : T \to T_N$ for which $\varphi(\varnothing) = \varnothing$ and for which, for each $u \in T$, the function
\begin{equation*}
    \varphi : \chi(u,T) \to \chi(\varphi(u), T_N)
\end{equation*}
is increasing in the lexicographic ordering. Note that if $\varphi : T \to T_N$ is a graph isomorphism with $\varphi(\varnothing) = \varnothing$, then we always need to have that $\varphi ( \chi(u,T)) = \chi(\varphi(u), T_N)$. However, for the graph isomorphisms in $\{T \to T_N\}$ we require the additional constraint that the function is monotone on the set of leaves. Note that if there exists a graph isomorphism with $\varphi(\varnothing) = \varnothing$, then there also needs to exist an element in $\{T\to T_N\}$, so that Markov's inequality implies that
\begin{align}\label{markovtrick}
    \p (T \sim^\star T_N ) \leq \E \left[ \left| \{ T \to T_N \} \right| \right] .
\end{align}
Next, we prove Lemma \ref{lemma:matchprob}.

\begin{proof}[Proof of Lemma \ref{lemma:matchprob}]
	
	We estimate the expected number of isomorphisms in $\{T \to T_N\}$.  For a vertex $v\in T$, we write $b_T(v)=\left(v_1,\ldots,v_{\od{(v)}}\right)$ for the sizes of subtrees that are attached to $v$ (where $\od{(v)}$ denotes the {number of children} of $v$). In particular, $v_1+\ldots+v_{\od{(v)}}$ is the number of descendants of $v$. In Claim \ref{claim:product} below, we will prove that the expected size of the set $ \{ T \to T_N\}$ is given by
	\begin{align}\label{eq:numembed}
    \E\left[\left| \{ T \to T_N\} \right|\right] =
	\prod_{v\in T_+} \E \left[| \{ b_T(v) \rightarrow w \left(|b_T(v)|_1\right) \}|\right],
	\end{align}
    where we recall that $w(N)$, defined in Definition~\ref{def:subtree_vector}, denotes the subtree vector, that is,\ the sizes of the subtrees rooted at the children of the root of a uniform attachment tree of size $N$. Also, recall that $| \{ b_T(v) \rightarrow w \left(|b_T(v)|_1\right)\}|$ denotes the number of ways to embed the vector $b_T(v)$ into a subtree vector $w(|b_T(v)|_1)$.   It follows from Lemma~\ref{lemma:pdembed} that
	\begin{align*}
	\E \left[|\{ b_T(v) \rightarrow w \left(|b_T(v)|_1\right) \}|\right] 
	\leq  0.9^{\od(v)},
	\end{align*}
	if $v$ has at least two descendants, where $\od(v)$ is the number of children of $v$.  Combining all of the previous statements, we see that the expected number of ways to embed $T$ into $T_N$ is bounded by 
	\begin{equation*}
	\prod_{v\in T_+} \E \left[|\{ b_T(v) \rightarrow w \left(|b_T(v)|_1\right) \}|\right]
	\leq
	\prod_{v\in T_+}0.9^{\od(v)}=0.9^{\sum_{v\in T_+}\od(v)}
	\end{equation*}
	where $T_+$ are all vertices in $T$ with at least two descendants. For any tree $T$ of size $N$, we have that
	\begin{equation*}
	\sum_{v \in T_+} \od (v) \geq \frac12 (N-1),
	\end{equation*}
	which we show in Claim \ref{claim:2} below.
	This implies that the expected number of tree isomorphisms from $T$ to $T_N$ that map the root to the root is, for $N$ large enough, bounded by 
	\begin{align*}
    \prod_{v\in T_+} \E \left[|\{ b_T(v) \rightarrow w \left(|b_T(v)|_1\right) \}|\right]
	\leq 0.9^{\sum_{v\in T_+}\od(v)}
    \leq
	(\sqrt{0.9})^{N-1}\leq 0.96^N,
	\end{align*}
	which concludes the proof of Lemma~\ref{lemma:matchprob}, by \eqref{markovtrick}.
\end{proof}

It remains to prove the two claims made in the proof.

\begin{claim}\label{claim:2}
Let $T = (V,E)$ be a fixed tree of size $N$  and let $T_+$ be the set of vertices in $T$ with at least two descendants. Then, for sufficiently large $N$,
    \begin{align*}
    \sum_{v \in T_+} \mathrm{outdeg} (v) \geq \frac12 (N-1) .
\end{align*}
\end{claim}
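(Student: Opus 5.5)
The plan is to count edges of $T$ according to which endpoint is the parent. Every non-root vertex has exactly one parent, so $\sum_{v\in V}\od(v)=N-1$. Splitting this sum, we get
\[
\sum_{v\in T_+}\od(v) = (N-1) - \sum_{v\notin T_+}\od(v).
\]
So it suffices to show that $\sum_{v\notin T_+}\od(v)\le \frac12(N-1)$. A vertex $v\notin T_+$ has at most one descendant, so it is either a leaf ($\od(v)=0$) or has exactly one child, which is itself a leaf. Hence the right-hand sum equals the number $m$ of vertices with exactly one descendant. Call these the \emph{pendant parents}; each comes with its unique leaf child.

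Next we bound $m$ by $\frac12(N-1)$. The pairs (pendant parent, its leaf child) are vertex-disjoint: a pendant parent is not a leaf, and each leaf has only one parent. So $2m\le N$, that is, $m\le N/2$. This is off by $\frac12$ from what we need, and equality $2m=N$ would force every vertex, including the root, to lie in such a pair. The root is a pendant parent only when $N=2$. Otherwise the root has at least two descendants (for $N\ge 3$) and lies in no pair as a parent. It is also not a leaf child, since it has no parent. Thus for $N\ge 3$ we get $2m\le N-1$, and the claim follows; this is why the statement says ``$N$ sufficiently large'' (here $N\ge3$).

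The only delicate point is this parity/root correction. Everything else is the double-counting identity $\sum_v \od(v)=N-1$ and the disjointness of the pendant pairs. If one prefers to avoid the edge case, the slightly weaker bound $\sum_{v\in T_+}\od(v)\ge \frac12 N-1$ also suffices for Lemma~\ref{lemma:matchprob} for large $N$. That is because $0.9^{N/2-1}\le 0.96^N$ eventually holds, as $\sqrt{0.9}<0.96$.
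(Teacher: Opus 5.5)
Your proof is correct and takes essentially the same route as the paper's. Both write $\sum_{v\in T_+}\mathrm{outdeg}(v) = (N-1) - |T_1|$, where $T_1$ is the set of vertices with exactly one descendant. Both then bound $|T_1|\le (N-1)/2$ using the disjoint pairs $\{v,u(v)\}$, which avoid the root once it has $N-1>1$ descendants, i.e.\ $N\ge 3$. The paper simply places these pairs inside $T\setminus\{\varnothing\}$ from the start, rather than first proving $2m\le N$ and then correcting for the root; this difference is cosmetic.
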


\begin{proof}
Define $T_{1}$ to be the set of vertices in $T$ with exactly one descendant. This descendant needs to be a leaf. Thus, for each $v \in T_1$, there exists exactly one leaf $u(v)$, which is the unique descendant of $v$. The collection $(\{v,u(v)\})_{v \in T_1}$ is a disjoint collection of vertices, not containing the root (which has $N-1>1$ descendants). Thus
\begin{equation*}
    N-1 = |T\setminus \varnothing| \geq \Big| \bigcup_{v \in T_1} \{v,u(v)\} \Big| = 2|T_1|, 
\end{equation*}
which implies that $|T_{1}| \leq (N-1)/2$. 
Thus we get that
\begin{align*}
    \sum_{v \in T_+} \mathrm{outdeg} (v) &= \sum_{v \in T} \mathrm{outdeg} (v) - \sum_{v \in T_1} \mathrm{outdeg} (v)
    =
    \sum_{v \in T} \mathrm{outdeg} (v) - |T_1|
    \\
    &\geq
    \sum_{v \in T} \mathrm{outdeg} (v) - \frac{N-1}{2}
    =
    \frac{N-1}{2},
\end{align*}
where we used that the sum of out-degrees equals the number of edges $|E|=|T|-1=N-1$ in the last inequality. 
\end{proof}

\begin{claim}\label{claim:product}
	Let $N \geq 1$ and let $T \subset V_\infty$ be a fixed tree of size $N$.
    Let
    $\left\{ T \rightarrow T_N \right\}$ denote the set of graph isomorphisms defined above.
    The expected number of such isomorphisms is given by
	\begin{align*}
	\E\left[|\left\{ T \rightarrow T_N \right\}|\right] =
	\prod_{u\in T_+} \E \left[| \{ b_T(u) \rightarrow w \left(|b_T(u)|_1\right) \}|\right].
	\end{align*}
\end{claim}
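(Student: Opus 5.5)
We argue by strong induction on $N$, decomposing both sides at the root. If $N=1$ both sides equal $1$ (empty product, and the unique map sending root to root). For $N\geq 2$, let the root $\varnothing$ of $T$ have children whose subtrees $T^{(1)},\dots,T^{(K)}$ have sizes $b_T(\varnothing)=(v_1,\dots,v_K)$, so $|b_T(\varnothing)|_1=N-1$. An element $\varphi\in\{T\to T_N\}$ is determined by two pieces of data. The first is a bijection $f$ from the children of $\varnothing$ in $T$ to the children of $\varnothing$ in $T_N$ that preserves subtree sizes and is increasing on children that are leaves (subtree size $1$), i.e.\ an element of $\{b_T(\varnothing)\to w^{T_N}\}$, where $w^{T_N}$ is the vector of subtree sizes of the root's children in $T_N$. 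The second is, for each $i$, an element of $\{T^{(i)}\to T_N^{(f(i))}\}$, where the subtree of $T_N$ is relabeled as a subset of $V_\infty$ rooted at its own root. The leaf-monotonicity constraint at the root is exactly the constraint that $1$-entries are matched in order, and the constraints at deeper vertices are local to the subtrees. This gives the bijective decomposition
\[
\left|\{T\to T_N\}\right| = \sum_{f\in\{b_T(\varnothing)\to w^{T_N}\}}\ \prod_{i=1}^{K}\left|\{T^{(i)}\to T_N^{(f(i))}\}\right|.
\]

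We then take expectations by conditioning on $w^{T_N}$. The vector $w^{T_N}$ has the law of the subtree vector $w(N-1)$ from Definition~\ref{def:subtree_vector}, and the ordering of the children is by arrival, matching the Ulam--Harris order. The key probabilistic input is the fact quoted just before Lemma~\ref{lemma:pdembed}: conditionally on the subtree sizes of the root's children, the subtrees rooted at those children are independent UA trees of the given sizes. This is the splitting property of random recursive trees; it follows from the uniform attachment within a subtree, cf.~\cite{MeirMoon74}, and applied recursively it gives the product form. Hence, given $w^{T_N}$ and any admissible $f$, the subtree $T_N^{(f(i))}$ is distributed as $T_{v_i}$, and these subtrees are independent across $i$. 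We obtain
\[
\E\left[|\{T\to T_N\}|\right]=\E\left[|\{b_T(\varnothing)\to w(N-1)\}|\right]\prod_{i=1}^K \E\left[|\{T^{(i)}\to T_{v_i}\}|\right].
\]
Applying the induction hypothesis to each $T^{(i)}$ (size $v_i<N$) turns each factor into a product over $T^{(i)}_+$. The vertices of $T_+$ other than the root are exactly the disjoint union of the $T^{(i)}_+$, with $b$ unchanged.

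It remains to check the root's factor and the vertices not in $T_+$. If $N\ge 3$, the root has at least two descendants, so $\varnothing\in T_+$ and its factor appears correctly. Vertices with zero or one descendant contribute factor $1$. A leaf trivially contributes $1$. A vertex with exactly one descendant has $b=(1)$, and $w(1)=(1)$ deterministically, so it also contributes $1$. This covers the case $N=2$ as well, so the product may be taken over $T_+$ alone.

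The main obstacle is making the conditional independence rigorous together with the relabeling. Specifically, we must verify that conditioning on the root's subtree-size vector $w^{T_N}$ makes the child subtrees, viewed as Ulam--Harris trees with their internal arrival order, independent UA trees of the given sizes. We must also check that the count of embeddings at the root depends only on $w^{T_N}$, so that its expectation factors off the subtree expectations. I would prove the former via the Chinese restaurant description (Definition~\ref{def:chinrest}): the sequence of table choices is independent of the uniform attachment choices made within each table. The latter is immediate from the displayed sum, since for each fixed $f$ the conditional expectation of the product depends only on the sizes $v_i$.
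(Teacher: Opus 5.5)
Your proposal is correct and follows the paper's proof. Both use induction on $N$, decompose $|\{T\to T_N\}|$ at the root as a sum over embeddings of $b_T(\varnothing)$ into the root's subtree-size vector of products of subtree isomorphism counts, and condition on that vector so that the child subtrees are independent UA trees of the given sizes. Both then apply the induction hypothesis. Your explicit Chinese-restaurant justification of the splitting property, and your check that the root contributes a factor $1$ when $N=2$, fill in two points the paper leaves implicit.
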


\begin{proof}
    We prove the statement via induction on $N$. The statement is clearly true for $N=1$, that is, when the tree $T$ consists of a single node only. In that case $T_+$ is the empty set and by convention the product over $T_+$ equals $1$. Assume that the statement is true for all trees of size at most $N$. Let $T \subset V_\infty$ be a tree of size $N+1$. Write $K= \od_T(\varnothing) = \left| \left\{ k \in \N : k\in T \right\} \right|$ for the out-degree of $\varnothing$ in $T$. For $1 \leq i \leq K$, write $S_i \coloneqq \left\{ \sigma \in T : \sigma=(i) \oplus \mu \text{ for some } \mu \in V_\infty \right\}$ for the fringe subtree rooted at vertex $(i)$ and $s_i = |S_i|$ for its size. Write $b_T(\varnothing) = (s_1,\ldots,s_K)$ for the vector containing the sizes of the subtrees. 
    Similarly, let $\overline{K}= \od_{T_{N+1}}(\varnothing) = \left| \left\{ k \in \N : k\in T_{N+1} \right\} \right|$ be the degree of the root in $T_{N+1}$, let
    $U_i \coloneqq \left\{ \sigma \in T_{N+1} : \sigma=(i)\oplus \mu \text{ for some } \mu \in V_\infty \right\}$ be the fringe subtree rooted at vertex $(i)$ and $u_i = |U_i|$ its size. Write $b_{T_{N+1}}(\varnothing) = (s_1,\ldots,s_K)$ for the vector containing the sizes of the subtrees. 

Let $\Phi$ be the set of embeddings of $b_{T}(\varnothing)$ into $b_{T_{N+1}}(\varnothing)$. Note that $\Phi$ is non-empty if and only if $b_{T}(\varnothing)$ and $b_{T_{N+1}}(\varnothing)$ are equivalent, which, in particular, implies that $K=\overline{K}$. Furthermore, we write $\{S_i \to U_j\}$ for the set of graph isomorphisms $\varphi : S_i \to U_j$ with $\varphi((i))=(j)$ (i.e., the ``root" of $S_i$ gets mapped to the ``root" of $U_j$) for which, for each $u \in S_i$, the function
\begin{equation*}
    \varphi : \chi(u,S_i) \to \chi(\varphi(u), U_j)
\end{equation*}
is increasing in the lexicographic ordering (i.e., graph isomorphism that respect the order of the leaves). Each graph isomorphism $\psi \in \{T \to T_{N+1}\}$ induces an embedding $\phi \in \Phi$ of $b_T(\varnothing)$ to $b_{T_{N+1}}(\varnothing)$. For such an embedding $\phi$, for each $i \in [K]$, the graph isomorphism $\psi$ also induces a graph isomorphism from $S_i$ to $U_{\phi((i))}$, which respects the usual leaf-ordering. Thus
    \begin{equation*}
    	|\left\{T \rightarrow T_{N+1} \right\}| = \mathbbm{1}_{b_{T}(\varnothing)\sim b_{T_{N+1}}(\varnothing) } \sum_{\phi \in \Phi}  \prod_{i=1}^{K} \left|\left\{S_i \rightarrow U_{\phi((i))} \right\}\right| .
    \end{equation*}
    Let $\left(T_n^i\right)_{i,n\in \N}$ be independent UA trees, which are furthermore independent of $T_{N+1}$, with $T_n^i$ being a UA tree of size $n$. Note that, conditioned on $b_{T_{N+1}}(\varnothing)$, the collection of subtrees $(U_i)_{1\leq i \leq \overline{K}}$ has the same distribution as $(T_{u_i}^i)_{1\leq i \leq \overline{K}}$.
    Let $(S_i)_+$ be the vertices in $S_i$ that have at least $2$ descendants.
   Taking expectations on both sides of the equation above and applying the induction hypothesis yields
    \begin{align*}
     \E \left[\left|\left\{T \rightarrow T_{N+1} \right\}\right|\right] 
    &= \E \left[ \mathbbm{1}_{b_T(\varnothing)\sim b_{T_{N+1}}(\varnothing)}  \E \left[ \sum_{\phi \in \Phi}  \prod_{i=1}^{K} \left|\left\{S_i \rightarrow U_{\phi(i)} \right\}\right| \ \Big| b_{T_{N+1}}(\varnothing) \right] \right]\\
    & = \E \left[ \mathbbm{1}_{b_T(\varnothing)\sim b_{T_{N+1}}(\varnothing)}  \sum_{\phi \in \Phi}  \prod_{i=1}^{K} \Big| \left\{S_i \rightarrow T_{u_{\phi(i)}}^{\phi(i)} \right\} \Big|  \right] \\
    &
    = \E \left[ \mathbbm{1}_{b_T(\varnothing)\sim b_{T_{N+1}}(\varnothing)} \sum_{\phi \in \Phi} 1  \right] \prod_{i=1}^{K} \prod_{u\in (S_i)_+} \E \left[ | \{ b_T(u) \rightarrow w \left(|b_T(u)|_1\right) \}| \right]\\
    & =
    \E \left[ | \{ b_T(\varnothing) \rightarrow b_{T_{N+1}}(\varnothing) \}| \right] \prod_{u \in T_+ \setminus \{ \varnothing \} } \E \left[|\{ b_T(u) \rightarrow w \left(|b_T(u)|_1\right) \} | \right]
    \\
    &=
    \prod_{u\in T_+} \E \left[|\{ b_T(u) \rightarrow w \left(|b_T(u)|_1\right) \} | \right],
    \end{align*}
    as desired. 
\end{proof}

\subsection{Beyond root-to-root matchings}\label{sec:beyondroottoroot}

In this section, we prove Theorem \ref{thm:UB_near1}. The theorem is a direct consequence of the following lemma.
\begin{lemma}\label{lem:upper bound eps}
	Let $\eps=0.0025$. Then $\p \left(X_{N + \lfloor \eps N \rfloor} \geq N \right) \to 0$ as $N\to \infty$.
 \invisible{ 
 \basc{This can be strengthened to: There exists a (random) $N_0\in\N$ such that $X_{(1+\eps)N}<N$ for all $N\geq N_0$ almost surely. We can also improve the bound on $\eps $ to $\eps<0.0327952$.}
 }
\end{lemma}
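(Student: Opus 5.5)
The plan is a first-moment (union bound) argument over candidate common subtrees, using Lemma~\ref{lemma:matchprob} to control the probability that a fixed rooted tree appears as a subtree of a UA tree with a matching root. Set $M := N + \lfloor \eps N \rfloor$.

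\textbf{Reformulation.} If $X_M \geq N$, there is a tree $S$ with $N$ vertices that is a subtree of both $T_M^1$ and $T_M^2$. Let $S^1 \subseteq T_M^1$ be its copy in the first tree. We root $S^1$ at its vertex closest to the root of $T_M^1$, namely the earliest-arriving vertex; call it $\rho$. The key structural fact about UA trees is a consistency property: conditionally on the vertex set of $S^1$ and on which vertices of $T_M^1$ it avoids, the tree $S^1$ rooted at $\rho$ is the UA-type tree formed by its vertices in arrival order. Each vertex of $S^1$ other than $\rho$ attaches to its parent in $S^1$. This is the standard restriction property: a UA tree restricted to an increasing subset that is connected through arrival order is again uniform attachment.

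So I will bound $\p(X_M \geq N)$ by a sum over the following data in each tree $i\in\{1,2\}$:
\begin{itemize}
\item the set of $N$ arrival labels $L_i \subseteq [M]$ forming the subtree, of which there are $\binom{M}{N}$ choices;
\item the unlabeled rooted shape $S$.
\end{itemize}
The probability that the labels $L_i$ induce a connected subtree isomorphic to $S$, with $\rho$ mapped to the root, is handled as follows. Conditioning on $T_M^i$ restricted to the vertices that must lie above, the induced structure on $L_i$ is a recursive tree in which each new vertex chooses its parent uniformly among earlier vertices. Requiring the parent to lie in $L_i$ costs a probability factor at most $1$. Given that, the parent is uniform among the earlier vertices of $L_i$, so the shape is distributed as $T_N$. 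Lemma~\ref{lemma:matchprob} then gives a probability of at most $0.96^N$ for each tree. I expect to need a slightly careful formulation (e.g.\ via the Markov bound~\eqref{markovtrick} and the relabeling) to make the "at most $1$ times UA" comparison rigorous. The main point is that conditioning on attachments landing inside $L_i$ only reduces probability, and the resulting conditional law is uniform.

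\textbf{Counting.} The number of rooted unlabeled trees on $N$ vertices is at most $c^N$ with $c \approx 2.9558$ (Otter's constant), and in any case crudely at most $4^N$, by counting Dyck paths of plane trees. Independence of the two trees gives
\[
\p(X_M \geq N) \leq \sum_{S} \binom{M}{N}^2 \p(S \sim^\star T_N)^2 \leq 4^N \binom{M}{N}^2 \, 0.96^{2N}.
\]
Using $\binom{M}{N} \leq \exp\bigl(M H(\eps/(1+\eps))\bigr)$, with $\eps = 0.0025$ this is $\exp(0.0175N)$ or so. The shape count, however, must be beaten. Since $4 \cdot 0.96^2 > 1$, this crude bound fails. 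This is the main obstacle: I must not pay twice for the shape. The fix is to sum over shapes only once, by bounding
\[
\sum_S \p(S \sim^\star T^{(1)})\,\p(S\sim^\star T^{(2)}) \leq \max_S \p(S \sim^\star T_N) \cdot \sum_S \p(S \sim^\star T_N) \leq 0.96^N \cdot 1 .
\]
This uses that the events $\{T_N \cong S\}$ over distinct shapes are disjoint. The total then becomes $\binom{M}{N}^2 0.96^N$, which tends to $0$, since $2\,(1+\eps)\, H(\eps/(1+\eps)) \approx 0.035 < -\log 0.96 \approx 0.041$.

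That completes the plan: invoke the restriction property, apply the disjointness trick to avoid counting shapes, use Lemma~\ref{lemma:matchprob} once, and bound the label choices by entropy. The delicate step is justifying that the induced subtree on a label set, conditioned to be connected, is stochastically no more likely to take shape $S$ than $T_N$ is. If direct conditioning is awkward, I would instead use expected embedding counts as in Claim~\ref{claim:product}, summed over label sets.
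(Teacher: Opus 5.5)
Your union bound has one concrete flaw, though it is cheap to repair. You root $S^1$ at $\rho_1=\min L_1$ and $S^2$ at $\rho_2=\min L_2$, and then sum over a single rooted shape $S$ shared by both trees. But the isomorphism $S^1\to S^2$ witnessing $X_M\ge N$ need not send $\rho_1$ to $\rho_2$. For example, take a three-vertex path whose earliest vertex is the middle one in $T_M^1$ and an endpoint in $T_M^2$. So $\{X_M\ge N\}$ is not covered by the events you sum over.

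The fix is to sum over unrooted shapes $U$ instead, writing $\cong$ for unrooted isomorphism. Fix a deterministic set $L$ with minimum $\rho$. The parents of the vertices $v\in L\setminus\{\rho\}$ are independent and uniform on $[v-1]$, so
\[
\p\bigl(L \text{ induces a tree rooted-isomorphic to } S\bigr)=\prod_{v\in L\setminus\{\rho\}}\frac{|L\cap[v-1]|}{v-1}\cdot\p\left(S\sim^\star T_N\right)\le \p\left(S\sim^\star T_N\right).
\]
This makes your restriction step an exact one-line computation. It should be phrased for a deterministic $L$, as in your union bound, not as conditioning on the random common subtree. Since $U$ has at most $N$ rootings, Lemma~\ref{lemma:matchprob} gives $\p(L \text{ induces a tree } \cong U)\le \p(T_N\cong U)\le N\,0.96^N$. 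Disjointness of the events $\{T_N\cong U\}$ then yields
\[
\p(X_M\ge N)\le\binom{M}{N}^2\sum_U\p(T_N\cong U)^2\le N\binom{M}{N}^2\,0.96^N\to 0 .
\]
The extra factor $N$ is subexponential, and your rate $2(1+\eps)H(\eps/(1+\eps))+\log 0.96\approx-0.0059$ is still negative.

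With this fix your argument is correct, and it takes a different route from the paper. The paper never restricts the uniform attachment law to a subset of vertices, and it sums over unrooted $T$ from the start, so it has no rooting issue. It writes $\p(X_{N+n}\ge N)\le\sum_T\p(T\subset T^1_{N+n})\,\p(T\subset T^2_{N+n})$ and bounds $\sum_T\p(T\subset T^1_{N+n})\le\binom{N+n}{n}$ by counting vertex subsets. It bounds each $\p(T\subset T^2_{N+n})$ via Lemma~\ref{lemma:embedprob}:
\begin{itemize}
\item extend $T$ to a full tree of size $N+n$, in at most $4^n\binom{N+n}{n}$ ways (Lemma~\ref{lem:tree_count});
\item choose which vertex is the root, in $N+n$ ways;
\item apply Lemma~\ref{lemma:matchprob} at size $N+n$.
\end{itemize}
This gives $(N+n)4^n\binom{N+n}{n}^2 0.96^{N+n}$.

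Your consistency property lets you apply Lemma~\ref{lemma:matchprob} directly at size $N$ to the induced subtree. This removes Lemma~\ref{lem:tree_count} and the factor $4^n$, and improves the exponential rate by $\eps\log 3.84$, so a slightly larger $\eps$ would also work. The price is that you rely on the exact product structure of uniform attachment: independent uniform parent choices, unaffected by vertices outside $L$. The paper's reduction needs only a whole-tree bound like Lemma~\ref{lemma:matchprob}, so it applies to any model for which such a bound is available.
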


\noindent To prove this lemma, we need several intermediary statements. For two trees $T$ and $T^\prime$, we write $T \subset T^\prime$ if there exists a graph isomorphism between $T$ and a subtree of $T^\prime$. In this case, we also say that $T$ is contained in $T^\prime$.

\begin{lemma}\label{lem:tree_count}
    Let $T=(V,E)$ be a fixed tree of size $N$. The number of unlabeled trees $T^\prime$ of size $N+n$ that contain $T$ is bounded by
    \begin{align*}
        4^n \binom{N+n}{n}. 
    \end{align*}
\end{lemma}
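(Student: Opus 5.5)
Every tree $T'$ of size $N+n$ containing $T$ can be produced from $T$ by a small amount of encoded extra data, so it suffices to bound the number of such encodings by $4^n\binom{N+n}{n}$. Fix a copy of $T$ inside $T'$. Then $T'\setminus T$ is a forest of $n$ vertices, and each of its components hangs off a vertex of $T$ by a single edge.

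The cleanest encoding is a depth-first (contour) traversal. First root $T$ at a fixed vertex and fix a planar ordering, so that $T$ has a fixed contour word of length $2(N-1)$ in up/down steps. We then make a choice of planar embedding of $T'$ that extends the embedding of $T$; this is possible since we only need to count, not realize canonically. The contour of $T'$ is the contour of $T$ with excursions inserted. Each excursion is a nonempty plane tree, encoded as a balanced up/down word.

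The excursions can be recorded as follows. The new vertices are grouped into maximal runs of $T'\setminus T$ edges traversed consecutively in the DFS of $T'$. Reading the DFS of $T'$ in vertex-discovery order, there are $N+n$ discovered vertices (discovery positions), of which exactly $n$ are new. Specify which $n$ of the $N+n$ discovery positions are new vertices, giving at most $\binom{N+n}{n}$ choices. Given that subset, and knowing the fixed contour of $T$, it remains to specify where each new vertex attaches. Knowing the DFS order alone does not determine the parent of a new vertex, but the depth change does. Encode the DFS of the full tree $T'$ as the Łukasiewicz/contour word. For each new vertex we need its parent, which is an ancestor of the previously discovered vertex; specifying "how many steps back up" is an arbitrary nonnegative integer. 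The total amount of backtracking among new-vertex insertions, however, is controlled: the contour of $T'$ has $2(N+n-1)$ steps, and the steps are fixed except around the $n$ insertions. The excess up/down steps contributed by new vertices number exactly $2n$ (one down and one up per new edge). These $2n$ extra steps, interleaved in order with the forced steps of $T$, are described by a binary word of length $2n$ once we know the insertion pattern. This gives $2^{2n}=4^n$.

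The main obstacle is making the last step rigorous: showing that the pair (set of new discovery positions, binary word of length $2n$ of the new down/up steps) determines $T'$ up to isomorphism. The issue is that where the contour of $T$ resumes after an excursion must also be recoverable. I would handle this by noting that during the traversal the walker is either on a $T$-vertex following the fixed $T$-contour or inside an excursion. An excursion ends exactly when the height returns to the attachment vertex, which is detectable from the balanced word. The discovery-position subset then tells, at each discovery, whether to take the next $T$-step or open or continue an excursion. If this bookkeeping turns out slightly lossy (for example, ambiguity between consecutive excursions at the same vertex versus one excursion), I would instead absorb the ambiguity by using the $\binom{N+n}{n}$ choice to mark attachment vertices and the $4^n$ factor to encode the forest shape, using the bound that plane forests with $n$ vertices number at most $4^n$. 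Since we only need an upper bound, overcounting is harmless.
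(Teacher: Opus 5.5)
Your primary encoding fails: the pair (set of new discovery positions, word of the $2n$ new steps) does not determine $T'$, even among plane extensions of a fixed rooted plane $T$.

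Write $U$ for a contour step away from the root and $D$ for a step towards it. Let $T$ be a single edge $rc$ rooted at $r$, and take $n=2$.
\begin{itemize}
\item Hanging two new leaves $x,y$ at $c$ gives the contour $U\,(UDUD)\,D$.
\item Hanging $x$ at $c$ and $y$ at $r$ (to the right of $c$) gives $U\,(UD)\,D\,(UD)$.
\end{itemize}
In both cases the preorder is $r,c,x,y$, so the new discovery positions are $\{3,4\}$, and the new-step word is $UDUD$. Yet the first tree is the star $K_{1,3}$ and the second is the path on four vertices.

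The reason is that the discovery subset only places the new $U$-steps among the $U$-steps of $T$. It is blind to $T$'s return steps. So when an excursion closes, you cannot tell whether the next one starts at the same vertex or at an ancestor reached by backtracking along $T$. This is not the ambiguity you anticipated: one excursion versus two consecutive excursions at the same vertex is visible from the returns to zero in the balanced word.

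Recording where the blocks sit among all $2N-1$ gaps of the contour of $T$ would restore injectivity. But it only yields $\binom{2N+n-2}{n}C_n$, which for $n\ll N$ exceeds the target by roughly a factor $2^n$. Distinguishing different gaps at the same vertex is wasted information for unlabeled trees.

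Your fallback is the correct argument and is essentially the paper's proof, but ``mark attachment vertices'' must be made precise. The factor $\binom{N+n}{n}$ has to pay for the vector $(c_v)_{v\in V}$, where $c_v$ is the number of new vertices in components of $T'-V(T)$ attached at $v$. Marking only which vertices receive attachments would not tell you how to split the forest. There are $\binom{N+n-1}{n}\le\binom{N+n}{n}$ such vectors.

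The rest of the fallback then goes through:
\begin{itemize}
\item List $V$ in a fixed order and concatenate planar orderings of the rooted forests hanging at each $v$. This gives one plane forest on $n$ vertices, with at most $C_n\le 4^n$ choices.
\item Cut the forest into consecutive blocks of $c_v$ vertices and join each block's roots to $v$. This recovers $T'$.
\end{itemize}
So codes surject onto the trees in question, which proves the bound.

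The paper does the same decomposition vertex by vertex. It replaces each $v$ by a rooted tree $F_v$ of size $s_v=c_v+1$, counts the vectors $(s_v)$ by stars and bars, and bounds the shapes by $\prod_v C_{s_v}\le\prod_v 4^{s_v-1}=4^n$.

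Your contour framework can also be repaired by changing only the meaning of the first factor. Insert all new subtrees at $v$ after its $T$-children, i.e.\ at its last visit, and record $(c_v)_{v\in V}$ instead of the discovery subset. The new-step word is then the concatenation, in postorder of $T$, of Dyck words of semilengths $c_v$, and it cuts uniquely.
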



\begin{proof}
    Every unlabeled tree $T^\prime$ of size $N+n$ can be obtained from $T$ by replacing vertex $v\in V$ by a rooted tree $F_v$ so that $\sum_{v\in V} |F_v|=N+n$. So the number of unlabeled trees $T^\prime$ containing $T$ is bounded from above by
    \begin{equation*}
        \big| \big\{ (F_v)_{v\in V} : F_v \text{ rooted tree}, \sum_{v \in V} |F_v|=N+n \big\} \big|.
    \end{equation*}
    Let $s_v \in \N$ be the size of the tree $F_v$. A stars-and-bars argument shows that 
    \begin{equation*}
        \big|\big\{ (s_v)_{v \in V} \in \N^{|V|} : \sum_{v \in V} s_v = N+n \big\}\big| = \binom{N+n-1}{n},
    \end{equation*} using that $|V| = N$. For each $k \in \N$, the number of rooted trees of size $k$ is bounded by the number of rooted plane trees of size $k$, which is exactly the $k$-th Catalan number $C_k = \frac{1}{k+1} \binom{2k}{k} \leq 4^{k-1}$. Thus we get that
    \begin{multline*}
        \big| \big\{ (F_v)_{v\in V} : F_v \text{ rooted tree}, \sum_{v \in V} |F_v|=n \big\} \big|
        \\
        \leq
        \sum_{(s_v)_v : \sum_v s_v = N + n} \ \prod_{v \in V} 4^{s_v-1}
        \leq
        \big|\big\{(s_v)_v : \sum_{v\in V} s_v = N + n\big\} \big|  4^{n} \leq \binom{N+n}{N} 4^n ,
    \end{multline*}
    finishing the proof.
\end{proof}

\begin{lemma}\label{lemma:embedprob}
	Let $T$ be a {fixed} tree of size $N$. Then, for sufficiently large $N$  and $n \leq \frac{N}{2}$
	\begin{align*}
		\p \left(T \subset T_{N+n} \right) \leq  (N+n) \binom{N+n}{n} 4^n 0.96^{N+n}.
	\end{align*}
\end{lemma}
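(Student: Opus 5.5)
The plan is a union bound over the possible shapes of $T_{N+n}$ containing $T$, combined with Lemma~\ref{lemma:matchprob} and Lemma~\ref{lem:tree_count}.

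\textbf{Step 1: reduce to unlabeled supertrees.} The event $\{T \subset T_{N+n}\}$ says that the (unlabeled) tree $T_{N+n}$ belongs to the set $\mathcal{S}$ of unlabeled trees $T'$ of size $N+n$ that contain $T$. By Lemma~\ref{lem:tree_count}, $|\mathcal{S}| \le 4^n \binom{N+n}{n}$. Hence
\[
\p(T \subset T_{N+n}) \le \sum_{T' \in \mathcal{S}} \p(T_{N+n} \cong T'),
\]
where $\cong$ denotes graph isomorphism that need not preserve the root.

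\textbf{Step 2: remove the root constraint.} Lemma~\ref{lemma:matchprob} only bounds $\p(T' \sim^\star T_{N+n})$ for a rooted tree. If $T_{N+n} \cong T'$ as unrooted trees, then some isomorphism sends the root of $T_{N+n}$ to some vertex $x$ of $T'$. Therefore $T'$, rooted at $x$, satisfies $\sim^\star$ with $T_{N+n}$. A union bound over the $N+n$ choices of $x$ gives
\[
\p(T_{N+n} \cong T') \le \sum_{x \in V(T')} \p\bigl((T',x) \sim^\star T_{N+n}\bigr) \le (N+n)\, 0.96^{N+n}.
\]
This uses Lemma~\ref{lemma:matchprob} with $N+n$ in place of $N$, which is valid because $N+n \ge N$ is large.

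\textbf{Step 3: combine.} Multiplying the bounds from Steps 1 and 2 yields
\[
\p(T \subset T_{N+n}) \le (N+n)\binom{N+n}{n} 4^n\, 0.96^{N+n},
\]
as claimed. The hypothesis $n \le N/2$ is not needed in this argument beyond making the lemmas applicable; it will matter only when this bound is later summed.

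The only delicate point is Step 2, namely the passage from rooted to unrooted isomorphism. Because Lemma~\ref{lemma:matchprob} holds uniformly over all rooted trees of a given size, the bound applies to each choice of root $x$, and the cost of this passage is just the polynomial factor $N+n$.
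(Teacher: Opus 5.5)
Your proposal is correct and follows essentially the same route as the paper. Both arguments take a union bound over the at most $4^n\binom{N+n}{n}$ unlabeled supertrees from Lemma~\ref{lem:tree_count}, then a union bound over the $N+n$ possible preimages of the root to pass from unrooted to root-preserving isomorphism, and finally apply Lemma~\ref{lemma:matchprob} at size $N+n$; you are also right that the hypothesis $n\le N/2$ plays no role here.
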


\begin{proof} 
	If $T\subset T_{N+n}$, then we can grow the tree $T$ to a tree $T^\prime$ that is isomorphic to $T_{N+n}$, and we can choose a vertex $v \in T^\prime$ so that this vertex gets mapped to the root $\varnothing \in T_{N+n}$ by the graph isomorphism. By Lemma~\ref{lem:tree_count}, we obtain an upper bound on the number of such trees $T^\prime$, which gives that
    \begin{align*}
		 \p \left(T \subset T_{N+n} \right) 
        &\leq  \binom{N+n}{n} 4^n \sup \left\{ \p \left(T^\prime \sim T_{N+n} \right)  | T^\prime \text{ tree of size $N+n$} \right\}\\
        &
        \leq   \binom{N+n}{n} 4^n (N+n) \sup \left\{ \p \left(T^\prime \sim^\star T_{N+n} \right)  | T^\prime \text{ rooted tree of size $N+n$} \right\}
        \\
        &\leq    \binom{N+n}{n} 4^n (N+n)0.96^{N+n},
    \end{align*}
    which concludes the proof.
\end{proof}

\noindent We also need the following estimate. 

\begin{claim}\label{claim:3}
	Let $\eps = 0.0025$ and let $n=\lfloor\eps N \rfloor$. Then,
	\begin{align*}
	\lim_{N\to\infty}4^n \binom{N+n}{n}^2  (N+n) 0.96^{N+n}=0.
	\end{align*} 
\end{claim}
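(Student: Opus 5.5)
The plan is to take logarithms, divide by $N$, and show that the resulting exponential rate is strictly negative; the polynomial factor $(N+n)$ is then irrelevant. With $n=\lfloor \eps N\rfloor$ we have $n/N\to\eps$. By the upper bound in \eqref{eq:binom_coeff_entropy_bounds}, together with monotonicity,
\[
\binom{N+n}{n}\le \exp\left((N+n)H\left(\tfrac{n}{N+n}\right)\right)\le \exp\left(N(1+\eps)H\left(\tfrac{\eps}{1+\eps}\right)+o(N)\right).
\]
Here we use that $x\mapsto(1+x)H(x/(1+x))$ is increasing, which is the property \eqref{eq:increasing weighted bin entrop}. The continuity of this function handles the floor.

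Hence
\[
\frac1N\log\left(4^n\binom{N+n}{n}^2(N+n)0.96^{N+n}\right)\le \eps\log 4+2(1+\eps)H\left(\tfrac{\eps}{1+\eps}\right)+(1+\eps)\log 0.96+o(1).
\]
It suffices to check numerically that the constant on the right is negative at $\eps=0.0025$. Writing $p=\eps/(1+\eps)\approx 0.0024938$, we get $H(p)=-p\log p-(1-p)\log(1-p)\approx 0.0024938\cdot 5.9939+0.0024907\approx 0.01495+0.00249\approx 0.01744$. So $2(1+\eps)H(p)\approx 0.03496$, $\eps\log4\approx 0.00347$, and $(1+\eps)\log 0.96\approx -0.04092$. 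The total is approximately $-0.0025<0$.

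Therefore the expression is at most $\exp(-cN+o(N))$ with $c\approx 0.0025>0$, which tends to $0$. The only delicate point is the margin: it is small (about $0.0025$) relative to the terms, so the entropy value should be evaluated with enough digits, or bounded rigorously via $H(p)\le p\log(1/p)+p$. That bound gives roughly $0.01744$ as well, still leaving a negative total. No other obstacle is expected.
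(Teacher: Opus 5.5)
Your proposal is correct and matches the paper's proof. Both bound $\binom{N+n}{n}$ by $\exp\bigl((1+\eps)H(\tfrac{\eps}{1+\eps})N+o(N)\bigr)$ via \eqref{eq:binom_coeff_entropy_bounds}, then check numerically that $\eps\log 4+2(1+\eps)H(\tfrac{\eps}{1+\eps})+(1+\eps)\log 0.96\approx -0.0025<0$. Your explicit numerics and the rigorous bound $H(p)\le p\log(1/p)+p$ make the sign check more transparent than the paper's one-line remark that ``inserting the value'' shows the exponent is negative.
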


\begin{proof}
    By~\eqref{eq:binom_coeff_entropy_bounds} we have that
    \be
    \binom{N+n}{n}=\exp\left( H \left( \frac{\eps}{1+\eps} \right)  (1+\eps)N + o(N) \right) . 
    \ee 
     We thus arrive at 
    \begin{multline*} 
    4^n \binom{N+n}{n}^2  (N+n) 0.96^{N+n} \\ 
    = \exp\left( 2 H\left( \frac{\eps}{1+\eps} \right)  (1+\eps)N + \log(0.96)(1+\eps)N +\log(4)\lfloor \eps N\rfloor+ o(N) \right),
    \end{multline*}
    and inserting the value of $\eps = 0.0025$ shows that the exponent is negative, and thus the entire term converges to $0$. 
\end{proof}

\invisible{
\begin{proof}
	We first study the binomial coefficient $\binom{N+n}{n}$. Let $A$ be a subset of $\{1,\ldots,N+n\}$ that is chosen uniformly at random among all $2^{N+n}$ many subsets. We can construct such a uniformly chosen subset by sampling $N+n$ many independent Bernoulli$\left(\tfrac{1}{2}\right)$ random variables $X_1,\ldots,X_{N+n}$, and including the element $i\in \left\{1,\ldots,N+n\right\}$ to the set $A$ if and only if $X_i=1$. Using this description of a random set $A$, we see that
	\begin{align*}
	\binom{N+n}{\leq n} 
	= 2^{N+n} \p \left(|A|\leq n\right) 
	= 2^{N+n} \p \left( \sum_{i=1}^{N+n} X_i \leq n\right)
	\end{align*}
	Write $\delta =\frac{\eps}{1+\eps} \approx \frac{n}{N+n}$. So we see that
	\begin{align}\label{eq:binomial coeff 1}
	\binom{N+n}{\leq n} 
	= 2^{N+n} \p \left( \sum_{i=1}^{N+n} X_i \leq n\right)
	= 2^{N+n} \p \left( \sum_{i=1}^{N+n} X_i \leq \delta (N+n)\right) .
	\end{align}
	The large deviation rate function of Bernoulli$\left(\tfrac{1}{2}\right)$ distributed random variables is given by $\Lambda(x) = x \log(2x) + (1-x) \log(2(1-x))$. So, using Cramer's Theorem for the large-deviation event in \eqref{eq:binomial coeff 1}, we get that
	\begin{align*}
	&\binom{N+n}{\leq n} \leq 2^{N+n} 
	\exp \left( - (N+n) \Lambda(\delta) +o(N) \right)
	\leq
	2^{(1+\eps) N} 
	\exp \left( - N(1+\eps) \Lambda(\eps) + o(N) \right)
	\end{align*}
	where we used that $\delta \leq \eps$ and thus $\Lambda(\delta) \geq \Lambda(\eps)$ for the last inequality. Using this upper bound on the binomial coefficient, we finally get that
	\begin{align*}
	& 8^n \binom{N+n}{n}^2 (N+n) 0.96^N \leq
	2^{\eps N} \left(2^{(1+\eps) N} 
	\exp \left( - N(1+\eps) \Lambda(\eps) \right) \right)^2 0.96^{N}  \e^{o(N)}
	\\  
	&
	=
	\exp\left( \log(8)\cdot\eps N + 2 \log(2)(1+\eps) N - 2N(1+\eps) \Lambda(\eps)
	+
	N
	\log(0.96) +o(N)
	\right)
	.
	\end{align*}
	To conclude, we use that
	\begin{align*}
	f(\eps)=\log(8)\cdot\eps + 2 \log(2)(1+\eps)  - 2 (1+\eps) \Lambda(\eps)
	+
	\log(0.96) < 0
	\end{align*}
	for $\eps < 0.0025$. This holds as $f$ is increasing between $0$ and $0.5$, and $f(0.0025) =-0.0005... < 0$.
\end{proof}
}

\noindent With this claim and Lemma~\ref{lemma:embedprob} at hand, we can finally prove Lemma \ref{lem:upper bound eps}.

\begin{proof}[Proof of Lemma \ref{lem:upper bound eps}]
    Let $N \geq 1$ and let $n \coloneqq \lfloor \eps N \rfloor$.
    Let  $T^1_{N+n}$ and $T^2_{N+n}$ be two independent uniform attachment trees, each of size $N+n$. Then, 
    \begin{align*}
        \p  \left( X_{N+n} \geq N \right)
        &\leq \p \left( \text{There exists a tree } T \subset T_{N+n}^1 \text{ of size $N$ s.t. }   T  \subset T_{N+n}^2  \right)
        \\
        &
        \leq \sum_T \p \left( T \subset T_{N+n}^1,  T \subset T_{N+n}^2 \right)
        =
        \sum_T \p \left( T \subset T_{N+n}^1 \right) \p \left( T \subset T_{N+n}^2 \right),
    \end{align*}
    where we sum over all trees $T$ of size $N$ and the equality follows by independence. By Lemma~\ref{lemma:embedprob}, for $N$ sufficiently large, the above can be further bounded from above by
    \begin{align*}
        \E \left[ \Big|\left\{  T \subset T_{N+n}^1  :  \text{$T$ is a tree of size $N$} \right\} \Big| \right] 4^n \binom{N+n}{n} (N+n) 0.96^{N+n}.
    \end{align*}
    The tree $T_{N+n}^1$, has at most $\binom{N+n}{N} = \binom{N+n}{n}$ many subtrees of size $N$. Thus,
    \begin{align*}
        \p & \left( X_{N+n} \geq N \right)
        \leq 
        4^n \binom{N+n}{n}^2   (N+n) 0.96^{N+n} \e^{o(N)} ,
    \end{align*}
    which converges to $0$ by Claim \ref{claim:3}.
\end{proof}

\subsection{Proof that $X_{n}/n$ is not bounded away from $0$} \label{subsec:not_bdd_0}

In this subsection we prove Theorem~\ref{thm:UB_near0} by showing that, with positive probability $\delta>0$, the sizes of subtrees near the root in $T_n^1$ and $T_n^2$ show dissimilar behaviour, and hence do not allow for a large common subtree. Specifically, we impose that in $T_n^1$ the root has many children which all have small subtrees (size $\leq \beta n$) attached to it, while in $T_n^2$, up to some error, the tree looks like a line of length of order $\frac{1}{\beta}$ with trees of size $\approx \beta n$ attached to the different vertices in the line.
This dissimilarity forces the  maximal common subtree of the two trees $T_n^1, T_n^2$ to be small, by the following lemma.

\begin{lemma}\label{lem:point removal}
    Let $T^1, T^2$ be two fixed trees. Let $v_1 \in T^1$ and $v_2 \in T^2$ be two vertices. Let $a_1,\ldots,a_j$ be the sizes of the connected components of $T^1$ when removing the vertex $v_1$, in decreasing order. Analogously, let $b_1,\ldots,b_m$ be the sizes of the connected components of $T^2$ when removing the vertex $v_2$, in decreasing order.
    Let $X$ be the maximal size over all rooted trees $T$ that are contained in both $T^1$ and $T^2$, such that the root in $T$ is mapped to $v_1$ in $T^1$, respectively to $v_2$ in $T^2$. Then
    \begin{equation*}
        X\leq 1+\sum_{i=1}^{\min(j,m)} \min(a_i,b_i).
    \end{equation*}
\end{lemma}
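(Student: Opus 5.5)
Fix a common rooted tree $T$ of size $X$ together with its two embeddings $\iota_1 \colon T \to T^1$ with $\iota_1(\mathrm{root}) = v_1$ and $\iota_2 \colon T \to T^2$ with $\iota_2(\mathrm{root}) = v_2$. The approach is to decompose $T$ at its root and bound each piece by the component it lands in.

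Removing the root of $T$ leaves subtrees $R_1, \ldots, R_p$ rooted at the children of the root, so $X = 1 + \sum_{k=1}^p |R_k|$. Since $\iota_1$ is injective and sends distinct children of the root to distinct neighbours of $v_1$, each $R_k$ lies in a distinct component of $T^1 \setminus \{v_1\}$. This gives an injection $\alpha \colon [p] \to [j]$ with $|R_k| \le a_{\alpha(k)}$. In the same way we get an injection $\beta \colon [p] \to [m]$ with $|R_k| \le b_{\beta(k)}$. In particular $p \le \min(j,m)$, and
\[
X \le 1 + \sum_{k=1}^p \min\bigl(a_{\alpha(k)}, b_{\beta(k)}\bigr).
\]

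The remaining step is a rearrangement inequality. For decreasing sequences $a$ and $b$ and injections $\alpha, \beta$ defined on $[p]$, we want
\[
\sum_{k=1}^p \min\bigl(a_{\alpha(k)}, b_{\beta(k)}\bigr) \le \sum_{i=1}^{p} \min(a_i, b_i),
\]
and the right-hand side is at most $\sum_{i=1}^{\min(j,m)} \min(a_i,b_i)$ because all terms are nonnegative. The cleanest proof is by counting. Reorder the pairs so that the $c_k := \min(a_{\alpha(k)}, b_{\beta(k)})$ are in decreasing order, $c_{(1)} \ge \cdots \ge c_{(p)}$. Fix $i$. The $i$ largest values $c_{(1)}, \ldots, c_{(i)}$ come from $i$ distinct indices $\alpha(k)$, so the smallest of these $a$-values is at most $a_i$, since $a$ is decreasing. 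Hence $c_{(i)} \le a_i$, and by the same argument $c_{(i)} \le b_i$. Therefore $c_{(i)} \le \min(a_i, b_i)$ termwise, and summing over $i$ gives the inequality.

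The only point needing care is the claim that at least one of the top $i$ values of $c$ is $\le a_i$. This follows because $i$ distinct indices of a decreasing sequence cannot all lie in $\{1, \ldots, i-1\}$, so at least one of them is $\ge i$. Everything else is bookkeeping. Combining the decomposition of $T$ with the rearrangement inequality yields $X \le 1 + \sum_{i=1}^{\min(j,m)} \min(a_i, b_i)$.
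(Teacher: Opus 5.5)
Your proof is correct and follows the paper's argument: the components of $T$ minus its root are mapped injectively into distinct components of $T^1\setminus\{v_1\}$ and of $T^2\setminus\{v_2\}$, and a rearrangement step finishes. The only difference is in that last step. The paper sorts the root-components of $T$ by size, bounds the $i$-th one by $a_i$, and then asserts without proof that $\sum_{i}\min(a_i,b_{\Delta(i)})$ is maximised when $\Delta$ is the identity; your termwise bound $c_{(i)}\le\min(a_i,b_i)$ actually proves that rearrangement fact.
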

\begin{proof}
    When removing the vertex $v_1$ from the tree $T^1$, the tree $T^1$ falls into the components $S_1,\ldots,S_j$ with $|S_i| = a_i$. Analogously, let $U_1,\ldots,U_m$ be the connected components of $T^2\setminus\{v_2\}$ with $|U_i| = b_i$.  Let $T$ be a rooted tree that is contained in both $T^1$ and $T^2$ and let $\psi:T\to T^1$ be an embedding (i.e., an injective map respecting the tree-structure) of the tree $T$ into $T^1$ for which $\psi(\varnothing)=v_1$, with $\varnothing$ the root of $T$. We define the sets $W_i=\psi^{-1} (S_i)$ for $i\in \{1,\ldots,j\}$.  Note that these sets can be empty and that the non-empty sets among $W_1,\ldots,W_j$ are the connected components of $T\setminus\{\varnothing\}$. By the injectivity of $\psi$ we get that $|W_i|\leq |S_i|= a_i$. Let $\widetilde{W}_1,\ldots,\widetilde{W}_\ell$ be all the non-empty sets among $W_1,\ldots,W_j$ in decreasing order of size. As $|W_i|\leq a_i$ and we assume that the $a_1,\ldots,a_j$ are decreasing, we also get that $|\widetilde{W}_i|\leq a_i$ for $i=1,\ldots,\ell$.
    If there exists an embedding of the tree $T$ into $T^2$, that is, an injective map $\varphi:T \to T^2$ with $\varphi(\varnothing)=v_2$ respecting the tree-structure, then for each index $i\in \{1,\ldots,\ell\}$ there exists exactly one index $\Delta(i) \in \{1,\ldots,m\}$ such that $\varphi(\widetilde{W}_i)\subseteq U_{\Delta(i)}$. So the function $\Delta:\{1,\ldots,\ell \} \to \{ 1,\ldots,m\}$ is an injective function. Further, as the function $\varphi$ is also injective, we get that $|\widetilde{W}_i| \leq |U_{\Delta(i)}|$ for $i \in \{1, \ldots,\ell \}$. In particular, we get that
    \begin{equation*}
        |T| = 1 + \sum_{i=1}^{\ell} |\widetilde{W}_i|
        = 1 + \sum_{i=1}^{\ell} \min(|\widetilde{W}_i|, |U_{\Delta(i)}|)
        \leq
        1 + \sum_{i=1}^{\ell} \min(a_i, b_{\Delta(i)}).
    \end{equation*}
    Among all $\ell \in \{1,\ldots,j\}$ and all injective functions $\Delta: \{1,\ldots,\ell \} \to \{1,\ldots, m\}$, the expression $\sum_{i=1}^{\ell} \min(a_i, b_{\Delta(i)})$ is maximised when $\ell = \min(j,m)$ and $\Delta$ is the identity on $\{1,\ldots,\min(j,m)\}$. Therefore, for any tree $T$ that can be embedded into $T^1$ and $T^2$, it holds that
    \begin{equation*}
        |T| 
        \leq
        1 + \sum_{i=1}^{\min(j,m)} \min(a_i, b_{i}).
    \end{equation*}
   The proof is concluded by taking the maximum over all rooted trees $T$ that are contained in both $T^1$ and $T^2$.
\end{proof}

\invisible{

\begin{figure}[h]
\centering
\begin{tikzpicture}
  \draw[thick,blue] (0,0) .. controls (-1,-2) and (-2,-1) .. (-3,-3);

  \draw[thick] (-1.41,-2.1) .. controls (-1.5,-2.6) and (-1.8,-2.5) .. (-2.2,-3);

  \draw[thick,blue] (-1.5,-1.5) .. controls (-1.2,-2.5) and (-1.8,-2.8) .. (-1,-3);

  \draw[thick,blue] (-0.72,-1) .. controls (0,-1) and (-0.3,-2) .. (0.8,-2.5);

  \draw[thick] (1,-1.15) .. controls (1.2,-1.3) and (0.9,-1.5) .. (0.8,-1.4);

  \draw[thick] (0.3,-0.71) .. controls (0.2,-0.6) and (0.4,-0.9) .. (0.3,-1);

  \draw[thick,blue] (0,0) .. controls (-0.3,-1) and (2.5,-1.5) .. (2,-2);

  \draw[thick] (-0.5,-2.4) .. controls (-0.7,-2.6) and (-0.8,-2.8) .. (-0.9,-2.8);
  
  \draw[thick,blue] (0,-1.75) .. controls (0,-1.9) and (-0.3,-2.2) .. (-0.5,-2.4);
  
  \fill[black] (0,0) circle (2pt);
  \node at (0,0.3) {$\varnothing$};

  \fill[red] (-0.72,-1) circle (2pt);
  \node at (-0.72,-0.7) {$v_1$};

  \node at (-2,-0.3) {$T^1$};


  \draw[thick,blue] (6,0) .. controls (5,-2) and (4,-1) .. (3,-3);

  \draw[thick,blue] (4.5,-1.5) .. controls (4.8,-2.5) and (4.2,-2.8) .. (5,-3);


  \draw[thick,blue] (5.28,-1) .. controls (6,-1) and (5.7,-2) .. (6.8,-2.5);

  \draw[thick,blue] (6,0) .. controls (5.7,-1) and (8.5,-1.5) .. (8,-2);



  \draw[thick,blue] (6,-1.75) .. controls (6,-1.9) and (5.7,-2.2) .. (5.5,-2.4);

  \draw[thick] (6,0) .. controls (6.5,-0.6) and (7.5,-0.8) .. (7.5,-1);

  \draw[thick] (7.4,-1.35) .. controls (7,-1.7) and (8,-2.2) .. (7.5,-2.8);

  \draw[thick] (3.35,-2.4) .. controls (4,-2.5) .. (4.5,-3);
  
  \fill[black] (6,0) circle (2pt);
  \node at (6,0.3) {$\varnothing$};

  \fill[red] (5.28,-1) circle (2pt);
  \node at (5.28,-0.7) {$v_2$};

  \node at (4,-0.3) {$T^2$};
\end{tikzpicture}

\caption{Two (schematic representations of the) trees $T^1$ and $T^2$ with two distinguished vertices $v_1$ and $v_2$ in red. The blue parts of either tree represent the smallest parts of each component when $v_i$ is removed from $T^i$ that can be found in both $T^1$ and $T^2$.}\label{fig:treecomppsize}
\end{figure}

}

\invisible{
\begin{figure}[h]
\centering

\begin{tikzpicture}
\fill[black] (0,0) circle (3pt);
\node at (0,0.27) {$v_1$};

\fill[black] (1.35,-0.7794) circle (3pt);     
\fill[black] (0.45,-0.7794) circle (3pt); 
\fill[black] (-0.45,-0.7794) circle (3pt);
\fill[black] (-1.35,-0.7794) circle (3pt);

\draw[thick] (0,0) -- (1.35,-0.7794);
\draw[thick] (0,0) -- (0.45,-0.7794);
\draw[thick] (0,0) -- (-0.45,-0.7794);
\draw[thick] (0,0) -- (-1.35,-0.7794);

\draw[thick] (-1.35,-0.7794) .. controls (-2.7,-1.53) and (-0.9,-3.69) .. (-1.35,-0.7794);

\draw[thick] (-0.45,-0.7794) .. controls (-1.62,-3.5874) and (0.54,-4.3866) .. (-0.45,-0.7794);

\draw[thick] (0.45,-0.7794) .. controls (-0.45,-5.4) and (1.8,-3.6) .. (0.45,-0.7794);

\draw[thick] (1.35,-0.7794) .. controls (0.45,-3.6) and (2.25,-3.6) .. (1.35,-0.7794);

\node at (-2.25,-0.27) {$T^1$};


\fill[black] (5.4,0) circle (3pt);
\node at (5.4,0.27) {$v_2$};

\fill[black] (6.3,-0.7794) circle (3pt);     
\fill[black] (5.4,-0.7794) circle (3pt); 
\fill[black] (4.5,-0.7794) circle (3pt);

\draw[thick] (5.4,0) -- (6.3,-0.7794);
\draw[thick] (5.4,0) -- (5.4,-0.7794);
\draw[thick] (5.4,0) -- (4.5,-0.7794);

\draw[thick] (4.5,-0.7794) .. controls (3.15,-1.7874) and (5.58,-2.5866) .. (4.5,-0.7794);

\draw[thick] (5.4,-0.7794) .. controls (3.87,-4.77) and (7.02,-4.77) .. (5.4,-0.7794);

\draw[thick] (6.3,-0.7794) .. controls (5.85,-2.25) and (6.75,-2.25) .. (6.3,-0.7794);

\node at (7.2,-0.27) {$T^2$};


{\color{blue}
\fill[blue] (0,-4.5) circle (3pt);
\node at (0,-4.23) {$v_1$};

\fill[blue] (1.35,-5.2794) circle (3pt);     
\fill[blue] (0.45,-5.2794) circle (3pt); 
\fill[blue] (-0.45,-5.2794) circle (3pt);
\fill[blue] (-1.35,-5.2794) circle (3pt);

\draw[thick] (0,-4.5) -- (1.35,-5.2794);
\draw[thick] (0,-4.5) -- (0.45,-5.2794);
\draw[thick] (0,-4.5) -- (-0.45,-5.2794);
\draw[thick] (0,-4.5) -- (-1.35,-5.2794);

\draw[thick] (1.35,-5.2794) .. controls (0,-6.03) and (1.8,-8.19) .. (1.35,-5.2794);

\draw[thick] (-0.45,-5.2794) .. controls (-1.62,-8.0874) and (0.54,-8.8866) .. (-0.45,-5.2794);

\draw[thick] (-1.35,-5.2794) .. controls (-2.25,-9.9) and (0,-8.1) .. (-1.35,-5.2794);

\draw[thick] (0.45,-5.2794) .. controls (-0.45,-8.1) and (1.35,-8.1) .. (0.45,-5.2794);
}
\node at (-2.25,-4.77) {$T^1$};


{\color{red} 
\fill[red] (5.4,-4.5) circle (3pt);
\node at (5.4,-4.23) {$v_2$};

\fill[red] (6.3,-5.2794) circle (3pt);     
\fill[red] (5.4,-5.2794) circle (3pt); 
\fill[red] (4.5,-5.2794) circle (3pt);

\draw[thick] (5.4,-4.5) -- (6.3,-5.2794);
\draw[thick] (5.4,-4.5) -- (5.4,-5.2794);
\draw[thick] (5.4,-4.5) -- (4.5,-5.2794);

\draw[thick] (5.4,-5.2794) .. controls (4.05,-6.2874) and (6.48,-7.0866) .. (5.4,-5.2794);

\draw[thick] (4.5,-5.2794) .. controls (2.97,-9.27) and (6.12,-9.27) .. (4.5,-5.2794);

\draw[thick] (6.3,-5.2794) .. controls (5.85,-6.75) and (6.75,-6.75) .. (6.3,-5.2794);
}
\node at (7.2,-4.77) {$T^2$};


{\color{blue}
\draw[thick] (4.05,-9.7794) .. controls (2.7,-10.53) and (4.5,-12.69) .. (4.05,-9.7794);

\draw[thick] (2.25,-9.7794) .. controls (1.08,-12.5874) and (3.24,-13.3866) .. (2.25,-9.7794);

\draw[thick, fill=green] (1.35,-9.7794) .. controls (0.45,-14.4) and (2.7,-12.6) .. (1.35,-9.7794);

\draw[thick] (3.15,-9.7794) .. controls (2.25,-12.6) and (4.05,-12.6) .. (3.15,-9.7794);
}


{\color{red}
\draw[thick] (2.25,-9.7794) .. controls (0.9,-10.7874) and (3.33,-11.5866) .. (2.25,-9.7794);

\draw[thick] (1.35,-9.7794) .. controls (-0.09,-13.77) and (2.97,-13.77) .. (1.35,-9.7794);

\draw[thick,fill=green] (3.15,-9.7794) .. controls (2.7,-11.25) and (3.6,-11.25) .. (3.15,-9.7794);
}

\fill[black] (2.7,-9) circle (3pt);

\fill[black] (4.05,-9.7794) circle (3pt);     
\fill[black] (3.15,-9.7794) circle (3pt); 
\fill[black] (2.25,-9.7794) circle (3pt);
\fill[black] (1.35,-9.7794) circle (3pt);

\draw[thick] (2.7,-9) -- (4.05,-9.7794);
\draw[thick] (2.7,-9) -- (3.15,-9.7794);
\draw[thick] (2.7,-9) -- (2.25,-9.7794);
\draw[thick] (2.7,-9) -- (1.35,-9.7794);

\end{tikzpicture}
\end{figure}

}

\noindent With this lemma at hand, we are ready to prove Theorem \ref{thm:UB_near0}.

\begin{proof}[Proof of Theorem~\ref{thm:UB_near0}]
    Given $\eps>0$, let $\beta = \eps/4$ and let $\delta>0$ be small enough so that $(\beta/3)^{2/\beta} \geq \sqrt{\delta}$.
    We view $T^1=T^1_n$ and $T^2=T^2_n$ as random subtrees of the Ulam--Harris tree as described in Section~\ref{sec:UlamHarris}. For $i \in \{1,2\}$, we define the {\sl weight} $w_i(\sigma)$ of a vertex $\sigma=(\sigma_1, \ldots, \sigma_\ell)\in V_\infty$, as the proportion of vertices in $T^i$ that are in the subtree rooted at $\sigma$, that is,
    \begin{equation*}
        w_i(\sigma)= \frac{1}{n}\cdot |\{ \tau \in V(T^i) \colon \tau = \sigma\oplus \mu \text{ for some } \mu \in \N^\ell, \ell \in \N_0\}| .
    \end{equation*}
    Further, we say that a vertex $\sigma$ has weight $0$ if $\sigma \notin V(T^i)$.  Given the weights of a vertex $\sigma=(\sigma_1,\ldots,\sigma_j)$ and its first $k-1$ children $\{\sigma\oplus (i):1\leq i \leq k-1\}$ for some $k \in \N$, the normalized weight $n\cdot w(\sigma \oplus (k))$ of its $k$-th child $\sigma \oplus (k)$ is uniformly distributed on $\{1,\ldots,nw(\sigma)-1-n\sum_{i \in [k-1]}w(\sigma\oplus (i))\}$, see Section~\ref{sec:offspring distribution}.

\begin{figure}[h]
\centering
\begin{tikzpicture}
  \fill[black] (0,0) circle (3pt);
  \node at (0,0.3) {$\varnothing$};
  \fill[black] (2.0,-0.866) circle (3pt); 
  \fill[black] (1.0,-0.866) circle (3pt);
  \fill[black] (0,-0.866) circle (3pt);
  \fill[black] (-1.0,-0.866) circle (3pt);
  \fill[black] (-2.0,-0.866) circle (3pt);
  \fill[black] (2.5,-0.866) circle (1pt);
  \fill[black] (2.7,-0.866) circle (1pt);
  \fill[black] (2.9,-0.866) circle (1pt);
  \fill[black] (3.1,-0.866) circle (1pt);
  
  \draw[thick] (0,0) -- (2.0,-0.866);
  \draw[thick] (0,0) -- (1.0,-0.866);
  \draw[thick] (0,0) -- (0,-0.866);
  \draw[thick] (0,0) -- (-1.0,-0.866);
  \draw[thick] (0,0) -- (-2.0,-0.866);
  
  \draw[thick] (-2.0,-0.866) .. controls (-5.466,-1.7) and (-3.866,-4.1) .. (-2.0,-0.866);
\draw[thick] (-1.0,-0.866) .. controls (-3.2,-3.986) and (-0.32,-4.874) .. (-1.0,-0.866);
\draw[thick] (0,-0.866) .. controls (-1,-6) and (1.5,-4) .. (0,-0.866);
\draw[thick] (1.0,-0.866) .. controls (0.8,-4) and (3.0,-3) .. (1.0,-0.866);
\draw[thick] (2.0,-0.866) .. controls (3.1,-3.5) and (3.8,-0.85) .. (2.0,-0.866);
  \node at (-3.0,-1.6) {\small $\leq \beta $};
\node at (-1.3,-2.3) {\small $\leq \beta $};
\node at (0.1,-2.5) {\small $\leq \beta $};
\node at (1.4,-2.1) {\small $\leq \beta $};
\node at (2.6,-1.4) {\small $\leq \beta $};
\node at (-2.3,-0.6) {$(1)$};
\node at (-1.35,-0.866) {$(2)$};
\node at (-0.4,-0.866) {$(3)$};
\node at (0.5,-0.866) {$(4)$};
\node at (1.45,-0.866) {$(5)$};
\node at (-3.6,-1) {$T^1$};
  \fill[black] (8,0) circle (3pt);
  \node at (8,0.3) {$\varnothing$};
  \fill[black] (7.2,-0.8) circle (3pt); 
  \fill[black] (6.4,-1.6) circle (3pt);
  \fill[black] (5.6,-2.4) circle (3pt);
  \fill[black] (4.8,-3.2) circle (3pt);
  \fill[black] (4,-4) circle (3pt);
  \draw[thick] (8,0) -- (4,-4);
  \draw[thick] (8,0).. controls (8.6,-3.134) and (11,-2.134) .. (8,0);  
  \draw[thick] (7.2,-0.8).. controls (7.8,-3.934) and (10.2,-2.934) .. (7.2,-0.8);
  \draw[thick] (6.4,-1.6) .. controls (7,-4.734) and (9.4,-3.734) .. (6.4,-1.6);
  \draw[thick] (5.6,-2.4) .. controls (6.2,-5.534) and (8.8,-4.534) .. (5.6,-2.4);
  \draw[thick] (4.8,-3.2) .. controls (5.4,-6.334) and (7.8,-5.334) .. (4.8,-3.2);
  \draw[thick] (4,-4) .. controls (4.6,-7.134) and (7,-6.134) .. (4,-4);
  \node at (8.85,-1.25) {$\leq \beta$};
  \node at (8.05,-2.05) {$\leq \beta$};
  \node at (7.25,-2.85) {$\leq \beta$};
  \node at (6.45,-3.65) {$\leq \beta$};
  \node at (5.65,-4.45) {$\leq \beta$};
  \node at (4.85,-5.25) {$\leq \beta$};
 \node at (5,-1) {$T^2$};
  \node[anchor=east] at (7.1,-0.8)  {$(1)$};
  \node[anchor=east] at (6.3,-1.6)  {$(1,1)$};
  \node[anchor=east] at (5.5,-2.4)  {$(1,1,1)$};
  \node[anchor=east] at (4.7,-3.2)  {$(1,1,1,1)$};
  \node[anchor=east] at (3.9,-4)    {$(1,1,1,1,1)$};
  \fill[black] (3.8,-4.2) circle (1pt);
  \fill[black] (3.6,-4.4) circle (1pt);
  \fill[black] (3.4,-4.6) circle (1pt);
  \fill[black] (3.2,-4.8) circle (1pt);
\end{tikzpicture}
\caption{Properties~\eqref{cond:T1} and \eqref{cond:T2} that hold for the trees $T^1$ and $T^2$, respectively. In $T^1$, each child of $\varnothing$ has a total weight at most $\beta$.  In $T^2$, for each vertex on the path $(\varnothing,(1),(1,1),\ldots , (1,1,1,1,1), \ldots)$, all its siblings $\{(1,\ldots,1,k)\}_{k\geq 2}$ have a combined total weight of at most $\beta$. Note that the dots do not imply 
that, in $T^1$, the root has an infinite degree, or, in $T^2$, that the path $(\varnothing,(1),(1,1),(1,1,1),(1,1,1,1),(1,1,1,1,1),\ldots)$ is infinitely long.}\label{fig:subtreeweight}
\end{figure}
     
    We argue that the following conditions on the weights of vertices in the first $N$ levels of the trees $T^1=T^1_n$ and $T^2=T^2_n$ are met with probability at least $\delta$. 
    \begin{enumerate}[(a)]
        \item In $T^1$, all children of the root have weight at most $\beta$: for every $m \in \N \cap V(T^1) $, $w_1((m))\leq\beta$. \label{cond:T1}
        \item In $T^2$, any vertex $\sigma$ of the form $\sigma=(1,\ldots,1)$ or  $\sigma = \varnothing$, satisfies $\sum_{k=2}^{\infty} w_2(\sigma\oplus (k)) \leq \beta$.  \label{cond:T2}
    \end{enumerate}
    These two properties are visualised in Figure~\ref{fig:subtreeweight}. Using the fact that $\sum_{j= 1}^{\infty}w((j))\leq 1$, note that  property~\eqref{cond:T1} holds if for all $m \in \N$ with $\sum_{j=1}^{m-1} w_1({(j)}) < 1-\beta$ we have $w_1({(m)})\in [\beta/2, \beta]$. Indeed, if $M$ is the first value for which $\sum_{j=1}^{M} w_1({(j)}) \geq 1-\beta$, then, for $\ell \geq M+1$ we have that $w_1({(\ell)}) \leq 1 - \sum_{j=1}^{M} w_1({(j)}) \leq \beta$.
    Let $A_m$ be the event defined by
    \begin{equation*}
        A_m \coloneqq \Big\{ \sum_{j=1}^{m-1} w_1((j)) \geq 1- \beta \Big\} \cup \Big\{ w_1((m)) \in \left[\beta/2,\beta\right] \Big\} .
    \end{equation*}
    First of all, note that if all events $A_m$ for $1\leq m \leq \frac{2}{\beta}$, hold, then we also get that all the events $A_m$ for  $m > \frac{2}{\beta}$ hold, since there needs to exist $j^\prime \leq \frac{2}{\beta}$ with $\sum_{j=1}^{j^\prime} w_1((j)) \geq 1-\beta$. Further, if $w_1((1)),\ldots, w_1((m)) \in (0,1)$ are so that $\sum_{j=1}^m w_1((j)) \geq 1-\beta$, then $A_{m+1}$ occurs with probability $1$. If $w_1((1)),\ldots, w_1((m)) \in (0,1)$ are so that $\sum_{j=1}^m w_1((j)) < 1-\beta$, then, conditionally on $w_1((1)),\ldots, w_1((m))$, we have that $w_1((m+1))$ is uniformly distributed on $\{1,\ldots,n - 1 - n\sum_{j=1}^m w_1((j))\}$. Hence, for large enough $n$, 
    \begin{equation*}
        \p \left(w_1((m+1)) \in \left[\beta/2,\beta\right] \mid w_1((1)),\ldots, w_1((m)) \right) \geq \frac{\lfloor \beta n \rfloor - \lceil n \beta/2 \rceil}{n - 1 - n\sum_{j=1}^m w_1((j))} \geq \beta / 3.
    \end{equation*}
    So we see that for all $w_1((1)),\ldots, w_1((m)) \in (0,1)$, we have that $\p \left( A_{m+1} \mid w_1((1)),\ldots, w_1((m)) \right) \geq \beta/3$.
    Using this idea inductively, we see that 
    \begin{multline*}
        \p \Big( \bigcap_{m=1}^{\infty} A_{m} \Big) = 
        \p \Big( \bigcap_{m=1}^{\lfloor 2/\beta \rfloor} A_{m} \Big)
        =
        \prod_{m=1}^{\lfloor 2/\beta \rfloor}
        \p \Big(  A_{m} \mid \bigcap_{j=1}^{m-1} A_j \Big)
        \\
        =
        \prod_{m=1}^{\lfloor 2/\beta \rfloor}
        \frac{\E \left[ \p \left( A_{m} \mid w_1((1)),\ldots, w_1((m-1)) \right)  \mathbbm{1}_{\bigcap_{j=1}^{m-1} A_j}\right]}{\p \left( \bigcap_{j=1}^{m-1} A_j \right)}
        \geq
         \prod_{m=1}^{\lfloor 2/\beta \rfloor}
        \frac{\E \left[ (\beta/3)  \mathbbm{1}_{\bigcap_{j=1}^{m-1} A_j}\right]}{\p \left( \bigcap_{j=1}^{m-1} A_j \right)}
        \geq
        (\beta/3)^{2/\beta}.
    \end{multline*}
    The previous argument thus implies that property (\ref{cond:T1}) holds with probability at least $(\beta/3)^{2/\beta}$.

    For $k\in \N_0$, let $1^{(k)}$ be the vertex $(1,\ldots,1)$ of length $k$ (with $1^{(0)} = \varnothing$ the root of the Ulam--Harris tree). Property~\eqref{cond:T2} holds if $\sum_{\ell=2}^{\infty} w_2( 1^{(m)} \oplus ( \ell ) ) \in \left[ \beta/2,\beta \right]$ for all $m \in \N$ with  $w_{2}(1^{(m)}) \geq \beta$.  A similar argument as above shows that this happens, for $n$ large enough, with probability at least $(\beta/3)^{2/\beta}$.
    As $T^1$ and $T^2$ are independent,~\eqref{cond:T1} and~\eqref{cond:T2} hold at the same time with probability at least~$\left((\beta/3)^{2/\beta}\right)^2 \geq \delta$.

    \medskip

Assume now that  $T^1$ and $T^2$  satisfy these two properties~\eqref{cond:T1} and~\eqref{cond:T2}. We argue that this assumption gives the two trees such different weight structures that no subtree of size at least $\eps n$ can be embedded into both trees.  
Let $f$ be a graph isomorphism from a subtree $S^1$ of $T^1$ to a subtree of $T^2$. We distinguish the cases $\varnothing \notin S^1$ and $\varnothing \in S^1$.

\textbf{Case 1:} $\varnothing \notin S^1$.
\\
In this case, $S^1$ has to be contained in a subtree that is rooted at one of the children of $\varnothing$. Since all subtrees of children of $\varnothing$ have size at most $\beta n$ by Property~\eqref{cond:T1}, this shows that $S^1$ has size at most $\beta n \leq \eps n$.

\textbf{Case 2:} $\varnothing \in S^1$.
\\ 
Let $f(\varnothing) \eqqcolon v \in T^2$. If we remove $v$ from  $T^2$ and the tree decomposes into connected components of sizes $b_1\geq\ldots\geq b_m$ for some $m \in \N$, then we claim that, $\sum_{i=3}^{m} b_i \leq \beta n$. To show this holds, we divide into two further cases. First, suppose  $v= 1^{(k)}$ for some $k\in\N_0$. When removing $v$ from $T^2$, the tree decomposes into the connected components containing $1^{(k-1)}$ (for $k>0$), $1^{(k+1)}$, and $1^{(k)}\oplus (2), 1^{(k)}\oplus(3),\ldots, 1^{(k)}\oplus(m-1)$. The forest containing $1^{(k)}\oplus (2), 1^{(k)}\oplus(3),\ldots, 1^{(k)}\oplus(m-1)$ has a total size of at most $\beta n$ by {Property}~\eqref{cond:T2}. Hence the sum of the sizes of the $m-2$ smallest components is at most   $\beta n$, too.

Second, assume that $v\neq 1^{(k)}$ for all $k \in \N_0$. Say that $v=(v_1,\ldots,v_j)$ and say that $v_1=\ldots=v_k=1$ and $v_{k+1} \neq 1$. In this case, the connected component containing the parent of $v$ can potentially be large, but the forest containing all other connected components has size at most $n\sum_{\ell=2}^{\infty} w_2(1^{(k)}\oplus(\ell)) \leq \beta n$.

Using this intermediate result, let $a_1,\ldots,a_j$ be the sizes of the connected components of $T^1$ when removing the root $\varnothing$, in decreasing order. Using Lemma~\ref{lem:point removal}, we thus get that
\begin{equation*}
    |S^1| \leq 1 + \sum_{i=1}^{\min(j,m)} \min(a_i,b_i) \leq 
    1+a_1+a_2+\sum_{i=3}^{m} b_i
    \leq 1 +3 \beta n \leq 4 \beta n=\eps n,
\end{equation*}
where $a_1,a_2\leq \beta n$ holds by property~\eqref{cond:T1} and the last inequality holds for $n$ large enough. 

In total, we see that, conditionally on~\eqref{cond:T1} and \eqref{cond:T2}, in Case 1 as well in Case 2, any tree $S^1$ that is contained in both $T^1$ and $T^2$ can have size of at most $4 \beta n \leq \eps n$, which implies that $\p(X_n \leq \eps n) \geq \delta > 0$ for all large enough~$n$, and concludes the proof.
\end{proof}

\invisible{

\underline{Case 2:} $r \in T_1'$ and the Ulam--Harris label $\sigma$ of $f(r)$ in $T_2$ satisfies $|\sigma|_0<N$, $\sigma_i \in\{1,2\}$ for all $i\in[|\sigma|_0]$ and $w_2(\sigma)\geq 2 \beta$. 
\\
In this case, by condition~\eqref{cond:T2}, the vertex $f(r)$ has $2$ children $v_1,v_2 \in V(T_2)$ such that 
\begin{align}\label{eq:v1v2}
w(v_1)+w(v_2)\geq w(f(r))-2 \beta.
\end{align}We now bound the size of the subtrees rooted at each of the children in $T_1'$: There may be children of $r$ that $f$ respectively maps to $v_1$, $v_2$, and the parent vertex of $f(r)$. Since, by condition~\eqref{cond:T1}, these children of $r$ all have weight at most $\beta$ in $T_1$, the sizes of the subtrees rooted at these vertices add up to at most $3 \beta n$. The remaining children of $r$ in $T_1'$ are mapped to children of $f(r)$ that are not $v_1$ or $v_2$. Due to \eqref{eq:v1v2}, these children have cumulated weight at most $2\beta$ in $T_2$. Putting everything together, we see that the size of $T_1'$ is at most $5 \beta n\leq \eps n$.

\underline{Case 3:} $r \in T_1'$, but Case 2 does not hold.
\\
We show that in this case the weight of $\sigma=f(r)$ in $T_2$ is at most $2 \beta$. This is immediate if $\sigma$ fails the above condition because of $w_2(\sigma)<2\beta$. In the first $N-1$ generations, labels $\sigma'$ which have some index $i$ with $\sigma'_i \notin\{1,2\}$, have weight at most $2 \beta$ since, due to~\eqref{cond:T2}, in generation $i$ all but $2 \beta$ of the weight available was split between the first two children of $\sigma_1 \dots \sigma_{i-1}$.

It remains to deal with (descendants of) $\sigma$ with $\sigma_i \in \{1,2\}$ and $|\sigma|_0=N-1$. By condition~\eqref{cond:T2} and since the root of $T_2$ has weight $1$, all descendants $\sigma=\sigma_1\ldots \sigma_\ell$ with $\ell<N$ and $\sigma\in\{1,2\}$ for all $i\in[\ell]$ satisfy that their weight is at most $\max\{2\beta, (1/2)^\ell\}$. By the choice of $N$, it follows that this maximum is at most $2\beta$ when $\ell=N-1$. 

Given that $w_2(f(r))\leq 2 \beta$, an argument similar to that from Case 2 gives that the size of $T_1'$ is at most $2 \beta n\leq \eps n$. 

}

\section{General attachment rules} \label{sec:general} 

So far our focus has been on uniform attachment trees, but some of the ideas and techniques developed in the previous sections also yield results for general models of randomly growing trees. 
In particular, in this section we study general models of preferential attachment trees, where the probability that the incoming vertex attaches to a particular existing vertex is proportional to an (essentially) arbitrary function of its degree. 

Formally, we study a general model of randomly growing trees $\left\{ T_{n} \right\}_{n \geq 2}$ defined as follows. Let $f : \mathbb{N} \to (0,\infty)$ be a function, which we refer to as the \emph{attachment function} or \emph{attachment rule}. Let~$T_{2}$ be the unique tree on two vertices. Given $T_{n}$ with vertices $v_{1}, \ldots, v_{n}$, the tree $T_{n+1}$ is formed from~$T_{n}$ by adding a new vertex $v_{n+1}$ and connecting it with an edge to an existing vertex $u$, chosen according to the following probability distribution: 
\begin{equation}\label{eq:general_attachment}
\p \left( u = v_{i} \, \middle| \, T_{n} \right) 
= \frac{f(d_{T_{n}}(v_{i}))}{\sum_{j=1}^{n} f(d_{T_{n}}(v_{j}))},
\end{equation}
where $d_{T_{n}}(v)$ denotes the degree of vertex $v$ in the tree $T_{n}$. 

\emph{Uniform attachment} corresponds to the special case when $f$ is a constant function, such as~$f \equiv 1$. 
\emph{Preferential attachment} often refers to the model where $f(n) = n$, which is very well studied~\cite{Mah92,BA99,BRST01}. 
Further special cases that are worth noting include 
\emph{linear} (or \emph{affine}) \emph{preferential attachment}, where $f(n) = n + a$ for some $a > -1$; 
\emph{sublinear preferential attachment}, where $f(n) = n^{\alpha}$ for some $\alpha \in (0,1)$; 
and \emph{superlinear preferential attachment}, where $f(n) = n^{\alpha}$ for some $\alpha > 1$. 
The general model given by~\eqref{eq:general_attachment} has also been studied extensively~\cite{RTV07}. 

We are interested in the largest common subtree of two independent trees generated via the model above. We will again denote by $X_{n}$ the size of the largest common subtree when both trees have $n$ vertices (we suppress the dependence on $f$ in the notation for simplicity). 
In Section~\ref{sec:linear} we prove Theorem~\ref{thm:general_lb} by adapting the argument of Section~\ref{sec:polyLB}, 
giving a simple polynomial lower bound on $X_{n}$ for linear and sublinear preferential attachment models. We also provide an alternative lower bound based on the maximum degree. 
Finally, in Section~\ref{sec:superlinear} we show that $X_{n}$ is linear in $n$ (and, in fact, $X_{n} = (1-o(1))n$ with high probability) for superlinear preferential attachment trees, by building on the detailed results of Oliveira and Spencer~\cite{oliveira2005connectivity} for this~model.

\subsection{A polynomial lower bound for linear and sublinear attachment}\label{sec:linear}

We next prove Theorem~\ref{thm:general_lb}. 
We show this using the same methods as in Section~\ref{sec:polyLB}, proving recursive bounds on time intervals that are growing exponentially in length. This underlines that this simple technique is robust in a variety of different settings.

\begin{proof}[Proof of Theorem~\ref{thm:general_lb}]
Multiplying the attachment function $f$ by a positive constant does not change the process. Thus we may assume that $f(n)\leq n$ and that $f(n)>c$ for all $n\in \N$ and some constant $c \in (0,1)$.

Let $\left\{ T_{n}^{1} \right\}_{n \geq 1}$ and $\left\{ T_{n}^{2} \right\}_{n \geq 1}$ denote the two randomly growing trees. We will show that the following inequality holds for every $k \geq 1$: 
\begin{equation}\label{eq:recursive_bound_general}
\E \left[  X_{2^{k+1}} \, \middle| \, T_{2^{k}}^{1}, T_{2^{k}}^{2} \right] \geq \left\{ 1 + \left(1 - \e^{-c/4} \right)^{2} \right\} X_{2^{k}}.
\end{equation}
Then, as in the proof of Claim~\ref{claim:simple}, 
the result follows by taking expectations in~\eqref{eq:recursive_bound_general} 
and applying the resulting inequality recursively (as in~\eqref{eq:recursion_simple}).

Let $v$ denote a vertex in $T_{2^{k}}^{1}$ 
and let us consider the evolution of the first tree from time $2^{k} + 1$ to time $2^{k+1}$. 
By the arguments of Claim~\ref{claim:simple}, 
in order to show~\eqref{eq:recursive_bound_general}, 
it suffices to argue that 
the probability that no new edge attaches to $v$ during this time interval is bounded from above by~$\e^{-c/4}$. 

To see that this holds, 
let $j \in \{ 2^{k}+1, \ldots, 2^{k+1} \}$ and observe that, regardless of the realization of~$T_{j}^{1}$, we have~that 
\[
\sum_{u \in T_{j}^{1}} f( d_{T_{j}^{1}}(u)) 
\leq \sum_{u \in T_{j}^{1}} d_{T_{j}^{1}} (u) 
= 2(j-1) 
\leq 2^{k+2},
\]
where the first inequality uses the assumption that $f(n) \leq n$, 
the equality uses the fact that $T_{j}^{1}$ has $j$ nodes and $j-1$ edges, and the last inequality uses the upper bound on the time interval. 
Since we also have that $f(n) > c$ for all $n$, 
the probability that the vertex joining $T_{j}^{1}$ attaches to $v$ is bounded from below by $c/ 2^{k+2}$, 
regardless of the realization of $T_{j}^{1}$. 
This implies that the probability that no new edge attaches to $v$ from time $2^{k} + 1$ to time $2^{k+1}$ is at most 
$(1 - c / 2^{k+2})^{2^{k}} \leq \e^{-c/4}$. 
\end{proof}

The following lemma provides an alternative argument that can provide useful lower bounds in certain cases.

\begin{lemma}[Maximum degree lower bound]\label{lem:max_degree_lb}
Let $T^{1}$ and $T^{2}$ be two (unlabeled) trees and let $X$ denote the size of their largest common subtree. We have that 
\[
X \geq \min \left\{ \Delta(T^{1}), \Delta(T^{2}) \right\} + 1,
\]
where $\Delta(T)$ denotes the maximum degree of tree $T$. 
\end{lemma}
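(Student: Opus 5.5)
The plan is to exhibit an explicit common subtree: a star. Let $d := \min\{\Delta(T^{1}), \Delta(T^{2})\}$. I will show that the star $K_{1,d}$, which has $d+1$ vertices, is isomorphic to a subtree of both $T^{1}$ and $T^{2}$. Then $X \geq d+1$ follows directly from the definition of $X$ as a maximum over common subtrees.

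For $i \in \{1,2\}$, pick a vertex $u_{i} \in T^{i}$ with $d_{T^{i}}(u_{i}) = \Delta(T^{i}) \geq d$. Choose any $d$ of its neighbours $w^{i}_{1}, \ldots, w^{i}_{d}$, and let $S^{i}$ be the subgraph with vertex set $\{u_{i}, w^{i}_{1}, \ldots, w^{i}_{d}\}$ and the $d$ edges $u_{i}w^{i}_{j}$.

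Each $S^{i}$ is connected and has $d+1$ vertices and $d$ edges, so it is a subtree of $T^{i}$. Moreover, each $S^{i}$ is isomorphic to $K_{1,d}$. Composing these isomorphisms via $u_{1} \mapsto u_{2}$ and $w^{1}_{j} \mapsto w^{2}_{j}$ gives an isomorphism $S^{1} \cong S^{2}$.

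There is essentially no obstacle. The only point to check is the degenerate case: if one tree is a single vertex, then $d = 0$ and the single vertex itself serves as the common subtree of size $1$. The bound also holds trivially whenever $d \geq 1$. It is also worth noting that the induced subgraph on these vertices coincides with $S^{i}$, since $T^{i}$ is a tree and so has no extra edges among the neighbours. This matters only if subtrees are meant to be induced, and in either interpretation the argument goes through.
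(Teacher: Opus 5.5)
Your proposal is correct and follows the paper's argument: both embed the star $K_{1,d}$ with $d=\min\{\Delta(T^{1}),\Delta(T^{2})\}$, centered at a maximum-degree vertex, into each tree. Your handling of the degenerate case $d=0$ and the remark that the subtree is automatically induced are fine extra details the paper leaves implicit.
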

\begin{proof}
The tree $T^{1}$ contains, as a subtree, a star of size $\Delta(T^{1})+1$ centered at a maximum degree vertex. The analogous statement holds for $T^{2}$ too. Thus both trees contain, as a subtree, a star of size $\min \left\{ \Delta(T^{1}), \Delta(T^{2}) \right\} + 1$. 
\end{proof}

\begin{corollary}[Affine preferential attachment]\label{cor:affine}
Let $f(n) = n + a$, where $a > -1$. 
Let $\left\{ \delta_{n} \right\}_{n \geq 1}$ be any sequence such that $\delta_{n} \to 0$ as $n \to \infty$. 
Then, with high probability, $X_{n} \geq \delta_{n} n^{1/(2+a)}$.
\end{corollary}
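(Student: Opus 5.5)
By Lemma~\ref{lem:max_degree_lb}, $X_n \geq \min\{\Delta(T_n^1), \Delta(T_n^2)\} + 1$. It therefore suffices to show that for a single affine preferential attachment tree with $f(k) = k + a$, we have $\Delta(T_n) \geq \delta_n n^{1/(2+a)}$ with high probability. A union bound over the two independent trees then finishes the argument. So the plan reduces to a lower tail bound for the maximum degree, and I would bound the degree of one fixed early vertex rather than the maximum itself.

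\textbf{Step 1: the degree of a fixed vertex is a martingale after normalization.} Take the root $v_1$ (or $v_2$). Write $D_n$ for its degree in $T_n$. The total weight is
\[
S_n = \sum_j (d(v_j) + a) = 2(n-1) + an = (2+a)n - 2.
\]
Given $T_n$, the degree $D_n$ increases by one with probability $(D_n + a)/S_n$. Hence
\[
\E[D_{n+1} + a \mid T_n] = (D_n + a)\Bigl(1 + \tfrac{1}{S_n}\Bigr).
\]
Setting $c_n := \prod_{m=2}^{n-1}(1 + 1/S_m)$, which satisfies $c_n \asymp n^{1/(2+a)}$, the process $M_n := (D_n + a)/c_n$ is a nonnegative martingale. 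It is bounded in $L^2$; the standard computation of the second moment uses the same product structure. Therefore $M_n \to M_\infty$ almost surely and in $L^2$.

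\textbf{Step 2: the limit is almost surely positive.} This is the main obstacle, because the conclusion needs $\p(M_\infty \leq \eta) \to 0$ as $\eta \to 0$. I would first try the classical fact, in the Móri / Athreya-type analysis of affine preferential attachment, that the limit $n^{-1/(2+a)} D_n \to \xi$ has an absolutely continuous law on $(0,\infty)$. I would cite it if it is acceptable as known.

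Otherwise I would argue directly via the continuous-time embedding. A vertex of degree $d$ reproduces at rate $d + a$, and the total population grows like $W e^{(2+a)t}$. The degree of the root is a pure birth process with rates $d + a$, so its growth $D(t)e^{-t}$ converges to a limit that is a Gamma-type variable, which is positive almost surely. Since $|\bp(t)|e^{-(2+a)t} \to W \in (0,\infty)$ almost surely, evaluating at the stopping times $\tau_n$ gives $D_n n^{-1/(2+a)} \to \Gamma \cdot W^{-1/(2+a)} > 0$ almost surely.

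\textbf{Step 3: conclude.} Since $\delta_n \to 0$ and the limit $\xi > 0$ almost surely, we get
\[
\p\bigl(\Delta(T_n) < \delta_n n^{1/(2+a)}\bigr) \leq \p\bigl(D_n n^{-1/(2+a)} < \delta_n\bigr) \to 0.
\]
To see this, fix $\eta > 0$. For large $n$ we have $\delta_n < \eta$, so the probability is at most $\p(D_n n^{-1/(2+a)} < \eta)$. This converges to $\p(\xi < \eta)$ (or is bounded by it at continuity points), and $\p(\xi < \eta)$ tends to $0$ as $\eta \downarrow 0$. Applying this bound to both independent trees and using Lemma~\ref{lem:max_degree_lb} yields $X_n \geq \delta_n n^{1/(2+a)}$ with high probability.
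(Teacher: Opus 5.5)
Your proof is correct and has the same skeleton as the paper's: reduce via Lemma~\ref{lem:max_degree_lb} to showing that a single tree has $\Delta(T_n) \geq \delta_n n^{1/(2+a)}$ with high probability, then take a union bound over the two independent trees.

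The difference lies in the key input. The paper cites M\'ori's theorem that $\Delta(T_n)/n^{1/(2+a)}$ converges almost surely to a limit that is almost surely positive. You instead use only the trivial bound $\Delta(T_n) \geq D_n$ for a fixed early vertex, and you prove the needed growth yourself through the continuous-time embedding:
\begin{itemize}
\item the root's degree is a linear birth chain with rates $d+a$, so $e^{-t}D(t)$ converges to a Gamma-distributed limit, which is almost surely positive;
\item $|\bp(t)|$ is a pure birth chain with rates $(2+a)N-2$, so $|\bp(t)|e^{-(2+a)t}\to W\in(0,\infty)$;
\item evaluating at $\tau_n$ gives $D_n n^{-1/(2+a)} \to \Gamma W^{-1/(2+a)}>0$ almost surely.
\end{itemize}
Your route is more elementary and self-contained. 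It needs only one vertex and only a lower bound, which is all the corollary asks for. The cited result gives the sharper fact that the maximum degree itself is of exact order $n^{1/(2+a)}$, which is not needed here.

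Two small remarks. First, the martingale in Step~1 is redundant once you have Step~2, and on its own it does not give positivity of the limit. Second, the clean form of Step~3 is
\[
\limsup_n \p\bigl(D_n n^{-1/(2+a)} < \eta\bigr) \leq \p(\xi \leq \eta),
\]
which follows from almost sure convergence. The right-hand side tends to $0$ as $\eta \downarrow 0$ because $\xi>0$ almost surely.
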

\begin{proof}
Let $\left\{T_{n} \right\}_{n \geq 2}$ be a sequence of randomly growing trees with $f(n) = n + a$. 
M\'ori~\cite{mori05maxdegree} showed that 
$\Delta(T_{n}) / n^{1/(2+a)}$ converges almost surely and the limit is positive almost surely. 
Thus, with high probability, $\Delta(T_{n}) \geq \delta_{n} n^{1/(2+a)}$. 
The result then follows from Lemma~\ref{lem:max_degree_lb}. 
\end{proof}

We note that the lower bounds of Theorem~\ref{thm:general_lb} and Lemma~\ref{lem:max_degree_lb} are complementary: each can be better in certain cases. 
For instance, consider affine preferential attachment, that is, $f(n) = n + a$, where $a > -1$. 
Corollary~\ref{cor:affine} shows that $X_n \geq \delta_{n} n^{1/(2+a)}$ with high probability, for any vanishing sequence $\delta_{n}$, 
whereas (an inspection of the proof of) Theorem~\ref{thm:general_lb} shows that 
$\E[X_{n}] \geq n^{\log_{2}(1 + (1 - \exp(- (1+a)/(4+2a)))^{2})}$. 
When $a \to -1$, the power of the lower bound given by Corollary~\ref{cor:affine} converges to $1$, 
while the power of the lower bound given by Theorem~\ref{thm:general_lb} converges to $0$; 
thus for small values of $a$ (and in particular for $a = 0$), the maximum degree lower bound is better. 
On the other hand, when $a \to \infty$, 
the power of the lower bound given by Corollary~\ref{cor:affine} converges to $0$, 
while the power of the lower bound given by Theorem~\ref{thm:general_lb} is bounded away from $0$, converging to $\log_{2}(1+(1-1/\sqrt{e})^{2}) \approx 0.21$; 
thus the bound of Theorem~\ref{thm:general_lb} is better for large values of $a$.

\subsection{Superlinear preferential attachment}\label{sec:superlinear}

When the attachment function $f$ is (polynomially) \emph{superlinear}, the resulting trees are quite extreme, as described in detail by Oliveira and Spencer~\cite{oliveira2005connectivity} and more generally by Iyer and Lodewijks~\cite{LodIyer23}. In particular, in such trees a dominant ``supervertex'' emerges, which attracts a positive fraction of all future edges, and thus has degree of order~$n$. 
Such a supervertex directly implies that 
the largest common subtree of two independent superlinear preferential attachment trees has a size that is linear in $n$, 
since both contain a linear size star (around the supervertex of each tree). 
In fact, even more is true, as the following two results show; in particular, we have that $X_{n} = (1-o(1)) n$ with high probability. This is in stark contrast to the case of uniform attachment, as shown by Theorems \ref{thm:UB_near1} and \ref{thm:UB_near0}.

\begin{theorem}[Maximum degree in superlinear preferential attachment trees]\label{thrm:supermax}
    Fix $\alpha>1$ and assume that $f(n)=\Theta(n^\alpha)$ as $n\to\infty$ and that $f(n)>0$ for all $n\in\N$. Then, the following hold:
    \begin{itemize}
        \item If $\alpha>2$, then the sequence $\{n-\Delta(T_n)\}_{n\geq 1}$ has an almost surely finite limit.
        \item If $\alpha=2$, then the sequence $\{(n-\Delta(T_n))/\log n\}_{n\geq 2}$ is tight.
        \item If $\alpha\in(1,2)$, then the sequence $\{(n-\Delta(T_n))n^{\alpha-2}\}_{n\geq 1}$ is tight.
    \end{itemize}
\end{theorem}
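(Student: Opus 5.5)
The plan is to embed the tree into a continuous-time branching process in which a vertex of degree $d$ gives birth at rate $f(d)$, in the spirit of the Athreya--Karlin embedding of Lemma~\ref{thrm:embed}. Write $S_k := \sum_{j=1}^{k} E_j/f(j)$ for the birth time of the $k$-th child of a vertex, measured from its own birth, where the $E_j$ are i.i.d.\ $\Exp(1)$. Since $f(n)=\Theta(n^\alpha)$ with $\alpha>1$, we have $\sum_j 1/f(j)<\infty$, so $S_\infty<\infty$ almost surely and the process explodes.

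The first step is the standard Oliveira--Spencer description. Almost surely there is a unique vertex $v^\ast$ with infinitely many children, and every other vertex has only finitely many children. In particular the root's children eventually all attach to the root. This gives the finite-limit statement for $\alpha>2$. For $n$ large, $n-\Delta(T_n)$ counts the non-$v^\ast$ vertices plus the descendants of the children of $v^\ast$. The key estimate is on the expected number of children of a vertex born when the supervertex has degree $m$. Such a vertex competes for time roughly $\sum_{j\ge m}1/f(j)\asymp m^{1-\alpha}$, so its expected number of children is $O(m^{1-\alpha})$; recursively, its expected number of descendants is also $O(m^{1-\alpha})$.

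Summing over $m\le n$ gives that the expected number of vertices not adjacent to $v^\ast$ is of order $\sum_{m\le n} m^{1-\alpha}$. This sum is $O(1)$ if $\alpha>2$, $O(\log n)$ if $\alpha=2$, and $O(n^{2-\alpha})$ if $\alpha\in(1,2)$. Markov's inequality then yields tightness of $(n-\Delta(T_n))/\log n$ and of $(n-\Delta(T_n))n^{\alpha-2}$.

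For $\alpha>2$, almost sure finiteness of the total (Borel--Cantelli applied to the summable expected counts) shows that $n-\Delta(T_n)$ is eventually nondecreasing and bounded. It is therefore eventually constant, which gives an almost sure finite limit.

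The main obstacle is making the descendant count rigorous. Children of non-supervertices can themselves reproduce, and before the supervertex emerges there is a random, though almost surely finite, initial phase. I would handle this by conditioning on the finite time after which $v^\ast$ is fixed, and then bounding subtree sizes by a subcritical branching comparison with offspring means $O(m^{1-\alpha})$. For the precise statement I would largely cite Oliveira--Spencer and Iyer--Lodewijks.
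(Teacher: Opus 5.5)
Your overall structure is the paper's: the Athreya--Karlin embedding, the Oliveira--Spencer supervertex $v^\ast$, the remaining lifetime $\sum_{j\ge m}1/f(j)\asymp m^{1-\alpha}$, summation into the three regimes, and Markov for tightness. However, the step you flag as the main obstacle is a genuine gap, and the fix you sketch is not available as stated.

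\textbf{The gap.} You claim that a child of $v^\ast$ born when $v^\ast$ has degree $m$ has $O(m^{1-\alpha})$ expected children, and recursively $O(m^{1-\alpha})$ expected descendants. Unconditionally this is false.
\begin{itemize}
\item That child $u$ has an explosive birth process, and $\P(\cP(u)\le t)>0$ for every $t>0$. So its expected number of offspring in an independent window of length $\asymp m^{1-\alpha}$ is infinite.
\item What is true is a bound $\P(\text{the child builds a subtree of size }k+1)=O(C_k\,m^{-k(\alpha-1)})$. But $C_k$ (in the paper, $\e^2(k\overline m_k)^k$) grows super-exponentially in $k$, so summing over $k$ yields no $O(m^{1-\alpha})$ bound. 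There is also no subcritical branching process to compare with.
\item Conditioning on the identity of $v^\ast$, or on the time after which it is fixed, does make offspring counts finite. But that event depends on the clocks of every vertex, including those in the subtrees you want to bound. The conditioned birth processes are then neither independent nor of the original law, and you give no way to handle this.
\item A smaller point: a unique infinite-degree vertex exists for every $\alpha>1$, so it does not by itself give the case $\alpha>2$. The paper cites the stronger Oliveira--Spencer statement that for $\alpha>2$ all but finitely many vertices attach to $v^\ast$.
\end{itemize}

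\textbf{What the paper does instead.} It never bounds expected subtree sizes.
\begin{itemize}
\item For each fixed $k$, it bounds only the probability that the $i$-th child's subtree reaches size $k+1$ before the parent explodes. The time to build any rooted tree on $k+1$ vertices stochastically dominates a sum of $k$ i.i.d.\ $\Exp(k\overline m_k)$ variables. A Chernoff bound against the parent's remaining time $\sum_{j>i}E_j/f(j-1)$ then gives $O(i^{-k(\alpha-1)})$ (Proposition~\ref{prop:subtree}).
\item Take $k^\ast>1/(\alpha-1)$. Borel--Cantelli shows that only finitely many children of $v^\ast$ carry subtrees of size larger than $k^\ast$. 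Each of these is a.s.\ finite, so their total is tight with no moment bound needed.
\item Smaller subtrees add at most $\sum_{k<k^\ast}Y^k_n(v^\ast)$ vertices, whose expectation is $O(\log n)$ or $O(n^{2-\alpha})$. The initial tree $T_N$ is handled by a.s.\ finiteness alone.
\item To get that expectation at the random vertex $v^\ast$, the paper uses FKG. The events $\{v^\ast=v\}$ and $\cE_{i,k}(v)$ are monotone in opposite directions in every exponential clock, hence negatively correlated. This gives $\E[Y^k_n(v^\ast)]\le\sum_v\E[Y^k_n(v)]\P(v^\ast=v)$.
\end{itemize}

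Your proposal lacks two things: the split into indicator counts at fixed size levels plus an a.s.\ finite remainder, and the FKG decoupling. With these, your $\alpha>2$ argument also goes through, provided Borel--Cantelli is applied to the indicators that the $i$-th child of $v^\ast$ has any child at all (the case $k^\ast=1$), rather than to expected descendant counts.
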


\begin{remark}
    Theorem~\ref{thrm:supermax} improves on the result in~\cite[Theorem $9.1$ $(2)$]{SethVenk16}, where it is shown that $\Delta(T_n)/n$ converges to $1$, almost surely, by showing sublinear fluctuations. 
\end{remark}

The above theorem together with Lemma \ref{lem:max_degree_lb} directly imply the following corollary, which is the `same' result for the largest common subtree of two independently grown superlinear preferential attachment trees.

\begin{corollary}[Largest common subtree in superlinear preferential attachment trees]\label{thrm:suplin}
Fix $\alpha > 1$ and suppose that 
$f(n) = \Theta\left( n^{\alpha} \right)$ as $n \to \infty$ 
and that $f(n) > 0$ for all $n\in\N$. Then, the following hold: 
\begin{itemize} 
\item If $\alpha > 2$, then the sequence 
$\left\{ n - X_{n} \right\}_{n \geq 1}$ has an almost surely finite limit.
\item If $\alpha=2$, then the sequence $\{(n-X_n)/\log n\}_{n\geq 2}$ is tight.
\item If $\alpha\in(1,2)$, then the sequence $\{(n-X_n)n^{\alpha-2}\}_{n\geq 1}$ is tight.
\end{itemize}
In particular, in all of the above cases we have that $X_{n} = (1-o(1))n$ with high probability. 
\end{corollary}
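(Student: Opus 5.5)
The corollary follows from Theorem~\ref{thrm:supermax} combined with Lemma~\ref{lem:max_degree_lb}, together with the trivial bound $X_n \le n$. Let $T_n^1$ and $T_n^2$ be the two independent trees. Lemma~\ref{lem:max_degree_lb} gives
\[
X_n \ge \min\{\Delta(T_n^1),\Delta(T_n^2)\}+1 .
\]
Since both trees have $n$ vertices, $X_n \le n$. Setting $D_n^i := n-\Delta(T_n^i) \ge 1$, we therefore get
\[
0 \le n - X_n \le \max\{D_n^1, D_n^2\} - 1 \le D_n^1 + D_n^2 .
\]
All three statements reduce to transferring the behaviour of $D_n^i$ from Theorem~\ref{thrm:supermax} to $n-X_n$.

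\textbf{Case $\alpha \in [1,2]$ (tightness).} The argument is the same for $\alpha=2$ and $\alpha\in(1,2)$. Let $a_n$ denote the relevant scaling: $a_n = 1/\log n$ when $\alpha=2$, and $a_n = n^{\alpha-2}$ when $\alpha\in(1,2)$. For any $M$,
\[
\p\bigl((n-X_n)a_n > 2M\bigr) \le \p(D_n^1 a_n > M) + \p(D_n^2 a_n > M).
\]
Theorem~\ref{thrm:supermax} says each sequence $\{D_n^i a_n\}$ is tight, so the right-hand side can be made uniformly small by choosing $M$ large. Hence $\{(n-X_n)a_n\}$ is tight.

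\textbf{Case $\alpha > 2$ (almost sure limit).} Each $D_n^i$ converges almost surely to a finite limit. Since $D_n^i$ is integer valued, it is eventually constant almost surely. The bound $n-X_n \le D_n^1+D_n^2$ then shows that $n-X_n$ is almost surely bounded, but this alone does not give convergence.

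To get convergence, I would use the structure behind Theorem~\ref{thrm:supermax} (Oliveira--Spencer). For $\alpha>2$, almost surely there is a random time after which every new vertex attaches to the supervertex. So for large $n$, each $T_n^i$ consists of a fixed finite tree $F^i$ together with $n - |F^i|$ leaves hanging off the supervertex.

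The plan is then to show that $n - X_n$ is eventually a deterministic function of $(F^1,F^2)$. The idea is that once $n$ is large, the optimal common subtree must contain both huge stars, so $X_n$ equals $n - c(F^1,F^2)$ for some constant $c$. Concretely, one fixes the supervertex-to-supervertex matching and then computes a finite optimisation over the remaining branches, which is unaffected by the extra leaves. I expect this last step to be the main obstacle, since it is the only point needing more than the maximum-degree bound. If that argument gets too delicate, a fallback is to state the $\alpha>2$ conclusion as almost sure boundedness.

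In all three cases $n - X_n = o(n)$ in probability, which gives $X_n = (1-o(1))n$ with high probability.
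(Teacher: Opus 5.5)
Your treatment of $\alpha=2$ and $\alpha\in(1,2)$ matches the paper's. Lemma~\ref{lem:max_degree_lb} and $X_n\le n$ give $0\le n-X_n\le \max\{D_n^1,D_n^2\}-1$, and tightness transfers by a union bound. Your heading should read $\alpha\in(1,2]$.

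For $\alpha>2$ you have the same picture as the paper: after a random step $N$, every new vertex in tree $i$ is a leaf attached to the supervertex $v_i^*$. However, the proposal stops at exactly the step that distinguishes this case. You do not show that a maximum common subtree matches $v_1^*$ to $v_2^*$, nor that $n-X_n$ stabilises. Your fallback, almost sure boundedness, does not prove the first bullet. You also do not need to identify $c(F^1,F^2)$ through a branch optimisation; a monotonicity argument suffices.

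Let $F^i:=T^i_N$ and $C:=\max_i \lim_n D_n^i$, so that $X_n\ge n-C$ and $\deg(v_i^*)\ge n-C$ for all large $n$. Take a common subtree $S$ with $|S|=X_n$ and embeddings $\phi_i:S\to T_n^i$.
\begin{itemize}
\item Every component of $T_n^1-v_1^*$ has at most $|F^1|$ vertices, so $v_1^*\in\phi_1(S)$ once $n>|F^1|+C$.
\item The image $\phi_1(S)$ is a subtree of $T_n^1$ missing at most $C$ vertices. Hence $s:=\phi_1^{-1}(v_1^*)$ has degree at least $n-2C$ in $S$.
\item Every vertex of $T_n^2$ other than $v_2^*$ has degree at most $|F^2|$. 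So $\phi_2(s)=v_2^*$ for large $n$.
\item The vertices added at step $n+1$ are leaves attached to $v_1^*$ and $v_2^*$. Hanging a new leaf on $s$ therefore extends both embeddings, giving $X_{n+1}\ge X_n+1$.
\end{itemize}
Hence $0\le (n+1)-X_{n+1}\le n-X_n$ for all large $n$. A nonincreasing sequence of nonnegative integers is eventually constant, which gives the almost surely finite limit. This is the paper's argument that, once both supervertices are matched, ``the largest common subtree grows by one in size at each step.''
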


\begin{proof}
    The cases $\alpha=2$ and $\alpha\in(1,2)$ directly follow from the corresponding tightness results in Theorem~\ref{thrm:suplin} combined with Lemma~\ref{lem:max_degree_lb}. For $\alpha>2$, we argue as follows. Since $n-\Delta(T_n)$ converges almost surely by Theorem~\ref{thrm:suplin}, there almost surely exists a random integer $N$ and a random vertex $v^*$ in the tree $T_N$ constructed up to step $N$, such that all vertices added after step $N$ are connected to $v^*$. This is thus also true for two tree processes $\{T^1_n\}_{n\geq 2}$ and $\{T^2_n\}_{n\geq 2}$, where we have integers $N_1$ and $N_2$ and vertices $v^*_1$ and $v^*_2$, respectively. Suppose that $N_2\geq N_1$ without loss of generality. After step $N_2$, in both trees we only attach to a single vertex, so that after some step $N_3\geq N_2$, the largest common subtree grows by one in size at each step. Indeed, if $v^*_1$ and $v^*_2$ are both included in the largest common subtree, which they eventually are due to their large degrees, then in the next step the vertex that is attached to both of them (in their respective trees) is also included in the largest common subtree. 
\end{proof}

The rest of this section is devoted to the proof of Theorem~\ref{thrm:supermax}. 

\paragraph{Embedding in a continuous-time branching process.}
To prove Theorem~\ref{thrm:supermax} we use a continuous-time branching process (CTBP) embedding of the superlinear preferential attachment tree, similar to the embedding discussed in Section~\ref{sec:prelim}. To this end, recall the Ulam--Harris tree~$\cU_\infty$, as defined in Section~\ref{sec:UlamHarris}. We assign to each $v\in \cU_\infty$ a sequence of i.i.d.\ rate $1$ exponential random variables, $\{E^{\scriptsize (v)}_j\}_{j\in\N}$. Define  
\be S^{\scriptsize(\varnothing)}_i:=E^{(\varnothing)}_1+\sum_{j=2}^i \frac{E^{(\varnothing)}_j}{f(j-1)}\qquad \text{and}\qquad  S^{\scriptsize (v)}_i:=\sum_{j=1}^{i}E_j^{(v)}/f(j)\qquad \text{for }v\in \cU_\infty\setminus\{\varnothing\}. 
\ee 

Finally, the birth process associated with $v$ is 
\be 
\xi_v(\cdot):=\sum_{i=1}^\infty \delta_{S^{\scriptsize (v)}_i}(\cdot),
\ee 
where $\delta$ is a Dirac measure. In words, every vertex $v\in \cU_\infty\setminus\{\varnothing\}$ has an independent and identically distributed birth process, where the first child is born after an exponential clock with rate $f(1)$ rings, after which the second child is born when an exponential clock with rate $f(2)$ rings, etc. The root $\varnothing$ has a slightly different birth process, where the first birth occurs after an exponential time with rate $1$, the second child at rate $f(1)$, etc. This accounts for the fact that the degree of a leaf  in the discrete tree $T_n$ is $1$, whereas the degree of the root in $T_1$ is $0$.  We then define the process $\{\text{BP}(t): t\geq 0\}$ as CTBP started at time $t=0$ with one individual $\varnothing$ with birth process~$\xi_\varnothing$. We view $\varnothing$ as the root of the branching process. All vertices born into the process are labeled according to the Ulam--Harris labeling (i.e., according to $\cU_\infty$), and every individual $v\in \cU_\infty$, once born, produces offspring according to $\xi_v$, independently of all other vertices.

We also recall the stopping times $\{\tau_n\}_{n\in\N}$ from~\eqref{eq:stoppingtimes}. We can again use the the Athreya--Karlin embedding, as in Lemma~\ref{thrm:embed}, to conclude that the sequence of superlinear preferential attachment trees $\{T_n\}_{n\in\N}$ can be embedded into the CTBP $\{\bp(t): t\geq 0 \}$, in the sense that
\be \label{eq:embedding}
\{T_n: n\in\N\}\overset d= \{\bp(\tau_n):n\in\N\}.
\ee 

\paragraph{Explosion.} The main crucial difference between the embedding of uniform attachment trees (or, more generally, (sub-)linear preferential attachment trees) and superlinear preferential attachment trees in CTBPs, is that the CTBP used for the latter embedding produces an infinite total progeny in finite time. That is, $\tau_\infty:=\lim_{n\to\infty}\tau_n<\infty$ almost surely. This is readily checked, as the expected amount of time required for $\varnothing$ to produce infinite offspring equals 
\be \label{eq:explcond}
1 +\sum_{j=1}^\infty \frac{1}{f(j)}<\infty, 
\ee 
where the finiteness follows as $f(j)=\Theta(j^\alpha)$ with $\alpha>1$. In particular, this implies that $\tau_\infty<\infty$ almost surely. A CTBP $\{\bp(t): t\geq 0\}$ that produces an infinite total progeny in finite time is called \emph{explosive}, and the event that an infinite progeny is produced in finite time is called \emph{explosion}. 

Moreover, the continuous-time embedding only holds up to time $\tau_\infty$. That is, the limiting tree $T_\infty:=\cup_{n=1}^\infty T_n$ is equal in distribution to $\bp(\tau_\infty)$ by~\eqref{eq:embedding}, as  follows from~\cite[Theorem~$4.3$]{oliveira2005connectivity}, but there is no correspondence between the superlinear preferential attachment tree and $\bp(t)$ for~$t>\tau_\infty$.

 We refer the interested reader to~\cite{oliveira2005connectivity,Athr07} and references therein for more background on explosive superlinear preferential attachment trees and their connection to explosive continuous-time branching processes, as well as to~\cite{Komjthy2016ExplosiveCB} and the references therein for more information on explosive continuous-time branching processes (or the more general Crump--Mode--Jagers branching processes) in general.

\paragraph{Preliminary results.} We now introduce some notation and present several preliminary results that we need for the proof of Theorem~\ref{thrm:supermax}. We define  
$\cB(\varnothing):=0$ and 
\be 
\cB(v):=E^{(\varnothing)}_1+\sum_{j=2}^{v_1}\frac{E^{(\varnothing)}_j}{f(j-1)}+\sum_{i=2}^k \sum_{j=1}^{v_i}\frac{E^{\scriptsize (v_1, \ldots, v_{i-1})}_j}{f(j)} \qquad \text{for }v=(v_1,\ldots, v_k)\in \cU_\infty\setminus\{\varnothing\},
\ee 
and 
\be 
\cP(\varnothing):=E^{(\varnothing)}_1+\sum_{j=2}^\infty \frac{E^{(\varnothing)}_j}{f(j-1)}, \qquad\text{and}\qquad \ \cP(v):=\sum_{j=1}^\infty \frac{E^{\scriptsize (v)}_j}{f(j)}\qquad \text{for }v\in \cU_\infty\setminus\{\varnothing\}. 
\ee 
In words, $\cB(v)$ is the birth time of $v$ and $\cB(v)+\cP(v)$ is the \emph{explosion time} of $v$, that is, the time at which $v$ has produced infinite offspring. As discussed, the explosion time of each vertex is almost surely finite. In particular, it holds that $\tau_\infty\leq \cB(v)+\cP(v)$ for all $v\in \cU_\infty$ almost surely, and we know from~\cite[Theorem $1.2$]{oliveira2005connectivity} that in the limiting infinite tree $T_\infty$ there exists a unique vertex with infinite degree almost surely. That is, almost surely there exists a unique vertex $v^*\in \cU_\infty$ such that 
\begin{align}
    \cB(v^*)+\cP(v^*)=\tau_\infty,\qquad \text{and}\qquad
    \cB(v)+\cP(v)>\tau_\infty, \qquad \text{for all } v\in \cU_\infty\backslash \{v^*\}.
\end{align}
Furthermore, for $k,n\in\N$ and $v \in \cU_\infty$, we define 
\be 
Y_n^k(v):=|\{i\in[n]: \text{The tree rooted at $v\oplus(i)$ is of size at least $k+1$ at time $\cB(v)+\cP(v)$}\}|.
\ee 
In words, $Y_n^k(v)$ counts the number of children of $v$, among its first $n$ children, that have produced a subtree of size at least $k+1$ by the time $v$ has produced infinite offspring. We have the following proposition.

\begin{proposition}[Expected subtree counts]\label{prop:subtree}
    Fix $k\in\N$ and $\alpha>1$. There exists a constant $C>0$ such that  
    \be
    \E[Y^k_n(\varnothing)]\leq \begin{cases} C, &\mbox{when } k(\alpha-1)>1, \\
    C\log n, &\mbox{when }  k(\alpha-1)=1,\\ 
    Cn^{1-k(\alpha-1)}, &\mbox{when }  k(\alpha-1)\in (0,1).
    \end{cases}
    \ee  
\end{proposition}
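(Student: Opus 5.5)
By linearity of expectation, $\E[Y^k_n(\varnothing)] = \sum_{i=1}^n p_i$, where $p_i$ is the probability that the subtree rooted at $(i)$ has at least $k+1$ vertices by time $\cP(\varnothing)$; note $\cB(\varnothing)=0$. The plan is to bound each $p_i$ and then sum. Write $R_i := \cP(\varnothing) - \cB((i)) = \sum_{j\ge i+1} E^{(\varnothing)}_j/f(j-1)$ for the time the root still has, after the birth of $(i)$, before it explodes. This is independent of everything inside the subtree of $(i)$. For the subtree of $(i)$ to reach size $k+1$ within time $R_i$, there must be at least $k$ births inside it within time $R_i$.

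\textbf{Key step: a lower bound on the time needed for $k$ births.} The subtree of $(i)$ starts with a single vertex, and each vertex that has $d$ children gives birth at rate $f(d+1)$ (the root's extra index shift only affects $\varnothing$, not $(i)$). While the subtree has at most $k$ vertices, every degree is bounded by $k$, so the total birth rate is at most $C_k := (k+1)\max_{d\le k+1} f(d) < \infty$. Hence the time of the $k$-th birth stochastically dominates a sum of $k$ i.i.d.\ $\Exp(C_k)$ variables, i.e.\ a Gamma$(k,C_k)$ variable $G$. Conditioning on $R_i$ and using independence,
\[
p_i \le \E\bigl[\p(G \le R_i \mid R_i)\bigr] \le \E\bigl[\min\{1, (C_k R_i)^k/k!\}\bigr] \le c_k \E[R_i^k].
\]

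\textbf{Moment bound on $R_i$.} $R_i$ is a sum of independent exponentials with means $1/f(j-1)=\Theta(j^{-\alpha})$, so $\E[R_i] = \Theta(i^{1-\alpha})$. For the $k$-th moment, either expand the power or use the mgf $\E[\e^{\lambda R_i}]=\prod_{j>i}(1-\lambda/f(j-1))^{-1}$ at $\lambda = i^{\alpha-1}$, which is bounded uniformly in $i$ because $\sum_{j>i} 1/f(j-1) = O(i^{1-\alpha})$ and $1/f(j-1)=O(i^{-\alpha})=o(1/\lambda)$ for $j>i$. Either way, $\E[R_i^k] \le C\, i^{-k(\alpha-1)}$. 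Small $i$ are handled by the trivial bound $p_i\le 1$.

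\textbf{Summing.} This gives $\E[Y^k_n(\varnothing)] \le C\sum_{i=1}^n i^{-k(\alpha-1)}$. The sum is bounded when $k(\alpha-1)>1$, is $O(\log n)$ when $k(\alpha-1)=1$, and is $O(n^{1-k(\alpha-1)})$ when $k(\alpha-1)\in(0,1)$, matching the three cases of the statement.

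The main obstacle is the comparison in the key step. One has to confirm that reaching $k+1$ vertices really requires $k$ birth events whose aggregate rate stays bounded by $C_k$ until then, despite superlinear $f$; this is where the deterministic bound on degrees within a subtree of at most $k$ vertices is used. One also needs independence of $R_i$ from the subtree clocks, which holds because the two depend on disjoint families of exponentials.
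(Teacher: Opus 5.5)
Your proposal is correct and follows essentially the same route as the paper: you bound the subtree's time to reach size $k+1$ from below by a sum of $k$ i.i.d.\ exponentials with rate bounded in terms of $\max_{d\le k} f(d)$, compare it against the independent residual explosion time $\sum_{j\ge i+1} E^{(\varnothing)}_j/f(j-1)$ of the root to get an $O(i^{-k(\alpha-1)})$ bound per child, and then sum over $i\le n$. The only difference is cosmetic: the paper applies a single Chernoff bound with tilt $\mu_i^{-1}$ to the difference of the two sums, whereas you condition on $R_i$, use $\p(G\le r)\le (C_k r)^k/k!$, and control $\E[R_i^k]$ through the moment generating function, which gives the same bound.
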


\begin{remark}\label{rem:offspring}
    Since the offspring distribution of each vertex $v\in \cU_\infty\setminus\{\varnothing\}$   is equivalent to that of $\varnothing$ up to a shift by a rate-one exponential random variable,  Proposition~\ref{prop:subtree} holds when we replace $\varnothing$ by any $v\in\cU_\infty$ and, if necessary, increase the value of $C$ (though a uniform constant that does not depend on the choice of $v\in \cU_\infty$ suffices).
\end{remark}

\begin{remark}
    Though superlinear preferential attachment models with attachment function $f(n)=\Theta(n^\alpha)$ as $n\to \infty$  for some $\alpha>1$ are most studied in the literature, there are other examples  such that~\eqref{eq:explcond} is satisfied, but $f(n)=o(n^\alpha)$ for any $\alpha>1$. For such models, the result in Proposition~\ref{prop:subtree} does not hold, as shown in~\cite[Theorem $3.21$]{LodIyer23}. Indeed, for such models the random variable $Y^k_n(\varnothing)$ diverges as $n$ tends to infinity almost surely, for any $k\in\N$. It would be interesting to understand what kind of scaling would be required, and if the result $\Delta(T_n)=n-o(n)$, as in Theorem~\ref{thrm:supermax}, still~holds.
\end{remark}

\begin{proof}[Proof of Proposition~\ref{prop:subtree}]
Take any rooted tree $S$ of size $k+1$, where $k\in\N$, and a vertex $i\in\cU_\infty\setminus\{\varnothing\}$, and let $T(S)$ denote the time it takes for the tree $S$, rooted at $i$, to be constructed (from the birth of $i$ onward). Let $\underline m_k:=\min_{i=1,\ldots, k}f(i)\wedge 1$ and $\overline m_k:=\max_{i=1,\ldots, k}f(i) \vee 1$. It is then clear that
    \be \label{eq:stochdom}
    \sum_{\ell=1}^k\mathrm{Exp}^{(\ell)}(k\overline m_k)\preceq \min_{\substack{\text{Trees }S'\\ |S'|=k+1}}T(S')\preceq T(S)\preceq \sum_{\ell=1}^k \mathrm{Exp}^{(\ell)}(\underline m_k),
    \ee 
    where $\{\mathrm{Exp}^{(\ell)}(k\overline m_k)\}_{\ell\in[k]}$ and $\{\mathrm{Exp}^{(\ell)}(\underline m_k)\}_{\ell\in[k]}$ denote sequences of i.i.d.\ exponential random variables with rates $k\overline m_k$ and $\underline m_k$, respectively, and where $\preceq$ denotes stochastic domination. 
    The stochastic domination holds since the rates of the exponential times at which the vertices in $S$ are born are at least $\underline m_k$ and at most $\overline m_k$, and the time it takes for each vertex to be born is at least the minimum of $k$ many exponential random variables (each of rate at most $\overline m_k$ since each vertex in $S$ has degree at most~$k$). We also observe that this stochastic domination holds  independently of which vertex $S$ is rooted at (i.e., $S$ could be rooted at any vertex $v\in \cU_\infty$). The first stochastic domination can also be found in~\cite[Lemma $5.6$]{oliveira2005connectivity}.

    For some rooted tree $S$ of size $k+1$, to be rooted at vertex $i$, let $\cE_i(S)$ be the event that the $i^{\text{th}}$ child of the root produces a tree $S$ before time $\cP(\varnothing)$, that is, before the root produces infinite offspring. Let $\{E_\ell'\}_{\ell\in[k]}$ be i.i.d.\ exponential random variables with rate $k\overline m_k$, and let $\{E_j\}_{j\in\N}$ be independent exponential random variables, where $E_1$ has rate $1$ and $E_j$ for $j\geq 2$ has rate $f(j-1)$, which are independent of the $\{E_\ell'\}_{\ell\in[k]}$. We associate the random variables $\{E_\ell'\}_{\ell\in[k]}$ with the formation of the tree $S$ that is rooted at $i$. In particular, their sum forms a (stochastic) lower bound for $T(S)$, as follows from~\eqref{eq:stochdom}. The random variables $\{E_j\}_{j\in\N}$ are associated with the inter-birth times of the root $\varnothing$. Importantly, since the birth process $\xi_\varnothing$ of the root and the birth processes associated to vertex $i$ and its descendants are independent, it follows that the two aforementioned collections of random variables are independent. We can then bound 
    \be 
    \P\bigg(\bigcup_{\substack{\text{Rooted trees $S$}\\|S|=k+1}}\!\!\!\!\!\!\!\!\cE_i(S)\bigg)=\P\bigg(\bigcup_{\substack{\text{Rooted trees $S$}\\|S|=k+1}}\bigg\{T(S)\leq \sum_{j=i+1}^\infty E_j\bigg\}\bigg)\leq \P\bigg(\sum_{\ell=1}^k E_\ell'\leq \sum_{j=i+1}^\infty E_j\bigg),
    \ee 
    where the final step follows by~\eqref{eq:stochdom}. We now introduce
    \be \label{eq:mui}
    \mu_i:=\sum_{j=i}^\infty \frac{1}{f(j)},
    \ee 
    and observe that $\mu_i=\Theta(i^{-(\alpha-1)})$, so that $f(j-1)\mu_i\geq 2$ for all $j\geq i$ and all $i$ sufficiently large. As a result, using a Chernoff bound yields
    \be
     \P\bigg(\sum_{\ell=1}^k E_\ell'\leq \!\!\sum_{j=i+1}^\infty\!\! E_j\bigg)= \P\bigg(\exp\bigg(\frac{1}{\mu_i}\bigg( \sum_{j=i+1}^\infty E_j-\sum_{\ell=1}^k E_\ell'\bigg)\bigg)\geq 1\bigg)\leq \prod_{\ell=1}^k \E\big[\e^{-\mu_i^{-1} E_\ell'}\big]\prod_{j=i+1}^\infty \E\big[\e^{\mu_i^{-1}E_j}\big]. 
    \ee 
    We note that the expected values are well-defined when $i$ is sufficiently large, as $\mu_i^{-1}\leq f(j-1)/2<f(j-1)$ for all $j\geq  i+1$. Since each $E_\ell'$ has a rate $k\overline m_k$,  the right-hand side equals 
    \be \label{eq:mgfexplicit}
    \Big(\frac{k\overline m_k}{k\overline m_k+\mu_i^{-1}}\Big)^k\!\!\! \prod_{j=i+1}^\infty \frac{f(j-1)}{f(j-1)-\mu_i^{-1}}.
    \ee  
    As $f(j-1)\mu_i\geq 2$ implies that $(f(j-1)\mu_i-1)^{-1}\leq 2/(f(j-1)\mu_i)$, and by using the definition of $\mu_i$, as in~\eqref{eq:mui}, we can rewrite the product as
    \be 
    \prod_{j=i+1}^\infty \Big(1+\frac{1}{f(j-1)\mu_i-1}\Big)\leq \exp\bigg(\sum_{j=i+1}^\infty \frac{1}{f(j-1)\mu_i-1}\bigg)\leq \exp\bigg(\frac{2}{\mu_i}\sum_{j=i}^\infty \frac{1}{f(j)}\bigg)=\e^2.
    \ee 
    Then, omitting the term $k\overline m_k$ from the denominator, we bound the first term in~\eqref{eq:mgfexplicit} from above by $(k\overline m_k)^k \mu_i^k$, to arrive at
    \be 
     \P\bigg(\sum_{\ell=1}^k E_\ell'\leq \sum_{j=i+1}^\infty E_j\bigg)\leq \e^2 (k\overline m_k)^k \mu_i^k=\cO(i^{-k(\alpha-1)}).
    \ee 
    Combining all of the above we finally arrive at the following bound for all $i$ sufficiently large:
    \be \ba\label{eq:probSbounds}
    \P(\text{vertex $i$ grows a tree of size at least $k+1$ before time $\cP(\varnothing)$})&=\P\bigg( \bigcup_{\substack{\text{Rooted trees }S\\ |S|=k+1}}\!\!\!\!\!\!\cE_i(S)\bigg)\\
    &=\cO(i^{-k(\alpha-1)}).
    \ea \ee
    Now summing over $i$ from $1$ to $n$ (and using the trivial upper bound of $1$ for all small $i$ for which~\eqref{eq:probSbounds} cannot be used) yields the desired result based on the different values of $k(\alpha-1)$.
\end{proof}

Proposition~\ref{prop:subtree} provides a more precise insight into the number of subtrees that are rooted at the children of the unique infinite-degree vertex $v^*$ in $T_\infty$, and helps us determine the magnitude of the maximum degree in $T_n$ more precisely beyond `$\Delta(T_n)$ is of order $n$'. We are now ready to prove Theorem~\ref{thrm:supermax}.

\begin{proof}[Proof of Theorem~\ref{thrm:supermax}]
    We first observe that in all three cases of the theorem, the sequences of random variables, $\{n-\Delta(T_n)\}_{n\geq 1}, \{(n-\Delta(T_n))/\log n\}_{n\geq 2}$, and $\{(n-\Delta(T_n))n^{\alpha-2}\}_{n\geq 1}$, are bounded from below by zero.
    
    Let us then start with the case $\alpha>2$. Here we can immediately draw upon results from~\cite[Theorem $1.2$]{oliveira2005connectivity}. Namely, in this setting (for $\alpha>2$), it holds that there almost surely exists some $N\in\N$ and some distinguished vertex $v^*$ in $T_N=\bp(\tau_N)$, such that all vertices born after time $\tau_N$ (or after step $N$ in discrete time) are children of $v^*$. In particular, $v^*$ has the largest degree in $T_n=\bp(\tau_n)$ for all $n>2N-1$, almost surely. Hence, the infinite limiting tree $T_\infty$ consists of a finite tree $T=T_N$ that contains a unique distinguished vertex $v^*$, and additionally an infinite number of vertices connected to $v^*$ (these are all children of $v^*$), almost surely. It is possible that $v^*$ has a number of children that are part of $T$ and hence do not contribute to the infinite number of vertices that connect to $v^*$ that are not part of $T$, namely $\deg_T(v^*)$ many. We thus have that $n-\Delta(T_n)\overset{\mathrm{a.s.}}{\longrightarrow} |T|-\deg_T(v^*)$, which concludes the proof of the case $\alpha>2$.

    For the other two cases, it suffices to prove that for any $\eps>0$ fixed there exists a constant $K(\eps)>0$ such that $(n-\Delta(T_n))/\log n$  exceeds $K(\eps)$ with probability at most $\eps$ when $\alpha=2$, and such that $(n-\Delta(T_n))n^{\alpha-2}$ exceeds $K(\eps)$ with probability at most $\eps$ when $\alpha\in(1,2)$. We provide a proof for the former, the proof of the latter claim follows analogous steps where one only needs to substitute $n^{2-\alpha}$ for $\log n$.

    Let us consider the event $\{\Delta(T_n)\leq n-K(\eps)\log n\}$ for some $K(\eps)$ to be specified later. This event holds exactly when the number of vertices in $T_n$ that are not children of the maximum degree vertex, is at least $K(\eps)\log n$. We aim to show that the latter event holds with probability at most $\eps$. Let $k^*:=\inf\{k\in\N: k>1/(\alpha-1)\}$. We recall that, by~\cite[Theorem $1.2$]{oliveira2005connectivity}, the limiting infinite tree $T_\infty$ has a finite height and contains a unique vertex $v^*$ that has an infinite degree, almost surely. A combination of Proposition~\ref{prop:subtree} (Equation~\eqref{eq:probSbounds} in its proof in particular), Remark~\ref{rem:offspring}, and the Borel-Cantelli lemma yields that almost surely there is at most a finite number of children of $v^*$ that produce a subtree of size at least $k^*+1$ by time $\cB(v^*)+\cP(v^*)=\tau_\infty$.  As a result, we observe that each subtree of size at least $k^*+1$ has a finite size. That is, for some random $L\in\N_0$ there are $L$ many subtrees rooted at children of $v^*$ with sizes $S_1,\ldots, S_L$, such that each $S_i$ is at least $k^*+1$ and finite, almost surely. 

    We now write 
    \be 
    \E[Y^k_n(v^*)]=\sum_{v\in \cU_\infty}\E[Y^k_n(v)\mathbbm 1_{\{v^*=v\}}]=\sum_{v\in \cU_\infty}\sum_{i=1}^n \E[\mathbbm 1_{\cE_{i,k}(v)}\mathbbm 1_{\{v^*=v\}}], 
    \ee 
    where $\cE_{i,k}(v)$ denotes the event that the subtree, rooted at $v\oplus (i)$, has size at least $k+1$ by time $\cB(v)+\cP(v)$. We claim that the upper  bound $\E[\mathbbm 1_{\cE_{i,k}(v)}\mathbbm 1_{\{v^*=v\}}]\leq \E[\mathbbm 1_{\cE_{i,k}(v)}]\P(v^*=v)$ holds, so that we obtain 
    \be 
    \E[Y^k_n(v^*)]\leq \sum_{v\in \cU_\infty}\E[Y^k_n(v)]\P(v^*=v). 
    \ee
    Since the constant $C$ in the upper bound for $\E[Y^k_n(v)]$ in Proposition~\ref{prop:subtree} does not depend on the choice of $v$ (see Remark~\ref{rem:offspring}), we arrive at \be\label{eq:expsubtreebound} \sum_{k=1}^{k^*-1}\E[Y_n^k(v^*)]=\begin{cases} \cO(\log n), &\mbox{when }\alpha=2, \\
    \cO(n^{2-\alpha}) &\mbox{when }\alpha\in(1,2).
    \end{cases}
    \ee
    To prove the claimed inequality $\E[\mathbbm 1_{\cE_{i,k}(v)}\mathbbm 1_{\{v^*=v\}}]\leq \E[\mathbbm 1_{\cE_{i,k}(v)}]\P(v^*=v)$, we argue that the events $\cE_{i,k}(v)$ and $\{v^*=v\}$ are negatively correlated. Let $\omega$ be a realization of the exponential random variables $(E_i^{(v)})_{i\in\N,v\in\cU_\infty}$ used in the construction of the branching process $\bp$ and suppose $v=(v_1,\ldots , v_k)$ for some $v_1,\ldots , v_k\in\N$ and $k\in\N$ (the case $v=\varnothing$ follows 
    analogously). { We define 
    \be 
    A_v \coloneqq \left\{ (u,\ell) \in \cU_\infty \times \N : u=(v_1,\ldots,v_j), \ell \leq v_{j+1} , j \in \{0,\ldots,k-1\}\right\} , 
    \ee 
    so that the time at which the vertex $v=(v_1,\ldots,v_k)$ is born is exactly
    \begin{equation*}
\cB(v)=E^{(\varnothing)}_1+\sum_{j=2}^{v_1}\frac{E^{(\varnothing)}_j}{f(j-1)}+\sum_{i=2}^k \sum_{j=1}^{v_i}\frac{E^{\scriptsize (v_1, \ldots, v_{i-1})}_j}{f(j)}
=
\sum_{(u,\ell) \in A_v} \frac{E^{(v)}_\ell}{\Tilde{f}(u,\ell)},
    \end{equation*}
    with $\Tilde{f}: \cU_\infty \times \N \to \R_{>0}$ chosen accordingly.
    Now note that the event $\{v^* = v\}$ is decreasing in the random variables $( E_{\ell}^{(u)} )_{(u,\ell) \in A_v}$ and $( E_{\ell}^{(v)} )_{\ell \in \N}$ and increasing in all other exponential random variables. This holds, since when one decreases one of the random variables in $( E_{\ell}^{(u)})_{(u,\ell)\in A_v}$ or $( E_{\ell}^{(v)})_{\ell \in \N}$, the explosion time of $v$ decreases. (The explosion time of some other vertices will also decrease, but by exactly the same amount.) Furthermore, the event $\{v^*=v\}$ is increasing in all other variables, since the explosion time of $v$ is not affected by increasing one of the exponential random variables not in $( E_{\ell}^{(u)} )_{(u,\ell)\in A_v}$ or $( E_{\ell}^{(v)} )_{\ell \in \N}$, but the explosion time of other vertices can only get larger. 
    Contrary, the event $\mathcal{E}_{i,k}(v)$ is increasing in $(E^{(v)}_{\ell})_{\ell \in \N}$, decreasing in $(E^{(v\oplus i \oplus \sigma)}_{\ell})_{\sigma \in \cU_\infty, \ell \in \N}$, and independent of all other variables. Thus we see that the event $\{v^* = v \}$ is decreasing in a variable $E^{(u)}_\ell$ exactly when the event $\mathcal{E}_{i,k}(v)$ is increasing in this variable, and vice versa. The claimed bound thus follows by the FKG inequality.
    }
    
    Using~\eqref{eq:expsubtreebound}, when $\alpha=2$, it follows that for any $\eps>0$ there exists a fixed finite constant $K=K(\eps)$ such that with probability at least $1-\eps/2$ the following event $\mathcal C_n(v^*)$ holds: At time $\cB(v^*) +\cP(v^*)=\tau_\infty$, when $v^*$ has produced infinite offspring (and all other vertices have produced finitely many children only), the subtrees rooted at the first $n$ children of $v^*$ satisfy: 
    \begin{itemize}
        \item There are at most $\frac{K(\eps)}{2k^*}\log n$ many  subtrees of size at least $2$ and at most $k^*$ (which combined contain at most $\frac{K(\eps)}{2}\log n$ many vertices).
        \item The number of vertices contained in subtrees of size at least $k^*+1$ is at most $K(\eps)$.
        \item The remaining subtrees (at least $n-K(\eps)(\log (n)/(2 k^*)+1)$ many) consist of exactly one vertex.
    \end{itemize}
    Here, the first point follows by combining~\eqref{eq:expsubtreebound} with Markov's inequality and by choosing $K(\eps)$ sufficiently large so that this event holds with probability at least $1-\eps/4$. The second point follows from what we concluded above in~\eqref{eq:expsubtreebound}. Namely, there are almost surely finitely many children of $v^*$, say $L$ many, that have grown a subtree of (finite) size at least $k^*+1$ by time $\tau_\infty$, say their sizes are $S_1,\ldots, S_L$. As $L$ is almost surely finite, and $S_1, \ldots, S_L$ are almost surely finite, it follows that 
    \be 
    \lim_{k\to\infty}\P(S_1+\cdots +S_L\geq k)=0. 
    \ee 
    As a result, for $K(\eps)$ large, the second point holds with probability at least $1-\eps/4$. Finally, the third point combines the first two with the fact that the remaining children of $v^*$ have not produced any children by time $\tau_\infty$.  
    
    Now that we have an understanding of the sizes of subtrees, rooted at children of $v^*$ at time $\tau_\infty$, we aim to compare this to the times $\tau_n$, when the tree is of size $n$. Again using~\cite[Theorem~1.2]{oliveira2005connectivity}, for $\alpha\in(1, 2]$  there almost surely exists some (random) $N\in \N$ and a distinguished vertex $v^*$ in $\bp(\tau_N)$ (or $T_N$ in discrete time), such that all vertices born after time $\tau_N$ are either children of $v^*$ or descendants of $v^*$ (i.e.\ grandchildren, great-grandchildren, etc.), and all vertices but $v^*$ have finitely many children after time $\tau_N$. This vertex $v^*$ is the same vertex as above, the unique vertex that has an infinite degree in the limiting tree. Hence, the infinite limiting tree $T_\infty$ consists of a finite tree $T=T_N=\bp(\tau_N)$ that contains a unique distinguished vertex $v^*$, and additionally an infinite tree $S_{\text{inf}}$, rooted at $v^*$. The tree $S_{\inf}$ satisfies that the root has an infinite degree, and that the children of the root satisfy the event $\cC_n(v^*)$ with probability at least $1-\eps/2$ (when thinking of $v^*$ as the root of $S_{\text{inf}}$). Further, as $|T|=|T_N|\leq N$, and $N$ is almost surely finite, the probability that the tree $T$ contains at most  $K(\eps)$ vertices is at least $1-\eps/2$ for $K(\eps)$ sufficiently large.
    
    Finally, there exists some $N'\geq N$ such that $v^*$ obtains the maximum degree after time $\tau_{N'}$ almost surely (or in the tree $T_n$ for all $n\geq N'$, in discrete time). Hence, for all $n\geq N'$, we observe that the number of vertices in $T_n$ that are \emph{not} children of the maximum degree vertex $v^*$ is bounded from above by the sizes of all subtrees rooted at the first $n$ children of $v^*$ in $S_{\text{inf}}$, plus the size of $T$. We can thus conclude, for $\alpha=2$, that the total number of vertices in $T_n$ that are \emph{not} children of $v^*$ is   at least $K(\eps)\log n$ with probability at most $\eps$, for $K(\eps)$ large enough. 
\end{proof}


\section*{Acknowledgements}

The catalyst of this paper was the RandNET Workshop on Random Graphs which took place in Eindhoven, the Netherlands during August 22--30, 2022. We thank the organizers, Serte Donderwinkel, Christina Goldschmidt, Remco van der Hofstad, and Joost Jorritsma, for putting together a fantastic event, which brought our group together. We also thank Eurandom for the hospitality, as well as the participants of the workshop, particularly Tom Hutchcroft, for helpful discussions. We are grateful to Nathan Ross for insightful discussions at the early stages of this project. In addition, we thank Luc Devroye, David Gamarnik, and G\'abor Lugosi for valuable discussions. 

Bas Lodewijks has received funding from the European Union’s Horizon 2022 research and innovation programme under the Marie Sk\l{}odowska-Curie grant agreement no.~$101108569$, ``DynaNet", and has been supported by the grant GrHyDy ANR-20-CE40-0002. 
C\'eline Kerriou was supported by the DFG project~$444092244$ ``Condensation in random geometric graphs'' within the priority programme SPP~$2265$.

This material is based upon work supported in part by the National Science Foundation (NSF) under Grant No.~DMS-1928930, while C\'eline Kerriou\ and Mikl\'os Z.\ R\'acz\ were in residence at the Simons Laufer Mathematical Sciences Institute (SLMath/MSRI) in Berkeley, California, participating in the Probability and Statistics of Discrete Structures program during Spring~2025. 

\section*{AI Statement}

All ideas, proofs, and results in this paper are due to the human authors, and all the writing was done by the human authors. 
AI assistance was used solely for spellchecking, editing Figure~\ref{fig:subtreeweight}, and checking the correctness of mathematical calculations.


\bibliographystyle{plain}
\bibliography{bib}




\end{document}